\renewcommand{\cftsecleader}{\cftdotfill{\cftdotsep}}
\numberwithin{figure}{section}
\definecolor{blue2}{cmyk}{.94,.11,0,0}
\definecolor{myblue}{rgb}{.8, .8, 1}
\newlength\mytemplen
\newsavebox\mytempbox
\DeclareFontFamily{U}{mathx}{}
\DeclareFontShape{U}{mathx}{m}{n}{<-> mathx10}{}
\DeclareSymbolFont{mathx}{U}{mathx}{m}{n}
\DeclareMathAccent{\widecheck}{0}{mathx}{"71}
\DeclareMathAlphabet\mathbfcal{OMS}{cmsy}{b}{n}
\renewenvironment{thebibliography}[1]
     {\section*{\refname}
      \@mkboth{\MakeUppercase\refname}{\MakeUppercase\refname}
      \begin{enumerate}[label={[\arabic{enumi}]},itemindent=*,leftmargin=2.5em]
      \@openbib@code
      \sloppy
      \clubpenalty4000
      \@clubpenalty \clubpenalty
      \widowpenalty4000
      \sfcode`\.\@m}
     {\def\@noitemerr
       {\@latex@warning{Empty `thebibliography' environment}}
      \end{enumerate}}
\newcommand{\newparallel}{\mathrel{\mathpalette\new@parallel\relax}}
\newcommand{\new@parallel}[2]{
  \begingroup
  \sbox\z@{$#1T$}
  \resizebox{!}{\ht\z@}{\raisebox{\depth}{$\m@th#1/\mkern-4.5mu/$}}
  \endgroup
}
\newcommand{\nnewparallel}{\mathrel{\mathpalette\nnew@parallel\relax}}
\newcommand{\nnew@parallel}[2]{
  \begingroup
  \sbox\z@{$#1T$}
  \resizebox{!}{\ht\z@}{\raisebox{\depth}{$\m@th#1/\mkern-4.5mu/\!\!\!\!\backslash $}}
  \endgroup
}
\newcommand\cb{
    \@ifnextchar[
       {\@cb}
       {\@cb[5pt]}}
\def\@cb[#1]{
    \@ifnextchar[
       {\@@cb[#1]}
       {\@@cb[#1][5pt]}}
\def\@@cb[#1][#2]#3{
    \sbox\mytempbox{#3}
    \mytemplen\ht\mytempbox
    \advance\mytemplen #1\relax
    \ht\mytempbox\mytemplen
    \mytemplen\dp\mytempbox
    \advance\mytemplen #2\relax
    \dp\mytempbox\mytemplen
    \colorbox{myblue}{\hspace{1em}\usebox{\mytempbox}\hspace{1em}}}
\DeclareMathOperator*{\esssup}{ess\,sup}
\patchcmd{\thebibliography}{\section*}{\section}{}{}
\renewcommand{\Re}{{\rm Re}}
\newcommand{\BES}{{\rm BES}}
\renewcommand{\Im}{{\rm Im}}
\newcommand{\less}{\lesssim}
\newcommand{\more}{\gtrsim}
\newcommand{\e}{{\mathrm e}}
\newcommand{\1}{\mathds 1}
\newcommand{\C}{{\mathscr C}}
\newcommand{\F}{\mathscr F}
\newcommand{\B}{\mathscr B}
\newcommand{\vep}{{\varepsilon}}
\newcommand{\da}{{\downarrow}}
\newcommand{\ua}{{\uparrow}}
\newcommand{\la}{{\langle}}
\newcommand{\ra}{{\rangle}}
\newcommand{\ind}{{\perp\!\!\!\perp}}
\newcommand{\bs}{\boldsymbol}
\newcommand{\ms}{\mathscr}
\renewcommand{\P}{{\mathbb P}}
\newcommand{\E}{{\mathbb E}}
\newcommand{\mc}{\mathcal}
\renewcommand{\d}{{\mathrm d}}
\newcommand{\al}{(-\alpha)}
\newcommand{\albe}{(-\alpha),\beta \da }
\newcommand{\zbe}{(0),\beta \da }
\newcommand{\R}{{\Bbb R}}
\renewcommand{\i}{{\mathtt i}}
\newcommand{\defeq}{{\stackrel{\rm def}{=}}}
\newcommand{\EN}{{\mathcal E_N}}
\renewenvironment{proof}[1][\proofname]{\noindent {\bfseries #1.}\;}{\hfill\ensuremath{\blacksquare}\\}
\newcommand{\cc}{{^\circ}}
\newcommand{\eqspace}{{\quad\;}}
\newcommand{\bi}{\mathbf i}
\newcommand{\bj}{\mathbf j}
\newcommand{\bk}{\mathbf k}
\newcommand{\bl}{\mathbf l}
\newcommand{\J}{{\mathcal J}}
\newcommand{\hK}{{\widehat{K}}}
\newcommand{\two}{{\sqrt{2}}}
\newcommand{\bbeta}{{\bs \beta}}
\newcommand{\K}{{\widehat{ K}}}
\newcommand{\loc}{{\rm loc}}
\newcommand{\I}{{\rm I}}
\newcommand{\II}{{\rm II}}
\newcommand{\III}{{\rm III}}
\renewcommand{\sp}{{\shortparallel}}
\newcommand{\nsp}{{\nshortparallel}}
\newcommand{\CN}{{\mathbb C}^{\it N}}
\newcommand{\Twi}
{T^{\bs w\setminus\{w_\bi\}}}
\newcommand{\TwJm}
{T^{\bs w\setminus\{w_{\bs J_m}\}}}
\newcommand{\CNw}{{\CN_{\bs w\,\sp}}}
\newcommand{\CNwi}{{\CN_{\bi\,\nsp,\,\bs w\setminus\{w_\bi\}\,\sp}}}
\newcommand{\CNwJm}{{\CN_{\bs J_m\,\nsp,\,\bs w\setminus\{w_{\bs J_m}\}\,\sp}}}
\newcommand{\CNwni}{{\CN_{\bs w\setminus\{w_\bi\}\,\sp}}}
\newcommand{\wt}{\widetilde}
\newtheoremstyle{slantthm}{10pt}{10pt}{\slshape}{}{\bfseries}{}{.5em}{\thmname{#1}\thmnumber{ #2}\thmnote{ (#3)}.}
\newtheoremstyle{slantrmk}{10pt}{10pt}{\rmfamily}{}{\bfseries}{}{.5em}{\thmname{#1}\thmnumber{ #2}\thmnote{ (#3)}.}
\begin{document}
\theoremstyle{slantthm}
\newtheorem{thm}{Theorem}[section]
\newtheorem{prop}[thm]{Proposition}
\newtheorem{lem}[thm]{Lemma}
\newtheorem{cor}[thm]{Corollary}
\newtheorem{defi}[thm]{Definition}
\newtheorem{claim}[thm]{Claim}
\newtheorem{disc}[thm]{Discussion}
\newtheorem{conj}[thm]{Conjecture}

\theoremstyle{slantrmk}
\newtheorem{ass}[thm]{Assumption}
\newtheorem{rmk}[thm]{Remark}
\newtheorem{eg}[thm]{Example}
\newtheorem{question}[thm]{Question}
\numberwithin{equation}{section}
\newtheorem{quest}[thm]{Quest}
\newtheorem{problem}[thm]{Problem}
\newtheorem{discussion}[thm]{Discussion}
\newtheorem{notation}[thm]{Notation}
\newtheorem{observation}[thm]{Observation}

\definecolor{db}{RGB}{13,60,150}
\definecolor{dg}{RGB}{150,40,40}

\newcommand{\thetitle}{
\vspace{-.5cm}
Stochastic motions of the two-dimensional many-body delta-Bose gas, II:
Many-$\bs \delta$ motions}

\title{\bf \thetitle\footnote{Support from an NSERC Discovery grant is gratefully acknowledged.}}

\author{Yu-Ting Chen\,\footnote{Department of Mathematics and Statistics, University of Victoria, British Columbia, Canada.}\,\,\footnote{Email: \url{chenyuting@uvic.ca}}}

\date{\today \vspace{-.5cm}}

\maketitle
\abstract{This paper is the second in a series devoted to constructing stochastic motions for the two-dimensional $N$-body delta-Bose gas for all integers $N\geq 3$ and establishing the associated Feynman--Kac-type formulas.  The main results here construct and study the more general \emph{stochastic many-$\delta$ motions} for $N$ particles. They have the interpretation of independent two-dimensional Brownian motions conditioned to attain the contact interactions that realize multiple two-body $\delta$-function potentials. For the construction, we transform the stochastic one-$\delta$ motions studied in \cite{C:SDBG1-2} by Girsanov's theorem \emph{locally before} a pair of particles with different initial conditions begins to contact each other. The strong Markov processes with lifetime thus obtained are concatenated by using the ``no-triple-contacts'' (NTC). This NTC phenomenon appears in the functional integral solutions of the two-dimensional many-body delta-Bose gas obtained earlier and is now proven at the pathwise level to a generalized degree.\medskip

\noindent \emph{Keywords:} Delta-Bose gas; Schr\"odinger operators; no triple contacts; Bessel processes; SDEs with singular drift.\smallskip

\noindent \emph{Mathematics Subject Classification (2020):} 60J55, 60J65, 60H30, 81S40.
\vspace{0cm}

\setlength{\cftbeforesecskip}{0pt}
\setlength\cftaftertoctitleskip{0pt}
\renewcommand{\cftsecleader}{\cftdotfill{\cftdotsep}}
\setcounter{tocdepth}{2}
\tableofcontents

\section{Introduction}\label{sec:intro}
This paper is the second in a series devoted to constructing stochastic motions for the two-dimensional $N$-body delta-Bose gas for all integers $N\geq 3$ and establishing the associated Feynman--Kac-type formulas. Our goal here is to construct and study the stochastic motions as diffusion processes, using the foundation from \cite{C:SDBG1-2} for the stochastic one-$\delta$ motions subject to probability measures denoted by $\P^{\beta_\bi\da,\bi}$. In Section~\ref{sec:selection}, we will discuss a very different possible alternative and explain our motivation for the formulation of the main results.

Given an integer $N\geq 3$, a set of weights $\bs w$, and a set of coupling constants $\bbeta$, the main results of this paper construct a $\Bbb C^N$-valued strong Markov process $\{\ms Z_t\}=\{Z^j_t\}_{1\leq j\leq N}$, called a {\bf stochastic many-$\bs \delta$ motion}, up to a terminal time $T_\partial>0$: $\ms Z_t=\partial$ for $t\in [T_\partial,\infty)$ on $\{T_\partial<\infty\}$, where $\partial$ is an extra point added to $\Bbb C^N$; see Remark~\ref{rmk:main1} (1$\cc$) for more details of this terminal time. Here, with 
\[
\mathcal E_N\,\defeq\,\{\bj=(j\prime,j)\in \Bbb N^2;1\leq j<j\prime\leq N\},
\]
$\bbeta=\{\beta_\bj\}_{\bj\in \mc E_N }\in (0,\infty)^{\mathcal E_N}$ and we require that $\bs w=\{w_\bj\}_{\bj\in \mc E_N}\in \R_+^{\mathcal E_N}$ satisfy $\#\{\bj;w_\bj>0\}\geq 2$.

The above stochastic many-$\delta$ motion satisfies several properties at the pathwise and expectation levels. In particular, it solves the following SDE: for all $1\leq j\leq N$, 
\begin{align}\label{SDE:Z10}
\begin{split}
Z^j_t&=z^j_0-\sum_{1\leq k\leq N\atop k\neq j}\int_0^t \frac{w_{j\vee\!\wedge k}\sqrt{\beta_{j\vee\!\wedge k}} K_1(\sqrt{\beta_{j\vee\!\wedge k}}|Z^{j}_s-Z^k_s|)(Z_s^{j}-Z^k_s)}{\sum_{\ell_2>\ell_1\atop N\geq \ell_2}w_{\ell_1\!\vee\!\wedge \ell_2}K_0(\sqrt{\beta_{\ell_2\!\vee\!\wedge \ell_1}}|Z^{\ell_2}_s-Z^{\ell_1}_s|)
|Z_s^{j}-Z^k_s|}\d s+\widetilde{W}^j_t,\; t<T_\partial,
\end{split}
\end{align}
which is formulated with the following ingredients besides $\bs w,\bbeta$ specified above:
\begin{itemize}
\item the set of eligible initial conditions $z_0=(z_0^1,\cdots,z_0^N)\in \Bbb C^N$ is such that 
\begin{align}\label{def:ic}
\#\{\bj=(j\prime,j)\in \mc E_N;z_0^{j\prime}-z_0^j=0\;\&\;w_\bj>0\}\leq 1,
\end{align}
\item ${\ell_1\!\vee\!\!\wedge \ell_2}\,\defeq\,(\max\{\ell_1,\ell_2\},\min\{\ell_1,\ell_2\})\in \mc E_N$ for integers $\ell_1,\ell_2\geq 1$ with $\ell_1\neq \ell_2$,
\item $K_\nu(\cdot)$ is the Macdonald function of index $\nu$, and
\item $\wt{\ms W}_t=\{\widetilde{W}^j_t\}_{1\leq j\leq N}$ is a family of independent two-dimensional standard Brownian motions with zero initial condition. 
\end{itemize}
In contrast, the SDE of the stochastic one-$\delta$ motion under $\P^{\beta_\bi\da,\bi}$ satisfies \eqref{SDE:Z10} with $w_\bj\equiv \1_{\bi}(\bj)$ and $T_\partial\equiv \infty$; see \eqref{def:ZSDE2-2} for a restatement. 
In any case, the SDE in \eqref{SDE:Z10} can be further identified as an SDE with singular drift \cite{KrRo-2, Ver-2} such that the drift  is supercritical under a Ladyzhenskaya--Prodi--Serrin-type condition:  $b(x,t)\in L^q_{\loc}(\R_+;L^p_{\loc}(\R^d))$ for $d/p+2/q>1$. Specifically, this supercriticality holds with $(d,p,q)=(2N,2,\infty)$ since \eqref{SDE:Z10} takes the form of 
\begin{align}\label{def:Zintro}
\ms Z_t=\ms Z_0+\int_0^t \bs b_\beta(\ms Z_s)\d s+\wt{\ms W}_t,\quad t<T_\partial,
\end{align}
with
$\bs b_\beta\in L^p_{\loc }(\Bbb C^N)$ if and only if $p\leq 2$; see \cite[Corollary~2.14]{C:BES-2} for $N=2$, whose proof extends to the present case of $N\geq 3$. More properties of the stochastic many-$\delta$ motions appear in
Theorem~\ref{thm:main1} and Propositions~\ref{prop:SDE1} and~\ref{prop:SDE2}. Note that the Feynman--Kac-type formulas in \cite{C:SDBG3-2} use the stochastic many-$\delta$ motions with $w_\bj>0$ for all $\bj\in \mc E_N$, while the extensions to general $\bs w$ are straightforward to our methods and have independent meanings explained below. 

The stochastic many-$\delta$ motions introduced above give a stochastic description of the two-dimensional many-body delta-Bose to a generalized degree. The corresponding characteristics are the nontrivial contacts realized in the forms of $Z^{j\prime}_t=Z^j_t$ for $\bj=(j\prime,j)\in \mc E_N$ with $w_\bj>0$ and the {\bf no-triple-contacts (NTC)} that the probability of $Z^{j_1}_t=Z^{j_2}_t=Z^{j_3}_t$ for some $0\leq t<T_\partial$ and distinct integers $j_1,j_2,j_3$ is zero. On the other hand, such contact interactions and the NTC are described in the original quantum Hamiltonian of the two-dimensional many-body delta-Bose; see \eqref{def:HNm} with $\mathcal J=\mathcal E_N$ for the Hamiltonian. They also find realizations in the earlier functional integral solutions, but at the operator and expectation levels (e.g. \cite{C:DBG-2}).

\begin{figure}[t]
\begin{center}
 \begin{tikzpicture}[scale=.8]
    \draw[gray, thin] (0,0) -- (7.9,0);
    \foreach \i in {0.0, 0.5,  2.0,  3.2,  5,  6.7, 7.9} {\draw [gray] (\i,-.05) -- (\i,.05);}
    \draw (0.0,-0.14) node[below]{$0$};
    \draw (0.55,-0.07) node[below]{$T_0^1$};
    \draw (2.05,-0.07) node[below]{$T_0^2$};
    \draw (3.25,-0.07) node[below]{$T_0^3$};
    \draw (5.05,-0.125) node[below]{$T_0^n$};
    \draw (7,-0.04) node[below]{$T_0^{n+1}$};
    \draw (7.9,-0.125) node[below]{$T_\partial$};
    \node at (0.0,0.40) {$\bullet$};
    \node at (0.0,1.20) {$\bullet$};
    \node at (0.0,1.80) {$\bullet$};
    \node at (0.0,2.30) {$\bullet$};
    \draw (0.0,0.40) node [left] {$Z^1_0$};
    \draw (0.0,1.20) node [left] {$Z^2_0$};
    \draw (0.0,1.80) node [left] {$Z^3_0$};
    \draw (0.0,2.30) node [left] {$Z^4_0$};    
    \draw [thick, color=black] (0.0,0.40) -- (0.5,0.6);
    \draw [thick, color=black] (0.0,1.20) -- (0.5,0.6);
    \draw [thick, color=black] (0.0,1.80) -- (0.5,1.15);    
    \draw [thick, color=black] (0.0,2.30) -- (0.5,2.20); 
    \node at (0.5,0.6) {$\bullet$};
    \node at (0.5,0.6) {$\bullet$};
    \node at (0.5,1.15) {$\bullet$};
    \node at (0.5,2.20) {$\bullet$};
    \draw [thick, color=black, snake=coil, segment length=4pt] (0.5,0.60) -- (1.2,0.80);
    \draw [thick, color=black] (0.5,1.15) -- (1.2,1.40);    
    \draw [thick, color=black] (0.5,2.20) -- (1.2,2.00);    
    \node at (1.2,0.8) {$\bullet$};
    \node at (1.2,0.8) {$\bullet$};
    \node at (1.2,1.40) {$\bullet$};
    \node at (1.2,2.00) {$\bullet$};
    \draw [thick, color=black] (1.2,0.8) -- (2.0,0.15);
    \draw [thick, color=black] (1.2,0.8) -- (2.0,1.45);
    \draw [thick, color=black] (1.2,1.40) -- (2.0,2.20);    
    \draw [thick, color=black] (1.2,2.00) -- (2.0,2.20);  
    \node at (2.0,0.15) {$\bullet$};
    \node at (2.0,1.45) {$\bullet$};
    \node at (2.0,1.45) {$\bullet$};
    \node at (2.0,2.20) {$\bullet$};
    \draw [thick, color=black] (2.0,0.15) -- (2.5,0.45);
    \draw [thick, color=black] (2.0,1.45) -- (2.5,1.2);
    \draw [thick, color=black, snake=coil, segment length=4pt] (2.0,2.20) -- (2.52,1.85);    
    \node at (2.5,0.45) {$\bullet$};
    \node at (2.5,1.2) {$\bullet$};
    \node at (2.5,1.2) {$\bullet$};
    \node at (2.5,1.80) {$\bullet$};
    \draw [thick, color=black] (2.5,0.45)--(3.2,0.85);  
    \draw [thick, color=black] (2.5,1.80)--(3.2,1.35);    
    \draw [thick, color=black] (2.5,1.20)--(3.2,1.35);  
    \draw [thick, color=black] (2.5,1.80)--(3.2,2.10);        
    \node at (3.2,0.85) {$\bullet$};
    \node at (3.2,0.85) {$\bullet$};
    \node at (3.2,1.35) {$\bullet$};
    \node at (3.2,2.10) {$\bullet$};
    \draw [thick, color=black, snake=coil, segment length=4pt] (3.2,1.30)--(3.68,1.05) ;
    \draw [thick, color=black] (3.2,0.85)--(3.7,0.5);    
    \draw [thick, color=black] (3.2,2.10)--(3.7,2.02);  
    \node at (3.7,0.5) {$\bullet$};
    \node at (3.7,1.05) {$\bullet$};
    \node at (3.7,2.02) {$\bullet$};
    \draw [thick, color=black] (3.7,2.02)--(4,1.5);    
    \draw [thick, color=black] (3.7,1.05)--(4,1);    
    \draw [thick, color=black] (3.7,1.05)--(4,0.5);    
    \draw [thick, color=black] (3.7,0.5)--(4,0.5);    
    \node at (4,0.5) {$\bullet$};
    \node at (4,1) {$\bullet$};
    \node at (4,1.5) {$\bullet$};
    \draw [ultra thick,loosely dotted, black] (4.3,2.1) -- (4.7,2.1);
    \draw [ultra thick,loosely dotted, black] (4.3,1.3) -- (4.7,1.3);
    \draw [ultra thick,loosely dotted, black] (4.3,.5) -- (4.7,.5);
    \node at (5,1.25) {$\bullet$};
    \node at (5,1.75) {$\bullet$};
    \node at (5,2.30) {$\bullet$};
    \node at (5,2.30) {$\bullet$};
    \draw [thick, color=black] (5,1.25)--(5.8,0.45);
    \draw [thick, color=black] (5,1.75)--(5.8,1.20);   
    \draw [thick, color=black, snake=coil, segment length=4pt] (5,2.30)--(5.8,1.80); 
    \node at (5.8,0.45) {$\bullet$};
    \node at (5.8,1.20) {$\bullet$};
    \node at (5.8,1.8) {$\bullet$};
    \node at (5.8,1.8) {$\bullet$};
    \draw [thick, color=black] (5.8,0.45)--(6.7,0.35);
    \draw [thick, color=black] (5.8,1.20)--(6.7,0.35); 
    \draw [thick, color=black] (5.8,1.8)--(6.7,1.15);    
    \draw [thick, color=black] (5.8,1.8)--(6.7,2.40); 
    \node at (6.7,0.35) {$\bullet$};
    \node at (6.7,0.35) {$\bullet$};
    \node at (6.7,1.15) {$\bullet$};
    \node at (6.7,2.40) {$\bullet$};
    \draw [ultra thick,loosely dotted, black] (7.1,.5) -- (7.8,.5);
    \draw [ultra thick,loosely dotted, black] (7.1,1.3) -- (7.8,1.3);
    \draw [ultra thick,loosely dotted, black] (7.1,2.1) -- (7.8,2.1);
\end{tikzpicture}
\vspace{-.5cm}
\end{center} 
\caption{The figure is a schematic diagram for the stochastic many-$\delta$ motions $\{\ms Z_t\}=\{Z^j_t\}_{1\leq j\leq 4}$ with $Z^{j\prime}_0\neq Z^j_0$ and $w_\bj>0$ for all $\bj=(j\prime,j)\in \mc E_4$. A straight line segment represents part of the path of a particle when it has no contact with other particles except possibly at the endpoints.  A coiled line segment simultaneously represents the maximal parts of the paths of two particles undergoing contact interactions. The times $T_0^n$ are the contact-creation times. 
}
\label{fig:2}
\end{figure}
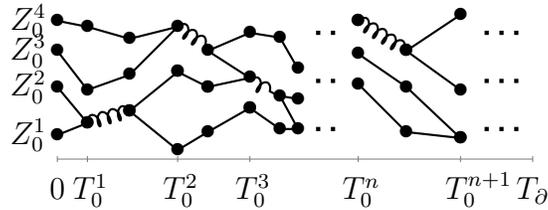

We will prove the existence of the stochastic many-$\delta$ motions by constructing and concatenating strong Markov processes with lifetime of the following two classes:
\begin{itemize}
\item The first class only deals with the initial conditions without contacts (Section~\ref{sec:SDE1}). The construction
allows some exact formulas at the expectation level since the law of a strong Markov process in this class is defined explicitly as a ``weighted average'' of the laws of the stochastic one-$\delta$ motions; see Definition~\ref{def:SDE1ult}. The weights use $\bs w$, and the process stops at the lifetime defined as the first time that contact occurs. In particular, the formulation is a multi-dimensional analogue of the Esscher transform in Donati-Martin and Yor's motivation for their method of constructing $\BES(0,\beta\da)$ \cite[(3.1)]{DY:Krein-2}. Specifically, the context in \cite{DY:Krein-2} is on transforming $2(1-\alpha)$-dimensional Bessel processes to $\BES(-\alpha,\beta\da)$, and our context is on transforming stochastic one-$\delta$ motions to stochastic many-$\delta$ motions.  See Section~\ref{sec:selection} for the role of $\BES(-\alpha,\beta\da)$ in the present problem.   
\item The second class mainly handles the initial conditions with contacts and subject to NTC (Section~\ref{sec:SDE2}). This class also extends the processes from the first class [Remark~\ref{rmk:SDE2} (1$\cc$)], but without pursuing extensions of those exact formulas. Now, the process is expressible as a stochastic one-$\delta$ motion via a Girsanov transformation locally before a lifetime for using an exponential local martingale which is not a martingale. In this case, the lifetime refers to the first time a pair of particles with different initial conditions begins to contact. Moreover, we close the local transformation by proving the continuous extensions of paths to the lifetime. The issue of this closure arises since the situation concerns constructing exit measures of supermartingales \cite{Follmer:exit-2,IW:Mult-2}. 
\end{itemize}
 In any case, under each of the strong Markov processes thus defined, the NTC is proven to hold up to and including the lifetime. This way, by starting with processes of the first class whenever initial conditions of no contacts are imposed or with processes of the second class otherwise, the construction can continue with processes of the second class and restart inductively the same way for the concatenation. The lifetimes mentioned above are superposed after the concatenation as {\bf contact-creation times} [\eqref{def:Tm1} and \eqref{def:Tm2}], and the terminal time $T_\partial$  is set as the increasing limit of the contact-creation times. See Figure~\ref{fig:2}. On the other hand, this construction motivates us to call our solutions to \eqref{SDE:Z10} stochastic many-$\delta$ motions: For $\mc J\subset\mc E_N$ with $\#\mathcal J\geq 2$ and $\bs w$ defined by $w_\bj\equiv \1_\mathcal J(\bj)$, the change of measures uses the stochastic one-$\delta$ motions under $\P^{\beta_\bj\da,\bj}$ for all $\bj\in \mathcal J$, and so, suggests that the associated stochastic many-$\delta$ motion is a probabilistic counterpart of the following formal operator:
\begin{align}\label{def:HNm}
 -\frac{1}{2}\sum_{j=1}^N\Delta_{z^j}-\sum_{\bj=(j\prime,j)\in \mc J}\Lambda_\bj \delta(z^{j\prime}-z^j),\quad (z^1,\cdots,z^N)\in \Bbb C^N.
\end{align}
Note that such an interpretation already exists in the stochastic one-$\delta$ motion under $\P^{\beta_\bi\da,\bi}$ for giving a probabilistic counterpart of \eqref{def:HNm} with $\mathcal J=\{\bi\}$ \cite[Section~1]{C:SDBG1-2}. 
Moreover, for given $N$ and $\bbeta$, all of such stochastic many-$\delta$ motions with $\{0,1\}$-valued weights and the stochastic one-$\delta$ motions relate to one another by the construction using change of measures. These processes obey a probabilistic counterpart of the BBGKY hierarchy in the kinetic theory of gases \cite[Section~3.3]{Kardar:SPP-3}. The extension to general $\bs w\in \R_+^{\mathcal E_N}$ has the interpretation of interpolations.

The proofs of this paper primarily deal with stochastic analytic problems complicated ubiquitously by the singularities of the Macdonald functions $K_0(\cdot)$ and $K_1(\cdot)$ as in the drift coefficients in \eqref{SDE:Z10}. The multi-dimensional setting is certainly responsible for the complexities, too. Roughly speaking, those problems emerge in the following two stages:
\begin{itemize}
\item The first stage is an intrinsic consequence of the fact that some singularities must be incorporated in any Radon--Nikod\'ym derivative process to create the contact interaction of a new pair of particles from a stochastic one-$\delta$ motion by changing measures (Proposition~\ref{prop:BMntc}). Specifically, the singularities now arise as a key issue in proving the semimartingale decompositions of the logarithms of the following non-martingale Radon--Nikod\'ym derivative processes we start with (Proposition~\ref{prop:logsum}) under $\P^{\beta_\bi\da,\bi}$ for $w_\bi>0$: 
\begin{align}\label{RNchoice}
\left.\left(\frac{K^{\bbeta,\bs w}_0 (t)}{ w_\bi K^{\bbeta,\bi}_0 (t)}\right)\right/ \left(\frac{K^{\bbeta,\bs w}_0 (0)}{ w_\bi K^{\bbeta,\bi}_0 (0)}\right),
\end{align}
where
\begin{align}
K_0^{\bbeta,\bj}(s)\,\defeq\, K_0(\sqrt{\beta_\bj}|Z^{j\prime}_s-Z^j_s|), \quad 
K^{\bbeta,\bs w}_0(s)\,\defeq\,\sum_{\bj\in \mc E_N}w_\bj K_0^{\bbeta,\bj}(s);\label{def:Kbbetaw0}
\end{align}
see the discussion of \eqref{def:Psi} for the choice of \eqref{RNchoice}. It\^{o}'s formula is applied to obtain the semimartingale decompositions, but only at an approximate level (Section~\ref{sec:logsum-1}). Moreover, the corresponding first-order finite-variation and It\^{o} correction terms are decomposed such that the finer terms are recombined appropriately to induce meaningful, but seemingly nonobvious, limits after the approximation is removed (Sections~\ref{sec:logsumA}--\ref{sec:logsumC}). For example, new singular integrals against local times are among the limiting terms. Above all, the stochastic many-$\delta$ motions have the interpretation that due to the SDE \eqref{SDE:Z10}, they mix the Laplacian with the $\delta$-function potentials for \eqref{def:HNm}. Therefore, singularities in relevant functions may have strong enough relations with the noise to make straightforward applications of It\^{o}'s formula infeasible. On the other hand, the semimartingale decompositions 
are used to identify the necessary normalization to induce probability measures, as we start with non-martingale Radon--Nikod\'ym derivative processes (Corollary~\ref{cor:superMG}). They also determine the SDEs of the strong Markov processes for concatenation and will serve as the main tool for proving the Feynman--Kac-type formulas in \cite{C:SDBG3-2} of this series. 

Note that the above characteristics find analogues in dealing with the one-dimensional diffusions $\BES(-\alpha,\beta\da)$ by which our methods for $N=2$ are built; see Section~\ref{sec:selection} for $\BES(-\alpha,\beta\da)$. Specifically, the difficulty of directly applying It\^{o}'s formula appears in proving the semimartingale decomposition of the logarithm of the Radon--Nikod\'ym derivative process for $\BES(-\alpha,\beta\da)$ with respect to $\BES(-\alpha)$ when $\alpha>0$ \cite[Section~4.1]{C:BES-2}.

\item The second stage concerns two problems from constructing the two classes of strong Markov processes with lifetime discussed above: (1) the continuous extensions of paths up to the lifetimes under the second class; (2) the validity of the NTC. In this stage, The SDEs of the {\bf stochastic relative motions}  $\{Z^\bj_t\}$ then take on their vital roles, where 
\begin{align}\label{def:Zbj2}
Z^\bj_t\;\defeq\,(Z^{j\prime}_t-Z^j_t)/\two, \quad \bj=(j\prime,j)\in \mc E_N.
\end{align}
In particular, we will identify in Theorem~\ref{thm:NTC} more general, sharp conditions for validating the NTC. The proof of this theorem extends a version of the comparison theorem of SDEs due to Ichiba and Karatzas~\cite{IK:BM-2} mainly to the extent of using many particles.  
\end{itemize}

\noindent {\bf Remainder of this paper.} Section~\ref{sec:selection} discusses a possible alternative to the main results of this paper and the motivations for both. Section~\ref{sec:many-bSDE} presents the main theorem (Theorem~\ref{thm:main1}) of this paper in detail and constructs the above-mentioned strong Markov processes with lifetime for concatenation. Section~\ref{sec:NTC} gives the proof of a general theorem (Theorem~\ref{thm:NTC}) by which the NTC is proved. Finally, Section~\ref{sec:logsum} gives the proof of the semimartingale decompositions of the logarithms of the non-martingale Radon--Nikod\'ym derivative processes (Proposition~\ref{prop:logsum}). \medskip

\noindent {\bf Frequently used notation.} $C(T)\in(0,\infty)$ is a constant depending only on $T$ and may change from inequality to inequality unless indexed by labels of equations. Other constants are defined analogously. We write $A\less B$ or $B\more A$ if $A\leq CB$ for a universal constant $C\in (0,\infty)$. $A\asymp B$ means both $A\less B$ and $B\more A$. For a process $Y$, the expectations $\E^Y_y$ and $\E^Y_\nu$ and the probabilities $\P^Y_y$ and $\P^Y_\nu$ mean that the initial conditions of $Y$ are the point $x$ and the probability distribution $\nu$, respectively. \emph{Unless otherwise mentioned, the processes are subject to constant initial conditions}. Lastly, $\log $ is defined with base $\e$, and $\log^ba\,\defeq\, (\log a)^b$. \medskip

\noindent {\bf Frequently used asymptotic representations.} The following can be found in \cite[p.136]{Lebedev-2}:
\begin{align}
&K_0(x)\sim \log x^{-1},&&\hspace{-3cm} x\searrow 0,\label{K00}\\
&K_0(x)\sim \sqrt{\pi/(2x)}\e^{-x},&&\hspace{-3cm} x\nearrow\infty,\label{K0infty}\\
&K_1(x)\sim  x^{-1},&&\hspace{-3cm} x\searrow 0,\label{K10}\\
&K_1(x)\sim \sqrt{\pi/(2x)}\e^{-x},&&\hspace{-3cm} x\nearrow\infty.\label{K1infty}
\end{align}

\section{A possible alternative and motivations}\label{sec:selection}
As pointed out in \cite{C:SDBG1-2}, we regard a legitimate combination of diffusion process and multiplicative functional as central to the problem of proving Feynman--Kac-type formulas of the two-dimensional many-body delta-Bose gas.
To determine the combination, this paper first constructs the more general stochastic many-$\delta$ motions. The Feynman--Kac-type formulas to be proven in \cite{C:SDBG3-2} then use the case of $w_\bj>0$ for all $\bj\in \mc E_N$ and
are obtained by \emph{deriving} the complete form of the multiplicative functional from the analytic solutions for the two-dimensional many-body delta-Bose gas in the earlier literature. On the other hand, we initially considered a very different combination, which is proposed in \cite[Remark~2.3]{C:BES-2} and intends to address the difficulty of mollifying delta potentials. Although the justification of this choice from \cite{C:BES-2} remains challenging, it has an appealing characteristic of using  $\delta$-functions directly and explicitly via local times at the approximate level. Therefore, this choice from \cite{C:BES-2} is arguably closest to the physical nature of the two-dimensional many-body delta-Bose gas. It is also closest to the Feynman--Kac formula of the one-dimensional many-body delta-Bose gas by Bertini and Cancrini~\cite{BC:1D-2} since this formula from \cite{BC:1D-2} admits direct applications of Brownian local times. 

Let us discuss the choice from \cite{C:BES-2} in more detail and explain why it is very different. The setting in \cite{C:BES-2}  extends the case of two particles and an original construction method of $\BES(0,\beta\da)$, $\beta>0$, due to Donati--Martin and Yor~\cite{DY:Krein-2}. Specifically, in the case of two particles, \cite{C:BES-2} obtains  the Feynman--Kac-type formulas for the semigroup solutions $\{P^\beta_t\}$ from \cite{AGHH:2D-2} for the relative motion operator $\ms L=-\Delta-\Lambda \delta(z)$
in several ways as follows, where $\hK_\nu(x)\,\defeq\, x^\nu K_\nu(x)$: 
 \begin{align}
P_t^\beta f(z_0)&=\E^{\beta\da}_{z^0/\two}\left[\frac{\e^{\beta t}K_0(\sqrt{\beta}\lvert z^0 \rvert)}{K_0(\sqrt{\beta}\lvert \two Z_t\rvert)}f(\two Z_t)\right]\label{PbetaFK0}\\
&=\lim_{\alpha\searrow 0}\E^{(-\alpha),\beta\da}_{z_0/\two}\biggl[\frac{\e^{\beta t}\widehat{K}_\alpha(\sqrt{\beta}\lvert z^0 \rvert)}{\widehat{K}_\alpha(\sqrt{\beta}\lvert \two Z_t\rvert)}f(\two Z_t)\biggr]\label{PbetaFK0alpha}\\
&=\lim_{\alpha\searrow 0}\E^{(-\alpha)}_{z_0/\two}[\e^{\Lambda_\alpha(\beta)L_t}f(\sqrt{2} Z_t)]
,\quad \forall\; f\in \C_b(\Bbb C),\;z^0\in \Bbb C\setminus\{0\}.\label{PbetaFK}
\end{align}
These approximations are ``lower-dimensional approximations,'' using the following definitions:
\begin{itemize}
\item For $\alpha\in (0,1/2)$, $\P^{(-\alpha)}$ denotes the probability measure under which $\{|Z_t|\}$ is a $2(1-\alpha)$-dimensional Bessel process. Let $\{L_t\}$ denote its Markovian local time at level zero.
\item $\{|Z_t|\}\sim \BES(-\alpha,\beta\da)$ under $\P^{(-\alpha),\beta\da}$, where, with a suitable constant $\Lambda_\alpha(\beta)$,
\begin{align}\label{DY:approx}
\d \P^{(-\alpha),\beta\da}_{z^0}\;\defeq\; \frac{\hK_\alpha(\sqrt{2\beta}|Z_t|)}{\hK_\alpha(\sqrt{2\beta}|z^0|)}\e^{\Lambda_\alpha (\beta)L_t-\beta t}\d \Bbb P^{(-\alpha)}_{z^0}\quad\mbox{ on } \sigma(|Z_s|,s\leq t).
\end{align}
\item The entire $\Bbb C$-valued processes $\{Z_t\}$ under $\P^{(-\alpha)}$ and $\P^{\albe}$ are both extended from Erickson's theorem \cite[Theorem~1, p.75--76]{Erickson-2} for continuous extensions of skew-product diffusion subject to the rule $Z_t=|Z_t|\exp \{\i \gamma_{\int_0^t \d s/|Z_s|^2}\}$ for $t<T_0(|Z|)$.
Here, $\{\gamma_t\}$ is an independent one-dimensional Brownian motion, and $T_\eta(\mathcal Z)\,\defeq\,\inf\{t\geq 0;\mathcal Z_t=\eta\}$.
\end{itemize}
In particular, \eqref{PbetaFK} is apparently similar to the following formal Feynman--Kac formula:
\begin{align}\label{Pbetadelta}
\e^{-t\ms L}f(z^0)=\E^{W}_{z_0/\two}\left[\exp\left\{\int_0^t \delta(\sqrt{2}W_s)\d s\right\}f(\sqrt{2}W_t)\right],
\end{align}
whereas the diffusion $\{Z_t\}$ relates to the stochastic one-$\delta$ motion $\{\ms Z_t\}=\{Z^j_t\}_{j=1,2}$ probabilistically representing the two-dimensional two-body delta-Bose gas by $\two Z_t=Z^2_t-Z^1_t$. Also, the expectation in  \eqref{PbetaFK0alpha} is a ground state transformation of $\BES(-\alpha)$ \cite[Section~3.1]{DY:Krein-2}.

The above-mentioned similarity between \eqref{PbetaFK} and \eqref{Pbetadelta} motivates a generalization in  \cite[Remark~2.2]{C:BES-2} to the case of many particles when $w_\bj\equiv 1$ and $\beta_\bj\equiv \beta$. The intended effect is that at the approximate level, the multiplicative functional is exponential with the exponent given by the sum of local times $\sum_{\bj=(j\prime,j)\in \mc E_N}\int_0^t \delta(Z^{j\prime}_s-Z^j_s)\d s$ for $\ms Z_t=(Z^1_t,\cdots,Z^N_t)$ approximating $2N$-dimensional Brownian motion in order to generalize \eqref{PbetaFK}. Additionally, the limit is expected to be made exact by a well-defined expected value as in \eqref{PbetaFK0}. Accordingly, the choice in \cite{C:BES-2} first \emph{postulates} the radial parts $\{|Z^\bj_t|\}_{\bj\in \mc E_N}$ as $\{\rho^{\bj}_t \}_{\bj\in \mc E_N}$, say under $\P^{N,\al}$, such that each $\{\rho^{\bj}_t\}$ is a version of $\BES(-\alpha)$, and  the $\two$-multiples of these Bessel processes converge to the distances among $N$ independent planar Brownian motions as $\alpha\searrow 0$. The multi-dimensional generalization of \eqref{PbetaFK} is then proposed in \cite[Remark~2.3]{C:BES-2} as
the $\alpha\searrow 0$ limit of 
\begin{align}
\E^{N,\al}\big[\e^{\sum_{\bj\in \mc E_N}\Lambda_\alpha(\beta) L^\bj_t}F\big]
=\E^{N,\albe}\biggl[\biggl(\prod_{\bj\in \mc E_N}\frac{\widehat{K}_\alpha(\sqrt{2\beta}\rho^{\bj}_0)}{\widehat{K}_\alpha(\sqrt{2\beta}\rho^{\bj}_t)}\biggr)\e^{(\sum_{\bj\in \mc E_N}\beta t)+A_t}\biggr].\label{DBG:fail}
\end{align}
Here, $F=F(\rho^\bj_t;\bj\in \mc E_N)$, and $\P^{N,\albe}$ extends $\P^{\albe}$ 
 on $\bigvee_{\bj}\sigma(\rho^{\bj}_s;s\leq t)$:
\begin{align}\label{def:BESabN}
\d \P^{N,\albe}\,\defeq\,\biggl(\,\prod_{\bj\in \mc E_N}\frac{\widehat{K}_\alpha(\sqrt{2\beta}\rho^{\bj}_t)}{\widehat{K}_\alpha(\sqrt{2\beta}\rho^{\bj}_0)}\biggr)\e^{\sum_{\bj\in \mc E_N}[\Lambda_\alpha(\beta) L^\bj_t-\beta t]+A_t}\d \P_{z_0}^{N,\al}
\end{align}
such that $\{L^\bj_t\}$ is the local time of $\{\rho^\bj_t\}$ at level $0$, and $\{A_t\}$ is a nonzero correction term defined by a sum of quadratic covariations. Due to the use of $\prod_{\bj\in \mc E_N}\widehat{K}_\alpha(\sqrt{\beta}|z^{j\prime}-z^j|)$, the possibility of \eqref{DBG:fail} may be further supported for sharing the same spirit of the one-dimensional many-body delta-Bose gas where an eigenfunction can be chosen to be a product form~\cite[Section~2.2]{Kardar:87-2}. Moreover, this is not a unique example showing eigenfunctions of product form in quantum many-body problems. On the other hand, to use \eqref{DBG:fail} for an authentic Feynman--Kac-type formula, there should be three fundamental problems to settle: (1) prove the existence of the limiting probability measure of $\P^{N,\albe}$ as $\alpha\searrow 0$, say $\P^{N,\zbe}$; (2) justify a Portmanteau-theorem-type continuity to represent the right-hand side of \eqref{DBG:fail} as an expectation under $\P^{N,\zbe}$; (3) incorporate angular parts under $\P^{N,\zbe}$ appropriately to recover the known analytic solutions for the two-dimensional many-body delta-Bose gas. 

The stochastic many-$\delta$ motions discussed in Section~\ref{sec:intro}
turn to a very different motivation, however. They consider Streit's example \cite[Example~2.2]{Streit-2} showing the following eigenfunction of sum form for $N$-body Hamiltonians  with contact interactions \emph{in three dimensions}: 
\[
\Phi(\bs z)\,\defeq\,\sum_{\bj\in \mc E_N}\Phi_0(\bs z^{j\prime}-\bs z^j),  \quad \bs z=(\bs z^1,\cdots,\bs z^N)\in \R^{3N},
\]
where $\bj=(j\prime,j)$, and $\Phi_0$ is an eigenfunction for the Hamiltonian with a $\delta$-function potential for one particle. Given the known relation of \eqref{PbetaFK0} to a ground state transformation \cite[Section~1.2]{C:SDBG1-2}, a natural counterpart of Streit's example for the two-dimensional $N$-body delts-Bose gas  is
\begin{align}\label{def:Psi}
\Psi(z )\,\defeq\, \sum_{\bj\in \mc E_N}K_0(\sqrt{\beta}|z^{j\prime}-z^{j}|),\quad z=(z^1,\cdots,z^N)\in \Bbb C^N.
\end{align}
Moreover, for initial conditions $z_0$ with $z_0^{j\prime}\neq z_0^j$ for all $\bj$, the ground state transformation corresponding to $\Psi(z)$ of $2N$-dimensional Brownian motion yields an SDE with the drift
\begin{align}\label{choice:b}
\wt{{\bs b}}_\beta(z)\,\defeq\, \nabla_z\log \Psi( z),
\end{align}
so that \eqref{def:Zintro} with ${\bs b}_\beta(z)=\wt{\bs b}_\beta(z)$ with $w_\bj\equiv 1$ and $\beta_\bj\equiv \beta$
for $z\neq 0$
 can be partially recovered.
Specifically, this use of a ground state transformation recovers \eqref{def:Zintro} \emph{only before} the first contact-creation time. As in the two-body case \cite{C:BES-2}, an extension beyond that time should expect the emergence of new objects, if not just for the forms, and deal with the construction. This issue is resolved here for constructing the stochastic processes and will be fully settled in \cite{C:SDBG3-2} when we prove Feynman--Kac-type formulas for the two-dimensional many-body delta-Bose gas. In particular, those Feynman--Kac-type formulas will indeed use formulations with new, nontrivial ingredients and mechanisms, not just $\Psi(z)$ and its immediate generalizations to $\sum_{\bj\in \mc E_N}w_\bj K_0(\sqrt{\beta_\bj}|z^{j\prime}-z^{j}|)$ for other $\bs w,\bbeta$. The Radon--Nikod\'ym derivative processes in \eqref{RNchoice} thus have the interpretation of reverting the ground state transformation in \eqref{PbetaFK0}
 and then defining a stochastic many-$\delta$ motion by another ground state transformation. 

Although the choice in \eqref{choice:b} appears to deviate significantly from the local time formulations discussed above, the use of a ground state transformation still allows a physical interpretation by using the distorted Brownian motion~\cite{AHS:DBM-2, EKS:DBM-2}. This relation has appeared in \cite{C:BES-2} for the two-dimensional two-body delta-Bose gas. Another interpretation implied by \eqref{choice:b} is that the stochastic many-$\delta$ motions may be viewed as $N$ independent two-dimensional Brownian motions conditioned for contact interactions; see \cite[Section~1]{C:SDBG1-2} for a related discussion. 

\section{Construction of the stochastic many-$\bs \delta$ motions}\label{sec:many-bSDE}
Our goal in this section is to prove the main theorem of this paper. It is stated below as Theorem~\ref{thm:main1}. The formulation uses probability measures on the space of $\CN\cup \{\partial\}$-valued c\`adl\`ag functions $D_{\CN\cup \{\partial\}}[0,\infty)$. Here, $\partial$ is an extra point added to $\CN$ and, as usual, $D_{\CN\cup \{\partial\}}[0,\infty)$ is equipped with Skorokhod's $J_1$-topology.  Accordingly, the statement of Theorem~\ref{thm:main1} uses the coordinate process $\{\ms Z_t\}=\{Z^j_t\}_{1\leq j\leq N}$ of $D_{\CN\cup \{\partial\}}[0,\infty)$. The proofs in Section~\ref{sec:SDE1} and~\ref{sec:SDE2} for Theorem~\ref{thm:main1} will restrict to the space $C_{\CN}[0,\infty)$ of $\CN$-valued continuous functions, though. The statement of Theorem~\ref{thm:main1} also uses the notations in \eqref{def:Kbbetaw0} and the following ones:
\begin{align}
\Twi_\eta&\,\defeq
 \inf\{t\geq 0;\exists\;\bj\neq \bi\;\mbox{s.t.}\;w_\bj>0 \;\&\;|Z^\bj_t|=\eta\}\label{def:Twi},\\
T_\eta^{\bs w}&\,\defeq\,\inf\{t\geq 0;\exists\;\bj\;\mbox{s.t.}\;w_\bj>0\;\&\; |Z^\bj_t|=\eta\},\label{def:Tw}\\
\hK_1^{\bbeta,\bj}(s)&\,\defeq\,\hK_1(\sqrt{2\beta_\bj}|Z^\bj_s|),\label{def:K1bj}
\end{align}
where $\bi,\bj\in\mc E_N$, $K_\nu$ is the Macdonald function of indices $\nu$, $\widehat{K}_\nu(x)\,\defeq\,x^\nu K_\nu(x)$, and $\{Z^\bj_t\}$ is a stochastic relative motion as defined in \eqref{def:Zbj2}.
Finally, the following sets serve as state spaces or sets of initial conditions to refine the condition in \eqref{def:ic}:
\begin{align}
\begin{split}\label{def:CNw}
\CNw&\,\defeq\, \{z\in \CN;z^\bj\neq 0\;\forall\;\bj\in \mc E_N\;\mbox{s.t.}\; w_\bj>0\},\\
\CNwi&\,\defeq\,\{z\in \CN;z^\bi=0\;\&\; z^\bj\neq 0\;\forall\;\bj\in \mc E_N\setminus\{\bi\}\;\mbox{s.t.}\; w_\bj>0\},\\
\CNwni&\,\defeq\, \CNw\cup \CNwi.
\end{split}
\end{align}
Here, $z^\bj\;\defeq\,(z^{j\prime}-z^j)/\two$ when given states $z=(z^1,\cdots,z^N)\in \CN$ of $\{\ms Z_t\}$.

\begin{thm}\label{thm:main1}
Fix an integer $N\geq 3$, $\bbeta\in (0,\infty)^{\mc E_N}$, and $\bs w\in \R_+^{\mc E_N}$ with $\#\{\bj;w_\bj>0\}\geq 2$. There exists a family 
$\ms P=\{\P^{\bbeta,\bs w}_{z_0};z_0\in \CNw\}\cup 
\{ \P^{\bbeta,\bs w,\bi}_{z_0};z_0\in\CNwi\}_{\bi\in \mc E_N,w_\bi>0}$ of probability measures  on $(D_{\CN\cup \{\partial\}}[0,\infty),\B(D_{\CN\cup \{\partial\}}[0,\infty)))$ such that the following properties hold: 
\begin{itemize}
\item [\rm (1$\cc$)] Let $\vartheta_s$ denote the shift operator on  $D_{\CN\cup \{\partial\}}[0,\infty)$: $\vartheta_s(\mathrm w)=\mathrm w(\cdot+s)$ for $\mathrm w\in D_{\CN\cup \{\partial\}}[0,\infty)$. For any $\P\in \ms P$, it holds $\P$-a.s. that $\ms Z_t\in \CN$ for all $t<T_\partial$ and $\ms Z_t\equiv \partial$ for all $t\geq T_\partial$ on $\{T_\partial<\infty\}$. Here, $T_\partial$ is defined as follows: 
\begin{align}\label{def:Tinfty}
\begin{aligned}
T_\partial=T_0^\infty&\,\defeq\,\lim_{m\to\infty}\ua\, T^m_0& &\P^{\bbeta,\bs w}_{z_0}\mbox{-a.s. for $z_0\in \CNw$},\\
T_\partial=T_0^{\bi,\infty}&\,\defeq\,\lim_{m\to\infty}\ua\, T^{\bi,m}_0&&\P^{\bbeta,\bs w,\bi}_{z_0}\mbox{-a.s. for $z_0\in \CNwi$},
\end{aligned}
\end{align}
and $T^m_0$ and $T^{\bi,m}_0$ are stopping times defined inductively as follows:  
\begin{align}
\label{def:Tm1}
 T^1_0&\,\defeq\, T^{\bs w}_0,
&T^{m+1}_0&\,\defeq\, \begin{cases}
\TwJm_0\circ \vartheta_{T^m_0}, &\mbox{if}\quad T^m_0<\infty,\\
\infty, &\mbox{otherwise},
\end{cases}\\
T^{\bi,1}_0&\,\defeq\, 
\Twi_0, &
T^{\bi,m+1}_0&\,\defeq\, \begin{cases}
\TwJm_0\circ \vartheta_{T^{\bi,m}_0}, &\mbox{if}\quad T^{\bi,m}_0<\infty,\\
\infty, &\mbox{otherwise},
\end{cases}\label{def:Tm2}
\end{align}
for $m\in\Bbb N$, where $\bs J_m$ is the unique random index in $\mc E_N$ such that $w_{\bs J_m}>0$ and $\ms Z_{T^{m}_0}\in \CNwJm$ for \eqref{def:Tm1}, or $w_{\bs J_m}>0$ and $\ms Z_{T^{\bi,m}_0}\in \CNwJm$ for \eqref{def:Tm2}. Also, $\P^{\bbeta,\bs w}_{z_0}(T_0^1<\infty)=1$ for all $z_0\in \CNw$, and
if $\#\{w_\bj;w_\bj>0\}\geq 2$ and $\beta_\bj=\beta$ for all $\bj\in \mc E_N$ with $w_\bj>0$, then $T_0^m$ and $T_0^{\bi,m}$ are a.s. finite for all $m\in \Bbb N$. 

\item[\rm (2$\cc$)] The family $\ms P$ defines
$\{\ms Z_t\}$ as a time-homogeneous strong Markov process with absorbing state $\partial$ and time to absorption $T_\partial$ from {\rm (1$\cc$)}, and 
\begin{align}\label{pathproperty}
\P\Bigg(\ms Z_t\in\bigcup_{\bi\in \mc E_N:w_\bi>0}\CNwni\cup\{\partial\},\;\forall\;t\geq 0\Bigg)=1,\quad \forall\;\P\in \ms P.
\end{align}

\item [\rm (3$\cc$)] For all $1\leq j\leq N$ and $\bj=(j\prime,j)\in \mc E_N$, the following equations hold over $0\leq t<T_\partial$ under $\P$ for any $\P\in \ms P$:
\begin{align}
Z^j_t&=z^j_0-\sum_{1\leq k\leq N\atop k\neq j}\int_0^t \frac{w_{j\vee\!\wedge k}\sqrt{\beta_{j\vee\!\wedge k}} K_1(\sqrt{\beta_{j\vee\!\wedge k}}|Z^{j}_s-Z^k_s|)(Z_s^{j}-Z^k_s)}{\sum_{\ell_2>\ell_1\atop N\geq \ell_2}w_{\ell_1\!\vee\!\wedge \ell_2}K_0(\sqrt{\beta_{\ell_2\!\vee\!\wedge \ell_1}}|Z^{\ell_2}_s-Z^{\ell_1}_s|)
|Z_s^{j}-Z^k_s|}\d s+\widetilde{W}^j_t,\label{SDE:Z1final}\\
Z^\bj_t&=Z^\bj_0-\sum_{\bk\in \EN}\frac{\sigma(\bj)\cdot \sigma(\bk)}{2}\int_0^t \frac{w_\bk\hK_1^{\bbeta,\bk}(s)}{K_0^{\bbeta,\bs w}(s)}\left(\frac{1}{\overline{Z}^\bk_s}\right)\d s+\widetilde{W}^\bj_t.\label{SDE:Z1}
\end{align}
Here, ${\ell_1\!\vee\!\!\wedge \ell_2}\,\defeq\,(\max\{\ell_1,\ell_2\},\min\{\ell_1,\ell_2\})\in \mc E_N$ for integers $\ell_1,\ell_2\geq 1$ with $\ell_1\neq \ell_2$, the Riemann-integral terms are well-defined as absolutely convergent integrals, $\{\wt{W}^j_t\}_{1\leq j\leq N}$ is a family of independent two-dimensional standard Brownian motions with zero initial condition, and $\wt{W}^\bj_t\,\defeq\,(\wt{W}^{j\prime}_t-\wt{W}^j_t)/\two$. Also, given $\bk=(k\prime,k)\in \mc E_N$, $\sigma(\bk)\in \{-1,0,1\}^N$ denotes the column vector such that the $k\prime$-th component is $1$, the $k$-th component is $-1$, and the remaining components are zero.
\end{itemize}
\end{thm}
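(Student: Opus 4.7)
The plan is to construct $\ms P$ in two stages and concatenate inductively via the strong Markov property. For $z_0\in \CNw$ (no initial contacts), I define $\P^{\bbeta,\bs w}_{z_0}$, restricted to paths stopped at $T_0^1$, as the Esscher-type mixture of the laws of the stochastic one-$\delta$ motions $\P^{\beta_\bi\da,\bi}_{z_0}$ from \cite{C:SDBG1-2}, with mixture weights $w_\bi K_0^{\bbeta,\bi}(0)/K_0^{\bbeta,\bs w}(0)$ indexed by $\{\bi\in\mc E_N:w_\bi>0\}$. This multi-dimensional analogue of the Donati-Martin--Yor construction of $\BES(0,\beta\da)$ \cite{DY:Krein-2} yields \eqref{SDE:Z1final}--\eqref{SDE:Z1} on $[0,T_0^1)$ by linearly combining the component SDEs, and $\P^{\bbeta,\bs w}_{z_0}(T_0^1<\infty)=1$ follows from the corresponding property of each $\P^{\beta_\bi\da,\bi}_{z_0}$.

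For $z_0\in \CNwi$ (one initial contact at $\bi$), I would build $\P^{\bbeta,\bs w,\bi}_{z_0}$ from $\P^{\beta_\bi\da,\bi}_{z_0}$ by a Girsanov-type change of measure whose Radon--Nikod\'ym density is the normalization of the nonnegative supermartingale \eqref{RNchoice}. Proposition~\ref{prop:logsum} supplies the semimartingale decomposition of $\log$ of \eqref{RNchoice}; Corollary~\ref{cor:superMG} then reads off the compensator needed to turn this strict supermartingale into a bona fide probability density, and the drift whose addition to the one-$\delta$-motion SDE yields \eqref{SDE:Z1}. Because the density is not a martingale, the transformation is only valid up to the lifetime $\Twi_0$ at which some pair $\bj\neq\bi$ with $w_\bj>0$ first enters contact, so to close the construction I would verify that paths admit continuous extensions up to and including $\Twi_0$ under the transformed measure---an \emph{exit-measure} closure in the sense of F\"ollmer~\cite{Follmer:exit-2} and Ikeda--Watanabe~\cite{IW:Mult-2}---and invoke Theorem~\ref{thm:NTC} to conclude that exactly one new pair $\bs J_1\in \mc E_N\setminus\{\bi\}$ is at contact at time $\Twi_0$.

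The full family $\ms P$ is then assembled inductively. Starting from $z_0\in \CNw$, run the process under $\P^{\bbeta,\bs w}_{z_0}$ up to $T_0^1$; NTC forces the existence of a pathwise-unique index $\bs J_1$ with $w_{\bs J_1}>0$ such that $\ms Z_{T_0^1}$ lies in the single-contact stratum indexed by $\bs J_1$, and by the strong Markov property I restart from $\ms Z_{T_0^1}$ under the second-class measure at level $\bs J_1$, iterate after $T_0^2$ using $\bs J_2$, and so on. The initial case $z_0\in \CNwi$ begins the induction directly with the second-class measure at level $\bi$. Setting $T_\partial$ to the monotone limit in \eqref{def:Tinfty} and $\ms Z_t\equiv\partial$ for $t\geq T_\partial$ on $\{T_\partial<\infty\}$ produces a well-defined law on $D_{\CN\cup\{\partial\}}[0,\infty)$; the strong Markov property for $\ms P$ follows from that of each segment together with the disjoint decomposition \eqref{pathproperty}, the SDEs \eqref{SDE:Z1final}--\eqref{SDE:Z1} with absolutely convergent Riemann integrals are patched segment-by-segment (absolute convergence being inherited from each segment because NTC prevents accumulation of simultaneous zero-passages in the drift denominator), and in the equal-$\beta_\bj$ case the a.s.\ finiteness of every contact-creation time comes from a two-body recurrence argument applied to the underlying one-$\delta$ motions.

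The principal obstacle is the closure of the local Girsanov transformation at its lifetime: since \eqref{RNchoice} is a strict supermartingale, the change of measure a priori only defines an exit measure, and one must show that under it the transformed process still has continuous paths up to and at $\Twi_0$. This hinges on the semimartingale decomposition of Proposition~\ref{prop:logsum}, itself the heaviest technical ingredient, since the singularities of $K_0$ and $K_1$ at the contact loci interact strongly with the Brownian noise, so the It\^o analysis must be performed on regularized approximations whose finite-variation and quadratic-covariation terms are then recombined to produce the singular integrals against local times that survive the passage to the limit (Sections~\ref{sec:logsumA}--\ref{sec:logsumC}). The no-triple-contacts input from Theorem~\ref{thm:NTC} is the other critical ingredient, ensuring that at each contact-creation time the new contact pair $\bs J_m$ is uniquely defined so that the induction proceeds unambiguously.
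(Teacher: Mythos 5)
Your proposal follows the paper's overall architecture: a mixture construction for $z_0\in\CNw$, a local Girsanov transform closed via exit-measure theory for $z_0\in\CNwi$, concatenation by induction at the contact-creation times, and NTC (via Theorem~\ref{thm:NTC} and Propositions~\ref{prop:SDE1}~(3$\cc$) and~\ref{prop:SDE2}~(7$\cc$)) to make the indices $\bs J_m$ well-defined. Two steps in your first-stage description, however, have genuine gaps.

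First, your definition of $\P^{\bbeta,\bs w}_{z_0}$ for $z_0\in\CNw$ as a bare convex mixture with weights $w_\bi K_0^{\bbeta,\bi}(0)/K_0^{\bbeta,\bs w}(0)$ of the one-$\delta$-motion laws $\P^\bi_{z_0}$ does not have total mass one unless $\bbeta$ is $\bs w$-homogeneous. In the inhomogeneous case, the paper's Definition~\ref{def:SDE1ult} multiplies each summand by the additional factor $\e^{-A^{\bbeta,\bs w,\bj}_0(T^\bj_0)}=\e^{-\wt A^{\bbeta,\bs w,\bj}_0(T^\bj_0)}$, and establishing the normalization identity \eqref{Pinhomo:idsum} is nontrivial: the paper first proves the homogeneous case (Lemma~\ref{lem:SDE1}~(1$\cc$)) and then reduces the inhomogeneous case to it by yet another change of measure built out of the martingale $\{\mathcal E^{\bbeta,\bs w,\bi}_{z_0}(t\wedge T^{\bs w}_\eta)\}$ (Lemma~\ref{lem:SDE1}~(3$\cc$) and the proof of Proposition~\ref{prop:SDE1}~(1$\cc$)). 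Without the $\e^{-A}$ tilting, your candidate measure is not a probability measure.

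Second, you claim the SDEs \eqref{SDE:Z1final}--\eqref{SDE:Z1} follow on $[0,T_0^1)$ ``by linearly combining the component SDEs.'' This is not the right mechanism: the drift of a mixture of diffusion laws is not the weighted average of the individual drifts. The paper instead fixes any $\bi$ with $w_\bi>0$, writes $\d\P^{\bbeta,\bs w}_{z_0}=\mathcal E^{\bbeta,\bs w,\bi}_{z_0}(T^{\bs w}_\eta)\,\d\P^\bi_{z_0}$ on $\sigma(\ms Z_{t\wedge T^{\bs w}_\eta};t\geq 0)$, and then applies Girsanov's theorem: $\widetilde W^j_t=W^j_t-\la W^j,N^{\bbeta,\bs w,\bi}_0\ra_t$ is a local martingale under $\P^{\bbeta,\bs w}_{z_0}$, and the explicit form of $N^{\bbeta,\bs w,\bi}_0$ from Proposition~\ref{prop:logsum}~(3$\cc$) produces the drift in \eqref{SDE:Z1}. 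The semimartingale decomposition of Proposition~\ref{prop:logsum}, which you do invoke for the second-stage measures, is equally indispensable here, and the absolute convergence \eqref{Lp:Z1drift} of the resulting singular drift needs the supermartingale bound \eqref{eq:Qtight} of Lemma~\ref{lem:Qtight}, not merely the NTC. Absent this Girsanov-from-$\P^\bi$ step, the link between Definition~\ref{def:SDE1ult} and the claimed SDE is unjustified.

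Beyond these two concrete gaps, your description of the exit-measure closure and of the a.s.\ finiteness of $T_0^m,T_0^{\bi,m}$ in the homogeneous case is accurate in outline but underspecified: the paper proves the continuous extension to $\Twi_0$ by a Kolmogorov continuity estimate on the transformed driving noise \eqref{UV:unifcont} (uniform in the cutoff $\eta$), and the a.s.\ finiteness comes from Proposition~\ref{prop:SDE2}~(5$\cc$), which requires $\beta_\bi=\min_{\bj:w_\bj>0}\beta_\bj$ and hinges on $\mathring A^{\bbeta,\bs w,\bi}_0(\infty)=\infty$ a.s.\ together with a H\"older estimate on $\E^\bi_{z_0}[(K^{\bbeta,\bj}_0(t)/K^{\bbeta,\bi}_0(t))^{q_0}]$, rather than a generic ``two-body recurrence.''
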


\begin{rmk}\label{rmk:dropi}
In \cite{C:SDBG3-2}, $\bi$'s in $\P_{z_0}^{\bbeta,\bs w,\bi}$ and $T^{\bi,m}_0$ are dropped and reflected through the pair $\bi=(i\prime,i)$ defining contact in $z_0$. But we do not use these simplifications elsewhere in this paper. \qed 
\end{rmk}

\begin{rmk}\label{rmk:main1}
(1$\cc$) Theorem~\ref{thm:main1} does \emph{not} conclude that $\P(T_\partial<\infty)>0$ for some $\P\in\ms P$. $T_\partial$ is just a time when our construction ``terminates,'' and the setting of $\ms Z_t\equiv \partial$ for $t\geq T_\partial$ when $T_\partial<\infty$ is just to complete the definition of the whole process $\{\ms Z_t\}$ for all $t\in \R_+$. Presently, we do not know whether or not $\P(T_\partial=\infty)=1$ for all $\P\in \ms P$. \medskip 

\noindent {(2$\cc$)}  Remark~\ref{rmk:SDE2} (2$\cc$) gives more details about the a.s. non-explosion of $T^m_0$ and $T^{\bi,m}_0$. \medskip 

\noindent {\rm (3$\cc$)} The strong Markov processes described in the two cases in Section~\ref{sec:intro} will be constructed under the following probability measures: (1) $\P^{\bbeta,\bs w}_{z_0}$ for $z_0\in \CNw$ up to and including $T^{\bs w}_0$ (Section~\ref{sec:SDE1}), and (2) $\P^{\bbeta,\bs w,\bi}_{z_0}$ for $z_0\in \CNwi$ with $w_\bi>0$ up to and including $\Twi_0$ (Section~\ref{sec:SDE2}). Accordingly, more details of $\P^{\bbeta,\bs w}_{z_0}$ and $\P^{\bbeta,\bs w,\bi}_{z_0}$ can be found in Propositions~\ref{prop:SDE1} and~\ref{prop:SDE2}. Note that Proposition~\ref{prop:SDE2} also constructs $\P^{\bbeta,\bs w,\bi}_{z_0}$ for $z_0\in \CNw$ with $w_\bi>0$ up to and including $\Twi_0$. For fixed $z_0\in \CNw$, these probability measures relate to $\P^{\bbeta,\bs w}_{z_0}$ by 
\begin{align}\label{PP:extension}
\E_{z_0}^{\bbeta,\bs w}[F(\ms Z_{t\wedge T^{\bs w}_0};t\geq 0)]=\E_{z_0}^{\bbeta,\bs w,\bi}[F(\ms Z_{t\wedge T^{\bs w}_0};t\geq 0)],\quad \forall\;0\leq F\in \B(C_{\CN}[0,\infty)).
\end{align}
See Remark~\ref{rmk:SDE2} (1$\cc$) for more details of \eqref{PP:extension}. \qed 
\end{rmk}

The proof of Theorem~\ref{thm:main1} (Sections~\ref{sec:SDE1}--\ref{sec:SDEcont}) will build on the stochastic one-$\delta$ motions $\{\ms Z_t\}=\{Z^j_t\}_{1\leq j\leq N}$ defined under $\P^{\beta_\bi\da,\bi}$ together with three tools discussed in Section~\ref{sec:threetool}
(Theorem~\ref{thm:NTC}, Proposition~\ref{prop:BMntc} and Proposition~\ref{prop:logsum}). Accordingly, we will use \cite{C:SDBG1-2} for several properties of the stochastic one-$\delta$ motions, especially those in \cite[Section~4 before Proposition~4.2]{C:SDBG1-2}. To lighten the notation for the applications, we write from now on
\begin{align}
\P^{\bi}\,\defeq\, \P^{\beta_\bi\da,\bi}.
\end{align}

\subsection{Three preliminary tools}\label{sec:threetool}
Theorem~\ref{thm:NTC} below gives the first tool. It can be viewed as a theorem identifying conditions on multiple nonnegative semimartingales with Bessel-process-like finite-variation parts such that the sum of the semimartingales does not hit zero simultaneously. The precise condition is only in a sum form and more general, though. As an application of this theorem, we will prove the stronger NTC described in Section~\ref{sec:intro} for the stochastic many-$\delta$ motions.

\begin{thm}[No simultaneous contacts]\label{thm:NTC}
Let $\mc J$ be a finite set with $n_0\defeq \#\J\geq 2$. Suppose that  for some stopping times $S,T$, 
we have a family of nonnegative processes $\{\varrho^\bj_t;S\leq t<T\}_{\bj\in \J}$ defined on $\{S<T\}$ such that 
\begin{gather}
\sum_{\bj\in \mc J}\varrho^\bj_t=\sum_{\bj\in \mc J}\varrho^\bj_S+\int_S^t
\sum_{\bj\in \J}\frac{\mu_\bj(r)}{\varrho_r^\bj}\d r+\sum_{\bj\in \J}(\mc B^{\bj}_t-\mc B^\bj_S),\quad \forall\;S\leq t<T,\label{eq:ntc}\\
 \int_S^t \sum_{\bj\in \J}\frac{\d r}{\varrho^\bj_r}<\infty, \quad \forall\;S\leq t<T,\label{ass:ntc}
\end{gather}
and the following conditions are satisfied by $\{\mu_\bj(s);S\leq t<T\}_{\bj\in \mc J}$  and $\{\mc B_t^\bj\}_{\bj\in \J}$:
\begin{itemize}
\item $\{\mu_\bj(s)\}$ are bounded progressively measurable, and for some constant $\alpha\geq 0$ satisfying
\begin{align}\label{def:alpha}
\frac{n_0^2(1-2\alpha)}{n_0+\frac{n_0(n_0-1)}{2}}\geq 1,
\end{align}
a.s. on $ \{S<T\}$, we have $\mu_\bj(s)=\mu_\bj(s,\omega)\geq (1-2\alpha)/2>0$ for a.e. $s\in [S, T)$ and all $\bj\in \mc E_N$.
\item $\{\mc B_t^\bj\}_{\bj\in \J}$ consists of one-dimensional standard Brownian motions satisfying 
\begin{align}\label{W:qcv}
\la \mc B^{\bj},\mc B^{\bk}\ra_t-\la \mc B^{\bj},\mc B^{\bk}\ra_S&=\frac{1}{2}\int_S^t \sigma_{\bj,\bk}(s)\d s,\quad \forall\;S\leq t< T,\;\bj\neq \bk,
\end{align}
for progressively measurable $\sigma_{\bj,\bk}(s)=\sigma_{\bj,\bk}(s,\omega)$ with $\esssup \{|\sigma_{\bj,\bk}(s,\omega)|;s\in [S,T)\}\leq 1$ a.s. on $ \{S<T\}$. 
\end{itemize}
Define a process $\{\varrho_t;S\leq t\leq T\}$ on $\{S<T\}$ by  
\begin{align}
\varrho_t\,\defeq
\begin{cases}\label{def:rhoT}
\displaystyle \;\sum_{\bj\in\J}\varrho^\bj_t,& S\leq t<T,\\
\displaystyle \;\lim_{s\nearrow T}\varrho_s\in [0,\infty],& t=T\mbox{ if }T<\infty.
\end{cases}
\end{align} 
Then $\P\left( \varrho_t>0\mbox{ for all $t\in (S,T]$},\;S<T\right)=\P(S<T)$.
\end{thm}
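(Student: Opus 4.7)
The plan is to treat $\varrho_t=\sum_{\bj\in\J}\varrho^\bj_t$ as a single one-dimensional continuous semimartingale and to show that near $0$ it behaves like a Bessel process of dimension $\geq 2$, so it a.s.\ never reaches $0$. The many-particle extension of the comparison machinery in \cite{IK:BM-2} is carried out entirely through an arithmetic--harmonic mean estimate that collapses all the singular drifts into one effective $1/\varrho$ drift, whereupon the $n_0$-dependent threshold in \eqref{def:alpha} emerges as the effective-Bessel-dimension condition ``$\delta\geq 2$''.

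First, I would record the one-dimensional semimartingale structure. Summing \eqref{eq:ntc} over $\bj\in\J$ gives
\begin{align*}
\varrho_t=\varrho_S+\int_S^t\sum_{\bj\in\J}\frac{\mu_\bj(r)}{\varrho^\bj_r}\,dr+(M_t-M_S),\qquad M_t\,\defeq\,\sum_{\bj\in\J}\mc B^\bj_t.
\end{align*}
By the AM--HM inequality $\sum_{\bj\in\J}1/\varrho^\bj_r\geq n_0^2/\varrho_r$ together with $\mu_\bj\geq(1-2\alpha)/2$, the drift density is bounded below by $(1-2\alpha)n_0^2/(2\varrho_r)$; from \eqref{W:qcv} with $|\sigma_{\bj,\bk}|\leq 1$ and the normalization of each $\mc B^\bj$, $d\la M,M\ra_r\leq c\,dr$ where $c\,\defeq\,n_0+n_0(n_0-1)/2$. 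Hypothesis \eqref{def:alpha} then rephrases as $(1-2\alpha)n_0^2\geq c$.

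Second, I would apply It\^o's formula to $\log \varrho_t$ on $[S,T_\epsilon)$, where $T_\epsilon\,\defeq\,\inf\{t\geq S:\varrho_t\leq \epsilon\}\wedge T$; the assumption \eqref{ass:ntc} combined with $\varrho_r\geq \varrho^\bj_r$ makes all the Lebesgue integrals well-defined. Plugging in the two bounds just obtained yields, for $t\in[S,T_\epsilon)$,
\begin{align*}
\log\varrho_t-\log\varrho_S\geq \int_S^t\frac{(1-2\alpha)n_0^2-c}{2\varrho_r^2}\,dr+N_t,\qquad N_t\,\defeq\,\int_S^t\frac{dM_r}{\varrho_r}.
\end{align*}
Since $(1-2\alpha)n_0^2\geq c$, the deterministic integral is $\geq 0$, so $\log\varrho_t\geq\log\varrho_S+N_t$ pathwise on $[S,T_\epsilon)$.

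Third, I would close the argument with a Dambis--Dubins--Schwarz contradiction. Assume, toward a contradiction, that on a positive-probability subset of $\{S<T\}$ one has $T_0\,\defeq\,\lim_{\epsilon\downarrow 0}\ua T_\epsilon\leq T$ with $\varrho_t\to 0$ as $t\uparrow T_0$. Then $\log\varrho_t\to-\infty$, forcing $N_t\to-\infty$ on that set. Represent the continuous local martingale as $N_t=\beta_{\la N,N\ra_t}$ with $\beta$ a standard Brownian motion via DDS, and split into cases: if $\la N,N\ra_{T_0-}<\infty$, then by continuity of $\beta$, $N_{T_0-}$ is finite---contradiction; if $\la N,N\ra_{T_0-}=\infty$, then $N_t=\beta_{u(t)}$ with $u(t)\ua\infty$, and the law of the iterated logarithm for $\beta$ gives $\liminf_tN_t=-\infty$ and $\limsup_tN_t=+\infty$, so $N_t$ does not converge to $-\infty$---again a contradiction. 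Hence $\varrho_t>0$ on $(S,T)$ a.s.\ on $\{S<T\}$, and the same dichotomy applied with $T_0=T$ upgrades this to $\varrho_T>0$ on $\{S<T<\infty\}$.

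The main obstacle I expect is twofold. On the one hand, the borderline case $(1-2\alpha)n_0^2=c$ (Bessel dimension exactly $2$) rules out any direct supermartingale estimate via $\varrho^{-p}$ with $p>0$, forcing the argument through $\log\varrho$ and DDS; on the other hand, the It\^o expansion and the accompanying lower bound must be carefully propagated across the localizing sequence $T_\epsilon\ua T_0$ while controlling the continuous local martingale $N$, and the passage from the many-particle drift $\sum_\bj\mu_\bj/\varrho^\bj_r$ to the effective one-particle drift $(1-2\alpha)n_0^2/(2\varrho_r)$ is precisely the many-particle step extending the comparison machinery of \cite{IK:BM-2}.
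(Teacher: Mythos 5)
Your route is genuinely different from the paper's and, in the case $\varrho_S>0$, it is cleaner. The paper applies the Dambis--Dubins--Schwarz (DDS) theorem to $\varrho$ itself to put the noise on a unit-speed clock, then handles intervals of constancy of the clock via Lemma~\ref{lem:NTCinterval}, bounds the drift via the same AM--HM inequality (Lemma~\ref{lem:igbdd}) and clock bound \eqref{bdd:varsigma}, and finally sets up a Yamada--Watanabe pathwise comparison with a family of Bessel processes of effective dimension $\mathfrak d\geq 2$ started from approximating stopping times $\sigma_k$. You instead apply It\^o directly to $\log\varrho$, collapse the many-particle drift into an effective $1/\varrho^2$ lower bound using the same AM--HM step, and rule out $\log\varrho_t\to-\infty$ by DDS applied to $N_t=\int_S^t dM_r/\varrho_r$ together with the law of the iterated logarithm. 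Your dichotomy $\langle N,N\rangle_{T_0-}<\infty$ versus $=\infty$ cleanly subsumes the ``flat-clock'' issue that the paper treats via the intervals of constancy of $\gamma$. The drift and quadratic-variation bounds you extract are exactly those in \eqref{bdd:varsigma} and Lemma~\ref{lem:igbdd}, and the reduction of \eqref{def:alpha} to the effective Bessel-dimension condition $\geq 2$ is the same mechanism as Lemma~\ref{lem:NTCAbdd}.

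There is, however, a genuine gap. The theorem asserts $\varrho_t>0$ for all $t\in(S,T]$, with $S$ excluded, precisely because $\varrho_S=0$ is permitted and does occur in the applications: in Proposition~\ref{prop:BMntc} the stopping time $S_1$ can equal $0$ with $|Z^\bj_0|+|Z^\bi_0|=0$ for arbitrary $z_0\in\CN$, and in Lemma~\ref{lem:RRprop} the sub-stopping times $S_n(\theta_0,M)$ can coincide with $\underline{R}$ at a state where $\sum_{\bj\in\mc S}|Z^\bj|=0$. When $\varrho_S=0$, your $T_\epsilon=\inf\{t\geq S:\varrho_t\leq\epsilon\}\wedge T$ equals $S$ for every $\epsilon>0$, so $T_0=S$, the stochastic interval $[S,T_0)$ is empty, $\log\varrho_S=-\infty$, and the inequality $\log\varrho_t\geq\log\varrho_S+N_t$ carries no information. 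Your proof as written does not show that $\varrho$ leaves $0$ instantly and returns to the good regime. The paper devotes Lemma~\ref{lem:sigmaklbd} and the whole $\sigma_k$-construction in Step~3 to exactly this point. The fix in your framework is short but must be stated: set $\sigma_\delta\,\defeq\,\inf\{t\geq S:\varrho_t\geq\delta\}$, note that \eqref{ass:ntc} forces $\varrho_r>0$ for a.e.\ $r$ so $\sigma_\delta\downarrow S$ and $\sigma_\delta<T$ for small $\delta$ a.s.\ on $\{S<T\}$, run your argument on $[\sigma_\delta,T)$ (where $\log\varrho_{\sigma_\delta}=\log\delta$ is finite) to get $\varrho>0$ on $(\sigma_\delta,T]$, and then let $\delta\downarrow 0$ to cover all of $(S,T]$. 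Without that step the proposed proof does not cover the full statement.
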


Here and in what follows, {\bf all the time variables are understood to take values only in $\R_+$ unless otherwise mentioned}. So, $\{\varrho_t;S\leq t\leq T\}=\{\varrho_t;S\leq t< T\}$ on $\{T=\infty\}$. 

\begin{rmk}\label{rmk:ntc}
(1$\cc$) For any integer $n_0\geq 2$, \eqref{def:alpha} can be attained at all small $\alpha>0$ and at $\alpha=0$ since for integers $n>1$,
\begin{align*}
\frac{n^2}{n+\frac{n(n-1)}{2}}> 1 \Longleftrightarrow \frac{2n^2}{n^2+n}> 1\Longleftrightarrow n^2> n\Longleftrightarrow n>1.
\end{align*}

\noindent (2$\cc$) The existence of $\lim_{t\nearrow}\varrho_t$, possibly $\infty$-valued, in \eqref{def:rhoT} follows from \eqref{eq:ntc} by using the increasing finite-variation process in  \eqref{eq:ntc}  and the H\"older regularity of Brownian motions on compacts. In particular, a.s. on $\{S<T<\infty\}$, the case of $\varrho_T=\infty$ from \eqref{def:rhoT} is attained if and only if the finite-variation part in \eqref{eq:ntc} explodes at $t=T$. 
\qed 
\end{rmk}

The proof of Theorem~\ref{thm:NTC} will be presented in Section~\ref{sec:NTC}. It is a comparison-theorem-type argument extending \cite[Lemma~2.1]{IK:BM-2} by proving the \emph{converse} property of no contacts and incorporating some key features of the present problem, including using many processes. 

Our second tool is obtained from applying Theorem~\ref{thm:NTC}. Recall that under $\P^{\bi}$, 
\begin{align}
\label{def:ZSDE2-2}
Z^j_t&=
Z_0^{j}-\frac{(\1_{j=i\prime}-\1_{j=i})}{\two}\int_0^t \frac{\hK_1(\sqrt{2\beta_\bi}|Z^\bi_s|)}{ K_0(\sqrt{2\beta_\bi}|Z^\bi_s|)}\biggl(\frac{1}{ \overline{Z}^\bi_s}\biggr)\d s+W^{j}_t,\\
Z^\bj_t\,&=Z_0^\bj-\frac{\sigma(\bj)\cdot \sigma(\bi)}{2}\int_0^t \frac{\hK_1(\sqrt{2\beta_\bi}|Z^\bi_s|)}{ K_0(\sqrt{2\beta_\bi}|Z^\bi_s|)}
\biggl( \frac{1}{\overline{Z}^\bi_s}\biggr)\d s+W^\bj_t,\label{SDE:Zbj-2}\\
|Z^\bj_t|&=|Z^\bj_0|+\int_0^t \biggl[\frac{1}{2|Z^\bj_s|}-\frac{\sigma(\bj)\cdot \sigma(\bi)}{2|Z^\bj_s|} \Re\biggl(\frac{Z^\bj_s}{Z^\bi_s}\biggr)\frac{\hK_1(\sqrt{2\beta_\bi}|Z^\bi_s|)}{ K_0(\sqrt{2\beta_\bi}|Z^\bi_s|)}\biggr]\d s+B^\bj_t;\label{SDE:|Zbj|-2}
\end{align}
see \cite[(4.1), (4.3) and (4.5)]{C:SDBG1-2}. Here, $\widehat{K}_\nu(\cdot)$ is defined before \eqref{PbetaFK0},
$\sigma(\bk)$ is defined in Theorem~\ref{thm:main1},
$\{W^j_t\}_{1\leq j\leq N}$ is a family of independent two-dimensional standard Brownian motions with $W^j_0=0$, $\{Z^\bj_t\}$ is defined in \eqref{def:Zbj2},
$W^\bj_t\,\defeq\,(W^{j\prime}_t-W^j_t)/\two$,
and with $Z^\bj_t=X^\bj_t+\i Y^\bj_t$ and $W^\bj_t=U^\bj_t+\i V^\bj_t$ for real $X^\bj_t, Y^\bj_t,U^\bj_t, V^\bj_t$,
\begin{align}\label{def:Bj-2}
B^\bj_t\;\defeq \int_0^t \frac{X^\bj_s\d U^\bj_s+Y^\bj_s\d V^\bj_s}{|Z^\bj_s|}\Longrightarrow \la B^\bj,B^\bk\ra_t=\frac{\sigma(\bj)\cdot \sigma(\bk)}{2}\int_0^t \frac{X^\bj X^\bk_s+Y^\bj_sY^\bk_s}{|Z^\bj_s||Z^\bk_s|}\d s,
\end{align}
where the implication follows by using
\begin{align}\label{covar:UV-2}
\la U^\bj,U^\bk\ra_t=\la V^\bj,V^\bk\ra_t=\frac{\sigma(\bj)\cdot \sigma(\bk)}{2}t.
\end{align}

\begin{prop}\label{prop:BMntc}
For all  $\bi\in \mc E_N$ and $z_0\in \CN$, it holds that 
$\P_{z_0}^{\bi}(T_0^\bi< \min_{\bj:\bj\neq \bi}\widetilde{T}_0^\bj=\infty)=1$, 
where 
$T_\eta^\bj\,\defeq\,\inf\{t\geq 0;|Z^\bj_t|=\eta\}$ and $\widetilde{T}_\eta^\bj\,\defeq\,\inf\{t> 0;|Z^\bj_t|=\eta\}$
 for all $\eta\geq 0$ and $\bj\in \mc E_N$. 
\end{prop}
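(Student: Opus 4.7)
The claim splits into (a) $T^\bi_0<\infty$ and (b) $\widetilde T^\bj_0=\infty$ for every $\bj\in\mc E_N\setminus\{\bi\}$, both $\P^\bi_{z_0}$-almost surely. For (a), I would specialize \eqref{SDE:|Zbj|-2} to $\bj=\bi$, use $\sigma(\bi)\cdot\sigma(\bi)=2$ and $\Re(Z^\bi_s/Z^\bi_s)=1$, and recognize $\{|Z^\bi_t|\}$ as a $\BES(0,\beta_\bi\da)$ in the sense of \cite[Section~4 before Proposition~4.2]{C:SDBG1-2} (cf.\ \cite{DY:Krein-2,C:BES-2}). A Feller-test computation exploiting $K_0'=-K_1$ produces the scale function explicitly and shows that $0$ is an accessible boundary; recurrence at $0$ then yields $T^\bi_0<\infty$ almost surely.

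For (b), I would treat each $\bj\neq\bi$ according to the value of $c\,\defeq\,\sigma(\bj)\cdot\sigma(\bi)\in\{-1,0,1\}$. When $c=0$, \eqref{SDE:Zbj-2} reduces to $Z^\bj_t=Z^\bj_0+W^\bj_t$, a planar Brownian motion, and the standard fact that planar Brownian motion a.s.\ avoids any prescribed singleton at positive times gives $\widetilde T^\bj_0=\infty$. When $c\neq 0$, I would introduce $\tilde W^\bj\,\defeq\,W^\bj-\tfrac{c}{2}W^\bi$ and verify via \eqref{covar:UV-2} that its cross-brackets with $W^\bi$ vanish and that its real and imaginary parts are mutually orthogonal; L\'evy's characterization then makes $\tilde W^\bj$ a planar Brownian motion (with variance factor $1-c^2/4$) independent of $W^\bi$, hence of $Z^\bi$. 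Subtracting $(c/2)$ times the $\bj=\bi$ case of \eqref{SDE:Zbj-2} from the $\bj$ case yields
\[
Z^\bj_t=Z^\bj_0+\tfrac{c}{2}(Z^\bi_t-Z^\bi_0)+\tilde W^\bj_t.
\]

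To conclude $\widetilde T^\bj_0=\infty$ when $c\neq 0$, I would work first on the stopping interval $[0,T^\bi_\epsilon)$ with $T^\bi_\epsilon\,\defeq\,\inf\{t:|Z^\bi_t|\le\epsilon\}$, where the drift of $Z^\bj$ in \eqref{SDE:Zbj-2} is bounded. A standard Girsanov transformation then removes this drift and presents $Z^\bj$ as a planar Brownian motion from $Z^\bj_0$, ruling out $Z^\bj_t=0$ for $t\in(0,T^\bi_\epsilon)$. Sending $\epsilon\searrow 0$ and using path continuity extend this to $(0,T^\bi_0]$; beyond $T^\bi_0$, the strong Markov property at $T^\bi_0$ restarts the process from $\ms Z_{T^\bi_0}\in\CN$ and the argument iterates. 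The principal obstacle is controlling $Z^\bj$ across the dense perfect zero set of $|Z^\bi|$ past $T^\bi_0$, where the drift of $Z^\bj$ is singular; this is exactly the setting in which Theorem~\ref{thm:NTC}, applied to $\mc J=\{\bi,\bj\}$ on suitable stochastic intervals, precludes simultaneous zeros of $|Z^\bi|$ and $|Z^\bj|$ and thereby closes the inductive argument.
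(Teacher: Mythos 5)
Your overall architecture—split into (a) $T^\bi_0<\infty$ and (b) $\widetilde T^\bj_0=\infty$ for all $\bj\neq\bi$—and your change-of-measure idea on $(0,T^\bi_\epsilon)$ match the paper's proof of \eqref{BMntc:000}: before $T^\bi_0$, inverting $\P^\bi$ to the planar Brownian motion measure $\P^{(0)}$ (which is a Girsanov-type relationship) makes $\{0\}$ polar for $Z^\bj$. Your linear decomposition $Z^\bj_t = Z^\bj_0 + \tfrac{c}{2}(Z^\bi_t-Z^\bi_0)+\tilde W^\bj_t$, with $\tilde W^\bj$ a scaled planar Brownian motion independent of $W^\bi$, is correct and is in fact used (implicitly, in the sentence after \eqref{ineq:tight1-1}) by the paper in the proof of Lemma~\ref{lem:Qtight}.

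There is, however, a genuine gap in the final step. The sentence ``beyond $T^\bi_0$, the strong Markov property at $T^\bi_0$ restarts the process from $\ms Z_{T^\bi_0}\in\CN$ and the argument iterates'' cannot be executed as stated: since $Z^\bi_{T^\bi_0}=0$, the restarted process has $T^\bi_0=0$ almost surely, so the stochastic interval on which the Girsanov argument applies is empty. The set $\{t\geq T^\bi_0:\,|Z^\bi_t|=0\}$ is perfect (no isolated points, since $|Z^\bi|$ is instantaneously reflecting), so there is no ``next'' stopping time at which $|Z^\bi|>0$ that could seed a naive iteration. You correctly diagnose this as the principal obstacle and correctly invoke Theorem~\ref{thm:NTC} to rule out simultaneous zeros of $|Z^\bi|$ and $|Z^\bj|$; but that theorem only controls $Z^\bj$ on the closed zero set $\{t:|Z^\bi_t|=0\}$, and it says nothing about the open excursion intervals, which is precisely where the Girsanov step is needed. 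Your proposal does not say how to reach those intervals.

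The paper's resolution is the excursion-interval construction: enumerate the excursion intervals $(g_n(m),d_n(m))$ of $|Z^\bi|$ away from zero with length $\geq 1/m$, and observe that $g_n(m)+1/m$ is a genuine $(\F^0_t)$-stopping time (Lemma~\ref{lem:stoppingtimes}), whereas $g_n(m)$ itself is not. Strong Markov at $g_n(m)+1/m$ combined with \eqref{BMntc:000} forces $\widetilde T^\bj_0\notin(g_n(m)+1/m,\,d_n(m))$ for every $n,m$; the union over $n,m$ of these intervals is exactly the complement of the zero set of $|Z^\bi|$, and Theorem~\ref{thm:NTC} (in the form of \eqref{BMntc}) handles the zero set itself. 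Without this parametrization of the excursions the argument is not closed. A small side point: the Feller-test computation for $T^\bi_0<\infty$ is unnecessary—this is a standard fact about $\BES(0,\beta_\bi\da)$ which the paper simply cites from \cite{DY:Krein-2}.
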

\begin{proof}
We first show the following property:
\begin{align}\label{BMntc:000}
\P^{\bi}_{z_0}(\widetilde{T}_0^\bj\geq T_0^\bi)=1,\quad \forall\;z_0\in \Bbb \CN,\;\bj\in \mc E_N\setminus\{\bi\}.
\end{align}
To see \eqref{BMntc:000}, it is enough to consider $z_0^\bi\neq 0$. In this case, $T^\bi_\vep\nearrow T_0^\bi$ as $\vep\searrow 0$. Also, for any $0<\vep<|z_0^\bi|$, $\P^{\bi}_{z_0}(\widetilde{T}_0^\bj\leq T^\bi_\vep)=0$ since we can locally invert $\P^{\bi}$ to $\P^{(0)}$ before $T_0^\bi$ via a change of measure relationship between $\P^\bi$ and $\P^{(0)}$ (cf. \cite[(2.7)]{C:SDBG1-2}):
\begin{align}
\P^{\bi}_{z_0}(\widetilde{T}^\bj_0\leq T_\vep^\bi)&=\P^{\bi}_{z_0}(\widetilde{T}^\bj_0\leq T_\vep^\bi<T_0^\bi)=\lim_{n\to\infty}\ua \sum_{k=1}^\infty\P^{\bi}_{z_0}\left(\widetilde{T}^\bj_0\leq T_\vep^\bi,\frac{k-1}{2^n}\leq T_\vep^\bi<\frac{k}{2^n}<T_0^\bi\right)\notag\\
&=\lim_{n\to\infty}\ua \sum_{k=1}^\infty\E^{(0)}_{z_0}\Biggl[\frac{\e^{-\frac{\beta_\bi k}{2^n}}K_0(\sqrt{2\beta_\bi}|Z_{\frac{k}{2^n}}^\bi|)}{K_0(\sqrt{2\beta_\bi}|z_0^\bi|)};\widetilde{T}^\bj_0\leq T_\vep^\bi,\frac{k-1}{2^n}\leq T_\vep^\bi<\frac{k}{2^n}\Biggr]=0,\label{DY:comapp}
\end{align}
where the first equality holds since $\P^\bi_{z_0}(T^\bi_0<\infty)=1$ \cite[(2.9)]{DY:Krein-2}, and the last equality follows since $\P^{(0)}_{z_0}(\widetilde{T}_0^\bj=\infty)=1$ by the polarity of $0$ under planar Brownian motion \cite[(2.7) Proposition, p.191]{RY-2}, and so, all of the expectations are just equal to zero.

Next, we show that for all $\bi\in \mc E_N$, $\bj\neq \bi$, and $z_0\in \CN$,
 \begin{align}\label{BMntc}
 \P^{\bi}_{z_0}(\exists\; t\in (0,\infty)\mbox{ s.t. } |Z^\bj_t|=|Z^\bi_t|=0)=0.
 \end{align}
To see \eqref{BMntc}, we use the SDEs of $|Z^\bj_t|$ and $|Z^\bi_t|$ under $\P^{\bi}_{z_0}$ and the notation in \eqref{def:Kbbetaw0}:
\begin{align*}
|Z^\bj_t|+|Z^\bi_t|&=|Z_0^\bj|+|Z_0^\bi|+\int_0^t
\frac{1}{|Z^\bi_s|}\Biggl(\frac{1}{2}-\frac{\hK_1^{\bbeta,\bi}(s)}{K_0^{\bbeta,\bi}(s)}\Biggr)\d s
\\
&\quad \;+\int_0^t \frac{\d s}{2|Z^\bj_s|}\d s-\int_0^t \frac{1}{|Z^\bj_s|}\Re\biggl(\frac{Z^\bj_s}{Z^\bi_s}\biggr)\cdot \frac{\sigma(\bi)\cdot \sigma(\bj)}{2}\frac{\hK_1^{\bbeta,\bi}(s)}{K_0^{\bbeta,\bi}(s)}\d s+B^\bj_t+B^\bi_t\\
&=|Z_0^\bj|+|Z_0^\bi|+\int_0^t\frac{\d s}{2|Z^\bj_s|}\\
&\quad\;+\int_0^t
\frac{1}{2|Z^\bi_s|}\Biggl(1-2\frac{\hK_1^{\bbeta,\bi}(s)}{K_0^{\bbeta,\bi}(s)}- 2\frac{|Z^\bi_s|}{|Z^\bj_s|}\Re\biggl(\frac{Z^\bj_s}{Z^\bi_s}\biggr)\cdot \frac{\sigma(\bi)\cdot \sigma(\bj)}{2}\frac{\hK_1^{\bbeta,\bi}(s)}{K_0^{\bbeta,\bi}(s)}\Biggr)\d s  +B^\bj_t+B^\bi_t,
\end{align*}
which allows the application of Theorem~\ref{thm:NTC}. Specifically, for any constant $\theta>0$, let
\[
S_1\,\defeq\, \inf\{t\geq 0;|Z^\bj_{t}|+|Z^\bi_{t}|\leq \theta\},\quad T_1\,\defeq\, \inf\{t\geq S_1;|Z^\bj_{t}|+|Z^\bi_{t}|=2 \theta\}.
\]
Then on the event $\{S_1<T_1\}$,  we write
\begin{align}
\begin{split}\label{SDE:NTC1}
|Z^\bj_t|+|Z^\bi_t|&=|Z^\bj_{S_1}|+|Z^\bi_{S_1}|+\int_{S_1}^t \frac{\mu_\bj(s)}{|Z^\bj_s|}\d s+\int_{S_1}^t \frac{\mu_\bi(s)}{|Z^\bi_s|}\d s\\
&\quad+(B^\bj_t-B^\bj_{S_1})+(B^\bi_t-B^\bi_{S_1}),\quad t\geq S_1.
\end{split}
\end{align}
Here, $\mu_\bj(s)\equiv 1/2$, and $\mu_\bi(s,\omega)\equiv [1-\nu_\bi^\theta(s,\omega)]/2$ for 
\[
\nu_\bi^\theta(s)\,\defeq\, 2\frac{\hK_1^{\bbeta,\bi}(s)}{K_0^{\bbeta,\bi}(s)}+2\frac{|Z^\bi_s|}{|Z^\bj_s|}\Re\biggl(\frac{Z^\bj_s}{Z^\bi_s}\biggr)\cdot \frac{\sigma(\bi)\cdot \sigma(\bj)}{2}\frac{\hK_1^{\bbeta,\bi}(s)}{K_0^{\bbeta,\bi}(s)}.
\]
The crucial property we need is that for all $\eta>0$, there exists $\theta_0>0$ such that on $\{S_1<\infty\}$, 
\[
\esssup\{|\nu_\bi^\theta(s,\omega)|;s\in [S_1,T_1]\}<\eta,\quad \forall\;0<\theta<\theta_0.
\]
This property holds since $K_0(0)^{-1}=0$, $\hK_1$ is bounded on compacts in $\R_+$,
\begin{align}\label{ZbiZbj:time}
\biggl|\frac{|Z^\bj_s|}{|Z^\bj_s|}\Re\biggl(\frac{Z^\bj_s}{Z^\bi_s}\biggr)\biggr|\leq 1,\quad\forall\;
s\geq 0 \;\mbox{s.t.}\; Z^\bj_s\neq 0\;\&\;Z^\bi_s\neq 0,
\end{align}
and the Lebesgue measure of the set of times $s\geq 0$ such that $Z^\bj_s=0$ or $Z^\bi_s=0$ is zero. [The latter holds because $Z^\bi$ has transition densities \cite[Theorem~2.1 (2$\cc$)]{C:SDBG1-2} and $Z^\bj$ is a linear combination of $Z^{\bi}$ and an independent two-dimensional Brownian motion with the contribution of the Brownian motion being nontrivial by \eqref{def:ZSDE2-2}.] Also, by \eqref{def:Bj-2}, $B^\bj$ and $B^\bi$ satisfy \eqref{W:qcv} for $\mc J=\{\bj,\bi\}$ by \eqref{def:Bj-2}. Hence, by choosing $\theta$ sufficiently small, 
Theorem~\ref{thm:NTC}  applies and gives 
\begin{align}\label{NTC:t00}
\P^\bi_{z_0}(|Z^\bj_t|+|Z^\bi_t|=0\mbox{ for some }t\in (S_1,T_1]\cap \R_+,S_1<T_1)=0;
\end{align}
see also Remark~\ref{rmk:ntc} (1$\cc$). Similarly, we have 
\begin{align}\label{NTC:t01}
\P^\bi_{z_0}(|Z^\bj_t|+|Z^\bi_t|=0\mbox{ for some }t\in (S_n,T_n]\cap\R_+,S_n<T_n)=0,\quad \forall\;n\geq 2,
\end{align}
where $S_n,T_n$, for $n\geq2$, are defined inductively by 
\[
S_n\,\defeq\, \inf\{t\geq T_{n-1};|Z^\bj_{t}|+|Z^\bi_{t}|= \theta\},\quad T_n\,\defeq\, \inf\{t\geq S_n;|Z^\bj_{t}|+|Z^\bi_{t}|=2 \theta\}.
\]

Now, we show that \eqref{NTC:t00} and \eqref{NTC:t01} are enough to get the following property: 
\begin{align}\label{NTCt0}
\P_{z_0}^\bi(\exists\;0<t\leq t_0\mbox{ s.t. }|Z_t^\bj|+|Z^\bi_t|=0)=0\quad\mbox{for any fixed $t_0>0$}.
\end{align}
To this end, we first bound the event under consideration in the following manner:  
\begin{align}\label{NTCt0bdd}
\begin{split}
&\quad\;\{\exists\;0<t\leq t_0\mbox{ s.t. }|Z_t^\bj|+|Z^\bi_t|=0\}\\
&\subset \left(\bigcup_{n=1}^\infty  \{|Z^\bj_t|+|Z^\bi_t|=0\mbox{ for some }t\in (S_n,T_n]\cap\R_+,S_n<T_n\} 
\right)
\cup \{T_n\leq t_0,\forall\;n\geq 1\}.
\end{split}
\end{align}
In more detail, this bound considers the fact that $\{t<\sup_nT_n;|Z^\bj_t|+|Z^\bi_t|=0\}\subset \bigcup_n [S_n,T_n]$, so $\{T_n\leq t_0,\forall\;n\geq 1\}$ handles the case that by time $t_0$, all $[S_n,T_n]$ have failed to capture any zero of $|Z^\bj|+|Z^\bi|$ in $(0,t_0]$ as $\{T_n\}$ has accumulated by time $t_0$. To bound the probability of the right-hand side of \eqref{NTCt0bdd}, first, use \eqref{NTC:t00} and \eqref{NTC:t01}. To bound the probability of $\{T_n\leq t_0,\forall\;n\geq 1\}$, note that on this event, $\lim_n\ua T_n=\lim_n\ua S_n\leq t_0$ and $|T_n-S_n|\to 0$ as $n\to\infty$ by monotonicity and $S_n\leq T_n\leq S_{n+1}$, so the continuity of paths forces that
\[
\theta=\lim_{n\to\infty}|Z^\bj_{S_n}|+|Z^\bi_{S_n}|=\lim_{n\to\infty}|Z^\bj_{T_n}|+|Z^\bi_{T_n}|=2\theta,
\]
which contradicts the assumption that $\theta>0$. We have proved \eqref{NTCt0}, and hence, \eqref{BMntc}. 

Now we prove $\P^{\bi}_{z_0}(\widetilde{T}^\bj_0=\infty)=1$ for all $\bj\neq \bi$ and $z_0\in \CN$. For $m\in \Bbb N$, define $(g_n(m),d_n(m))$ as the $n$-th excursion interval of $|Z^\bi|$ away from zero such that the length of the interval is $\geq 1/m$.
Note that $g_n(m)+1/m$ are stopping times. See the discussion after Lemma~\ref{lem:RRprop} for more details. 
With $\vartheta_\bullet$ denoting the shift operator, 
\[
\P_{z_0}^{\bi}\Big(g_n(m)+1/m< \widetilde{T}_0^{\bj}<T_0^\bi\circ\vartheta_{g_n(m)+1/m}\Big)=0
\]
by the strong Markov property of $\{(Z^\bi_t,Z^\bj_t)\}$ at time $g_n(m)+1/m$ and
the fact obtained in the first paragraph that $\P^{\bi}_{z_1}(\widetilde{T}_0^\bj\geq T_0^\bi)=1$ for all $z_1\in \CN$. The foregoing equality implies that
\[
\P^{\bi}_{z_0}(\widetilde{T}_0^\bj\in J)=0,\quad \mbox{where }J\;\defeq \bigcup_{m=1}^\infty\bigcup_{n=1}^\infty (g_n(m)+1/m,d_n(m)).
\]
Since $\R_+\setminus\{t\geq 0;|Z^\bi_t|=0\}=J$, we must have 
\begin{align}\label{BMntc1}
\P^{\bi}_{z_0}(\widetilde{T}_0^\bj\in \{\infty\}\cup \{t\geq 0;|Z^\bi_t|=0\})=1. 
\end{align}
Moreover, by \eqref{BMntc}, \eqref{BMntc1} can be reduced to $\P^{\bi}_{z_0}(\widetilde{T}_0^\bj=\infty)=1$. The proof is complete.
\end{proof}

Our last tool consists of Proposition~\ref{prop:logsum} and Corollary~\ref{cor:superMG} on the semimartingale decompositions of the logarithms of the non-martingale Radon--Nikod\'ym derivative processes we use to construct the stochastic many-$\delta$ motions; recall \eqref{RNchoice}.
 We will prove Proposition~\ref{prop:logsum}  in Section~\ref{sec:logsum} and
Corollary~\ref{cor:superMG} at the end of the present subsection (Section~\ref{sec:threetool}).

\begin{prop}\label{prop:logsum}
Fix $\bi\in \mc E_N$, $\bbeta\in (0,\infty)^{\mc E_N}$ and $\bs w\in \R_+^{\mc E_N}$ with $w_\bi>0$ and $\#\{\bj;w_\bj>0\}\geq 2$. Then for any $z_0\in \CNwni$,  it holds $\P_{z_0}^{\bi}$-a.s. that
\begin{align}
\begin{split}
 \log \frac{K^{\bbeta,\bs w}_0 (t)}{ w_\bi K^{\bbeta,\bi}_0 (t)}
&= \log \frac{K^{\bbeta,\bs w}_0 (0)}{ w_\bi K^{\bbeta,\bi}_0 (0)}+A^{\bbeta,\bs w,\bi}_0(t) +N_0^{\bbeta,\bs w,\bi}(t)-\frac{1}{2}\la N^{\bbeta,\bs w,\bi}_0,N^{\bbeta,\bs w,\bi}_0\ra_t,\quad \forall\; t\geq 0.\label{RN}
\end{split}
\end{align}
Here, the terms on both sides satisfy the following properties: 
\begin{itemize}
\item [\rm (1$\cc$)] $ K^{\bbeta,\bs w}_0 (t)/ [w_\bi K^{\bbeta,\bi}_0 (t)]$ is understood to be equal to $1$ whenever $\ms Z_t\notin \CNw$, $t\geq 0$.
\item [\rm (2$\cc$)] $\{A^{\bbeta,\bs w,\bi}_0(t)\}$ is a finite-variation process defined by
\begin{align}
A^{\bbeta,\bs w,\bi}_0(t)&\,\defeq\, \mathring{A}^{\bbeta,\bs w,\bi}_0(t)+\widetilde{A}^{\bbeta,\bs w,\bi}_0(t),\label{def:A}\\
\mathring{A}^{\bbeta,\bs w,\bi}_0(t)&\,\defeq\, \sum_{\bj\in\mathcal E_N\setminus\{ \bi\}}2\left(\frac{w_\bj}{w_\bi}\right)\int_0^t K_0(\sqrt{2\beta_\bj} |Z^\bj_s|)\d L^{\bi}_s,\label{def:Aring}\\
\widetilde{A}^{\bbeta,\bs w,\bi}_0(t)&\,\defeq\,\sum_{\bj\in \mathcal E_N} \int_0^t \beta_\bj\frac{w_\bj  K^{\bbeta,\bj}_0(s)}{K^{\bbeta,\bs w}_0(s)}\d s- \beta_\bi t,\label{def:Atilde}
\end{align}
where $\{L^\bi_t\}$ is the Markovian local time of $\{Z^\bi_t\}$ at level $0$ \cite{DY:Krein-2} subject to the normalization 
\begin{align}\label{def:DYLT-2}
\E^{\bi}_0\left[\int_0^\infty\e^{-q\tau} \d  L^\bi_\tau\right]=\frac{1}{\log (1+q/\beta_\bi)},\quad \forall\;q\in (0,\infty).
\end{align}

\item [\rm (3$\cc$)] $\{N^{\bbeta,\bs w,\bi}_0(t)\}$ is a real-valued continuous local martingale defined by 
\begin{align}
N^{\bbeta,\bs w,\bi}_0(t)&\,\defeq \,\mathring{N}^{\bbeta,\bs w,\bi}_0(t)+\widetilde{N}^{\bbeta,\bs w,\bi}_0(t),\label{def:Nwi}\\
\mathring{N}^{\bbeta,\bs w,\bi}_0(t)&\,\defeq\,
\int_0^{t}\frac{\hK^{\bbeta,\bi}_1(s)[K^{\bbeta,\bs w}_0(s)-w_\bi K_0^{\bbeta,\bi}(s)]}{|Z^\bi_s|K^{\bbeta,\bi}_0(s)K^{\bbeta,\bs w}_0(s)}\d B^\bi_s,\label{def:Nwiring}\\
\widetilde{N}^{\bbeta,\bs w,\bi}_0(t)&\,\defeq\,
-\sum_{\bj\in \mc E_N\setminus\{ \bi\}}\int_0^{t}\frac{w_\bj\hK^{\bbeta,\bj}_1(s)}{|Z^\bj_s|K^{\bbeta,\bs w}_0(s)}\d B^\bj_s,\label{def:Nwitilde}
\end{align}
where $\{B^\bj_t\}$ is the one-dimensional Brownian motion defined in \eqref{def:Bj-2}. 

\item [\rm (4$\cc$)] Recall that $\Twi_\eta$ is defined in \eqref{def:Twi}, and $T^{\bs w\setminus\{w_\bi\}}_{\eta}\nearrow\infty$ as $\eta\searrow 0$ $\P^{\bi}_{z_0}$-a.s. by Proposition~\ref{prop:BMntc}. 
For any $\eta\in (0,\min_{\bj:w_\bj>0,\bj\neq \bi}|z^\bj_0|)$, it holds that
\[
\{N^{\bbeta,\bs w,\bi}_0(t\wedge T^{\bs w\setminus\{w_\bi\}}_{\eta})\}\quad \mbox{and}\quad
\{\mathring{N}^{\bbeta,\bs w,\bi}_0(t\wedge T^{\bs w\setminus\{w_\bi\}}_{\eta})\}
\]
are continuous $L^2$-martingales, and 
\[
\E^{\bi}_{z_0}\left[\exp\left\{\lambda \la \mathring{N}^{\bbeta,\bs w,\bi}_0,\mathring{N}^{\bbeta,\bs w,\bi}_0\ra_{t\wedge T^{\bs w\setminus\{w_\bi\}}_{\eta}}\right\}\right]<\infty,\quad \forall\;\lambda\in \R,\;t\geq 0.
\]
\end{itemize}
\end{prop}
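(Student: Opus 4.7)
The strategy is to establish the decomposition \eqref{RN} by an approximate It\^o's-formula argument on a stochastic interval where everything is smooth, followed by a delicate $\vep\searrow 0$ limit that extracts the local-time term $\mathring A$. Fix $\eta\in(0,\min_{\bj\neq\bi,\,w_\bj>0}|z_0^\bj|)$ and $\vep>0$, and set $\tau_{\vep,\eta}\defeq T^\bi_\vep\wedge T^{\bs w\setminus\{w_\bi\}}_\eta$, with $T^\bi_\vep\defeq\inf\{t\geq 0:|Z^\bi_t|=\vep\}$. On $[0,\tau_{\vep,\eta}]$ every $|Z^\bj_s|$ with $w_\bj>0$ stays bounded below by a positive constant (using Proposition~\ref{prop:BMntc} to justify that $T^{\bs w\setminus\{w_\bi\}}_\eta\nearrow\infty$ as $\eta\searrow 0$), so both $\log K_0^{\bbeta,\bs w}$ and $\log K_0^{\bbeta,\bi}$ are $C^2$ along paths of $\ms Z$ and It\^o's formula applies without issue.

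I would then carry out the It\^o expansion directly using the SDEs \eqref{def:ZSDE2-2}--\eqref{SDE:|Zbj|-2} under $\P^\bi$ together with the modified Bessel identities $K_0'(x)=-K_1(x)$ and $K_0''(x)+x^{-1}K_0'(x)=K_0(x)$, equivalently $\Delta_y K_0(\sqrt{2\beta_\bj}|y|)=2\beta_\bj K_0(\sqrt{2\beta_\bj}|y|)$ away from $y=0$. Applying It\^o to each $K_0^{\bbeta,\bj}(s)$ as a function of $|Z^\bj_s|$, the singular $|Z^\bj|^{-2}$-term from the $(1/(2|Z^\bj|))\d s$-drift combines with the second-order Bessel correction to leave a clean drift $\beta_\bj K_0^{\bbeta,\bj}\d s$ plus a cross-term $\tfrac{\sigma(\bj)\cdot\sigma(\bi)}{2}\hK_1^{\bbeta,\bj}\hK_1^{\bbeta,\bi}\Re(Z^\bj/Z^\bi)/(|Z^\bj|^2 K_0^{\bbeta,\bi})\d s$ inherited from the $\P^\bi$-drift, and the martingale $-\hK_1^{\bbeta,\bj}|Z^\bj|^{-1}\d B^\bj$. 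Chaining through $\log\Phi$ with $\Phi=K_0^{\bbeta,\bs w}$, using the covariations \eqref{def:Bj-2}, and subtracting $\log(w_\bi K_0^{\bbeta,\bi})$, the cross-terms and $\P^\bi$-drift contributions cancel algebraically across the two expansions; what survives is precisely $\widetilde A^{\bbeta,\bs w,\bi}_0(t\wedge\tau_{\vep,\eta})+N^{\bbeta,\bs w,\bi}_0(t\wedge\tau_{\vep,\eta})-\tfrac12\la N_0^{\bbeta,\bs w,\bi},N_0^{\bbeta,\bs w,\bi}\ra_{t\wedge\tau_{\vep,\eta}}$, with no local-time contribution yet because $|Z^\bi|\geq\vep$ throughout.

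The main obstacle is passing $\vep\searrow 0$ to identify $\mathring A^{\bbeta,\bs w,\bi}_0$. Under $\P^\bi$, $\{|Z^\bi_s|\}$ is a $\BES(0,\beta_\bi\da)$ that a.s.\ touches zero, and the separate expansions of $\log K_0^{\bbeta,\bs w}$ and $\log K_0^{\bbeta,\bi}$ each diverge at zeros of $|Z^\bi|$, while their difference stays bounded and equals $0$ at those zeros by convention (1$\cc$). The residual boundary mass accumulated across each excursion of $|Z^\bi|$ away from $0$ must therefore be extracted via excursion analysis: using $K_0(x)\sim\log x^{-1}$ as $x\searrow 0$ from \eqref{K00}, near an excursion endpoint one has $(w_\bj/w_\bi)K_0^{\bbeta,\bj}(s)/K_0^{\bbeta,\bi}(s)\sim(w_\bj/w_\bi)K_0(\sqrt{2\beta_\bj}|Z^\bj_s|)\cdot[\log|Z^\bi_s|^{-1}]^{-1}$, and matching this inverse-log weighting against the Donati--Martin--Yor normalization \eqref{def:DYLT-2} of $L^\bi$ produces the factor $2$ and the kernel $K_0(\sqrt{2\beta_\bj}|Z^\bj_s|)\d L^\bi_s$ appearing in $\mathring A$. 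Reconciling the approximate It\^o formula with the zero-set of $|Z^\bi|$ in this way---the multi-particle extension of the two-body analysis from \cite{C:BES-2}---is the technical heart of the argument.

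Finally, item (4$\cc$) follows from the same stopping. On $\{t\leq T^{\bs w\setminus\{w_\bi\}}_\eta\}$ one has $|Z^\bj|\geq\eta/\two$ for all $\bj\neq\bi$ with $w_\bj>0$, so $\hK_1^{\bbeta,\bj}/|Z^\bj|$ is bounded and $K_0^{\bbeta,\bs w}\geq w_\bj K_0(\sqrt{2\beta_\bj}\eta/\two)>0$; the integrand of $\widetilde N^{\bbeta,\bs w,\bi}_0$ is uniformly bounded and the stopped process is an $L^2$-martingale. For $\mathring N^{\bbeta,\bs w,\bi}_0$, the factor $[K_0^{\bbeta,\bs w}-w_\bi K_0^{\bbeta,\bi}]/K_0^{\bbeta,\bs w}\in[0,1]$ tames the blow-up near $|Z^\bi|=0$; since $\hK_1(u)=uK_1(u)\to 1$ as $u\searrow 0$, $\hK_1^{\bbeta,\bi}$ is bounded near $0$, and the integrand of $\la\mathring N^{\bbeta,\bs w,\bi}_0,\mathring N^{\bbeta,\bs w,\bi}_0\ra$ behaves like $|Z^\bi_s|^{-2}\log^{-4}(|Z^\bi_s|^{-1})$ near zeros of $|Z^\bi|$, whose exponential moments are controlled by standard Krein-type bounds for $\BES(0,\beta_\bi\da)$. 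This yields both the $L^2$-martingale property of the stopped $\mathring N^{\bbeta,\bs w,\bi}_0$ and the required exponential integrability of its bracket.
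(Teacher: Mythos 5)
Your approach is genuinely different from the paper's and the gap lies exactly where you flag it. The paper never stops the process at $T^\bi_\vep$; instead it mollifies the \emph{function}, replacing $\log K_0^{\bbeta,\bj}(s)=\log G_0(2\beta_\bj R^\bj_s)$ by $\log G_0(2\beta_\bj(\vep_\bj+R^\bj_s))$ for $R^\bj_s=|Z^\bj_s|^2$. Because the mollified function is globally $C^2$, It\^{o}'s formula applies on the \emph{entire} interval $[0,t]$, including across the zeros of $|Z^\bi|$, with no excursion accounting needed. The local-time term then emerges purely analytically: when the two It\^o expansions are subtracted, the residual drift in the $\bi$-direction carries the explicit factor $\kappa^{\beta_\bi\da}_\vep(|Z^\bi_s|)=\vep_\bi/[(\vep_\bi+R^\bi_s)^2 K_0(\sqrt{2\beta_\bi(\vep_\bi+R^\bi_s)})^2]$, which the paper shows converges (after pairing with the speed measure $m^{\beta\da}_0$ via the occupation times formula and a $\sqrt{\vep}$-rescaling of the integrating variable) to $2\d L^\bi$ uniformly in $s\leq t$. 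That one lemma replaces the entire excursion analysis you sketch. Your stopping-time mollification achieves the identity on $[0,T^\bi_\vep\wedge T^{\bs w\setminus\{w_\bi\}}_\eta]$, but $T^\bi_\vep\nearrow T^\bi_0<\infty$ a.s., so you only reach the first zero of $|Z^\bi|$ and then must restart after each of infinitely many excursions; the boundary contributions must be summed uniformly and shown to assemble into $2(w_\bj/w_\bi)\int_0^t K_0(\sqrt{2\beta_\bj}|Z^\bj_s|)\d L^\bi_s$, with exactly the normalization fixed by \eqref{def:DYLT-2}. Your argument for this—``matching the inverse-log weighting against the Donati--Martin--Yor normalization''—is a plausibility heuristic, not a proof. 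The constant $2$, the exact kernel $K_0(\sqrt{2\beta_\bj}|Z^\bj_s|)$ evaluated at the excursion endpoint, and the a.s.\ uniform control across all small excursions are each nontrivial, and this is precisely what the paper's $\kappa^{\beta\da}_\vep$-approximation of local time is designed to deliver in one stroke.

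A second, smaller point: you assert that ``the cross-terms and $\P^\bi$-drift contributions cancel algebraically'' to produce $-\tfrac12\la N^{\bbeta,\bs w,\bi}_0,N^{\bbeta,\bs w,\bi}_0\ra_t$. The paper warns that the naive decomposition of $\la\mathring N^{\bbeta,\bs w,\bi}_0,\mathring N^{\bbeta,\bs w,\bi}_0\ra$ into $\I+\II+\III$ (each carrying a factor $\hK_1^{\bbeta,\bi}(s)^2/|Z^\bi_s|^2$ divided by various products of $K_0$'s) is \emph{formal}: the pieces are not known to converge absolutely individually. On your stopped interval this is moot because $|Z^\bi|\geq\vep$, but the issue reappears when you pass $\vep\searrow 0$, and the paper's resolution (grouping $-I_{\ref{eq:ItoKi}}(4)-I_{\ref{eq:ItoKi}}(5)+I_{\ref{eq:ItoKw}}(5)+I_{\ref{eq:ItoKw}}(9)$ as a single non-negative square and using a quantitative comparison of $(\hK_1/K_0)(\sqrt{\vep+y^2})$ against $(\hK_1/K_0)(y)$) is not a routine dominated-convergence step. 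Items (3$\cc$) and (4$\cc$) you handle essentially as in the paper. In short: the strategy is workable in principle but you would need a substantial new argument for the excursion-by-excursion local-time extraction, whereas the paper's function-mollification plus occupation-times approach makes this the easy part.
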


\begin{cor}\label{cor:superMG}
Assume the conditions of Proposition~\ref{prop:logsum}.
Then for any $z_0\in \CNwni$, 
\begin{align}\label{superMG}
\begin{split}
\mathcal E_{z_0}^{\bbeta,\bs w,\bi}(t)&\,\defeq\,\frac{w_\bi K_0^{\bbeta,\bi}(0)}{K_0^{\bbeta,\bs w}(0)}
\frac{\e^{-A^{\bbeta,\bs w,\bi}_0(t)}K_0^{\bbeta,\bs w}(t)}{w_\bi K_0^{\bbeta,\bi}(t)}
=\exp\left\{N_0^{\bbeta,\bs w,\bi}(t)-\frac{1}{2}\la N^{\bbeta,\bs w,\bi}_0,N^{\bbeta,\bs w,\bi}_0\ra_t\right\}
\end{split}
\end{align}
is a continuous local martingale under $\P_{z_0}^{\bi}$ such that for any $\eta\in (0,\min_{\bj:w_\bj>0,\bj\neq \bi}|z^\bj_0|)$, 
\begin{align}\label{L2MG}
\mathcal E_{z_0}^{\bbeta,\bs w,\bi}(t\wedge \Twi_\eta),\quad t\geq 0,
\end{align}
is a martingale under $\P_{z_0}^{\bi}$. In particular, the process in \eqref{superMG} is a supermartingale. 
\end{cor}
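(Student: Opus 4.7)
The equality between the two expressions for $\mathcal E^{\bbeta,\bs w,\bi}_{z_0}(t)$ in \eqref{superMG} is pure algebra: exponentiating \eqref{RN} from Proposition~\ref{prop:logsum} yields
\[
\frac{K_0^{\bbeta,\bs w}(t)}{w_\bi K_0^{\bbeta,\bi}(t)}=\frac{K_0^{\bbeta,\bs w}(0)}{w_\bi K_0^{\bbeta,\bi}(0)}\,\e^{A_0^{\bbeta,\bs w,\bi}(t)}\exp\Bigl\{N_0^{\bbeta,\bs w,\bi}(t)-\tfrac{1}{2}\la N_0^{\bbeta,\bs w,\bi},N_0^{\bbeta,\bs w,\bi}\ra_t\Bigr\},
\]
and moving the prefactors to the other side gives the claimed identity. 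The left-hand expression is the Dol\'eans--Dade stochastic exponential $\mathcal E(N_0^{\bbeta,\bs w,\bi})_t$ of the continuous local martingale $N_0^{\bbeta,\bs w,\bi}$ from Proposition~\ref{prop:logsum} (3$\cc$), so by It\^o's formula it is itself a continuous local martingale under $\P^{\bi}_{z_0}$.

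For the martingale property of the process stopped at $\Twi_\eta$, the plan is to verify Novikov's criterion
\[
\E_{z_0}^{\bi}\Bigl[\exp\bigl\{\tfrac{1}{2}\la N_0^{\bbeta,\bs w,\bi},N_0^{\bbeta,\bs w,\bi}\ra_{t\wedge \Twi_\eta}\bigr\}\Bigr]<\infty,\quad\forall\;t\geq 0.
\]
Decomposing $N_0^{\bbeta,\bs w,\bi}=\mathring{N}_0^{\bbeta,\bs w,\bi}+\widetilde{N}_0^{\bbeta,\bs w,\bi}$ and using the Kunita--Watanabe-type bound $\la M+L,M+L\ra\leq 2\la M,M\ra+2\la L,L\ra$, it suffices to control the two pieces separately. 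The exponential moment of $\la \mathring{N}_0^{\bbeta,\bs w,\bi},\mathring{N}_0^{\bbeta,\bs w,\bi}\ra_{t\wedge \Twi_\eta}$ of arbitrary order is given for free by Proposition~\ref{prop:logsum} (4$\cc$). For $\widetilde{N}_0^{\bbeta,\bs w,\bi}$, observe that on $[0,\Twi_\eta]$ every $\bj\in \mc E_N\setminus\{\bi\}$ with $w_\bj>0$ satisfies $|Z^\bj_s|\geq \eta$, so $\hK_1^{\bbeta,\bj}(s)/|Z^\bj_s|=\sqrt{2\beta_\bj}K_1(\sqrt{2\beta_\bj}|Z^\bj_s|)$ is uniformly bounded and $K_0^{\bbeta,\bs w}(s)\geq w_\bj K_0^{\bbeta,\bj}(s)$ stays uniformly bounded away from $0$; hence each integrand in \eqref{def:Nwitilde} is bounded by a deterministic constant depending on $(\bbeta,\bs w,\eta)$, making $\la \widetilde{N}_0^{\bbeta,\bs w,\bi},\widetilde{N}_0^{\bbeta,\bs w,\bi}\ra_{t\wedge \Twi_\eta}$ itself a bounded process. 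Combined via Cauchy--Schwarz with the exponential moment bound on $\mathring{N}_0^{\bbeta,\bs w,\bi}$, Novikov's condition follows, and the stopped exponential is a true martingale.

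Finally, the supermartingale property of the unstopped process $\{\mathcal E^{\bbeta,\bs w,\bi}_{z_0}(t)\}$ is immediate: from the first equality in \eqref{superMG} it is manifestly non-negative (with the convention from Proposition~\ref{prop:logsum} (1$\cc$) used to define the ratio after contacts), and any non-negative continuous local martingale is automatically a supermartingale by Fatou applied to a reducing sequence of stopping times---one can, in fact, take the localizing sequence $\Twi_{1/n}\wedge n$ and pass to the limit using the already-established martingale property at each level. The main obstacle I anticipate is the bookkeeping for $\widetilde{N}_0^{\bbeta,\bs w,\bi}$: several pairs $\bj\neq \bi$ contribute, with cross variations coupled to $B^\bi$ through $\eqref{def:Bj-2}$, so care is required when extracting the deterministic bound on $\la \widetilde{N}_0^{\bbeta,\bs w,\bi},\widetilde{N}_0^{\bbeta,\bs w,\bi}\ra$ uniformly on $[0,t\wedge \Twi_\eta]$. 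All else is routine stochastic calculus.
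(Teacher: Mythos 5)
Your overall strategy matches the paper's: exponentiate \eqref{RN} for the identity in \eqref{superMG}, identify the right-hand side as a Dol\'eans--Dade exponential and hence a continuous local martingale, verify Novikov's criterion by splitting $N_0^{\bbeta,\bs w,\bi}=\mathring N_0^{\bbeta,\bs w,\bi}+\widetilde N_0^{\bbeta,\bs w,\bi}$, use the deterministic bound on $\la\widetilde N_0^{\bbeta,\bs w,\bi},\widetilde N_0^{\bbeta,\bs w,\bi}\ra$ up to $\Twi_\eta$ together with the exponential moment bound on $\la\mathring N_0^{\bbeta,\bs w,\bi},\mathring N_0^{\bbeta,\bs w,\bi}\ra$ from Proposition~\ref{prop:logsum} (4$\cc$), and close with the standard supermartingale fact for nonnegative local martingales. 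These are the same moves the paper makes.

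However, there is one genuine gap in the step where you bound the integrand of $\widetilde N_0^{\bbeta,\bs w,\bi}$. You bound the numerator $\hK_1^{\bbeta,\bj}(s)/|Z^\bj_s|=\sqrt{2\beta_\bj}K_1(\sqrt{2\beta_\bj}|Z^\bj_s|)$ above by a constant (correct, since $K_1$ is decreasing and $|Z^\bj_s|\geq\eta$ on $[0,\Twi_\eta]$), and then assert that the denominator $K_0^{\bbeta,\bs w}(s)\geq w_\bj K_0^{\bbeta,\bj}(s)$ ``stays uniformly bounded away from $0$.'' That lower bound is false: $K_0^{\bbeta,\bj}(s)=K_0(\sqrt{2\beta_\bj}|Z^\bj_s|)$ decays exponentially to $0$ as $|Z^\bj_s|\to\infty$, and there is nothing on $[0,\Twi_\eta]$ that caps $|Z^\bj_s|$ (or any of the other $|Z^\bk_s|$) from above. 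Bounding numerator and denominator separately therefore cannot give a deterministic bound on the integrand. What saves the argument---and what the paper uses---is that after the inequality $K_0^{\bbeta,\bs w}(s)\geq w_\bj K_0^{\bbeta,\bj}(s)$ the integrand is controlled by the \emph{ratio} $\sqrt{2\beta_\bj}K_1(\sqrt{2\beta_\bj}|Z^\bj_s|)/K_0(\sqrt{2\beta_\bj}|Z^\bj_s|)$, and by \eqref{K0infty}--\eqref{K1infty} both $K_0$ and $K_1$ share the same $\sqrt{\pi/(2x)}\,\e^{-x}$ asymptotics at infinity, so $K_1/K_0$ is bounded on $[\sqrt{2\beta_\bj}\eta,\infty)$. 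Replace your ``denominator bounded below'' claim with this ratio estimate and the rest of the plan goes through. The obstacle you flagged about cross variations is not the issue---those are handled the same way via \eqref{def:Bj-2} and contribute only an $N$-dependent constant---whereas this pointwise estimate is the one that needs care.
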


Proposition~\ref{prop:logsum} may be viewed as a special version of the multi-dimensional It\^{o}'s formula as \eqref{RN} expands $\log [K^{\bbeta,\bs w}_0(t)/(w_\bi K^{\bbeta,\bi}_0(t))]$ into a semimartingale according to the SDEs of $\{Z^\bj_t\}_{\bj\in \mc E_N}$ under $\P_{z_0}^\bi$. The proof addresses various issues described in Section~\ref{sec:intro} now that It\^{o}'s formula is not directly applicable. Properties that make It\^{o}'s formula not directly applicable include the fact that $K_0'(x)=-K_1(x)$ diverges to $-\infty$ at $x=0$ so that $K_0(x)$ is not a difference of convex functions in an open set containing $\R_+$, but the range of $t\mapsto |Z^\bi_t|$ under $\P^{\bi}$ includes $0$. The multi-dimensionality is also fundamental to the non-applicability. Note that the choice of initial conditions $z_0\in \CNwni$ plays a nontrivial role in the proof of Proposition~\ref{prop:logsum}.

\begin{rmk}\label{rmk:NTC}
\noindent (1$\cc$) For any nonzero $\bs w\in \R_+^{\mc E_N}$ with $\#\{\bj;w_\bj>0\}\geq 2$, it is plain that
\[
 \left(\min_{\bj:w_\bj>0}\beta_\bj-\beta_\bi\right)t\leq
\widetilde{A}^{\bbeta,\bs w,\bi}_0(t)\leq \left(\max_{\bj:w_\bj>0}\beta_\bj-\beta_\bi\right)t.
\]
In particular, $\widetilde{A}^{\bbeta,\bs w,\bi}_0(t)\equiv 0$ if $\beta_\bj=\beta$ for all $\bj\in \mc E_N$ with $w_\bj>0$. \medskip 

\noindent (2$\cc$) The following records some finer properties of Proposition~\ref{prop:logsum}:
\begin{enumerate}
\item [\rm (a)] The assumption $z_0\in \CNwni$ and the definition of $ K^{\bbeta,\bs w}_0 (t)/ [w_\bi K^{\bbeta,\bi}_0 (t)]$ for $\ms Z_t\notin \CNw$ in Proposition~\ref{prop:logsum} (1$\cc$) ensures the continuity of $t\mapsto   K^{\bbeta,\bs w}_0 (t)/ [w_\bi K^{\bbeta,\bi}_0 (t)]$ by the following:
\begin{itemize}
\item By Proposition~\ref{prop:BMntc}, $\P^\bi_{z_0}(\ms Z_t\in \CNwni,\;\forall\;t> 0)=1$.
\item Let $\{z_n\}\subset \CN$ be any sequence such that $z_n^\bj\neq 0$ for all $\bj\in \mc E_N$, $z_n^\bj\to z_\infty^\bj\in \Bbb C$ as $n\to\infty$, $z_\infty^\bi=0$, and $\lim_n (\log |z_n^\bj|)/(\log |z_n^\bi|)$ exists in $\R$ whenever $w_\bj>0$ and $z^\bj_\infty=0$. Then by the asymptotic representation \eqref{K00} of $K_0$ as $x\to 0$,
\[
\lim_{n\to\infty}\frac{\sum_{\bj\in \mc E_N}w_\bj K_0(\sqrt{2\beta_\bj}|z_n^\bj|)}{w_\bi K_0(\sqrt{2\beta_\bi}|z_n^\bi|)}=\sum\bigg\{\frac{w_\bj}{w_\bi}\lim_{n\to\infty}\frac{\log |z^\bj_n|}{\log |z^\bi_n|};w_\bj>0,  z_\infty^\bj=0\bigg\}.
\]
\end{itemize}
The context for $ K^{\bbeta,\bs w}_0 (t)/ [w_\bi K^{\bbeta,\bi}_0 (t)]$ is that $z^\bj_\infty\neq 0$ for all $\bj\neq \bi$. 
Hence, a ``removal'' of singularities in $K^{\bbeta,\bs w}_0 (t)$ and $w_\bi K^{\bbeta,\bi}_0 (t)$ occurs in the $\infty/\infty$-ratio $ K^{\bbeta,\bs w}_0 (t)/ [w_\bi K^{\bbeta,\bi}_0 (t)]$ whenever $\ms Z_t\notin \CNw$. 

\item [\rm (b)] On the other hand, for \emph{fixed} $t>0$, $w_\bi K^{\bbeta,\bi}_0 (t)$ and $K^{\bbeta,\bs w}_0 (t)$ are strictly positive by themselves $\P^\bi_{z_0}$-a.s. due to the existence of the probability density of $Z^\bi_t$ \cite[Theorem~2.1]{C:SDBG1-2}. 

\item [\rm (c)] Whereas $K_0(0)=\infty$ and $\d L_s^\bi$ is supported in $\{s\geq 0;Z^\bi_s=0\}$, the integrals in \eqref{def:Aring} are $\P^\bi_{z_0}$-a.s. finite since for all $\bj\in \mc E_N\setminus\{ \bi\}$ with $w_\bj>0$,
 $s\mapsto K_0(\sqrt{2\beta_\bj}|Z^\bj_s|)$, $s\geq 0$, are bounded on compacts by the choice $z_0\in \CNwni$ and Proposition~\ref{prop:BMntc}. \qed 
\end{enumerate}
\end{rmk}

\begin{proof}[Proof of Corollary~\ref{cor:superMG}]
The second equality in \eqref{superMG} can be seen by taking the exponentials of both sides of \eqref{RN} and rearranging. To see the martingale property of $\mathcal E_{z_0}^{\bbeta,\bs w,\bi}$, by the second equality of \eqref{superMG} and Novikov's criterion for exponential local martingales \cite[(1.15) Proposition, p.332]{RY-2}, it suffices to check that
\begin{align}\label{Nqvexp}
\E^{\bi}_{z_0}\left[\exp\left\{\lambda \la N^{\bbeta,\bs w,\bi}_0,N^{\bbeta,\bs w,\bi}_0\ra_{t\wedge T^{\bs w\setminus\{w_\bi\}}_{\eta} }\right\}\right]<\infty ,\quad \forall\;\lambda\in \R,\;t\geq 0.
\end{align}
To obtain \eqref{Nqvexp}, note that 
\begin{align}
\la N^{\bbeta,\bs w,\bi}_0,N^{\bbeta,\bs w,\bi}_0\ra_t&=\la \mathring{N}^{\bbeta,\bs w,\bi}_0,\mathring{N}^{\bbeta,\bs w,\bi}_0\ra_t+2\la \mathring{N}^{\bbeta,\bs w,\bi}_0,\widetilde{N}^{\bbeta,\bs w,\bi}_0\ra_t+\la \widetilde{N}^{\bbeta,\bs w,\bi}_0,\widetilde{N}^{\bbeta,\bs w,\bi}_0\ra_t\notag\\
&\leq \la \mathring{N}^{\bbeta,\bs w,\bi}_0,\mathring{N}^{\bbeta,\bs w,\bi}_0\ra_t+2\la \mathring{N}^{\bbeta,\bs w,\bi}_0,\mathring{N}^{\bbeta,\bs w,\bi}_0\ra_t^{1/2}
\la \widetilde{N}^{\bbeta,\bs w,\bi}_0,\widetilde{N}^{\bbeta,\bs w,\bi}_0\ra_t^{1/2}\notag\\
&\quad +\la \widetilde{N}^{\bbeta,\bs w,\bi}_0,\widetilde{N}^{\bbeta,\bs w,\bi}_0\ra_t\notag\\
&\leq 2\la \mathring{N}^{\bbeta,\bs w,\bi}_0,\mathring{N}^{\bbeta,\bs w,\bi}_0\ra_t+2\la \widetilde{N}^{\bbeta,\bs w,\bi}_0,\widetilde{N}^{\bbeta,\bs w,\bi}_0\ra_t,\label{Nqvexp:1}
\end{align}
where the first inequality uses the Kunita--Watanabe inequality \cite[(1.15) Proposition, p.126]{RY-2}, and the second equality uses the bound $2ab\leq a^2+b^2$ for all $a,b\in \R$. 
A similar argument shows that by the definition \eqref{def:Nwitilde} of $\widetilde{N}^{\bbeta,\bs w,\bi}_0$, 
\begin{align*}
\la \widetilde{N}^{\bbeta,\bs w,\bi}_0,\widetilde{N}^{\bbeta,\bs w,\bi}_0\ra_t&\leq C(N) \sum_{\bj\in \mc E_N\setminus\{\bi\}}\int_0^t \frac{w_{\bj}^2 \widehat{K}_1^{\bbeta,\bj}(s)^2}{|Z^\bj_s|^2 K^{\bbeta,\bs w}_0(s)^2}\d s= C(N)\sum_{\bj\in \mc E_N\setminus\{\bi\}}\int_0^t \frac{w_{\bj}^2 2\beta_\bj K_1^{\bbeta,\bj}(s)^2}{ K^{\bbeta,\bs w}_0(s)^2}\d s\\
&\leq C(N)\sum_{\scriptstyle \bj\in \mc E_N\setminus\{\bi\}\atop \scriptstyle w_\bj>0}\int_0^t \frac{w_{\bj}^2 2\beta_\bj K_1^{\bbeta,\bj}(s)^2}{w_\bj^2 K^{\bbeta,\bj}_0(s)^2}\d s,
\end{align*}
where the equaltiy uses the definition \eqref{def:K1bj} of $\widehat{K}^{\bbeta,\bj}_1$, and 
the last inequality uses the definition \eqref{def:Kbbetaw0} of $K^{\bbeta,\bs w}_0$. By the definition \eqref{def:Twi} of $\Twi_\eta$ and the asymptotic representations \eqref{K0infty} and \eqref{K1infty} of $K_0(\cdot)$ and $K_1(\cdot)$ as $x\to\infty$, the last inequality implies
\begin{align}
\la \widetilde{N}^{\bbeta,\bs w,\bi}_0,\widetilde{N}^{\bbeta,\bs w,\bi}_0\ra_{t\wedge \Twi_\eta}\leq C(N,\bbeta,\bs w,\eta)t,\quad \forall\;t\geq 0.\label{Nqvexp:2}
\end{align}
By \eqref{Nqvexp:1}, \eqref{Nqvexp} follows upon using the exponential moment condition in Proposition~\ref{prop:logsum} (4$\cc$) and \eqref{Nqvexp:2}. The proof is complete.
\end{proof}

\subsection{The first class of strong Markov processes}\label{sec:SDE1}
In this subsection, we begin the proof of Theorem~\ref{thm:main1}. From now on until before Section~\ref{sec:SDEcont}, we use the space $C_{\Bbb C^N}[0,\infty)$ of continuous functions from $[0,\infty)$ to $\Bbb C^N$ as the sample space, and  $\{\ms Z_t\}=\{Z^j_t\}_{1\leq j\leq N}$ refers to the coordinate process of $C_{\Bbb C^N}[0,\infty)$. By convention, $C_{\Bbb C^N}[0,\infty)$ is equipped with the topology of uniform convergence on compacts. Accordingly, we regard the probability measures $\P^\bi$ of the stochastic one-$\delta$ motions as probability measures on $C_{\Bbb C^N}[0,\infty)$ and set 
\[
\F_t^0\,\defeq\,\sigma(\ms Z_s;s\leq t),\quad 0\leq t\leq \infty.
\]
The following definition specifies the first class of strong Markov processes described in Section~\ref{sec:intro}.

\begin{defi}\label{def:SDE1ult}
Fix  $\bbeta\in (0,\infty)^{\mc E_N}$ and nonzero $\bs w\in \R_+^{\mc E_N}$.
For $z_0\in \CNw$, $\P^{\bbeta,\bs w}_{z_0}$ is a probability measure on $(C_{\Bbb C^N}[0,\infty),\B(C_{\Bbb C^N}[0,\infty)))$ defined as follows: for $0\leq F\in \B(C_{\Bbb C^N}[0,\infty))$, 
\begin{align}
\E_{z_0}^{\bbeta,\bs w}[F(\ms Z_{t};t\geq 0)]
&\,\defeq\,\sum_{\bj\in \mc E_N}\frac{w_\bj K^{\bbeta,\bj}_0(0)}{K^{\bbeta,\bs w}_0(0)}\E_{z_0}^{\bj}[\e^{-A^{\bbeta,\bs w,\bj}_0(T^{\bj}_0)}F(\ms Z_{t\wedge T^\bj_0};t\geq 0)].\label{def:Q1ult}
\end{align}
Here, $\CNw$ is defined in \eqref{def:CNw}, $A^{\bbeta,\bs w,\bj}_0(\cdot)$ is defined in \eqref{def:A}, and $T^\bj_\eta$ are stopping times defined in Proposition~\ref{prop:BMntc}. Moreover, since $\mathring{A}^{\bbeta,\bs w,\bj}_0(t)=0$ for all $t\leq T_0^\bj$ under $\P^{\bj}$, $\e^{-A^{\bbeta,\bs w,\bj}_0(T^{\bj}_0)}$ in \eqref{def:Q1ult} is replaceable by $\e^{-\wt{A}^{\bbeta,\bs w,\bj}_0(T^{\bj}_0)}$, where $\mathring{A}^{\bbeta,\bs w,\bj}_0(\cdot)$ and $\wt{A}^{\bbeta,\bs w,\bj}_0(\cdot)$ are defined in  \eqref{def:Aring} and \eqref{def:Atilde}. 
\end{defi}

Proposition~\ref{prop:SDE1} below is the main result of Section~\ref{sec:SDE1}. 

\begin{prop}\label{prop:SDE1}
For any $\bbeta\in (0,\infty)^{\mc E_N}$ and nonzero $\bs w\in \R_+^{\mc E_N}$, the following holds:
\begin{itemize}
\item [\rm (1$\cc$)]  For any $z_0\in \CNw$, $\P_{z_0}^{\bbeta,\bs w}$ in Definition~\ref{def:SDE1ult} indeed defines a probability measure. Moreover, for any $\bi\in \mc E_N$ with $w_\bi>0$ and $0\leq F\in \B(C_{\CN}[0,\infty))$,
\begin{align}\label{Pinhomo:id}
 \E^{\bbeta,\bs w}_{z_0}[F(\ms Z_{t\wedge T_0^{\bs w}};t\geq 0)\e^{A^{\bbeta,\bs w,\bi}_0(T^{\bs w}_0)};T^{\bs w}_0=T^\bi_0]=\frac{w_\bi K_0^{\bbeta,\bi}(0)}{K_0^{\bbeta,\bs w}(0)}\E^{\bi}_{z_0}[F(\ms Z_{t\wedge T_0^{\bi}};t\geq 0)].
 \end{align}
 In particular,
 \begin{align}\label{Pinhomo:idsum}
\sum_{\bi\in \mc E_N\atop w_\bi>0}\frac{w_\bi K_0^{\bbeta,\bi}(0)}{K_0^{\bbeta,\bs w}(0)}\E^{\bi}_{z_0}[\e^{-A^{\bbeta,\bs w,\bi}_0(T^{\bs w}_0)}]=1,\quad \forall\;z_0\in \CNw.
\end{align} 

\item [\rm (2$\cc$)]  The family of probability measures $\{\P^{\bbeta,\bs w}_{z_0};z_0\in  \Bbb C^N_{\bs w\,\sp}\}$ defines $\{\ms Z_t\}$ as a strong Markov process stopped at $T_0^{\bs w}$ with $\P^{\bbeta,\bs w}_{z_0}(\ms Z_0=z_0)=1$ and $\P^{\bbeta,\bs w}_{z_0}(T_0^{\bs w}<\infty)=1$ for all $z_0\in  \Bbb C^N_{\bs w\,\sp}$.

\item [\rm (3$\cc$)] For any $z_0\in \CN_{\bs w\,\sp}$,  
\begin{gather}
\P_{z_0}^{\bbeta,\bs w}\left(
\begin{array}{cc}
\exists\;\bj_1,\bj_2\in \mc E_N\mbox{ with }\bj_1\neq \bj_2\\
\&\; \exists\;0<t\leq T^{\bs w}_0\;\mbox{s.t.}\; Z^{\bj_1}_t=Z^{\bj_2}_t=0
\end{array}
\right)=0.\label{Z1:NTC}
\end{gather}

\item [\rm (4$\cc$)] Under $\P^{\bbeta,\bs w}_{z_0}$ for any $z_0\in \CNw$, $\{Z^j_t\}$ and $\{Z^\bj_t\}_{\bj\in \mc E_N}$ for $0\leq t \leq T_0^{\bs w}$  satisfy the SDEs in \eqref{SDE:Z1final} and~\eqref{SDE:Z1} for a family of independent two-dimensional standard Brownian motions $\{\wt{W}^j_t\}_{1\leq j\leq N}$. The Riemann-integral terms in these SDEs are absolutely integrable with
\begin{align}\label{Lp:Z1drift}
 \E_{z_0}^{\bbeta,\bs w}\biggl[\int_0^{t\wedge T_0^{\bs w}}\biggl(\frac{w_\bj\hK^{\bbeta,\bj}_1(s)}{K_0^{\bbeta,\bs w}(s)|Z^\bj_s|} \biggr)^p\d s\biggr]<\infty,\quad \forall\;t\geq 0,\;1\leq p<2,\;\bj\in \mc E_N.
\end{align}
\end{itemize}
\end{prop}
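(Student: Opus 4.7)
The plan rests on three ingredients already in place: Corollary~\ref{cor:superMG} giving that $\mathcal{E}^{\bbeta,\bs w,\bi}_{z_0}$ is a supermartingale under $\P^\bi_{z_0}$, a true martingale up to $\Twi_\eta$ with exponential-moment control; Proposition~\ref{prop:BMntc}, which tells us that under $\P^\bi$ the only ever-occurring contact is at the pair $\bi$ and it happens in finite time; and the deterministic bound on $\widetilde A^{\bbeta,\bs w,\bi}_0$ from Remark~\ref{rmk:NTC}~(1$\cc$). The weights $w_\bi K_0^{\bbeta,\bi}(0)/K_0^{\bbeta,\bs w}(0)$ form a partition of unity on $\{\bi\in\mc E_N:w_\bi>0\}$, so part (1$\cc$) collapses to proving the single-pair identity $\E^\bi_{z_0}[\e^{-\widetilde A^{\bbeta,\bs w,\bi}_0(T^\bi_0)}]=1$ for each $\bi$ with $w_\bi>0$. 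From Corollary~\ref{cor:superMG} I obtain $\E^\bi[\mathcal{E}^{\bbeta,\bs w,\bi}_{z_0}(T^\bi_0\wedge\Twi_\eta)]=\mathcal{E}^{\bbeta,\bs w,\bi}_{z_0}(0)$; by Proposition~\ref{prop:BMntc}, $T^\bi_0\wedge\Twi_\eta\uparrow T^\bi_0<\infty$ $\P^\bi$-a.s.\ as $\eta\searrow 0$, and at that limiting time $\mathcal{E}^{\bbeta,\bs w,\bi}_{z_0}(T^\bi_0)$ simplifies to $\mathcal{E}^{\bbeta,\bs w,\bi}_{z_0}(0)\e^{-\widetilde A(T^\bi_0)}$ because the ratio $K_0^{\bbeta,\bs w}(T^\bi_0)/[w_\bi K_0^{\bbeta,\bi}(T^\bi_0)]$ equals $1$ by the convention in Proposition~\ref{prop:logsum}~(1$\cc$) (as $\ms Z_{T^\bi_0}\in\CNwi$) and $\mathring A^{\bbeta,\bs w,\bi}_0(T^\bi_0)=0$ because the Markovian local time $L^\bi$ vanishes on $[0,T^\bi_0]$. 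The $\eta\searrow 0$ interchange will be justified via uniform integrability obtained from Proposition~\ref{prop:logsum}~(4$\cc$) combined with the representation $\mathcal{E}=\exp\{N-\tfrac12\la N,N\ra\}$ in \eqref{superMG} and the bound $|\widetilde A|\le Ct$. The identity then yields \eqref{Pinhomo:idsum} as $\sum_\bi w_\bi K_0^{\bbeta,\bi}(0)/K_0^{\bbeta,\bs w}(0)=1$, so $\P^{\bbeta,\bs w}_{z_0}$ in Definition~\ref{def:SDE1ult} has total mass one; and \eqref{Pinhomo:id} drops straight out of that definition because under each $\P^\bj$ the indicator $\1_{T^{\bs w}_0=T^\bi_0}$ collapses the mixture to $\bj=\bi$ (by Proposition~\ref{prop:BMntc} giving $T^{\bs w}_0=T^\bj_0$), after which the $\e^A\e^{-A}$ factors cancel.

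Parts (2$\cc$) and (3$\cc$) then follow cleanly. Finiteness $\P^{\bbeta,\bs w}_{z_0}(T^{\bs w}_0<\infty)=1$ is inherited since each $T^\bj_0<\infty$ $\P^\bj$-a.s., and the NTC~\eqref{Z1:NTC} is immediate because the offending event is $\P^\bj$-null by Proposition~\ref{prop:BMntc} and the mixture preserves null sets. For the strong Markov property at a stopping time $\tau<T^{\bs w}_0$, I would apply \eqref{Pinhomo:id} twice: the identity
\[
\E^{\bbeta,\bs w}_{z_0}[GH]=\sum_{\bi:w_\bi>0}\frac{w_\bi K_0^{\bbeta,\bi}(0)}{K_0^{\bbeta,\bs w}(0)}\,\E^\bi_{z_0}\!\left[G\,\E^\bi_{\ms Z_\tau}[\e^{-A^{\bbeta,\bs w,\bi}_0(T^\bi_0)}H]\right]
\]
lets one reassemble the inner expectation back into $\E^{\bbeta,\bs w}_{\ms Z_\tau}[\cdot]$ by pushing $K_0^{\bbeta,\bs w}(\ms Z_\tau)/[w_\bi K_0^{\bbeta,\bi}(\ms Z_\tau)]$ through the sum, which is exactly the multi-dimensional Esscher step underlying Definition~\ref{def:SDE1ult}.

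For part (4$\cc$) I would read each summand in \eqref{def:Q1ult} as a Girsanov change of measure. By the second equality of \eqref{superMG}, the density $\mathcal{E}^{\bbeta,\bs w,\bj}_{z_0}/\mathcal{E}^{\bbeta,\bs w,\bj}_{z_0}(0)$ is the Dol\'eans exponential of $N^{\bbeta,\bs w,\bj}_0$, so under the tilted measure each driver $W^k$ of the $\P^\bj$-SDE \eqref{def:ZSDE2-2} gets shifted to $\widetilde W^k_t=W^k_t-\la W^k,N^{\bbeta,\bs w,\bj}_0\ra_t$. Computing the cross-variations from \eqref{def:Bj-2} and \eqref{covar:UV-2} and substituting the explicit form \eqref{def:Nwiring}--\eqref{def:Nwitilde} of $N^{\bbeta,\bs w,\bj}_0$ rewrites the single $\bj$-term of the drift in \eqref{def:ZSDE2-2} into the full sum over $\bk\in\mc E_N$ and upgrades the denominator from $w_\bj K_0^{\bbeta,\bj}$ to $K_0^{\bbeta,\bs w}$; independence of $\{\widetilde W^j\}$ and their standard-Brownian property come from Girsanov applied to the full vector of drivers. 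The $L^p$-bound~\eqref{Lp:Z1drift} reduces via \eqref{Pinhomo:id} to a $\P^\bi$-bound: for $\bj\ne\bi$ the integrand is deterministically bounded on compacts by Proposition~\ref{prop:BMntc}, while for $\bj=\bi$ the denominator dominates $w_\bi K_0^{\bbeta,\bi}$ and the asymptotics \eqref{K00}, \eqref{K10} reduce the question to local integrability near contact of $1/(|Z^\bi_s|\log|Z^\bi_s|^{-1})^p$, which holds for $p<2$ by the occupation density of $\{|Z^\bi_s|\}\sim\BES(0,\beta_\bi\da)$.

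The hard part, as I see it, will be the $\eta$-uniform-integrability step in part (1$\cc$) that closes $\mathcal{E}^{\bbeta,\bs w,\bi}_{z_0}$ at $T^\bi_0$ as a true martingale: Corollary~\ref{cor:superMG} only supplies the martingale property up to $\Twi_\eta$, and a bare Fatou argument would yield only the supermartingale inequality $\E^\bi[\mathcal{E}(T^\bi_0)]\le\mathcal{E}(0)$ rather than the equality on which the total-mass identity \eqref{Pinhomo:idsum}—and therefore the very well-posedness of Definition~\ref{def:SDE1ult}—depends. Extracting the required $\eta$-uniform bound from the quantitative exponential-moment control of Proposition~\ref{prop:logsum}~(4$\cc$), paired with the exponential tail of $T^\bi_0$ under $\P^\bi$ accessible through Donati-Martin--Yor's normalization \eqref{def:DYLT-2}, is the delicate heart of the argument; once that is secured, everything downstream is essentially mechanical.
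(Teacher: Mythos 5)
There is a genuine gap in your treatment of part~(1$\cc$). You propose to establish \eqref{Pinhomo:idsum} by proving the single-pair identity $\E^\bi_{z_0}[\e^{-\widetilde A^{\bbeta,\bs w,\bi}_0(T^\bi_0)}]=1$, which you would extract by closing the exponential local martingale $\mathcal E^{\bbeta,\bs w,\bi}_{z_0}$ at $T^\bi_0$. But your simplification of $\mathcal E^{\bbeta,\bs w,\bi}_{z_0}(T^\bi_0)$ is wrong: from \eqref{superMG} and the convention in Proposition~\ref{prop:logsum}~(1$\cc$), when $\ms Z_{T^\bi_0}\in\CNwi$ the ratio $K_0^{\bbeta,\bs w}(T^\bi_0)/[w_\bi K_0^{\bbeta,\bi}(T^\bi_0)]$ becomes~$1$, so
\begin{align*}
\mathcal E^{\bbeta,\bs w,\bi}_{z_0}(T^\bi_0)=\frac{w_\bi K_0^{\bbeta,\bi}(0)}{K_0^{\bbeta,\bs w}(0)}\,\e^{-\widetilde A^{\bbeta,\bs w,\bi}_0(T^\bi_0)},
\end{align*}
and since $\mathcal E^{\bbeta,\bs w,\bi}_{z_0}(0)=1$ the front factor is \emph{not} $\mathcal E^{\bbeta,\bs w,\bi}_{z_0}(0)$, contrary to your claim. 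Consequently, a true martingale closure $\E^\bi_{z_0}[\mathcal E^{\bbeta,\bs w,\bi}_{z_0}(T^\bi_0)]=1$ would force $\frac{w_\bi K_0^{\bbeta,\bi}(0)}{K_0^{\bbeta,\bs w}(0)}\E^\bi_{z_0}[\e^{-\widetilde A^{\bbeta,\bs w,\bi}_0(T^\bi_0)}]=1$ for every $\bi$ with $w_\bi>0$; summing over those $\bi$ gives $\#\{\bi:w_\bi>0\}$ for the left-hand side of \eqref{Pinhomo:idsum}, which contradicts the desired value~$1$ as soon as two or more weights are positive. So the martingale \emph{cannot} close at $T^\bi_0$ as a uniformly integrable martingale; the strict supermartingale loss is exactly what makes \eqref{Pinhomo:idsum} come out right, and it is not captured by either of your two (mutually incompatible) candidate identities. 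The $\eta$-uniform integrability you flag as ``the delicate heart'' is not merely hard to establish — it is false, and the ``once that is secured, everything downstream is essentially mechanical'' framing is therefore misplaced.

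The paper bypasses this completely. It first treats the $\bs w$-homogeneous case (Lemma~\ref{lem:SDE1}), where $\widetilde A^{\bbeta,\bs w,\bj}_0\equiv 0$ by Remark~\ref{rmk:NTC}~(1$\cc$), so $\e^{-A^{\bbeta,\bs w,\bj}_0(T^\bj_0)}=1$ on $[0,T^\bj_0]$ under $\P^\bj$ and \eqref{def:Q1ult} is a probability measure simply because the weights $w_\bj K_0^{\bbeta,\bj}(0)/K_0^{\bbeta,\bs w}(0)$ sum to one — no martingale closure at $T^\bj_0$ enters at all. The inhomogeneous case is then produced from a homogeneous reference measure $\P^{\widetilde\bbeta,\bs w}_{z_0}$ by a change of measure with density $\mathcal E^{\bbeta,\bs w,\bi}_{z_0}(T^{\bs w}_0)/\mathcal E^{\widetilde\bbeta,\bs w,\bi}_{z_0}(T^{\bs w}_0)$, in which the singular $K_0$-factors cancel and only the bounded ratio $\e^{-\widetilde A^{\bbeta,\bs w,\bi}_0(T^{\bs w}_0)}$ controlled by Remark~\ref{rmk:NTC}~(1$\cc$) survives; the total mass is then checked by dominated convergence under the tail bound $\E^{\widetilde\bbeta,\bs w}_{z_0}[\e^{q T^{\bs w}_0}]<\infty$ for $q<\widetilde\beta$, not by optional stopping of the local martingale at $T^\bi_0$. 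You would need to rebuild part~(1$\cc$) along those lines. The other parts of your proposal — the mixture-preserves-null-sets argument for (3$\cc$), the Girsanov shift for (4$\cc$), and the reassembly step for the strong Markov property — are broadly compatible in direction with the paper's proofs (the paper routes the strong Markov property and the $L^p$ bound through the Brownian reference measure $\P^{(0)}$ and Lemma~\ref{lem:Qtight}, respectively), though they, too, would need the corrected (1$\cc$) as input to change measure between $\P^\bi_{z_0}$ and $\P^{\bbeta,\bs w}_{z_0}$.
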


We will prove Proposition~\ref{prop:SDE1} (1$\cc$)--(3$\cc$) and Proposition~\ref{prop:SDE1} (4$\cc$) separately.
For (1$\cc$)--(3$\cc$), the major property to be proved is that Definition~\ref{def:SDE1ult} indeed defines $\P^{\bbeta,\bs w}_{z_0}$ as a probability measure since the presence of $\e^{-A^{\bbeta,\bs w,\bj}_0(T^{\bj}_0)}=\e^{-\wt{A}^{\bbeta,\bs w,\bj}_0(T^{\bj}_0)}$ should make it unclear why the right-hand side of \eqref{def:Q1} has a total mass equal to $1$. The trivial case, however, is when $\wt{A}_0^{\bbeta,\bs w,\bj}(T_0^\bj)=0$ under $\P^{\bj}_{z_0}$ for all $\bj\in \mc E_N$, so Remark~\ref{rmk:NTC} (1$\cc$) suggests the following definition:

\begin{defi}\label{def:homo}
Let $\bs \beta\in (0,\infty)^{\mc E_N}$ and let $\bs w\in \R_+^{\mc E_N}$ be nonzero.
The set $\bbeta$ is {\bf $\bs w$-homogeneous} if for some $\beta>0$, $\beta_\bj=\beta$ for all $\bj\in \mc E_N$ with $w_\bj>0$ and is {\bf $\bs w$-inhomogeneous} otherwise.  
\end{defi}

Lemma~\ref{lem:SDE1} below specifies the first set of properties of $\P_{z_0}^{\bbeta,\bs w}$ from Definition~\ref{def:SDE1ult} when $\bbeta$ is $\bs w$-homogeneous. 
 
\begin{lem}\label{lem:SDE1}
Let $\bs w\in \R_+^{\mc E_N}$ be nonzero and $z_0\in \CNw$. \medskip

\noindent {\rm (1$\cc$)} For any given $\bs w$-homogeneous
 $\bbeta\in (0,\infty)^{\mc E_N}$,  $\P^{\bbeta,\bs w}_{z_0}$ defines a probability measure on $(C_{\Bbb C^N}[0,\infty),\B(C_{\Bbb C^N}[0,\infty)))$ with 
\begin{align}
\E_{z_0}^{\bbeta,\bs w}[F(\ms Z_{t};t\geq 0)]
=\sum_{\bj\in \mc E_N}\frac{w_\bj K^{\bbeta,\bj}_0(0)}{K^{\bbeta,\bs w}_0(0)}\E_{z_0}^{\bj}[F(\ms Z_{t\wedge T^\bj_0};t\geq 0)]\label{def:Q1}
\end{align}
for any $0\leq F\in \B(C_{\Bbb C^N}[0,\infty))$. Moreover, with $T^{\bs w}_\eta$ defined in \eqref{def:Tw},
 \begin{align}
 &\P^{\bbeta,\bs w}_{z_0}(T_0^{\bs w}<\infty)=1,\label{T0:finite}\\
 &\P^{\bbeta,\bs w}_{z_0}(\lim_{\eta\searrow 0}\ua T^{\bs w}_\eta= T^{\bs w}_0)=1,\label{T0:approx}\\
& \P^{\bbeta,\bs w}_{z_0}(\ms Z_t=\ms Z_{t\wedge T^{\bs w}_{0}},\;\forall\;t\geq 0)=1,\label{T0:stopped}\\
& \P_{z_0}^{\bbeta,\bs w}\left(
\begin{array}{cc}
\exists\;\bj_1,\bj_2\in \mc E_N\mbox{ with }\bj_1\neq \bj_2\\
\&\; \exists\;0<t\leq T^{\bs w}_0\;\mbox{s.t.}\; Z^{\bj_1}_t=Z^{\bj_2}_t=0
\end{array}
\right)=0,\label{Z1:NTC0}\\
&\E_{z_0}^{\bbeta,\bs w}[F(\ms Z_{t\wedge T^{\bs w}_0};t\geq 0);T^{\bs w}_0=T_0^\bi]=\frac{w_\bi K^{\bbeta,\bi}_0(0)}{K^{\bbeta,\bs w}_0(0)}\E_{z_0}^{\bi}[F(\ms Z_{t\wedge T^\bi_0};t\geq 0)],\label{cov:3}
\end{align}
where $0\leq F\in \B(C_{\CN}[0,\infty))$.\medskip 

\noindent {\rm (2$\cc$)} Let 
 $\bbeta\in (0,\infty)^{\mc E_N}$ be  $\bs w$-homogeneous.
For any $\bi\in \mc E_N$ with $w_\bi>0$, $\eta\in (0,\min_{\bk:w_\bk>0}|z^\bk_0|)$, $(\F_t^0)$-stopping time $\tau$ satisfying $\tau\leq  T^{\bs w}_{\eta}$, $H\in \F^0_\tau$, and $0\leq F\in \B(C_{\Bbb C^N}[0,\infty))$,  we have
\begin{align}
\E_{z_0}^{\bbeta,\bs w}[F(\ms Z_{t\wedge \tau};t\geq 0);H]
&=\E^{(0)}_{z_0}\left[\frac{\e^{-\beta \tau} K^{\bbeta,\bs w}_0(\tau)}{K^{\bbeta,\bs w}_0(0) }F(\ms Z_{t\wedge \tau};t\geq 0);H\right]\label{eq:Tvep0} \\
&=\frac{w_\bi K^{\bbeta,\bi}_0(0)}{ K^{\bbeta,\bs w}_0(0)}\E_{z_0}^{\bi}\biggl[F(\ms Z_{t\wedge \tau};t\geq 0)\frac{ 
K^{\bbeta,\bs w}_0(\tau)
}{w_\bi K^{\bbeta,\bi}_0(\tau)};H\biggr].\label{eq:Tvep}
\end{align}

\noindent {\rm (3$\cc$)} Let $\bbeta\in (0,\infty)^{\mc E_N}$  and $\widetilde{\bbeta}\in (0,\infty)^{\mc E_N}$ be such that  $\widetilde{\beta}_\bj=\widetilde{\beta}$ for all $\bj$ with $w_\bj>0$,
 \begin{align}\label{cond:homo}
 \wt{\beta}>\max_{\bj:w_\bj>0}\beta_\bj-\min_{\bj:w_\bj>0}\beta_\bj,
 \end{align}
and $\bbeta$ is not required to be $\bs w$-homogeneous. Then for any bounded continuous $F$,
\begin{align}\label{inhomo:1}
\begin{split}
&\lim_{\eta\searrow 0}\E^{\widetilde{\bbeta},\bs w}_{z_0}[F(\ms Z_{t\wedge T^{\bs w}_\eta};t\geq 0)\mathcal E^{\bbeta,\bs w,\bi}_{z_0}(T^{\bs w}_\eta)\mathcal E^{\widetilde{\bbeta},\bs w,\bi}_{z_0}(T^{\bs w}_\eta)^{-1}]\\
&\quad\;=\E^{\widetilde{\bbeta},\bs w}_{z_0}[F(\ms Z_{t\wedge T^{\bs w}_0};t\geq 0)\mathcal E^{\bbeta,\bs w,\bi}_{z_0}(T^{\bs w}_0)\mathcal E^{\widetilde{\bbeta},\bs w,\bi}_{z_0}(T^{\bs w}_0)^{-1}].
\end{split}
\end{align}
Here, the right-hand side uses the following strictly positive random variable under $\P^{\widetilde{\bbeta},\bs w}_{z_0}$: 
\begin{align}\label{ER:lim}
\begin{split}
&\mathcal E^{\bbeta,\bs w,\bi}_{z_0}(T^{\bs w}_0)\mathcal E^{\widetilde{\bbeta},\bs w,\bi}_{z_0}(T^{\bs w}_0)^{-1}\\&\quad\;\,\defeq\,\lim_{t\nearrow T_0^{\bs w}}\left.\left(
\frac{w_\bi K_0^{\bbeta,\bi}(0)}{K_0^{\bbeta,\bs w}(0)}
\frac{\e^{-\widetilde{A}^{\bbeta,\bs w,\bi}_0(t)} K_0^{\bbeta,\bs w}(t)}{w_\bi K_0^{\bbeta,\bi}(t)}\right)\right/\left(\frac{w_\bi K_0^{\widetilde{\bbeta},\bi}(0)}{K_0^{\widetilde{\bbeta},\bs w}(0)}
\frac{K_0^{\widetilde{\bbeta},\bs w}(t)}{w_\bi K_0^{\widetilde{\bbeta},\bi}(t)}\right).
\end{split}
\end{align}
Moreover, for any $\eta\in (0,\min_{\bj;w_\bj>0}|z^\bj_0|)$, $(\F_t^0)$-stopping time $\tau$ with $\tau\leq T^{\bs w}_{\eta}$ and $0\leq F\in \B(C_{\Bbb C^N}[0,\infty))$,  
\begin{align}
&\quad\;\E^{\widetilde{\bbeta},\bs w}_{z_0}[F(\ms Z_{t\wedge \tau};t\geq 0)\mathcal E^{\bbeta,\bs w,\bi}_{z_0}(T^{\bs w}_0)\mathcal E^{\widetilde{\bbeta},\bs w,\bi}_{z_0}(T^{\bs w}_0)^{-1}]\notag\\
&=\E^{\bi}_{z_0}[F(\ms Z_{t\wedge \tau};t\geq 0)\mathcal E^{\bbeta,\bs w,\bi}_{z_0}(\tau)]\label{inhomo:2}\\
&=\E^{(0)}_{z_0}\Biggl[F(\ms Z_{t\wedge \tau};t\geq 0)\exp\Biggl\{-\int_0^{\tau} \sum_{\bj\in \mathcal E_N}\beta_\bj\frac{w_\bj  K^{\bbeta,\bj}_0(s)}{K^{\bbeta,\bs w}_0(s)}\d s\Biggr\}\frac{K^{\bbeta,\bs w}_0(\tau)}{K_0^{\bbeta,\bs w}(0)}\Biggr].\label{inhomo:3}
\end{align}
In particular, by passing $\eta\searrow 0$ for both sides of \eqref{inhomo:3} with $\tau=T_\eta^{\bs w}$,
 the limit in \eqref{inhomo:1} is independent of $\widetilde{\bbeta}$ and $\bi\in \mc E_N$ with $w_\bi>0$. 
\end{lem}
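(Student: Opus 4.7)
The plan is to reduce all three parts to properties of a single stochastic one-$\delta$ motion $\P^\bi$ via the Radon--Nikod\'ym relation
\[
\frac{\d\P^\bi}{\d\P^{(0)}}\bigg|_{\F^0_\tau}=\frac{\e^{-\beta_\bi\tau}K_0^{\bbeta,\bi}(\tau)}{K_0^{\bbeta,\bi}(0)},\qquad \tau\leq T^\bi_0,
\]
which defines $\P^\bi$ from the planar Brownian measure $\P^{(0)}$ (cf.\ \eqref{DY:comapp}). For Part~(1$\cc$), when $\bbeta$ is $\bs w$-homogeneous Remark~\ref{rmk:NTC}~(1$\cc$) gives $\widetilde A^{\bbeta,\bs w,\bj}_0\equiv 0$, and $z_0\in\CNw$ forces $L^\bj_s=0$ on $[0,T^\bj_0]$, hence $\mathring A^{\bbeta,\bs w,\bj}_0(T^\bj_0)=0$; so Definition~\ref{def:SDE1ult} collapses to the convex combination \eqref{def:Q1}. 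The weights $w_\bj K_0^{\bbeta,\bj}(0)/K_0^{\bbeta,\bs w}(0)$ are nonnegative and sum to $1$ by the very definition of $K_0^{\bbeta,\bs w}(0)$, giving a probability measure. Then \eqref{T0:finite}--\eqref{cov:3} each reduce to a statement about a single $\P^\bj$: Proposition~\ref{prop:BMntc} gives $T^{\bs w}_0=T^\bj_0<\infty$ a.s.\ under $\P^\bj$, and continuity of $|Z^\bk|$ on $[0,T^\bj_0]$ yields $\inf_{s\leq T^\bj_0}|Z^\bk_s|>0$ for $\bk\neq\bj$, providing both \eqref{T0:approx} (as eventually $T^{\bs w}_\eta=T^\bj_\eta\nearrow T^\bj_0$) and the no-simultaneous-contacts \eqref{Z1:NTC0}. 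Property \eqref{T0:stopped} is built into \eqref{def:Q1}, and \eqref{cov:3} is immediate since $T^\bi_0=\infty\neq T^{\bs w}_0$ under $\P^\bj$ for $\bj\neq\bi$, so only the $\bj=\bi$ summand contributes on $\{T^{\bs w}_0=T^\bi_0\}$.

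For Part~(2$\cc$), $\tau\leq T^{\bs w}_\eta<T^\bj_0$ for every $\bj$ with $w_\bj>0$. Substituting $\d\P^\bj/\d\P^{(0)}$ into each summand of \eqref{def:Q1} cancels the $K_0^{\bbeta,\bj}(0)$'s, and the weighted sum collapses into $K_0^{\bbeta,\bs w}(\tau)=\sum_\bj w_\bj K_0^{\bbeta,\bj}(\tau)$, yielding \eqref{eq:Tvep0}. For \eqref{eq:Tvep}, multiply and divide the integrand by $w_\bi K_0^{\bbeta,\bi}(\tau)$ and apply the reverse change of measure $\P^{(0)}\to\P^\bi$ on $\F^0_\tau$.

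Part~(3$\cc$) is the substantive part, organized in three steps. \emph{Step~A.} RHS of \eqref{inhomo:2} $=$ \eqref{inhomo:3}: apply $\P^\bi\to\P^{(0)}$ to $\E^\bi_{z_0}[F\cdot\mathcal E^{\bbeta,\bs w,\bi}_{z_0}(\tau)]$, substitute the explicit form of $\mathcal E^{\bbeta,\bs w,\bi}_{z_0}(\tau)$ from Corollary~\ref{cor:superMG}, and use $\mathring A^{\bbeta,\bs w,\bi}_0(\tau)=0$ (since $\tau<T^\bi_0$) to rewrite $A^{\bbeta,\bs w,\bi}_0(\tau)+\beta_\bi\tau$ as $\int_0^\tau\sum_\bj\beta_\bj w_\bj K_0^{\bbeta,\bj}(s)/K_0^{\bbeta,\bs w}(s)\,\d s$. \emph{Step~B.} The $\tau$-stopped analogue of \eqref{inhomo:2} follows by Bayes: since $\widetilde{\bbeta}$ is $\bs w$-homogeneous, Part~(2$\cc$) identifies $\d\P^{\widetilde\bbeta,\bs w}_{z_0}/\d\P^\bi|_{\F^0_\tau}$ with $\mathcal E^{\widetilde\bbeta,\bs w,\bi}_{z_0}(\tau)$, so by direct cancellation $\E^{\widetilde\bbeta,\bs w}_{z_0}[G\cdot \mathcal E^{\bbeta,\bs w,\bi}_{z_0}(\tau)/\mathcal E^{\widetilde\bbeta,\bs w,\bi}_{z_0}(\tau)]=\E^\bi_{z_0}[G\cdot\mathcal E^{\bbeta,\bs w,\bi}_{z_0}(\tau)]$ for any $G\in\F^0_\tau$. \emph{Step~C.} Specialize $\tau=T^{\bs w}_\eta$ and pass $\eta\searrow 0$. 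The NTC from Part~(1$\cc$) (applied under $\P^{\widetilde\bbeta,\bs w}_{z_0}$) isolates a unique $\bk$ with $|Z^\bk_{T^{\bs w}_0}|=0$, and the asymptotic~\eqref{K00} then makes the four $K_0$-factors in \eqref{ER:lim} balance to a finite positive limit, treating the cases $\bk=\bi$ and $\bk\neq\bi$ separately; this proves existence of the limit in \eqref{ER:lim}. For the expectations, Proposition~\ref{prop:logsum}~(4$\cc$) gives that $\mathcal E^{\bbeta,\bs w,\bi}_{z_0}$ stopped at $T^{\bs w\setminus\{w_\bi\}}_\eta\geq T^{\bs w}_\eta$ is an $L^2$-martingale under $\P^\bi$, so $\E^\bi_{z_0}[\mathcal E^{\bbeta,\bs w,\bi}_{z_0}(T^{\bs w}_\eta)]=\mathcal E^{\bbeta,\bs w,\bi}_{z_0}(0)$ is constant in $\eta$; by Step~B, the positive $\P^{\widetilde\bbeta,\bs w}_{z_0}$-martingale $\mathcal E^{\bbeta,\bs w,\bi}_{z_0}/\mathcal E^{\widetilde\bbeta,\bs w,\bi}_{z_0}$ stopped at $T^{\bs w}_\eta$ thus has constant $L^1$-norm, hence is uniformly integrable on $[0,T^{\bs w}_0)$, and dominated convergence gives \eqref{inhomo:1} for bounded continuous $F$.

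The main obstacle lies in Step~C: as $t\nearrow T^{\bs w}_0$ individual $K_0$-factors in \eqref{ER:lim} blow up while their ratio stays bounded, and the pointwise convergence must be paired with integrability uniform in~$\eta$. Condition~\eqref{cond:homo} on $\widetilde\beta$ is precisely what delivers the uniform domination: from Remark~\ref{rmk:NTC}~(1$\cc$) one has $\widetilde A^{\bbeta,\bs w,\bi}_0(t)\geq (\min_\bj\beta_\bj-\beta_\bi)t$, so $\e^{-\widetilde A^{\bbeta,\bs w,\bi}_0(t)}$ grows at most exponentially with rate $\beta_\bi-\min_\bj\beta_\bj\leq\max_\bj\beta_\bj-\min_\bj\beta_\bj<\widetilde\beta$, while \eqref{def:DYLT-2} applied branchwise under $\P^{\widetilde\bbeta,\bs w}_{z_0}$ gives $\E^{\widetilde\bbeta,\bs w}_{z_0}[\e^{\lambda T^{\bs w}_0}]<\infty$ for every $\lambda<\widetilde\beta$. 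Together they control the ratio $\mathcal E^{\bbeta,\bs w,\bi}_{z_0}/\mathcal E^{\widetilde\bbeta,\bs w,\bi}_{z_0}$ uniformly up to $T^{\bs w}_0$, enabling the dominated-convergence passage to the limit and giving the asserted independence of the limit of $\widetilde\bbeta$ and $\bi$ after \eqref{inhomo:3}.
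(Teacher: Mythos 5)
Your proposal follows the paper's route for all three parts, with only a minor reordering in Part~(3$\cc$) and one sentence that does not quite work as stated.

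Parts~(1$\cc$) and~(2$\cc$) are handled exactly as in the paper: the convex-combination structure of Definition~\ref{def:SDE1ult}, Proposition~\ref{prop:BMntc} to reduce everything to a single $\P^{\bj}$, and the change-of-measure identity $\d\P^{\bj}/\d\P^{(0)}|_{\F^0_\tau}=\e^{-\beta_\bj\tau}K_0^{\bbeta,\bj}(\tau)/K_0^{\bbeta,\bj}(0)$ for $\tau\leq T^{\bs w}_\eta$, summed over $\bj$ to collapse into $K_0^{\bbeta,\bs w}(\tau)$.

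For Part~(3$\cc$), the substantive content matches the paper, but one link in your Step~C is circular as phrased. You claim that because $\{\mathcal E^{\bbeta,\bs w,\bi}_{z_0}(T^{\bs w}_\eta)/\mathcal E^{\widetilde\bbeta,\bs w,\bi}_{z_0}(T^{\bs w}_\eta)\}_{\eta}$ has constant $L^1$-norm under $\P^{\widetilde\bbeta,\bs w}_{z_0}$, it is uniformly integrable. That implication is false in general (e.g.\ $X_n=n\1_{[0,1/n]}$ on $[0,1]$), and in the present setting it would require knowing $\E[\lim_\eta X_\eta]=1$ beforehand, which is essentially the content of~\eqref{inhomo:1} that you are trying to prove — so you cannot get uniform integrability for free from the constancy of the mean. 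Fortunately, the very next paragraph of your write-up supplies the correct ingredient and matches what the paper actually does: the pointwise bound $0\leq R_\bi(t)\leq C(\bbeta,\widetilde\bbeta)\e^{(\beta_\bi-\min_{\bj:w_\bj>0}\beta_\bj)t}$ (which needs the $K_0$-ratio bound in the spirit of~\eqref{bdd:Kratio} to absorb the four $K_0$-factors, not only the asymptotics~\eqref{K00} at the contact pair), paired with the exponential moment $\E^{\widetilde\bbeta,\bs w}_{z_0}[\e^{\lambda T^{\bs w}_0}]<\infty$ for $\lambda<\widetilde\beta$ — this is where~\eqref{cond:homo} enters — yields a dominating integrable envelope, and dominated convergence gives~\eqref{inhomo:1}. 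Once~\eqref{inhomo:1} is in hand, your Step~B cancellation plus optional stopping of the bounded martingale $\{\mathcal E^{\bbeta,\bs w,\bi}_{z_0}(t\wedge T^{\bs w}_\eta)\}$ gives~\eqref{inhomo:2}, and your Step~A gives~\eqref{inhomo:3}; this is the paper's order, just restated. Two small citation notes: the exponential moment of $T^{\bs w}_0$ comes from the explicit PDF of $T^\bi_0$ under $\P^{\bi}$ rather than from the local-time normalization~\eqref{def:DYLT-2}; and the martingale property of the stopped $\mathcal E^{\bbeta,\bs w,\bi}_{z_0}$ is Corollary~\ref{cor:superMG} rather than Proposition~\ref{prop:logsum}~(4$\cc$) itself.
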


Note that \eqref{cov:3} is a particular case of \eqref{Pinhomo:id}, and the existence and strict positivity of the limit in \eqref{ER:lim} can be seen by using \eqref{Z1:NTC0} and a modification of Remark~\ref{rmk:NTC} (2$\cc$)-(a). Also, the proof of Proposition~\ref{prop:SDE1} (1$\cc$) below will use Lemma~\ref{lem:SDE1} (3$\cc$) to show that the measures defined by \eqref{def:Q1ult} in the inhomogeneous case can be obtained from the probability measures in the homogeneous case by changing measures, and so, indeed define probability measures.   \medskip

\begin{proof}[Proof of Lemma~\ref{lem:SDE1}]
{\bf (1$\cc$)} First,  \eqref{T0:finite} holds since for any $\bj\in \mc E_N$ with $w_\bj>0$, $\P^{\bj}_{z_0}(T^{\bs w}_{0}<\infty)\geq \P^{\bj }_{z_0}(T^\bj_0<\infty)=1$ \cite[(2.9), p.884]{DY:Krein-2}. Also, \eqref{T0:approx} can be deduced from the fact that $\{\bj\in \mc E_N;w_\bj>0\}$ is a finite set. The remaining properties can be obtained from
Proposition~\ref{prop:BMntc}: $\P^\bi_{z_0}(T^{\bs w}_0=T^{\bi}_0<T^\bj_0=\infty)=1$ for any $\bi\neq \bj$ with $w_\bi>0$ so \eqref{T0:stopped} holds by \eqref{def:Q1}, and \eqref{Z1:NTC0} and \eqref{cov:3} are immediate. \medskip  
  
\noindent {\bf (2$\cc$)}  Let $\eta\in (0,\min_{\bk:w_\bk>0}|z^\bk_0|)$, $F$ be bounded continuous, $\tau$  be an $(\F^0_t)$-stopping time with $\tau \leq T^{\bs w}_\eta$, and $H\in \F^0_\tau$. Note that 
for $\bj\in \mc E_N$ with $w_\bj>0$, since $\P^{\bj}_{z_0}(T^{\bs w}_\eta\leq T^\bj_\eta<T^\bj_0)=1$, applying 
 \cite[(2.7)]{C:SDBG1-2} as in \eqref{DY:comapp} gives
 \begin{align}
&\quad\;\E^{\bj}_{z_0}[ F(\ms Z_{t\wedge \tau};t\geq 0);H]
 =\E^{\bj}_{z_0}[F(\ms Z_{t\wedge \tau};t\geq 0) ;H\cap\{\tau<T_0^\bj\}]\notag\\
 &=\lim_{n\to\infty}\ua \sum_{k=1}^\infty  \E^{\bj}_{z_0}\left[F(\ms Z_{t\wedge \tau};t\geq 0);H\cap\left\{\frac{k-1}{2^n}\leq \tau<\frac{k}{2^n}<T^\bj_0\right\}\right]\notag\\
 &=
\lim_{n\to\infty}\ua \sum_{k=1}^\infty\E^{(0)}_{z_0}\left[\frac{\e^{-\beta_\bj \frac{k}{2^n}}K^{\bbeta,\bj}_0(\tfrac{k}{2^n})}{K_0^{\bbeta,\bj}(0)}F(\ms Z_{t\wedge \tau};t\geq 0);H\cap\left\{\frac{k-1}{2^n}\leq \tau<\frac{k}{2^n}\right\}\right]\notag\\
&=\E^{(0)}_{z_0}\left[\frac{\e^{-\beta_\bj \tau}K_0^{\bbeta,\bj}(\tau)}{K^{\bbeta,\bj}_0(0)}F(\ms Z_{t\wedge \tau};t\geq 0);H\right],\label{SDE1:BM-1000}
\end{align}
where the last equality uses dominated convergence since $\tau\leq T^{\bs w}_\eta$ by assumption and $K_0$ is bounded continuous on $[\sqrt{2\beta_\bj}\eta,\infty)=[\sqrt{2\beta}\eta,\infty)$. Note that \eqref{SDE1:BM-1000} does not require the homogeneity of $\bbeta$, though. 
  
Now, to see  \eqref{eq:Tvep0},  
we use \eqref{def:Q1} and \eqref{SDE1:BM-1000} in the same order:
\begin{align}
\E_{z_0}^{\bbeta,\bs w}[F(\ms Z_{t\wedge \tau};t\geq 0);H]
&=\sum_{\bj\in \mc E_N}\frac{w_\bj K^{\bbeta,\bj}_0(0)}{K^{\bbeta,\bs w}_0(0)}\E_{z_0}^{\bj}[ F(\ms Z_{t\wedge \tau};t\geq 0);H]\notag\\
&=\sum_{\bj\in \mc E_N}\frac{w_\bj}{K^{\bbeta,\bs w}_0(0)}\E^{(0)}_{z_0}\left[\e^{-\beta\tau} K^{\bbeta,\bj}_0(\tau)F(\ms Z_{t\wedge \tau};t\geq 0);H\right]\notag\\
&=\E^{(0)}_{z_0}\left[\frac{\e^{-\beta\tau} K^{\bbeta,\bs w}_0(\tau)}{K^{\bbeta,\bs w}_0(0) }F(\ms Z_{t\wedge \tau};t\geq 0);H\right],\label{SDE1:BM}
\end{align}
as required. Also, the second required equality \eqref{eq:Tvep} follows upon applying \eqref{SDE1:BM-1000} with $\bj=\bi$ for $\bi\in \mc E_N$ with $w_\bi>0$:
\begin{align*}
\E^{(0)}_{z_0}\left[\frac{\e^{-\beta\tau}K^{\bbeta,\bs w}_0(\tau)}{K^{\bbeta,\bs w}_0(0) }F(\ms Z_{t\wedge \tau};t\geq 0);H\right]
=\frac{w_\bi K^{\bbeta,\bi}_0(0)}{ K^{\bbeta,\bs w}_0(0)}\E_{z_0}^{\bi}\biggl[F(\ms Z_{t\wedge \tau};t\geq 0)\frac{ 
K^{\bbeta,\bs w}_0(\tau)
}{w_\bi K^{\bbeta,\bi}_0(\tau)};H\biggr].
\end{align*}
 
 \noindent {\bf (3$\cc$)} We first show \eqref{inhomo:1}.
Let us begin by making two observations for
\begin{align*}
R_\bi(t)&\,\defeq\, \mathcal E^{\bbeta,\bs w,\bi}_{z_0}(t)\mathcal E^{\widetilde{\bbeta},\bs w,\bi}_{z_0}(t)^{-1},
\quad t<T_0^{\bs w},
\end{align*}
under $\P^{\widetilde{\bbeta},\bs w}_{z_0}$. First, by Remark~\ref{rmk:NTC} (1$\cc$), 
\begin{align*}
0\leq R_\bi(t)&\leq C(\bbeta,\widetilde{\bbeta})\exp\left\{-\left(\min_{\bj:w_\bj>0}\beta_\bj-\beta_\bi\right)t\right\}\frac{K_0^{\bbeta,\bs w}(t)}{K_0^{\widetilde{\bbeta},\bs w}(t)}\frac{K_0^{\widetilde{\bbeta},\bi}(t)}{K_0^{\bbeta,\bi}(t)}\\
&\leq C(\bbeta,\widetilde{\bbeta})\exp\left\{-\left(\min_{\bj:w_\bj>0}\beta_\bj-\beta_\bi\right)t\right\},\quad t<T_0^{\bs w}, 
\end{align*}
where the second equality follows by modifying the proof of \eqref{bdd:Kratio}. Second, observe that $\E^{\widetilde{\bbeta},\bs w}_{z_0}[\e^{qT_0^{\bs w}}]<\infty$ for all $q<\widetilde{\beta}$ since $\E^{\tilde{\beta}\da,\bi}_{z_0}[\e^{qT^\bi_0}]<\infty$ for all $q<\widetilde{\beta}$ by the explicit formula of the PDF of $T^\bi_0$ \cite[(2.9)]{C:SDBG1-2}. These observations are enough to get \eqref{inhomo:1} by applying the dominated convergence theorem. 

Finally, to prove \eqref{inhomo:2} and \eqref{inhomo:3} for all $0\leq F\in \B(C_{[0,\infty)}(\Bbb C^N))$, it suffices to consider bounded continuous $F$ since $C_{[0,\infty)}(\CN)$ is a Polish space.
For the case of \eqref{inhomo:2}, we consider the following,  using \eqref{inhomo:1} and \eqref{eq:Tvep} in the same order: 
\begin{align*}
&\quad\;\E^{\widetilde{\bbeta},\bs w}_{z_0}[F(\ms Z_{t\wedge \tau};t\geq 0)\mathcal E^{\bbeta,\bs w,\bi}_{z_0}(T^{\bs w}_0)\mathcal E^{\widetilde{\bbeta},\bs w,\bi}_{z_0}(T^{\bs w}_0)^{-1}]\\
&=\lim_{\eta\searrow 0}\E^{\widetilde{\bbeta},\bs w}_{z_0}[F(\ms Z_{t\wedge \tau};t\geq 0)\mathcal E^{\bbeta,\bs w,\bi}_{z_0}(T^{\bs w}_\eta)\mathcal E^{\widetilde{\bbeta},\bs w,\bi}_{z_0}(T^{\bs w}_\eta)^{-1}]
=\lim_{\eta\searrow 0}\E^{\bi}_{z_0}[F(\ms Z_{t\wedge \tau};t\geq 0)\mathcal E^{\bbeta,\bs w,\bi}_{z_0}(T^{\bs w}_\eta)]\\
&=\E^{\bi}_{z_0}[F(\ms Z_{t\wedge \tau};t\geq 0)\mathcal E^{\bbeta,\bs w,\bi}_{z_0}(\tau)].
\end{align*}
Here, the last equality uses the optional stopping theorem \cite[(3.1) Theorem, p.68]{RY-2} since the boundedness of $K_0(\cdot)$ on $[\eta_0,\infty)$ for any $\eta_0>0$ implies that
$\{\mathcal E^{\bbeta,\bs w,\bi}_{z_0}(t\wedge T^{\bs w}_{\eta})\}$ is a bounded martingale for $\eta\in (0,\min_{\bj:w_\bj>0}|z^\bj_0|)$ (Corollary~\ref{cor:superMG}), and $\tau\leq T^{\bs w}_\eta$ by assumption. To get \eqref{inhomo:3}, we use the definition \eqref{superMG} of $\mathcal E^{\bbeta,\bs w,\bi}_{z_0}(\cdot)$ and \eqref{SDE1:BM-1000} with $\bj=\bi$, the latter not requiring $\bs w$-homogeneous $\bbeta$:
\begin{align*}
&\quad\;\E^{\bi}_{z_0}[F(\ms Z_{t\wedge \tau};t\geq 0)\mathcal E^{\bbeta,\bs w,\bi}_{z_0}(\tau)]\\
&=\E^{(0)}_{z_0}\left[F(\ms Z_{t\wedge \tau};t\geq 0)\frac{w_\bi K^{\bbeta,\bi}_0(0)}{K_0^{\bbeta,\bs w}(0)}
\frac{\e^{-\widetilde{A}_0^{\bbeta,\bs w,\bi}(\tau)}K_0^{\bbeta,\bs w}(\tau)}{w_\bi K^{\bbeta,\bi}_0(\tau)}
\frac{\e^{-\beta_\bi \tau}K_0^{\bbeta,\bi}(\tau)}{K_0^{\bbeta,\bi}(0)}
\right]\\
&=\E^{(0)}_{z_0}\Biggl[F(\ms Z_{t\wedge \tau};t\geq 0)\exp\biggl\{-\int_0^{\tau} \sum_{\bj\in \mathcal E_N}\beta_\bj\frac{w_\bj  K^{\bbeta,\bj}_0(s)}{K^{\bbeta,\bs w}_0(s)}\d s\biggr\}\frac{K^{\bbeta,\bs w}_0(\tau)}{K_0^{\bbeta,\bs w}(0)}\Biggr],
\end{align*}
where the last equality follows by some algebra, using the definition \eqref{def:Atilde} of $\widetilde{A}_0^{\bbeta,\bs w,\bi}(\cdot)$. 
\end{proof}

\begin{proof}[Proof of (1$\cc$)--(3$\cc$) for Proposition~\ref{prop:SDE1}]
\noindent {\bf (1$\cc$)} By Lemma~\ref{lem:SDE1}, it remains to consider the inhomogeneous case. 
For $\bs w$-inhomogeneous $\bbeta$, we first define an auxiliary probability measure $\widehat{\P}^{\bbeta,\bs w}_{z_0}$ on the space $(C_{\Bbb C^N}[0,\infty),\B(C_{\Bbb C^N}[0,\infty)))$ by
\begin{align}\label{inhomo:0}
\begin{split}
&\widehat{\E}^{\bbeta,\bs w}_{z_0}[F(\ms Z_{t\wedge T_0^{\bs w}};t\geq 0)]\\
&\quad \,\defeq\,\E^{\widetilde{\bbeta},\bs w}_{z_0}[F(\ms Z_{t\wedge T_0^{\bs w}};t\geq 0)\mathcal E^{\bbeta,\bs w,\bi}_{z_0}(T^{\bs w}_0)\mathcal E^{\widetilde{\bbeta},\bs w,\bi}_{z_0}(T^{\bs w}_0)^{-1}],\quad 0\leq F\in \B(C_{\Bbb C^N}[0,\infty)),
\end{split}
\end{align}
where $z_0\in \CNw$, and $\wt{\bbeta}$ is any choice satisfying the condition in Lemma~\ref{lem:SDE1} (3$\cc$). Note that by taking $F\equiv 1$ in  \eqref{inhomo:1}, $\widehat{\P}^{\bbeta,\bs w}_{z_0}$ is indeed a probability measure. Also,  for any $\eta\in (0,\min_{\bj;w_\bj>0}|z^\bj_0|)$, \eqref{inhomo:2} with $\tau=T_\eta^{\bs w}$ gives
\begin{align*}
\E^{\bi}_{z_0}[F(\ms Z_{t\wedge T^{\bs w}_\eta};t\geq 0)]&=\E^{\widetilde{\bbeta},\bs w}_{z_0}[F(\ms Z_{t\wedge T^{\bs w}_\eta};t\geq 0)\mathcal E^{\bbeta,\bs w,\bi}_{z_0}(T^{\bs w}_\eta)^{-1}
\mathcal E^{\bbeta,\bs w,\bi}_{z_0}(T^{\bs w}_0)\mathcal E^{\widetilde{\bbeta},\bs w,\bi}_{z_0}(T^{\bs w}_0)^{-1}]\\
&=\widehat{\E}^{\bbeta,\bs w}_{z_0}[F(\ms Z_{t\wedge T^{\bs w}_\eta};t\geq 0)\mathcal E^{\bbeta,\bs w,\bi}_{z_0}(T^{\bs w}_\eta)^{-1}]\notag
\end{align*}
where the last equality uses \eqref{inhomo:0}. Hence, by the definition \eqref{superMG} of $\mathcal E^{\bbeta,\bs w,\bi}_{z_0}(\cdot)$,
\begin{align*}
\frac{w_\bi K_0^{\bbeta,\bi}(0)}{K^{\bbeta,\bs w}_0(0)}\E^{\bi}_{z_0}[F(\ms Z_{t\wedge T^{\bs w}_\eta};t\geq 0)\e^{-A^{\bbeta,\bs w,\bi}_0(T^{\bs w}_\eta)-\beta_\bi T_\eta^{\bs w}}]
=\widehat{\E}^{\bbeta,\bs w}_{z_0}\left[F(\ms Z_{t\wedge T^{\bs w}_\eta};t\geq 0)\frac{\e^{-\beta_\bi T^{\bs w}_\eta}w_\bi K_0^{\bbeta,\bi}(T^{\bs w}_\eta)}{K^{\bbeta,\bs w}_0(T^{\bs w}_\eta)}
\right].
\end{align*}
Now, note that  $\widehat{\P}^{\bbeta,\bs w}_{z_0}(T_0^{\bs w}<\infty)=1$ by \eqref{T0:finite}, and $-A^{\bbeta,\bs w,\bi}_0(T^{\bs w}_\eta)-\beta_\bi T_\eta^{\bs w}\leq 0$ by \eqref{def:A}. Hence, by dominated convergence and Remark~\ref{rmk:NTC} (2$\cc$), passing $\eta\searrow 0$ for the leftmost side and the rightmost side of the foregoing display in the case of bounded continuous $F$ gives 
\begin{align*}
\frac{w_\bi K_0^{\bbeta,\bi}(0)}{K^{\bbeta,\bs w}_0(0)}\E^{\bi}_{z_0}[F(\ms Z_{t\wedge T^{\bs w}_0};t\geq 0)\e^{-A^{\bbeta,\bs w,\bi}_0(T^{\bs w}_0)-\beta_\bi T_0^{\bs w}}]
=\widehat{\E}^{\bbeta,\bs w}_{z_0}[F(\ms Z_{t\wedge T^{\bs w}_0};t\geq 0)\e^{-\beta_\bi T^{\bs w}_0};T_0^{\bs w }=T_0^\bi
].
\end{align*}
Since $\widehat{\P}^{\bbeta,\bs w}_{z_0}(T_0^{\bs w}<\infty)=1$, 
the last equality implies \eqref{Pinhomo:id} for all $0\leq F\in \B(C_{\CN}[0,\infty))$ with $\E^{\bbeta,\bs w}_{z_0}$ replaced by $\widehat{\E}_{z_0}^{\bbeta,\bs w}$. By \eqref{def:Q1ult}, we deduce $\P_{z_0}^{\bbeta,\bs w}=\widehat{\P}_{z_0}^{\bbeta,\bs w}$, so \eqref{Pinhomo:id} also holds. \medskip 

\noindent {\bf (2$\cc$)} By \eqref{T0:finite}, \eqref{T0:stopped}, and \eqref{inhomo:0}, it is immediate that for any $z_0\in  \Bbb C^N_{\bs w\,\sp}$,
$\{\ms Z_t\}$ under $\P_{z_0}^{\bbeta,\bs w}=\widehat{\P}_{z_0}^{\bbeta,\bs w}$ defines a process stopped at $T_0^{\bs w}$ such that $\P^{\bbeta,\bs w}_{z_0}(T_0^{\bs w}<\infty)=1$.
It remains to prove the required strong Markov property. For any bounded continuous $F_1,F_2$ and any $(\F_t^0)$-stopping time $\tau_0$, we apply \eqref{T0:approx}, \eqref{inhomo:3} with $\tau=T^{\bs w}_\eta$, the strong Markov property of Brownian motion, \eqref{inhomo:3}, and \eqref{T0:approx} again in the same order
 to get
\begin{align}
&\quad\;\E_{z_0}^{\bbeta,\bs w}[F_1(\ms Z_{t\wedge \tau_0};t\geq 0)F_2(\ms Z_{(\tau_0+t)\wedge T^{\bs w}_0};t\geq 0);\tau_0<T^{\bs w}_0]\notag\\
&=\lim_{\eta\searrow 0} \E_{z_0}^{\bbeta,\bs w}[F_1(\ms Z_{t\wedge \tau_0};t\geq 0)F_2(\ms Z_{(\tau_0+t)\wedge T^{\bs w}_\eta};t\geq 0);\tau_0< T^{\bs w}_\eta]\notag\\
&=\lim_{\eta\searrow 0}\E^{(0)}_{z_0}\Biggl[F_1(\ms Z_{t\wedge \tau_0};t\geq 0)F_2(\ms Z_{(\tau_0+t)\wedge T^{\bs w}_\eta};t\geq 0)\\
&\quad \times \exp\biggl\{-\int_0^{T_{\eta}^{\bs w}} \sum_{\bj\in \mathcal E_N}\beta_\bj\frac{w_\bj  K^{\bbeta,\bj}_0(s)}{K^{\bbeta,\bs w}_0(s)}\d s\biggr\}\frac{K^{\bbeta,\bs w}_0(T_{\eta}^{\bs w})}{K_0^{\bbeta,\bs w}(0)} ;\tau_0<T^{\bs w}_\eta\Biggr]\notag\\
&=\lim_{\eta\searrow 0}\E^{(0)}_{z_0}\left[F_1(\ms Z_{t\wedge \tau_0};t\geq 0)\exp\biggl\{-\int_0^{\tau_0} \sum_{\bj\in \mathcal E_N}\beta_\bj\frac{w_\bj  K^{\bbeta,\bj}_0(s)}{K^{\bbeta,\bs w}_0(s)}\d s\biggr\}\frac{K^{\bbeta,\bs w}_0(\tau_0)}{K_0^{\bbeta,\bs w}(0)}\right.\notag\\
&\quad \left.\times\E^{(0)}_{\ms Z_{\tau_0}}\left[F_2(\ms Z_{t\wedge T^{\bs w}_\eta};t\geq 0)\exp\biggl\{-\int_0^{T_{\eta}^{\bs w}} \sum_{\bj\in \mathcal E_N}\beta_\bj\frac{w_\bj  K^{\bbeta,\bj}_0(s)}{K^{\bbeta,\bs w}_0(s)}\d s\biggr\}\frac{K^{\bbeta,\bs w}_0(T_{\eta}^{\bs w})}{K_0^{\bbeta,\bs w}(0)}\right];\tau_0<T^{\bs w}_\eta\right]\notag\\
&=\lim_{\eta\searrow 0} \E_{z_0}^{\bbeta,\bs w}[F_1(\ms Z_{t\wedge \tau_0};t\geq 0)\E^{\bbeta,\bs w}_{\ms Z_{\tau_0}}[F_2(\ms Z_{t\wedge T^{\bs w}_\eta};t\geq 0)];\tau_0<T^{\bs w}_\eta]\notag\\
&=\E_{z_0}^{\bbeta,\bs w}[F_1(\ms Z_{t\wedge \tau_0};t\geq 0)\E^{\bbeta,\bs w}_{\ms Z_{\tau_0}}[F_2(\ms Z_{t\wedge T^{\bs w}_0};t\geq 0)];\tau_0<T^{\bs w}_0].\label{SMP:proof}
\end{align}
In more detail, the fourth equality has also used the fact that $\ms Z_{\tau_0}\in \CNw$ on $\{\tau_0<T^{\bs w}_\eta\}$ so that \eqref{inhomo:3} is indeed applicable. Since $\{\ms Z_t\}$ is stopped at time $T_0^{\bs w}$ under $\P^{\bbeta,\bs w}_{z_0}$, the last equality is enough to get the required strong Markov property.\medskip 

\noindent {\bf (3$\cc$)} We obtain \eqref{Z1:NTC} immediately from \eqref{Z1:NTC0} and \eqref{inhomo:0}.
\end{proof}

To prepare the proof of Proposition~\ref{prop:SDE1} (4$\cc$), we show the following lemma, which will also be used in Section~\ref{sec:SDE2}. The reader may recall the set $\CNwni$ defined in \eqref{def:CNw} and the convention in Proposition~\ref{prop:logsum} (1$\cc$). 

\begin{lem}\label{lem:Qtight}
Let $\bbeta\in (0,\infty)^{\mc E_N}$, and let $\bs w\in \R_+^{\mc E_N}$ be nonzero. For any 
 $\bi\in \mc E_N$ with $w_\bi>0$, $z_0\in\CNwni$,
$1\leq p<2$, and $0<t<\infty$, the following $L^1$-property holds for all $r\in \{s,t\}$:
\begin{align}
\E^{\bi}_{z_0}\Biggl[\Biggl(\,\sum_{\bj\in \mc E_N}\frac{w_\bj\hK^{\bbeta,\bj}_1(s)}{K_0^{\bbeta,\bs w}(s)|Z^\bj_s|} \Biggr)^p\mathcal E^{\bbeta,\bs w,\bi}_{z_0}(r)\Biggr]\in L^1([0,t],\d s).\label{eq:Qtight}
\end{align}
Also, the expectation in \eqref{eq:Qtight} with $r=s$ as a function of $s\in (0,\infty)$ is bounded on compacts in $ (0,\infty)$.
\end{lem}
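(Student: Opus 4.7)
The strategy is to substitute the closed form $\mathcal{E}_{z_0}^{\bbeta,\bs w,\bi}(s) = C(z_0)\, e^{-A^{\bbeta,\bs w,\bi}_0(s)}\, K_0^{\bbeta,\bs w}(s)/K_0^{\bbeta,\bi}(s)$ from Corollary~\ref{cor:superMG} and bound the resulting integrand term by term under $\P^{\bi}_{z_0}$. The case $r=t$ reduces to $r=s$ via the supermartingale property of $\{\mathcal{E}_{z_0}^{\bbeta,\bs w,\bi}(u)\}_{u\geq 0}$: for $s\leq t$ and the nonnegative $\F_s^0$-measurable integrand $X_s$, conditioning gives
\begin{align*}
\E^{\bi}_{z_0}\bigl[X_s\,\mathcal{E}_{z_0}^{\bbeta,\bs w,\bi}(t)\bigr] = \E^{\bi}_{z_0}\bigl[X_s\,\E^{\bi}_{z_0}[\mathcal{E}_{z_0}^{\bbeta,\bs w,\bi}(t)\mid \F_s^0]\bigr] \leq \E^{\bi}_{z_0}\bigl[X_s\,\mathcal{E}_{z_0}^{\bbeta,\bs w,\bi}(s)\bigr].
\end{align*}
Next, $\mathring{A}^{\bbeta,\bs w,\bi}_0(s)\geq 0$ by \eqref{def:Aring}, and $\widetilde{A}^{\bbeta,\bs w,\bi}_0(s)\geq -C(\bbeta)\,s$ by Remark~\ref{rmk:NTC}(1$\cc$), so $e^{-A^{\bbeta,\bs w,\bi}_0(s)}\leq e^{C(\bbeta)s}$ is merely a harmless factor.

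After these reductions, applying the power-mean inequality $(\sum_\bj a_\bj)^p\leq (\#\mc E_N)^{p-1}\sum_\bj a_\bj^p$ together with the bound $K_0^{\bbeta,\bs w}(s)^{p-1}\geq (w_\bj K_0^{\bbeta,\bj}(s))^{p-1}$ (valid for $\bj$ with $w_\bj>0$, the only $\bj$ to which the sum restricts), the proof reduces to showing that
\begin{align*}
\E^{\bi}_{z_0}\biggl[\frac{\hK_1^{\bbeta,\bj}(s)^p}{K_0^{\bbeta,\bj}(s)^{p-1}\,K_0^{\bbeta,\bi}(s)\,|Z^\bj_s|^p}\biggr]
\end{align*}
is locally bounded in $s\in(0,\infty)$ and lies in $L^1([0,t],\d s)$ for each such $\bj$. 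For the diagonal term $\bj=\bi$, using $\hK_1^{\bbeta,\bi}(s)=\sqrt{2\beta_\bi}|Z^\bi_s|K_1(\sqrt{2\beta_\bi}|Z^\bi_s|)$, this reduces to a constant multiple of $\E^{\bi}_{z_0}[(K_1/K_0)(\sqrt{2\beta_\bi}|Z^\bi_s|)^p]$. The ratio $(K_1/K_0)(x)^p$ is bounded by $C\,(x\log x^{-1})^{-p}$ near $x=0$ and by $C\,(1+x^p)$ at infinity by \eqref{K00}--\eqref{K1infty}; the claim then follows from the explicit transition density of $|Z^\bi_s|$ under $\P^\bi_{z_0}$ (a version of $\BES(0,\beta_\bi\da)$, referenced in Remark~\ref{rmk:NTC}(2$\cc$)(b) via \cite[Theorem~2.1]{C:SDBG1-2}), whose two-dimensional Gaussian-type scaling near the origin yields at worst $s^{-p/2}$ growth as $s\searrow 0$, integrable for $p<2$.

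For the off-diagonal terms $\bj\neq\bi$, the asymptotics \eqref{K00}--\eqref{K1infty} yield: near $|Z^\bj_s|=0$ the integrand is dominated by $C\,|Z^\bj_s|^{-p}(\log|Z^\bj_s|^{-1})^{1-p}/K_0^{\bbeta,\bi}(s)$, with the log factor bounded since $p\geq 1$; near $|Z^\bj_s|=\infty$ the factor $\hK_1^{\bbeta,\bj}(s)^p$ produces exponential decay; and near $|Z^\bi_s|=\infty$ the reciprocal $1/K_0^{\bbeta,\bi}(s)$ grows like $\sqrt{|Z^\bi_s|}\,e^{c|Z^\bi_s|}$. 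The $|Z^\bj_s|^{-p}$ singularity is integrable in the two-dimensional area element for $p<2$, and the exponential growth at large $|Z^\bi_s|$ is absorbed by the Gaussian-type decay of the joint transition density of $(Z^\bi_s,Z^\bj_s)$ under $\P^\bi_{z_0}$. This joint density is accessed either by direct inspection of the SDEs \eqref{def:ZSDE2-2}--\eqref{SDE:Zbj-2} (which represent the coordinates $(Z^j)_j$ as independent planar Brownian motions modulated by a drift depending only on $|Z^\bi|$), or by inverting $\P^{\bi}$ to $\P^{(0)}$ through \eqref{SDE1:BM-1000} on the stopped interval $[0,T^{\bi}_\eta]$ and letting $\eta\searrow 0$; under $\P^{(0)}$ the pair $(Z^\bi,Z^\bj)$ is jointly Gaussian and the resulting integrals are elementary. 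The main obstacle is precisely this off-diagonal estimate: the singularity of $|Z^\bj_s|^{-p}$ at the origin must be reconciled with the exponential growth of $1/K_0^{\bbeta,\bi}(s)$ at infinity, and one must ensure that no joint extreme regime (small $|Z^\bj_s|$ with large $|Z^\bi_s|$) survives. The local boundedness of the expectation on compact subsets of $(0,\infty)$ then follows from the continuous and locally uniform dependence of all these density estimates on $s$.
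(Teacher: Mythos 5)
Your supermartingale reduction from $r=t$ to $r=s$, the substitution of the explicit form of $\mathcal E^{\bbeta,\bs w,\bi}_{z_0}(s)$, and the power-mean step together with the bound $K_0^{\bbeta,\bs w}(s)^{p-1}\geq (w_\bj K_0^{\bbeta,\bj}(s))^{p-1}$ are all valid, and arguably give a cleaner single-index reduction than the paper's route via \eqref{bdd:Kratio} and the expansion $K_0^{\bbeta,\bs w}(s)/[w_\bi K_0^{\bbeta,\bi}(s)]=\sum_\bk w_\bk K_0^{\bbeta,\bk}(s)/[w_\bi K_0^{\bbeta,\bi}(s)]$, which introduces a second index $\bk$ that must be tracked through the whole argument.

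However, the off-diagonal ($\bj\neq\bi$) estimate — which you correctly identify as the main obstacle — is not actually carried out, and one of your two proposed devices fails. The paper decouples the joint singularity by H\"older's inequality with three exponents $p_1,p_2,p_3$, bounds $\E^\bi_{z_0}[|Z^\bj_s|^{-p'}]\less s^{-p'/2}$ via the representation of $Z^\bj$ as $Z^\bi$ plus an independent planar Brownian motion plus a drift and the $\BESQ$ comparison theorem (see \eqref{ineq:tight1-1-1}), and bounds $\E^\bi_{z_0}[K_0^{\bbeta,\bi}(s)^{-p_3}]\leq \E^{(0)}_{z_0}[K_0^{\bbeta,\bi}(s)^{-p_3}]$ by stochastic domination (\cite[Lemma~4.12]{C:SDBG1-2}). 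Your ``direct inspection'' alternative can be made rigorous — condition on $\sigma(Z^\bi_u;u\leq s)$, note that $Z^\bj_s$ is then Gaussian with conditional variance $\gtrsim s$, apply \eqref{ineq:tight1-1-1} to the conditional expectation, then appeal to the same stochastic domination for the remaining $K_0^{\bbeta,\bi}(s)^{-1}$ — but you do not say any of this, so as written it is a heuristic rather than an argument. Your alternative of ``inverting $\P^\bi$ to $\P^{(0)}$'' via \eqref{SDE1:BM-1000} has a genuine gap: $\P^\bi$ restricted to $\F_s^0$ is \emph{not} absolutely continuous with respect to $\P^{(0)}$, because $Z^\bi$ hits zero with probability one under $\P^\bi$ and never does under $\P^{(0)}$. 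The formula \eqref{SDE1:BM-1000} holds only for stopping times $\tau\leq T^{\bs w}_\eta$, and letting $\eta\searrow 0$ recovers only $\E^\bi_{z_0}[\,\cdot\,;\,s<T^\bi_0]$, missing the contribution of the path after $Z^\bi$ first reaches the origin. A minor quantitative remark: the claimed $s^{-p/2}$ growth for the diagonal term $\bj=\bi$ is too optimistic; the explicit density of $|Z^\bi_s|$ under $\P^\bi_{z_0}$ gives a bound of order $(s\log^2 s)^{-1}+1$ (the paper's \eqref{ineq:tight1-1} via \cite[(4.19)]{C:SDBG1-2}), which is larger for small $s$ but still in $L^1([0,t])$, so the conclusion stands.
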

\begin{proof}
By the supermartingale property of $\mathcal E^{\bbeta,\bs w,\bi}_{z_0}(\cdot)$ (Corollary~\ref{cor:superMG}), it suffices to prove the part of Lemma~\ref{lem:Qtight} using $r=s$. 
In this case, note that since $\widehat{K}_1(x)\,\defeq\,xK_1(x)$, for $\{x_\bj\}\in (0,\infty)^{\mc E_N}$, 
\begin{align}
\sum_{\bj\in \mc E_N}\frac{w_\bj \widehat{K}_1(\sqrt{2\beta_\bj}x_\bj)}{\sum_{\bk\in \mc E_N}w_\bk K_0(\sqrt{2\beta_\bk} x_\bk)x_\bj}&\leq 
\sum_{\bj:w_\bj>0}\frac{w_\bj\sqrt{2\beta_\bj} K_1(\sqrt{2\beta_\bj} x_\bj)}{w_\bj K_0(\sqrt{2\beta_\bj} x_\bj)}\notag\\
&\leq C(\bbeta,\bs w) \sum_{\bj\in \mc E_N}\frac{\1_{\{\sqrt{2\beta_\bj}x_\bj\leq 0.5\}}}{x_\bj K_0(\sqrt{2\beta_\bj} x_\bj)}+C(\bbeta,\bs w,N)\label{bdd:Kratio}
\end{align}
by the asymptotic representations \eqref{K10}--\eqref{K1infty} of $K_1$ as $x\to 0$ and $x\to\infty$ and the asymptotic representation \eqref{K0infty} of $K_0$ as $x\to\infty$. Also, recall the definitions of $\hK^{\bbeta,\bj}_1(\cdot)$,
$K^{\bbeta,\bj}_0(\cdot)$, $K_0^{\bbeta,\bs w}(\cdot)$, and $\mathcal E^{\bbeta,\bs w,\bi}_{z_0}(\cdot)$ 
in \eqref{def:K1bj}, \eqref{def:Kbbetaw0}, and \eqref{superMG}. Hence, for all $0<s<\infty$,
\begin{align}
&\quad\; \E^{\bi}_{z_0}\Biggl[\Biggl(\,\sum_{\bj\in \mc E_N}\frac{w_\bj\hK^{\bbeta,\bj}_1(s)}{K_0^{\bbeta,\bs w}(s)|Z^\bj_s|} \Biggr)^p\mathcal E^{\bbeta,\bs w,\bi}_{z_0}(s)\Biggr]\notag\\
&=\E^{\bi}_{z_0}\Biggl[\Biggl(\,\sum_{\bj\in \mc E_N}\frac{w_\bj\hK^{\bbeta,\bj}_1(s)}{K_0^{\bbeta,\bs w}(s)|Z^\bj_s|} \Biggr)^p\frac{w_\bi K_0^{\bbeta,\bi}(0)}{K_0^{\bbeta,\bs w}(0)}\frac{\e^{-\mathring{A}^{\bbeta,\bs w,\bi}_0(s)-\widetilde{A}^{\bbeta,\bs w,\bi}_0(s)}K_0^{\bbeta,\bs w}(s)}{w_\bi K_0^{\bbeta,\bi}(s)}\Biggr]\notag\\
&\leq C(\bbeta,\bs w,p,N)\notag\\
&\quad \times\E^{\bi}_{z_0}\Biggl[\Biggl(\,\sum_{\bj\in \mc E_N}\Biggl(\frac{\1_{\{\sqrt{2\beta_\bj}|Z^\bj_s|\leq 0.5\}}}{|Z^\bj_s|K_0(\sqrt{2\beta_\bj}|Z^\bj_s|)}\Biggr)^p+1\Biggr)\frac{w_\bi K_0^{\bbeta,\bi}(0)}{K_0^{\bbeta,\bs w}(0)}\frac{\e^{-\mathring{A}^{\bbeta,\bs w,\bi}_0(s)-\widetilde{A}^{\bbeta,\bs w,\bi}_0(s)} K_0^{\bbeta,\bs w}(s)}{w_\bi K_0^{\bbeta,\bi}(s)}\Biggr]\notag\\
\begin{split}\label{ineq:tight1000}
&\leq C(\bbeta,\bs w,p,N)\e^{C(\bbeta)t}\sum_{\bj\in \mc E_N}\E^{\bi}_{z_0}\Biggl[\Biggl(\frac{\1_{\{\sqrt{2\beta_\bj}|Z^\bj_s|\leq 0.5\}}}{|Z^\bj_s|K_0(\sqrt{2\beta_\bj}|Z^\bj_s|)}\Biggr)^p\frac{w_\bi K_0^{\bbeta,\bi}(0)}{K_0^{\bbeta,\bs w}(0)}\frac{K_0^{\bbeta,\bs w}(s)}{w_\bi K_0^{\bbeta,\bi}(s)}\Biggr]\\
&\quad +C(\bbeta,\bs w,p,N).
\end{split}
\end{align}
Here, the first inequality uses \eqref{bdd:Kratio} and then the inequality $(x+y)^p\leq C(p)(x^p+y^p)$ for all $x,y\geq 0$ multiple times. Also, in \eqref{ineq:tight1000}, $\e^{C(\bbeta)t}$ arises from Remark~\ref{rmk:NTC} (1$\cc$), and
 the last $C(\bbeta,\bs w,p,N)$ follows from the supermartingale property of $\mathcal E^{\bbeta,\bs w,\bi}_{z_0}(\cdot)$ (Corollary~\ref{cor:superMG}) since $\mathcal E^{\bbeta,\bs w,\bi}_{z_0}(0)=1$. To handle the sum on the right-hand side of \eqref{ineq:tight1000}, note that
\begin{align}
&\quad\;\E^{\bi}_{z_0}\left[\left(\frac{\1_{\{\sqrt{2\beta_\bj}|Z^\bj_s|\leq 0.5\}}}{|Z^\bj_s|K_0(\sqrt{2\beta_\bj}|Z^\bj_s|)}\right)^p\frac{w_\bi K_0^{\bbeta,\bi}(0)}{K_0^{\bbeta,\bs w}(0)}\frac{K_0^{\bbeta,\bs w}(s)}{w_\bi K_0^{\bbeta,\bi}(s)}\right]\notag\\
\begin{split}
&\leq C(\bs w)
\E^{\bi}_{z_0}\left[\left(\frac{\1_{\{\sqrt{2\beta_\bj}|Z^\bj_s|\leq 0.5\}}}{|Z^\bj_s|K_0(\sqrt{2\beta_\bj}|Z^\bj_s|)}\right)^p\right]\\
&\quad +C(\bs w)\sum_{ \bk:\bk\neq \bi}\E^{\bi}_{z_0}\left[\left(\frac{\1_{\{\sqrt{2\beta_\bj}|Z^\bj_s|\leq 0.5\}}}{|Z^\bj_s|K_0(\sqrt{2\beta_\bj}|Z^\bj_s|)}\right)^p\frac{K_0^{\bbeta,\bk}(s)}{K_0^{\bbeta,\bi}(s)}\right].\label{ineq:tight1}
\end{split}
\end{align}
Here, we use $w_\bi K_0^{\bbeta,\bi}(0)/K_0^{\bbeta,\bs w}(0)\leq 1$ for $z_0\in \CNw\cup \CNwi$ [Proposition~\ref{prop:logsum} (1$\cc$)] and write $K_0^{\bbeta,\bs w}(s)/[w_\bi K_0^{\bbeta,\bi}(s)]=\sum_\bk \{w_\bk K_0^{\bbeta,\bk}(s)/[w_\bi K_0^{\bbeta,\bi}(s)]\}$, where each summand is finite a.s. by Remark~\ref{rmk:NTC} (2$\cc$)-(b).

The first expectation on the right-hand side of \eqref{ineq:tight1} can be bounded as follows
by considering $\bj=\bi$ and $\bj\neq \bi$ separately. For $\bj=\bi$, it follows readily from \cite[(4.19)]{C:SDBG1-2} and the assumption $1\leq p<2$ that, for $\delta_{\ref{ineq:tight1-1}}=\delta_{\ref{ineq:tight1-1}}(\beta_\bi)$, 
\begin{align}\label{ineq:tight1-1}
\E^{\bi}_{z_0}\Biggl[\Biggl(\frac{\1_{\{\sqrt{2\beta_\bi}|Z^\bi_s|\leq 0.5\}}}{|Z^\bi_s|K_0(\sqrt{2\beta_\bi}|Z^\bi_s|)}\Biggr)^p\Biggr]\leq C(p,\beta_\bi,t)\left(\frac{\1_{\{s\leq 2\delta_{\ref{ineq:tight1-1}}\}}}{s\log^2 s}+1\right)\in L^1([0,t],\d s).
\end{align}
For $\bj\neq \bi$, $Z^\bj$ is a linear combination of $Z^\bi$ and an independent two-dimensional Brownian motion, with the contribution of the Brownian motion being nontrivial, which can be seen by using \eqref{SDE:Zbj-2}. Also, for a two-dimensional standard Brownian motion $\{W'_t\}$ independent of $ \{Z^\bi_t\}$, $z^1\in \Bbb C$, $\sigma\in [0,\infty)$ and $\varsigma\in (0,\infty)$, conditioning on $Z^\bi_s$ and applying the comparison theorem of SDEs \cite[2.18 Proposition, p.293]{KS:BM-2} to $\BES Q$ of dimension $2$ for varying initial conditions give
\begin{align} 
\E^{\bi}_{z_0}\left[\frac{1}{|\sigma  Z^\bi_s+z^1+\varsigma W'_s|^{p'}}\right]
\leq \E^{\bi}_0\left[\frac{1}{|\varsigma W'_s|^{p'}}\right]=C(\varsigma,p') s^{-p'/2},\;\; 0<s<\infty,\;0<p'<2.\label{ineq:tight1-1-1}
\end{align}
Note that the last expectation is finite only if $p'<2$, and the last equality holds by the Brownian scaling. By the linear structure of $Z^\bj$ just mentioned and \eqref{ineq:tight1-1-1}, we deduce that
\begin{align}\label{ineq:tight1-2}
\E^{\bi}_{z_0}\left[\left(\frac{\1_{\{\sqrt{2\beta_\bj}|Z^\bj_s|\leq 0.5\}}}{|Z^\bj_s|K_0(\sqrt{2\beta_\bj}|Z^\bj_s|)}\right)^p\right]\leq C(p,\beta_\bj)s^{-p/2},\quad 0<s<\infty,\; \bj\neq \bi.
\end{align}

To bound the remaining expectations on the right-hand side of \eqref{ineq:tight1}, we take $1<p_1,p_2,p_3<\infty$ such that $p_1^{-1}+p_2^{-1}+p_3^{-1}=1$ and $1<pp_1<2$. Then by H\"older's inequality, 
\begin{align}
&\quad\;\E^{\bi}_{z_0}\left[\left(\frac{\1_{\{\sqrt{2\beta_\bj}|Z^\bj_s|\leq 0.5\}}}{|Z^\bj_s|K_0(\sqrt{2\beta_\bj}|Z^\bj_s|)}\right)^p\frac{K_0^{\bbeta,\bk}(s)}{K_0^{\bbeta,\bi}(s)}\right]\notag\\
&\leq \E^{\bi}_{z_0}\left[\left(\frac{\1_{\{\sqrt{2\beta_\bj}|Z^\bj_s|\leq 0.5\}}}{|Z^\bj_s|K_0(\sqrt{2\beta_\bj}|Z^\bj_s|)}\right)^{pp_1}\right]^{1/p_1}\times \E^\bi_{z_0}[K_0^{\bbeta,\bk}(s)^{p_2}]^{1/p_2}\times \E^\bi_{z_0}[K_0^{\bbeta,\bi}(s)^{-p_3}]^{1/p_3}\notag\\
&\leq \begin{cases}
\displaystyle C(p,p_1,\beta_\bi,t)\left(\frac{\1_{\{s\leq 2\delta_{\ref{ineq:tight1-1}}\}}}{s\log^2 s}+1\right)^{1/p_1}\\
\vspace{-.4cm}\\
\quad \times \E^\bi_{z_0}[K_0^{\bbeta,\bk}(s)^{p_2}]^{1/p_2}\times \E^\bi_{z_0}[K_0^{\bbeta,\bi}(s)^{-p_3}]^{1/p_3}    ,& \bj=\bi,\\
\vspace{-.4cm}\\
\displaystyle C(p,p_1,\beta_\bj)s^{-p/2}\times  \E^\bi_{z_0}[K_0^{\bbeta,\bk}(s)^{p_2}]^{1/p_2}\times\E^\bi_{z_0}[K_0^{\bbeta,\bi}(s)^{-p_3}]^{1/p_3},&\bj\neq \bi,
\end{cases}\label{ineq:tight2-1}
\end{align}
for $0<s\leq t$
by \eqref{ineq:tight1-1} and \eqref{ineq:tight1-2}. To bound the right-hand side of \eqref{ineq:tight2-1}, note that for any $0<p_4<2$, the asymptotic representations \eqref{K00}--\eqref{K0infty} of $K_0$ as $x\to 0$ and $x\to\infty$ give 
\begin{align}
\forall\;\bk:\bk\neq \bi,\quad
\E^\bi_{z_0}[K_0^{\bbeta,\bk}(s)^{p_2}]^{1/p_2}& \leq C(p_2,\beta_\bk,p_4)\E^{\bi}_{z_0}[|Z^\bk_s|^{-p_4}]^{1/p_2}\notag\\
&\leq C(p_2,\beta_\bk,p_4)s^{-p_4/(2p_2)},\quad 0<s<\infty,\label{ineq:tight2-2}
\end{align}
where the last inequality can be seen by using the argument to get \eqref{ineq:tight1-2}. Also, by the decreasing monotonicity of $K_0$, \cite[Lemma~4.12]{C:SDBG1-2}, and the asymptotic representations \eqref{K00}--\eqref{K0infty}  of $K_0$ as $x\to 0$ and $x\to\infty$, 
\begin{align}
\E^\bi_{z_0}[K_0^{\bbeta,\bi}(s)^{-p_3}]^{1/p_3}&\leq \E^{(0)}_{z_0}[K_0(\sqrt{2\beta_\bi}|Z^\bi_s|)^{-p_3}]^{1/p_3}\notag\\
&\leq C(\beta_\bi,p_3)\e^{C(\beta_\bi,p_3)(|z_0^\bi|+s)},\quad 0<s<\infty. \label{ineq:tight2-3}
\end{align}
Putting \eqref{ineq:tight2-1}, \eqref{ineq:tight2-2} and \eqref{ineq:tight2-3} together shows that we can choose $\tilde{p}=\tilde{p}(p)<1$ such that 
\[
\E^{\bi}_{z_0}\left[\left(\frac{\1_{\{\sqrt{2\beta_\bj}|Z^\bj_s|\leq 0.5\}}}{|Z^\bj_s|K_0(\sqrt{2\beta_\bj}|Z^\bj_s|)}\right)^p\frac{K_0^{\bbeta,\bk}(s)}{K_0^{\bbeta,\bi}(s)}\right]\leq C(p,\beta_\bi,\beta_\bj,\beta_\bk,t)s^{-\tilde{p}},\quad\forall\;0<s\leq t,\;\bj\in \mc E_N,\;\bk\neq \bi.
\]

Finally, we apply \eqref{ineq:tight1-1}, \eqref{ineq:tight1-2}, and the last display to the right-hand side of \eqref{ineq:tight1}. By \eqref{ineq:tight1000}, this leads to \eqref{eq:Qtight} and the property that the expectation in \eqref{eq:Qtight} as a function of $s\in (0,\infty)$ is bounded on compacts in $ (0,\infty)$, both for $r=s$. 
The proof is complete.
\end{proof}

\begin{proof}[Proof of (4$\cc$) for Proposition~\ref{prop:SDE1}]
We can use a change of measures from $\P_{z_0}^\bi$ to $\P_{z_0}^{\bbeta,\bs w}$ to find the SDEs of $\{Z^j_{t}\}_{1\leq j\leq N}$ and $\{Z^\bj_{t}\}_{\bj\in \mc E_N}$ for $0\leq t\leq T^{\bs w}_\eta$ under $\P_{z_0}^{\bbeta,\bs w}$, for any $\eta\in (0,\max_{\bj:w_\bj>0}|z^\bj_0|)$ and $\bi\in \mc E_N$ with $w_\bi>0$. To carry this out, first, set
\begin{align}\label{def:Wcom}
\widetilde{W}^j_t\;\defeq\; W^j_t-\la W^j, N^{\bbeta,\bs w,\bi}_0\ra_t, \quad 1\leq j\leq N,\;0\leq t< T^{\bs w}_0,
\end{align}
where $W^j$ is the driving Brownian motion of $Z^j$ under $\P^{\bi}_{z_0}$ as specified in \eqref{def:ZSDE2-2}, and $N_0^{\bbeta,\bs w,\bi}$ is the continuous local martingale defined in \eqref{def:Nwi}. Then by linearity, 
\begin{align}\label{def:Wcomtilde}
\widetilde{W}^\bj_t= W^\bj_t-\la W^\bj, N^{\bbeta,\bs w,\bi}_0\ra_t, \quad \bj\in \mc E_N,\;0\leq t< T^{\bs w}_0.
\end{align}
Recall that by \eqref{inhomo:2} and \eqref{inhomo:0}, 
$\d \P^{\bbeta,\bs w}_{z_0}=\mathcal E^{\bbeta,\bs w,\bi}_{z_0}(T^{\bs w}_{\eta})\d \P^\bi_{z_0}$ on $\sigma(\ms Z_{t\wedge T^{\bs w}_\eta};t\geq 0)$. Hence, by Corollary~\ref{cor:superMG} and Girsanov's theorem \cite[(1.7) Theorem, p.329]{RY-2}, $\{\widetilde{W}^j_{t\wedge T_\eta^{\bs w}}\}$ and $\{\widetilde{W}^\bj_{t\wedge T_\eta^{\bs w}}\}$ are continuous local martingales under $\P^{\bbeta,\bs w}_{z_0}$ for all $1\leq j\leq N$ and $\bj\in\mc E_N$. 

The SDEs of  $\{Z^j_{t}\}_{1\leq j\leq N}$ and $\{Z^\bj_{t}\}_{\bj\in \mc E_N}$ for $0\leq t\leq T^{\bs w}_\eta$
under $\P_{z_0}^{\bbeta,\bs w}$ can be derived as follows. First, by the SDE of $\{Z^\bj_{t}\}$  under $\P^\bi_{z_0}$ in \eqref{SDE:Zbj-2} and by \eqref{def:Wcomtilde},
\begin{align}
Z^\bj_t&=Z_0^\bj-\frac{\sigma(\bj)\cdot \sigma(\bi)}{2}\int_0^t  \frac{ \K^{\bbeta,\bi}_{1}(s)}{ K^{\bbeta,\bi}_0(s)}
\biggl( \frac{1}{\overline{Z}^\bi_s}\biggr)\d s+\la W^\bj, N^{\bbeta,\bs w,\bi}_0\ra_t +\widetilde{W}^\bj_t\notag\\
&=Z_0^\bj-\frac{\sigma(\bj)\cdot \sigma(\bi)}{2}\int_0^t  \frac{ \K^{\bbeta,\bi}_{1}(s)}{ K^{\bbeta,\bi}_0(s)}
\biggl( \frac{1}{\overline{Z}^\bi_s}\biggr)\d s+\int_0^{t}\frac{\hK^{\bbeta,\bi}_1(s)[K^{\bbeta,\bs w}_0(s)-w_\bi K_0^{\bbeta,\bi}(s)]}{|Z^\bi_s|K^{\bbeta,\bi}_0(s)K^{\bbeta,\bs w}_0(s)}\d \la W^\bj,B^\bi\ra_s\notag\\
&\quad -\sum_{\bk\in \mc E_N\setminus\{ \bi\}}\int_0^{t}\frac{w_\bk\hK^{\bbeta,\bk}_1(s)}{|Z^\bk_s|K^{\bbeta,\bs w}_0(s)} \d \la W^\bj,B^\bk\ra_s+\widetilde{W}^\bj_t\notag\\
&=Z^\bj_0-\sum_{\bk\in \mc E_N}\frac{\sigma(\bj)\cdot \sigma(\bk)}{2} \int_0^t \frac{w_\bk\hK_1^{\bbeta,\bk}(s)}{K_0^{\bbeta,\bs w}(s)}\biggl(\frac{1}{\overline{Z}^\bk_s}\biggr)\d s+\widetilde{W}^\bj_t,\quad 0\leq t\leq T_\eta^{\bs w}.\label{SDE:1-100}
\end{align}
Here, the second equality uses \eqref{def:Nwi}--\eqref{def:Nwitilde}, and the last equality holds since by the definitions of $W^\bj$ and $B^\bk$ in \eqref{SDE:Zbj-2} and the formulas in \eqref{covar:UV-2}
we get
\begin{align*}
\d\la W^\bj, B^\bk\ra_t&=\frac{X^\bk_t\d \la U^\bj,U^\bk\ra_t}{|Z^\bk_t|}+\i\frac{Y^\bk_t\d \la V^\bj,V^\bk\ra_t}{|Z^\bk_t|}\\
&=\frac{\sigma(\bj)\cdot \sigma(\bk)}{2}\frac{X^\bk_t\d t+\i Y^\bk_t\d t}{|Z^\bk_t|}=\frac{\sigma(\bj)\cdot \sigma(\bk)}{2}\frac{Z^\bk_t\d t}{|Z^\bk_t|}=\frac{\sigma(\bj)\cdot \sigma(\bk)}{2}\biggl(\frac{|Z^{\bk}_t|}{\overline{Z}^\bk_t}\biggr)\d t .
\end{align*}
By \eqref{SDE:1-100}, $\{Z^\bj_{t}\}_{\bj\in \mc E_N}$ for $0\leq t\leq T^{\bs w}_\eta$ satisfy the SDEs in \eqref{SDE:Z1}.

As for the SDEs of $\{Z^j_{t}\}_{1\leq j\leq N}$ for $0\leq t\leq T^{\bs w}_\eta$, we can use \cite[Proposition~2.2]{C:SDBG4-2}. To this end, note that for the drift coefficients in \eqref{SDE:Z1}, we rewrite part of it as follows whenever $Z^\bk_s\neq 0$:
\[
 \frac{w_\bk\hK_1^{\bbeta,\bk}(s)}{2K_0^{\bbeta,\bs w}(s)}\left(\frac{1}{\overline{Z}^\bk_s}\right)= \frac{w_\bk\hK_1^{\bbeta,\bk}(s)}{2K_0^{\bbeta,\EN}(s)}\frac{1}{|Z_s^{\bk}|^2}Z^\bk_s.
\]
Accordingly, by \cite[Proposition~2.2]{C:SDBG4-2} with the identification 
\begin{align*}
a_\bk(s)&\,\equiv\, \frac{w_\bk\hK_1^{\bbeta,\bk}(s)}{2K_0^{\bbeta,\bs w}(s)}\frac{1}{|Z_s^{\bk}|^2}\1_{\{s\leq T^{\bs w}_\eta\}}= \frac{w_\bk\sqrt{2\beta_\bk} K_1(\sqrt{2\beta_\bk}|Z^\bk_s|)}{2K_0^{\bbeta,\bs w}(s)}\frac{1}{|Z_s^{\bk}|}\1_{\{s\leq T^{\bs w}_\eta\}}\\
&= \frac{w_\bk\sqrt{\beta_\bk} K_1(\sqrt{\beta_\bk}|Z^{k\prime}_s-Z^k_s|)}{K_0^{\bbeta,\bs w}(s)|Z_s^{k\prime}-Z^k_s|}\1_{\{s\leq T^{\bs w}_\eta\}}\quad \forall\;\bk=(k\prime,k)\in \mc E_N,
\end{align*}
$\{Z^j_{t}\}_{1\leq j\leq N}$ for $0\leq t\leq T^{\bs w}_\eta$ satisfy the equations in \eqref{SDE:Z1final} as soon as the following equations hold for some two-dimensional Brownian motion $\{\sqrt{N}\widetilde{W}^{\Sigma,\eta}_t\}$:
\begin{align}\label{BMsumBM}
\frac{1}{N}\sum_{j=1}^NZ^j_t=\frac{1}{N}\sum_{j=1}^N Z^j_0+\wt{W}^{\Sigma,\eta}_t,\quad \frac{1}{N}\sum_{j=1}^N\wt{W}^{j,\eta}_t=\wt{W}^{\Sigma,\eta}_t,\quad 0\leq t\leq T_\eta^{\bs w},
\end{align}
and $\{\wt{W}^{\Sigma,\eta}_t\}\ind \{\wt{W}^{\bj,\eta}_t\}_{\bj\in \mc E_N}$. Here,  we use the extension $\{\wt{W}^{j,\eta}_t\}_{\bj\in \mc E_N}$ of  $\{\wt{W}^{j}_t;0\leq t\leq T_\eta^{\bs w}\}_{\bj\in \mc E_N}$ beyond $T^{\bs w}_\eta$ such that $\{\wt{W}^{j,\eta}_t\}_{\bj\in \mc E_N}$ is family of independnet two-dimensional standard Brownian motions. [The existence of $\{\wt{W}^{j,\eta}_t\}_{\bj\in \mc E_N}$ uses
a concatenation with independent Brownian motions and L\'evy's characterization of Brownian motion \cite[3.16 Theorem, p.157]{KS:BM-2}.] To obtain the existence of such $\{\wt{W}^{\Sigma,\eta}_t\}$,
it suffices to show that $\{\wt{W}^{\Sigma,\eta}_t\}$
defined by the second equation in \eqref{BMsumBM} for all $t\geq 0$ satisfies all of the required properties. In this case, we only need to verify the first equality in \eqref{BMsumBM} for $0\leq t\leq T_\eta^{\bs w}$, but it holds upon noting that
\begin{align}\label{BMsumBM1}
\frac{1}{N}\sum_{j=1}^N Z^j_0+\wt{W}^{\Sigma,\eta}_t=\frac{1}{N}\sum_{j=1}^NZ^j_0+\frac{1}{N}\sum_{j=1}^N  W^j_t=\frac{1}{N}\sum_{j=1}^NZ^j_t,\quad 0\leq t\leq T^{\bs w}_\eta.
\end{align}
To see the first equality in \eqref{BMsumBM1}, note that $\{\frac{1}{N}\sum_{j=1}^N W^j_t\}\ind \{W^\bj_t\}$ for any $\bj\in \mc E_N$ under $\P^{\bi}_{z_0}$
so that $\la \frac{1}{N}\sum_{j=1}^N W^j_t,B^\bj\ra_{t\wedge T^{\bs w}_\eta}=0$ for all $\bj\in \mc E_N$ by the definition of $B^\bj$ in \eqref{def:Bj-2}. Hence, by \eqref{def:Wcom} and then \eqref{def:Nwi}--\eqref{def:Nwitilde}, the following holds for $0\leq t\leq T_\eta^{\bs w}$:
\[
\wt{W}^{\Sigma,\eta}_{t}=\frac{1}{N}\sum_{j=1}^NW^j_t-\left\langle\frac{1}{N}\sum_{j=1}^N W^j,N^{\bbeta,\bs w,\bi}\right\rangle_t=\frac{1}{N}\sum_{j=1}^NW^j_t.
\]
The second equality in \eqref{BMsumBM1} can be seen by using \eqref{def:ZSDE2-2}. We have proved that $\{Z^j_{t}\}_{1\leq j\leq N}$ for $0\leq t\leq T^{\bs w}_\eta$ satisfy the SDEs in \eqref{SDE:Z1final}.

The next step is to extend the SDEs of  $\{Z^j_{t}\}_{1\leq j\leq N}$ and $\{Z^\bj_{t}\}_{\bj\in \mc E_N}$ to $0\leq t\leq T^{\bs w}_0$ under $\P_{z_0}^{\bbeta,\bs w}$ and to specify the independent two-dimensional standard Brownian motions $\{W^j_t\}_{1\leq j\leq N}$. These tasks can first use the observation that
by \eqref{T0:approx} and the fact that $\{\ms Z_t\}$ is the coordinate process of $C_{\Bbb C^N}[0,\infty)$, \eqref{SDE:Z1final} and \eqref{SDE:Z1}  hold for all $0\leq t\leq T_0^{\bs w}$ by passing $\eta\searrow 0$ \emph{as soon as} \eqref{Lp:Z1drift} holds, in which case we define $\widetilde{W}^j_{t\wedge T_0^{\bs w}}$ as the limit of $\widetilde{W}^j_{t\wedge T_\eta^{\bs w}}$ as $\eta\searrow 0$. 

We prove \eqref{Lp:Z1drift} now. Let $1\leq p<2$ and $\bi\in \mc E_N$ such that $w_\bi>0$.
Then for all $\eta\in(0,\min_{\bj:w_\bj>0}|z_0^\bj|)$, we have
\begin{align}
  \E_{z_0}^{\bbeta,\bs w}\left[\int_0^{t\wedge T^{\bs w}_\eta}\left(\frac{w_\bj\hK^{\bbeta,\bj}_1(s)}{K_0^{\bbeta,\bs w}(s)|Z^\bj_s|} \right)^p\d s\right]
&= \int_0^t \E_{z_0}^{\bbeta,\bs w}\left[\left(\frac{w_\bj\hK^{\bbeta,\bj}_1(s)}{K_0^{\bbeta,\bs w}(s)|Z^\bj_s|} \right)^p\1_{\{s\leq t\wedge T_\eta^{\bs w}\}}\right]\d s\notag\\
&= \int_0^t  \E^{\bi}_{z_0}\left[\left(\frac{w_\bj\hK^{\bbeta,\bj}_1(s)}{K_0^{\bbeta,\bs w}(s)|Z^\bj_s|} \right)^p\1_{\{s\leq t\wedge T_\eta^{\bs w}\}}
\mathcal E^{\bbeta,\bs w,\bi}_{z_0}(t\wedge T^{\bs w}_\eta)\right]\d s\notag\\
&\leq \int_0^t  \E^{\bi}_{z_0}\left[\left(\frac{w_\bj \hK^{\bbeta,\bj}_1(s)}{K_0^{\bbeta,\bs w}(s)|Z^\bj_s|} \right)^p\1_{\{s\leq t\wedge T_\eta^{\bs w}\}}\mathcal E^{\bbeta,\bs w,\bi}_{z_0}(s)\right]\d s\notag\\
&\leq \int_0^t  \E^{\bi}_{z_0}\left[\left(\frac{w_\bj \hK^{\bbeta,\bj}_1(s)}{K_0^{\bbeta,\bs w}(s)|Z^\bj_s|} \right)^p\mathcal E^{\bbeta,\bs w,\bi}_{z_0}(s)\right]\d s<\infty.\label{SDE1:AF}
\end{align}
Here, the second equality uses \eqref{inhomo:2} with $\tau=t\wedge T^{\bs w}_\eta$; the first inequality follows from the supermartingale property of $\mathcal E^{\bbeta,\bs w,\bi}_{z_0}(\cdot)$ (Corollary~\ref{cor:superMG}) since $\{s\leq t\wedge T_\eta^{\bs w}\}\in \F_s^0$; the last inequality uses \eqref{eq:Qtight} for $r=s$. Since the right-hand side of \eqref{SDE1:AF} is independent of $\eta\in (0,\min_{\bj:w_\bj>0}|z_0^\bj|)$, \eqref{SDE1:AF}  implies \eqref{Lp:Z1drift} by passing $\eta\searrow 0$ and using \eqref{T0:approx} and Fatou's lemma. 

Finally, we show how $\{\widetilde{W}^j_{t\wedge T_0^{\bs w}}\}_{1\leq j\leq N}$ extends to a family of independent two-dimensional standard Brownian motions $\{\widetilde{W}^j_t\}_{1\leq j\leq N}$. Note that by \eqref{def:Wcom},
$\widetilde{U}^j\,\defeq\,\Re( \widetilde{W}^j)$ and  $\widetilde{V}^j\,\defeq\,\Im( \widetilde{W}^j)$ satisfy the following properties: \begin{align}\label{covar:UVtilde}
\la \wt{U}^i,\wt{U}^j\ra_t=\la \wt{V}^i,\wt{V}^j\ra_t=\delta_{ij}t,\quad 0\leq t<T_0^{\bs w},
\end{align}
where $\delta_{ij}$ are Kronecker's deltas, and, by It\^{o}'s formula,
\begin{align}\label{UV:qvbdd}
\left.
\begin{array}{ll}
\Big\{\Big((\widetilde{U}^j_{ t\wedge T_\eta^{\bs w}})^2- t\wedge T_\eta^{\bs w},(\widetilde{V}^j_{ t\wedge T_\eta^{\bs w}})^2- t\wedge T_\eta^{\bs w}\Big)_{j=1}^N; t\geq 0\Big\}\\
\mbox{is a vector continuous local martingale under $\P^{\bbeta,\bs w}_{z_0}$}.
\end{array}
\right.
\end{align}
By \eqref{covar:UVtilde}, $\{(\wt{U}^j_{t\wedge T^{\bs w}_\eta},\wt{V}^j_{t\wedge T^{\bs w}_\eta})_{j=1}^N;t\geq 0\}$ is a vector martingale. Also, by Fatou's lemma, \eqref{UV:qvbdd} implies that 
\begin{align}\label{UV:qvbdd1}
\max_{1\leq j\leq N}\max\{\E^{\bbeta,\bs w}_{z_0}[(\widetilde{U}^j_{ t\wedge T_\eta^{\bs w}})^2 ],\E^{\bbeta,\bs w}_{z_0}[(\widetilde{V}^j_{ t\wedge T_\eta^{\bs w}})^2 ]\}\leq  t,\quad \forall \; t\geq 0,\;\eta\in \left(0,\min_{\bj:w_\bj>0}|z^\bj_0|\right).
\end{align}
Hence, the uniform integrability of the family of random variables $\widetilde{U}^j_{ t\wedge T_\eta^{\bs w}}$ and $\widetilde{V}^j_{ t\wedge T_\eta^{\bs w}}$ holds. We conclude that $\{\widetilde{W}^j_{t\wedge T_0^{\bs w}}\}_{\bj\in \mc E_N}$ is a family of continuous martingales under $\P^{\bbeta,\bs w}_{z_0} $ such that \eqref{covar:UVtilde} holds up to and including $t=T_0^{\bs w}$. This is enough to get the family $\{\widetilde{W}^j_t\}_{1\leq j\leq N}$ of independent two-dimensional standard Brownian motions. The proof is complete.
\end{proof}

\subsection{The second class of strong Markov processes}\label{sec:SDE2}
In this subsection, we construct the second class of strong Markov processes described in Section~\ref{sec:intro}.
This class uses initial conditions in $\CNwni=\CNw\cup \,\CNwi $; recall the sets defined in \eqref{def:CNw}.
The preliminary forms of the required probability measures are defined as follows. Let $\bs w\in \R_+^{\mc E_N}$ be nonzero with $\#\{\bj;w_\bj>0\}\geq 2$, $\bi\in \mc E_N$ with $w_\bi>0$, $z_0\in \CNwni$, and $\eta\in (0,\min_{\bj:w_\bj>0,\bj\neq \bi}|z_0^\bj|)$. Then we can define probability measures $\P^{\bbeta,\bs w,\bi,\eta}_{z_0}$ on the measurable space $(C_{\Bbb C^N}[0,\infty),\B(C_{\Bbb C^N}[0,\infty)))$ such that $\P^{\bbeta,\bs w,\bi,\eta}_{z_0}$ are uniquely determined by the following expectations, using the martingale property stated in Corollary~\ref{cor:superMG}: for $0\leq F\in \B(C_{\Bbb C^N}[0,\infty))$ and $0<t_0<\infty$,
\begin{align}\label{def:Z2SDE}
\begin{split}
\E^{\bbeta,\bs w,\bi,\eta}_{z_0}[F(\ms Z_{t\wedge t_0};t\geq 0)]\,\defeq\,\E^{\bi}_{z_0}\left[F\left(\ms Z_{t\wedge t_0\wedge \Twi_\eta};t\geq 0\right)\mc E^{\bbeta,\bs w,\bi}_{z_0}\left(t_0\wedge \Twi_\eta\right) \right],
\end{split}
\end{align}
where $\Twi_\eta$ and $\mc E^{\bbeta,\bs w,\bi}_{z_0}(\cdot)$ are defined in \eqref{def:Twi} and \eqref{superMG}. Since $(C_{\Bbb C^N}[0,\infty),\B(C_{\Bbb C^N}[0,\infty)))$ serves as the underlying space, \eqref{def:Z2SDE} allows the definition of $\P^{\bbeta,\bs w,\bi,\eta}_{z_0}$ on the whole space $(C_{\Bbb C^N}[0,\infty),\B(C_{\Bbb C^N}[0,\infty)))$, not just on the measurable space generated by the set of $\CN$-valued continuous functions defined on $[0,t_0]$ for every $0<t_0<\infty$ \cite[1.3.5 Theorem, p.34]{SV-2}. 

Furthermore, the probability measures $\P_{z_0}^{\bbeta,\bs w,\bi,\eta}$ enjoy consistency in $\eta$, and the extensions to $\eta\searrow 0$ are the probability measures we look for to specify the second class of strong Markov processes. Here, the consistency in $\eta$ refers to the fact that for any $0<\eta_1<\eta_2<\min_{\bj:w_\bj>0,\bj\neq \bi}|z_0^\bj|$,  the uniqueness of $\P^{\bbeta,\bs w,\bi,\eta}_{z_0}$ by \eqref{def:Z2SDE} gives
\begin{align}\label{eq:consistency}
\E^{\bbeta,\bs w,\bi,\eta_1}_{z_0}[F(\ms Z_{t\wedge \Twi_{\eta_2}};t\geq 0)]=\E^{\bbeta,\bs w,\bi,\eta_2}_{z_0}\left[F(\ms Z_t;t\geq 0) \right],\;\;\forall\; 0\leq F\in \B(C_{\Bbb C^N}[0,\infty)).
\end{align}
On the other hand, proving the consistent extensions of $\P^{\bbeta,\bs w,\bi,\eta}_{z_0}$ to $\eta\searrow 0$ falls within the scope of constructing the exit measure of a supermartingale \cite{Follmer:exit-2,IW:Mult-2}. Roughly speaking, the central technical condition we need to verify is the continuous extensions to the time $\Twi_0$, if finite, of $\{\ms Z_t;t<\Twi_0\}$, since $\Twi_\eta\nearrow \Twi_0$ as $\eta\searrow 0$ as in \eqref{T0:approx}. This condition is handled in the proof of Proposition~\ref{prop:SDE2} (1$\cc$) below. We also summarize all the properties we need in this proposition. 

\begin{prop}\label{prop:SDE2}
Let $\bbeta\in (0,\infty)^{\mc E_N}$, $\bs w\in \R_+^{\mc E_N}$ with $\#\{\bj;w_\bj>0\}\geq 2$, and $\bi\in \mc E_N$ with $w_\bi>0$. Then the following properties hold.
\begin{itemize}
\item [\rm (1$\cc$)] For any $z_0\in \CNwni$, there exists a unique probability measure $\P^{\bbeta,\bs w,\bi}_{z_0}$ defined on $(C_{\Bbb C^N}[0,\infty),\B(C_{\Bbb C^N}[0,\infty)))$ such that
 $\{\ms Z_t\}$ stops at time $\Twi_0$ under $\P^{\bbeta,\bs w,\bi}_{z_0}$, and
\begin{align}\label{def:Q2}
\begin{aligned}
&\E^{\bbeta,\bs w,\bi}_{z_0}[F(\ms Z_{t\wedge \Twi_{\eta}};t\geq 0)]\\
& =
\E^{\bbeta,\bs w,\bi,\eta}_{z_0}[F(\ms Z_t;t\geq 0) ],\quad\forall\; 0\leq F\in \B(C_{\Bbb C^N}[0,\infty)),\;\eta\in \left(0,\min_{\bj:\bj\neq \bi;w_\bj>0}|z^\bj_0|\right).
\end{aligned}
\end{align}

\item [\rm (2$\cc$)] Under $\P^{\bbeta,\bs w,\bi}_{z_0}$ for any $z_0\in \CNwni$, $\{Z^j_t\}_{1\leq j\leq N}$ and $\{Z^\bj_t\}_{\bj\in \mc E_N}$ for $0\leq t \leq \Twi_0$  satisfy the SDEs in \eqref{SDE:Z1final} and~\eqref{SDE:Z1}. The Riemann-integral terms in these SDEs are absolutely integrable with
\begin{align}\label{Lp:Z2drift}
 \E_{z_0}^{\bbeta,\bs w,\bi}\biggl[\int_0^{t\wedge\Twi_0}\biggl(\frac{w_\bj\hK^{\bbeta,\bj}_1(s)}{K_0^{\bbeta,\bs w}(s)|Z^\bj_s|} \biggr)^p\d s\biggr]<\infty,\quad \forall\;t\geq 0,\;1\leq p<2,\;\bj\in \mc E_N.
\end{align}

\item [\rm (3$\cc$)] For any $z_0\in \CNwni$, $0\leq t_0<\infty$, and $0\leq F\in \B(C_{\Bbb C^N}[0,\infty))$,
\begin{align}
\begin{aligned}\label{def:Q21}
\E^{\bbeta,\bs w,\bi}_{z_0}\left[F(\ms Z_{t\wedge t_0};t\geq 0);t_0<\Twi_0\right] =\E^{\bi}_{z_0}\left[F(\ms Z_{t\wedge t_0};t\geq 0)\mathcal E^{\bbeta,\bs w,\bi}_{z_0}(t_0)
\right].
\end{aligned}
\end{align}
The same identity holds if we replace $t_0<\Twi_0$ on the left-hand side by $t_0\leq \Twi_0$. 

\item [\rm (4$\cc$)]
For any $z_0\in \CNw$, $\eta\in (0,\min_{\bj:w_\bj>0}|z_0^{\bj}|)$, $(\F_t^0)$-stopping time $\tau$ with $\tau\leq T^{\bs w}_0$, 
 and $0\leq F\in \B(C_{\Bbb C^N}[0,\infty))$,
\begin{align}\label{def:Q21000}
\E^{\bbeta,\bs w,\bi}_{z_0}\left[F(\ms Z_{t\wedge \tau};t\geq 0)\right]=\E^{\bbeta,\bs w}_{z_0}[F(\ms Z_{t\wedge \tau};t\geq 0)].
\end{align}

\item [\rm (5$\cc$)] For any $z_0\in \CNwni$, $\P^{\bbeta,\bs w,\bi}_{z_0}(\Twi_0<\infty)=1$ if $\beta_\bi=\min_{\bj:w_\bj>0}\beta_\bj$.  

\item [\rm (6$\cc$)] The family of probability measures 
$\{\P^{\bbeta,\bs w,\bi}_{z_0};z_0\in \CNwni\}$ defines $\{\ms Z_t\}$ 
as a strong Markov process stopped at $\Twi_0$.
\item [\rm (7$\cc$)] For any $z_0\in \CNwni$, it holds that
\begin{align*}
&\eqspace\P^{\bbeta,\bs w,\bi}_{z_0}\Big(\Twi_0<\infty, Z^\bi_{\Twi_0}\neq 0\;\;\&\;\;\exists\;!\; \bj\in\mc E_N\setminus\{\bi\}\mbox{ s.t. }w_\bj>0\;\&\; Z^\bj_{\Twi_0}=0\Big)\\
&=\P^{\bbeta,\bs w,\bi}_{z_0}\left(\Twi_0<\infty\right).
\end{align*}
 \end{itemize}
\end{prop}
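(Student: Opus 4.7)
By path continuity and the very definition of $\Twi_0$ in \eqref{def:Twi}, on $\{\Twi_0 < \infty\}$ at least one $\bj \in \mc E_N \setminus \{\bi\}$ with $w_\bj > 0$ satisfies $Z^\bj_{\Twi_0} = 0$; thus both conclusions of (7$\cc$) reduce to the no-simultaneous-contact (NSC) statement that for every distinct pair $\bk_1, \bk_2 \in \mc I \defeq \{\bi\} \cup \{\bj \in \mc E_N \setminus \{\bi\} : w_\bj > 0\}$,
\begin{equation*}
 \P^{\bbeta,\bs w,\bi}_{z_0}\bigl(\exists\, t \in (0, \Twi_0] \cap \R_+ : Z^{\bk_1}_t = Z^{\bk_2}_t = 0\bigr) = 0.
\end{equation*}
The plan is to prove this NSC statement by applying Theorem~\ref{thm:NTC} with $n_0 = 2$ and some $\alpha \in (0, 1/8)$ (admissible by \eqref{def:alpha}) to the pair $\{|Z^{\bk_1}_t|, |Z^{\bk_2}_t|\}$, via a localization patterned on the proof of Proposition~\ref{prop:BMntc}.

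For $z_0 \in \CNw$, starting from the SDE \eqref{SDE:Z1} of Proposition~\ref{prop:SDE2} (2$\cc$) and It\^o's formula, using the polar identity $|Z^\bj_s|^{-1}\Re(Z^\bj_s/Z^\bk_s) = \cos(\arg Z^\bj_s - \arg Z^\bk_s)/|Z^\bk_s|$, one derives for each $\bj \in \mc I$
\begin{equation*}
 |Z^\bj_t| = |Z^\bj_0| + \int_0^t \Biggl[\frac{1}{2|Z^\bj_s|} - \sum_{\bk \in \mc I} \frac{\sigma(\bj)\cdot\sigma(\bk)}{2|Z^\bk_s|}\cos(\arg Z^\bj_s - \arg Z^\bk_s)\frac{w_\bk \hK^{\bbeta,\bk}_1(s)}{K^{\bbeta,\bs w}_0(s)}\Biggr]\d s + \wt B^\bj_t,
\end{equation*}
where $\wt B^\bj$ is the one-dimensional Brownian motion built from $\wt W^\bj$ as in \eqref{def:Bj-2}. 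Each coefficient $w_\bk \hK^{\bbeta,\bk}_1(s)/K^{\bbeta,\bs w}_0(s) \leq \hK^{\bbeta,\bk}_1(s)/K^{\bbeta,\bk}_0(s)$ is uniformly bounded by the asymptotics \eqref{K00}--\eqref{K1infty} and tends to $0$ as $|Z^\bk_s|\searrow 0$. For small parameters $\theta, \delta > 0$ set $T^{(0)}\defeq 0$ and, inductively in $n \geq 1$,
\begin{equation*}
 S^{(n)} \defeq \inf\{t \geq T^{(n-1)} : |Z^{\bk_1}_t| + |Z^{\bk_2}_t| \leq \theta\},\quad T^{(n)} \defeq \inf\Bigl\{t \geq S^{(n)} : |Z^{\bk_1}_t| + |Z^{\bk_2}_t| = 2\theta\text{ or }\min_{\bl \in \mc I \setminus \{\bk_1, \bk_2\}}|Z^\bl_t| = \delta\Bigr\}.
\end{equation*}
Summing the SDEs for $\bj \in \{\bk_1, \bk_2\}$ on $[S^{(n)}, T^{(n)}]$ and regrouping by the running index $\bk$: contributions from $\bk \in \{\bk_1, \bk_2\}$ yield $\mu_{\bk_\ell}(s)/|Z^{\bk_\ell}_s|$ with $\mu_{\bk_\ell}(s)\to 1/2$ as $\theta\searrow 0$ (by the boundedness of the coefficients and the polar-identity cancellation), while the bounded residual drift $h(s)$ from $\bk \in \mc I \setminus \{\bk_1, \bk_2\}$---controlled by the $\delta$-lower-bound---is absorbed as $h(s) = \sum_{\ell}[h(s)|Z^{\bk_\ell}_s|/2]/|Z^{\bk_\ell}_s|$, with correction of size $\lesssim|h(s)|\theta$ to each $\mu_{\bk_\ell}$. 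For $\theta\ll\delta$, $\mu_{\bk_\ell}(s) \geq (1-2\alpha)/2$ on $[S^{(n)}, T^{(n)}]$; the covariation condition \eqref{W:qcv} follows from \eqref{def:Bj-2}--\eqref{covar:UV-2} and \eqref{covar:UVtilde}. Theorem~\ref{thm:NTC} then rules out vanishing of $|Z^{\bk_1}| + |Z^{\bk_2}|$ on $(S^{(n)}, T^{(n)}]$, and the Cauchy-path argument of \eqref{NTCt0bdd} in Proposition~\ref{prop:BMntc} exhausts $(0, \Twi_0]$.

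For $z_0 \in \CNwi$, combine the above with the strong Markov property of Proposition~\ref{prop:SDE2} (6$\cc$) at a small time $\vep > 0$: since $\Twi_0 > 0$ a.s. by continuity from $z_0 \in \CNwi$, and $Z^\bi_\vep$ admits a density under $\P^{\bbeta,\bs w,\bi}_{z_0}$ (via \eqref{def:Q21} and \cite[Theorem~2.1]{C:SDBG1-2}), the event $\{\Twi_0 > \vep,\, Z^\bi_\vep \neq 0\}$ has probability tending to $1$ as $\vep \searrow 0$, and on this event $\ms Z_\vep \in \CNw$ so that the $\CNw$-case applies to the shifted process. The main obstacle is the exit of $T^{(n)}$ via $|Z^\bi_{T^{(n)}}| = \delta$ in the case $\bi \notin \{\bk_1, \bk_2\}$---for $\bl \in \mc I \setminus \{\bi, \bk_1, \bk_2\}$ the definition of $\Twi_0$ already gives $|Z^\bl| > 0$ on $[0, \Twi_0)$, but $|Z^\bi|$ may oscillate near zero. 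This is resolved by a secondary application of Theorem~\ref{thm:NTC} to the triple $\{|Z^\bi|, |Z^{\bk_1}|, |Z^{\bk_2}|\}$ with $n_0 = 3$ (still admissible by \eqref{def:alpha}) to rule out triple contact; the residual pair-contact scenario with $Z^\bi_{\tau^*} \neq 0$ is handled by the pair localization above on the smaller neighborhood where $|Z^\bi|$ is bounded away from $0$ by continuity at $\tau^*$.
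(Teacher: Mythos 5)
Your reduction of (7$\cc$) to a pairwise no-simultaneous-contact statement for pairs in $\mc I\defeq\{\bj:w_\bj>0\}$ is correct, and the plan of localizing and invoking Theorem~\ref{thm:NTC} through Lemma~\ref{lem:RRprop}-type stopping times is the right mechanism. The genuine gap is that your argument is only two levels deep (pairs, then a triple to handle $\bi$), whereas the localization condition \eqref{R0R1:3} behind Lemma~\ref{lem:RRprop} requires \emph{every} weighted pair outside $\mc S$ to be bounded away from zero on the stopping interval — not just $\bi$. For $N\geq 4$ there are additional pairs $\bl\in\mc E_N\setminus\{\bi,\bk_1,\bk_2\}$ with $w_\bl>0$, and the scenario in which one such $\bl$ vanishes \emph{simultaneously} with $\bk_1,\bk_2$ (for instance at $\Twi_0$) is ruled out neither by your triple application of Theorem~\ref{thm:NTC} (whose stopping times must then exit through $|Z^\bl|=\delta$, leaving the critical time uncovered) nor, circularly, by the pair-NSC you are trying to establish for $\{\bk_1,\bl\}$. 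Your intervals $(S^{(n)},T^{(n)}]$ therefore need not exhaust $(0,\Twi_0]$, and the argument does not close.

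The paper closes this loop by a \emph{downward strong induction} on the cardinality $j$ of a simultaneous contact: Step~1 applies Lemma~\ref{lem:RRprop} with $\mc S=\mc E_N$, where \eqref{R0R1:3} is vacuous, to establish ${N\choose 2}$-NSC; Step~2 descends from $j={N\choose 2}$ to $j=2$, the induction hypothesis being used precisely to prove the covering identity \eqref{NTC:interval} — at every $t_0\in(0,\Twi_0]$ one can select $j+1$ indices at positive distance — so that Lemma~\ref{lem:RRprop} can be applied over the excursion intervals $\bigl(g_n^{\mc L}(m)+\tfrac{1}{m},\,d_n^{\mc L}(m)\bigr]$ of $\min_{\bj\in\mc L}|Z^\bj|$, via Lemma~\ref{lem:stoppingtimes}. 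Two smaller issues: passing from \eqref{SDE:Z1} to an SDE for $|Z^\bj_t|$ by ``It\^o's formula'' is delicate since $|Z^\bi_t|$ hits zero under $\P^{\bbeta,\bs w,\bi}_{z_0}$; the paper uses \cite[Lemma~5.5]{C:SDBG1-2} for this in Lemma~\ref{lem:ZS}, which also carries out the index-regrouping you allude to and furnishes the quantitative control \eqref{bdd:NTC1} on $\Phi^{\bbeta,\bs w,\bj}_{\mc S}$ that your $\mu$-estimates would need to be made precise.
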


\begin{rmk}\label{rmk:SDE2}
(1$\cc$) By Proposition~\ref{prop:SDE1} (2$\cc$) and \eqref{def:Q21000}, $\P^{\bbeta,\bs w,\bi}_{z_0}$ defined by \eqref{def:Z2SDE} for $z_0\in \CNw$ extends $\P^{\bbeta,\bs w}_{z_0}$ from Definition~\ref{def:SDE1ult}. The longer time $\Twi_0$ to stop is in use under $\P^{\bbeta,\bs w,\bi}_{z_0}$ since $\Twi_0\geq T^{\bs w}_0$. We also allow the additional set of initial conditions $\CNwi$ now. \medskip 

\noindent 
{\rm (2$\cc$)} In contrast to the others, Proposition~\ref{prop:SDE1} (5$\cc$) requires $\beta_\bi=\min_{\bj:w_\bj>0}\beta_\bj$. Presently, we do not know if $\P^{\bbeta,\bs w,\bi}_{z_0}(\Twi_0<\infty)=1$ for all the eligible $(\bbeta,\bs w,\bi)$, namely, all $(\bbeta,\bs w,\bi)$ such that $\bbeta\in (0,\infty)^{\mc E_N}$, $\bs w\in \R_+^{\mc E_N}$ with $\#\{\bj;w_\bj>0\}\geq 2$, and $\bi\in \mc E_N$ with $w_\bi>0$. \qed 
\end{rmk}
 
The proof of Proposition~\ref{prop:SDE2} (7$\cc$) is more involved than those of (1$\cc$)--(6$\cc$) and requires some additional definitions to fix ideas. We settle (1$\cc$)--(6$\cc$) first. \medskip 

\begin{proof}[Proof of Proposition~\ref{prop:SDE2} (1$\cc$)--(6$\cc$)]
\mbox{}\medskip 

\noindent {\bf (1$\cc$) and (2$\cc$)} 
Let us start the proof of (1$\cc$) first; the proof of (2$\cc$) will merge later. For the proof of (1$\cc$), it follows from continuity that the proofs of the existence part and \eqref{def:Q2}
 suffice. To this end, we first use a consistency theorem 
to construct a probability measure $\widehat{\P}^{\bbeta,\bs w,\bi}_{z_0}$ accommodating $\P_{z_0}^{\bbeta,\bs w,\bi,\eta}$ for any $\eta\in (0,\min_{\bj:\bj\neq \bi, w_\bj>0}|z_0^\bj|)$. The informal idea of a construction is to subdivide $\{\ms Z_t;t<\Twi_0\}$ into pieces of continuous paths according to the stopping times $\Twi_{(n_0+j)^{-1}}$, $j\in \Bbb N$, 
where we fix an integer $n_0\geq 1$ such that $0<n_0^{-1}<\min_{\bj:\bj\neq \bi, w_\bj>0}|z_0^\bj|$. Then the construction is complete by transferring those pieces of continuous paths to the probability space under $\widehat{\P}^{\bbeta,\bs w,\bi}_{z_0}$ and assembling them in the natural way. 

Specifically, by adding an extra point $\partial$ to $\CN$ and using \eqref{eq:consistency}, we can construct $\widehat{\P}_{z_0}^{\bbeta,\bs w,\bi}$ on $(C_{\CN\cup \{\partial\}}[0,\infty)^{\Bbb N},\B(C_{\CN\cup \{\partial\}}[0,\infty))^{\Bbb N})$ such that with $\widehat{\E}_{z_0}^{\bbeta,\bs w,\bi}\,\defeq\,\E^{\widehat{\P}_{z_0}^{\bbeta,\bs w,\bi}}$,
\begin{align}\label{eq:consistency0}
\begin{aligned}
&\eqspace\widehat{\E}_{z_0}^{\bbeta,\bs w,\bi}\left[F\left(\{\ms Z^{(1)}_t;t\geq 0\},\cdots,\{\ms Z^{(m)}_t;t\geq 0\}\right)\right]\\
&=\E^{\bbeta,\bs w,\bi,(n_0+m)^{-1}}_{z_0}\left[
F\left(\{\ms Z^{(1)*}_t;t\geq 0\},\cdots,\{\ms Z^{(m)*}_t;t\geq 0\}\right)
\right],\\
&\quad\quad \forall\;m\in \Bbb N,\; 0\leq F\in \B(C_{\Bbb C^N}[0,\infty)^m),
\end{aligned}
\end{align}
where $\{\ms Z^{(j)}_t\}$ maps to the coordinate process of the $j$-th copy of $C_{\CN}[0,\infty)$, and
\begin{align}\label{def:Zst}
\ms Z^{(j)*}_t\,\defeq\,
\begin{cases}
 \ms Z_{t\wedge \Twi_{n_0^{-1}}},&\mbox{if }j\geq 1,\\
 \ms Z_{\big(\Twi_{(n_0+j-2)^{-1}}+t\big)\wedge \Twi_{(n_0+j-1)^{-1}}},&\mbox{if }\Twi_{(n_0+j-2)^{-1}}<\infty,\;j\geq 2,\\
\partial, &\mbox{if }\Twi_{(n_0+j-2)^{-1}}=\infty,\;j\geq 2.
\end{cases}
\end{align}
See \cite[Theorem~6.14, p.114]{K:FMP-2} for the consistency theorem in use, and recall that $C_{\CN\cup \{\partial\}}[0,\infty)$ is a Polish space. By concatenating the nontrivial parts of $\{\ms Z^{(j)}_t;t\geq 0\}$ under $\widehat{\P}_{z_0}^{\bbeta,\bs w,\bi}$ and using \eqref{eq:consistency} and
\eqref{eq:consistency0}, we obtain a continuous process $\{\widehat{\ms Z}_t;t<\widehat{T}_0^{\bs w\setminus\{w_\bi\}}\}$
such that
\begin{align}\label{eq:consistency1}
\begin{aligned}
&\widehat{\E}^{\bbeta,\bs w,\bi}_{z_0}[F(\widehat{\ms Z}_{t\wedge \Twi_{\eta}};t\geq 0)]\\
&\quad =
\E^{\bbeta,\bs w,\bi,\eta}_{z_0}[F(\ms Z_t;t\geq 0) ],\quad  \forall\;0\leq F\in \B(C_{\Bbb C^N}[0,\infty)),\;\eta\in \left(0,\min_{\bj:\bj\neq \bi, w_\bj>0}|z_0^\bj|\right),
\end{aligned}
\end{align}
where $\widehat{T}_0^{\bs w\setminus\{w_\bi\}}$ is the equivalent of $\Twi_0$ when $\{\widehat{\ms Z}_t;t<\widehat{T}_0^{\bs w\setminus\{w_\bi\}}\}$ instead of $\{\ms Z_t\}$ is in use. Recall \eqref{def:Twi} for the definition of $\Twi_0$. To lighten notation, we will drop ``$\widehat{\mbox{\;\;\;}}$'' in $\widehat{\ms Z}_t$ and $\widehat{T}_0^{\bs w\setminus\{w_\bi\}}$ when working under $\widehat{\P}^{\bbeta,\bs w,\bi}_{z_0}$. Similarly, the notations defined on $C_{\CN}[0,\infty)$ earlier will be used with the same meanings under $\widehat{\P}^{\bbeta,\bs w,\bi}_{z_0}$.

The remaining of the proof of (1$\cc$) is similar to the proof of Proposition~\ref{prop:SDE1} (4$\cc$), and we can prove (2$\cc$) of the present proposition at the same time. Now, by comparing \eqref{inhomo:2} and \eqref{def:Z2SDE} and recalling the proof of Proposition~\ref{prop:SDE1} (4$\cc$), it should be clear that
\eqref{SDE:Z1final} and \eqref{SDE:Z1} hold for $0\leq t\leq \Twi_\eta$. The unique continuous extensions of the finite-variation parts of
 these SDEs to $\Twi_0$ when $\Twi_0<\infty$ also hold in the present case. This is because Lemma~\ref{lem:Qtight} holds for all initial conditions $z_0\in \CNwni$, and so,  \eqref{SDE1:AF} extends to the present case by \eqref{def:Z2SDE} and \eqref{def:Q2}.  
However, we now need to justify the existence of the limit $\ms Z_{t\wedge \Twi_\eta}$ in $\Bbb C$ as $\eta\searrow 0$ under $\widehat{\P}^{\bbeta,\bs w,\bi}_{z_0}$. This limit does not hold yet as $\widehat{\P}^{\bbeta,\bs w,\bi}_{z_0}$ is not a priori a probability measure defined on $C_{\CN}[0,\infty)$. But  as soon as we can prove the continuation of the Brownian motion parts of the SDEs of $Z^\bj$ under $\widehat{\P}^{\bbeta,\bs w,\bi}_{z_0}$ to $\Twi_0$, 
 the required continuation of $\{\ms Z_t;t<\Twi_0\}$ to $\Twi_0$ whenever $\Twi_0<\infty$ follows. Moreover, the proof of Proposition~\ref{prop:SDE1} (4$\cc$) then continues to apply on getting the required Brownian motions $\{\wt{W}^j_t\}_{1\leq j\leq N}$.

To complete the proofs of (1$\cc$) and (2$\cc$), we show that 
\begin{align}\label{W:limT0000}
\widehat{\P}^{\bbeta,\bs w,\bi}_{z_0}\left( \widetilde{W}^j_{t\wedge \Twi_0}\,\defeq\,
\lim_{\eta\searrow 0}
\widetilde{W}^j_{t\wedge \Twi_\eta}\mbox{ exists in $\Bbb C$}\right)=1,\quad\forall\;1\leq j\leq N.
\end{align}
To this end, note that by an argument similar to the one to get \eqref{UV:qvbdd1},
\begin{align}
&\quad\;\max_{1\leq j\leq N}\max\Big\{\widehat{\E}^{\bbeta,\bs w,\bi}_{z_0}\Big[\Big(\widetilde{U}^j_{ t\wedge \Twi_\eta}-\widetilde{U}^j_{ s\wedge \Twi_\eta}\Big)^4 \Big],\widehat{\E}^{\bbeta,\bs w,\bi}_{z_0}\Big[\Big(\widetilde{V}^j_{ t\wedge \Twi_\eta}-\widetilde{V}^j_{ s\wedge \Twi_\eta}\Big)^4 \Big]\Big\}\notag\\
&\leq  3(t-s)^2, \quad\forall \; 0\leq s\leq t<\infty,\;\eta\in \left(0,\min_{\bj:w_\bj>0}|z^\bj_0|\right).\label{UV:unifcont}
\end{align}
Given the $\eta$-independent bound in \eqref{UV:unifcont},  Kolmogorov's continuity theorem \cite[(2.1) Theorem, p.26]{RY-2} and Fatou's lemma imply the H\"older continuity of $\{\widetilde{W}^j_t;0\leq t<\Twi_0\wedge t_0\}$, $0<t_0<\infty$. Therefore, we obtain \eqref{W:limT0000} by the fact that any uniformly continuous real-valued function defined on $[0,t_1)$ for $0<t_1<\infty$ extends uniquely to a continuous function on $[0,t_1]$. The proofs of (1$\cc$) and (2$\cc$) are now complete simultaneously.\medskip 

\noindent {\bf (3$\cc$)} To obtain \eqref{def:Q21}, first, we combine \eqref{def:Z2SDE} and \eqref{def:Q2} to get, for any $\eta\in (0,\min_{\bj:w_\bj>0,\bj\neq \bi}|z_0^\bj|)$,
\begin{align*}
\E^{\bbeta,\bs w,\bi}_{z_0}\left[F(\ms Z_{t\wedge t_0};t\geq 0);t_0<\Twi_\eta\right]
&=\E^{\bi}_{z_0}\left[F(\ms Z_{t\wedge t_0};t\geq 0)\mc E^{\bbeta,\bs w,\bi}_{z_0}(t_0\wedge \Twi_\eta)
;t_0<\Twi_\eta\right]\\
&=\E^{\bi}_{z_0}\left[F(\ms Z_{t\wedge t_0};t\geq 0)
\mc E^{\bbeta,\bs w,\bi}_{z_0}(t_0);t_0<\Twi_\eta\right].
\end{align*}
Then \eqref{def:Q21} follows by passing $\eta\searrow 0$ on both sides of the last equality and using the monotone convergence theorem, since $\Twi_\eta\nearrow \Twi_0$, and $\Twi_0=\infty$ $\P^{\bi}_{z_0}$-a.s. by Proposition~\ref{prop:BMntc}. To see \eqref{def:Q21}  with ``$t_0<\Twi_0$'' replaced by ``$t_0\leq \Twi_0$,'' it suffices to repeat the same argument with ``$t_0<\Twi_\eta$'' changed to ``$t_0\leq \Twi_\eta$'' in the foregoing display. \medskip 

\noindent {\bf (4$\cc$)} It suffices to consider $F$ which are also bounded continuous. In this case, \eqref{def:Q21000} for $\tau\leq T^{\bs w}_\eta$, $\eta\in (0,\min_{\bj:w_\bj>0,\bj\neq \bi}|z_0^\bj|)$, holds by combining \eqref{inhomo:0}, \eqref{inhomo:2}, \eqref{def:Z2SDE} and \eqref{def:Q2} and using the bound $T^{\bs w}_\eta\leq \Twi_\eta$. The case of general $\tau\leq \Twi_0$ follows by using $\tau\wedge \Twi_\eta$ in the case just considered
and passing $\eta\searrow 0$. 
 \medskip

\noindent {\bf (5$\cc$)} 
To prove $\P^{\bbeta,\bs w,\bi}_{z_0}(\Twi_0<\infty)=1$, recall the formula \eqref{superMG} of $\mathcal E^{\bbeta,\bs w,\bi}_{z_0}(\cdot)$. By \eqref{def:Q21},
\begin{align*}
&\quad\;\P^{\bbeta,\bs w,\bi}_{z_0}(t<\Twi_0)\\
&=\E^{\bi}_{z_0}\left[\frac{w_\bi  K_0^{\bbeta,\bi}(0)}{K_0^{\bbeta,\bs w}(0)}\frac{\e^{-\mathring{A}^{\bbeta,\bs w,\bi}_0(t)-\widetilde{A}^{\bbeta,\bs w,\bi}_0(t)} K^{\bbeta,\bs w}_0(t)}{w_\bi K^{\bbeta,\bi}_0(t)}\right]\\
&=\frac{w_\bi K_0^{\bbeta,\bi}(0)}{K_0^{\bbeta,\bs w}(0)}\E^{\bi}_{z_0}[\e^{-\mathring{A}^{\bbeta,\bs w,\bi}_0(t)-\widetilde{A}^{\bbeta,\bs w,\bi}_0(t)}]+\sum_{\bj\in \mc E_N\setminus\{\bi\}}\frac{w_\bi K_0^{\bbeta,\bi}(0)}{K_0^{\bbeta,\bs w}(0)}\E^{\bi}_{z_0}\left[\e^{-\mathring{A}^{\bbeta,\bs w,\bi}_0(t)-\widetilde{A}^{\bbeta,\bs w,\bi}_0(t)}\frac{w_\bj K^{\bbeta,\bj}_0(t)}{w_\bi K^{\bbeta,\bi}_0(t)}\right]\\
&\leq \frac{w_\bi K_0^{\bbeta,\bi}(0)}{K_0^{\bbeta,\bs w}(0)}\E^{\bi}_{z_0}[\e^{-\mathring{A}^{\bbeta,\bs w,\bi}_0(t)-\widetilde{A}^{\bbeta,\bs w,\bi}_0(t)}]\\
&\quad\;+\sum_{\bj\in \mc E_N\setminus\{\bi\}}\frac{w_\bi K_0^{\bbeta,\bi}(0)}{K_0^{\bbeta,\bs w}(0)}
\left(\frac{w_\bj}{w_\bi}\right)
\E^{\bi}_{z_0}[\e^{-p_0\mathring{A}^{\bbeta,\bs w,\bi}_0(t)-p_0\widetilde{A}^{\bbeta,\bs w,\bi}_0(t)}]^{1/p_0}\E^{\bi}_{z_0}\left[\left(\frac{K^{\bbeta,\bj}_0(t)}{K^{\bbeta,\bi}_0(t)}\right)^{q_0}\right]^{1/q_0}
\end{align*}
for all pairs of H\"older conjugates $(p_0,q_0)$ such that $1<q_0<1+1/\two$.

To pass the limit of the right-hand side of the last inequality, first,
by using Fatou's lemma and \cite[Theorem~3.1 (3$\cc$)]{C:SDBG1-2} with the substitution $t'/q=t$, for all $\bj\neq \bi$,
\[
\int_0^\infty \e^{-t'}\liminf_{q\searrow 0}\E^{\bi}_{z_0}\left[\left(\frac{K^{\bbeta,\bj}_0(t'/q)}{K^{\bbeta,\bi}_0(t'/q)}\right)^{q_0}\right]\d t'\leq \liminf_{q\searrow 0}\int_0^\infty \e^{-t'}\E^{\bi}_{z_0}\left[\left(\frac{K^{\bbeta,\bj}_0(t'/q)}{K^{\bbeta,\bi}_0(t'/q)}\right)^{q_0}\right]\d t'<\infty.
\]
Also, we apply the assumption that $\beta_\bi=\min_{\bj:w_\bj>0}\beta_\bj$, which yields the fact that 
$\widetilde{A}^{\bbeta,\bs w,\bi}_0(t)\geq 0$ by Remark~\ref{rmk:NTC} (1$\cc$). Since $\mathring{A}^{\bbeta,\bs w,\bi}_0(\infty)=\infty$ $\P^{\bi}_{z_0}$-a.s. by \cite[(3.2)]{C:SDBG1-2}  and the definition \eqref{def:Aring} of $\mathring{A}^{\bbeta,\bs w,\bi}_0(\cdot)$, it follows from the last two displays that 
 we can choose $\{t_n\}$ with $0<t_n\nearrow \infty$ for
\[
0=
\lim_{n\to\infty}\P^{\bbeta,\bs w,\bi}_{z_0}(t_n<\Twi_0)=\P^{\bbeta,\bs w,\bi}_{z_0}(\Twi_0=\infty),
\]
which entails the required identity $\P^{\bbeta,\bs w,\bi}_{z_0}(\Twi_0<\infty)=1$.  \medskip

\noindent {\bf (6$\cc$)} 
The strong Markov property can be obtained by modifying the proof in \eqref{SMP:proof}, now via $\P^\bi$ using \eqref{def:Z2SDE} and \eqref{def:Q2} and approximating $\Twi_0$ with $t_0\wedge \Twi_\eta$ for $t_0\nearrow\infty$ and $\eta\searrow0 $. Specifically, for any $(\F_t^0)$-stopping time $\tau_0$ and bounded continuous $F_1,F_2$, we consider
\begin{align}
&\quad\;\E_{z_0}^{\bbeta,\bs w,\bi}\left[F_1(\ms Z_{t\wedge \tau_0};t\geq 0)F_2(\ms Z_{(\tau_0+t)\wedge \Twi_0};t\geq 0);\tau_0<\Twi_0\right]\notag\\
&=\lim_{\eta\searrow 0}\lim_{t_0\nearrow\infty} \E_{z_0}^{\bbeta,\bs w,\bi}\left[F_1(\ms Z_{t\wedge \tau_0};t\geq 0)F_2(\ms Z_{(\tau_0+t)\wedge t_0\wedge  T^{\bs w}_\eta};t\geq 0);\tau_0<t_0\wedge  \Twi_\eta\right]\notag\\
&=\lim_{\eta\searrow 0}\lim_{t_0\nearrow\infty}\E^{\bi}_{z_0}\left[F_1(\ms Z_{t\wedge \tau_0};t\geq 0)F_2(\ms Z_{(\tau_0+t)\wedge t_0\wedge  T^{\bs w}_\eta};t\geq 0)\mc E^{\bbeta,\bs w,\bi}_{z_0}\left(t_0\wedge \Twi_\eta\right);\tau_0<t_0\wedge  \Twi_\eta \right]\notag\\
&=\lim_{\eta\searrow 0}\lim_{t_0\nearrow\infty}\E^{\bi}_{z_0}\left[F_1(\ms Z_{t\wedge \tau_0};t\geq 0)\mc E^{\bbeta,\bs w,\bi}_{z_0}(\tau_0)\right.\notag\\
&\quad \times\E^{\bi}_{\ms Z_{\tau_0}}\left[F_2(\ms Z_{t\wedge (t_0-r)\wedge \Twi_\eta};t\geq 0)\mc E^{\bbeta,\bs w,\bi}_{z_0}\left((t_0-r)\wedge \Twi_\eta\right)\right]\Big|_{r=\tau_0};\tau_0<t_0\wedge \Twi_\eta\Big]\notag\\
&=\lim_{\eta\searrow 0}\lim_{t_0\nearrow\infty} \E_{z_0}^{\bbeta,\bs w,\bi}\left[F_1(\ms Z_{t\wedge \tau_0};t\geq 0)\E^{\bbeta,\bs w,\bi}_{\ms Z_{\tau_0}}[F_2(\ms Z_{t\wedge (t_0-r)\wedge \Twi_\eta};t\geq 0)]\big|_{r=\tau_0};\tau_0<t_0\wedge \Twi_\eta\right]\notag\\
&=\E_{z_0}^{\bbeta,\bs w,\bi}\left[F_1(\ms Z_{t\wedge \tau_0};t\geq 0)\E^{\bbeta,\bs w,\bi}_{\ms Z_{\tau_0}}[F_2(\ms Z_{t\wedge \Twi_0};t\geq 0)];\tau_0<\Twi_0\right].\notag
\end{align}
In more detail, the third equality uses the multiplicativity of $\mathcal E^{\bbeta,\bs w,\bi}_{z_0}(\cdot)$ and the property $\ms Z_{\tau_0}\in \CNwni$ on $\{\tau_0<t_0\wedge \Twi_\eta\}$, and the fourth equality uses the fact that $\{\mathcal E^{\bbeta,\bs w,\bi}_{z_0}(t\wedge \Twi_\eta)\}$ is a martingale by Corollary~\ref{cor:superMG}, as well as \eqref{def:Z2SDE} and
\eqref{def:Q2}. The last equality is enough to prove the required strong Markov property. 
\end{proof}

It remains to prove Proposition~\ref{prop:SDE2} (7$\cc$). The required property is comparable to Proposition~\ref{prop:BMntc}, but the situation here is more complicated. The following lemma specifies the SDEs comparable to the one in \eqref{SDE:NTC1}, now considering the presence of multiple particles.

\begin{lem}\label{lem:ZS}
Let $\bbeta\in (0,\infty)^{\mc E_N}$, $\bs w\in \R_+^{\mc E_N}$ with $\#\{\bj;w_\bj>0\}\geq 2$, $\bi\in \mc E_N$ with $w_\bi>0$, and
$z_0\in \CNwni$. For any $\mathcal S$ with $\varnothing\neq \mc S\subset\mc E_N$, it holds that under $\P^{\bbeta,\bs w,\bi}_{z_0}$, 
\begin{align}\label{SDE:ZS2}
\sum_{\bj\in \mc S}| Z^\bj_t|
 &=\sum_{\bj\in \mc S} |Z^\bj_0|+\sum_{\bj\in \mc S} \int_0^t\frac{1-\Phi^{\bbeta,\bs w,\bj}_{\mc S}(s)}{2|Z^\bj_s|}\d s+\sum_{\bj\in \mc S}\widetilde{B}^\bj_t,\quad 0\leq t\leq \Twi_0.
 \end{align}
Here, $\widetilde{B}^\bj$ are one-dimensional standard Brownian motions such that, with $Z^\bj=X^\bj+\i Y^\bj$ and $\widetilde{W}^\bj=\widetilde{U}^{\bj}+\i \widetilde{V}^\bj$ for real $X^\bj,Y^\bj,\widetilde{U}^\bj,\widetilde{V}^\bj$,  the following holds for all $\bj,\bk\in \mc E_N$ and $0\leq t\leq \Twi_0$:
\begin{align}\label{covar:tB}
\widetilde{B}^\bj_t\,\defeq\, \int_0^t \frac{ X^\bj_s\d \widetilde{U}^\bj_s+Y^\bj_s\d \widetilde{V}^\bj_s}{| Z^\bj_s|}\Longrightarrow 
\la \widetilde{B}^\bj,\widetilde{B}^\bk\ra_t=\frac{\sigma(\bj)\cdot \sigma(\bk)}{2}\int_0^t \frac{X^\bj_sX^\bk_s+Y^\bj_sY^\bk_s}{|Z^\bj_s||Z^\bk_s|}\d s.
\end{align}
 Also, for any $s\in (0,\Twi_0]$ such that $|Z^\bk_s|>0$ for all $\bk\in \mc E_N$,  
\begin{align}
\begin{aligned}
\Phi^{\bbeta,\bs w,\bj}_{\mc S}(s)&\,\defeq\, \sum_{\bk\in \mc S}
  \frac{|Z^\bj_s|}{|Z^\bk_s|}\Re\biggl(\frac{ Z^\bk_s}{ Z^\bj_s}\biggr) [\sigma(\bj)\cdot\sigma(\bk)]
 \frac{w_\bj \hK_1^{\bbeta,\bj}(s)}{K_0^{\bbeta,\bs w}(s)}\\
 &\quad\,+\sum_{\bk\in \mathcal E_N\setminus \mc S}\frac{|Z^\bj_s|^2}{|Z^\bk_s|^2}\Re\biggl(\frac{ Z^\bk_s}{ Z^{\bj}_s}\biggr)[\sigma(\bj)\cdot\sigma(\bk)]
 \frac{w_\bk \hK_1^{\bbeta,\bk}(s)}{K_0^{\bbeta,\bs w}(s)},\quad \forall\;\bj\in \mc S.\label{def:PhiNTC}
 \end{aligned}
\end{align}
Moreover,  
\begin{align}\label{bdd:NTC1}
\begin{aligned}
|\Phi^{\bbeta,\bs w,\bj}_{\mc S}(s)|
\leq C_{\ref{bdd:NTC1}}(\bbeta)
 \frac{w_\bj\#\mc S+
\sum_{\bk\in \mc E_N\setminus\mc S}w_\bk|Z^\bk_s|^{-1} |Z^\bj_s|}{K_0^{\bbeta,\bs w}(s)}.
\end{aligned}
\end{align}
\end{lem}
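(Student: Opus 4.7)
The strategy is a direct application of It\^{o}'s formula to each $|Z^\bj_t|$ built from the SDE \eqref{SDE:Z1} of Proposition~\ref{prop:SDE2}~(2$\cc$), followed by a symmetrizing rearrangement of the resulting drift. First, the independence of $\{\widetilde{W}^j_t\}_{1\leq j\leq N}$ transfers, as in \eqref{covar:UV-2}, to $\la \widetilde{U}^\bj,\widetilde{U}^\bk\ra_t=\la \widetilde{V}^\bj,\widetilde{V}^\bk\ra_t=\frac{\sigma(\bj)\cdot \sigma(\bk)}{2}t$ and $\la \widetilde{U}^\bj,\widetilde{V}^\bk\ra_t\equiv 0$. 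Writing $Z^\bj=X^\bj+\i Y^\bj$ and using $1/\overline{Z}^\bk=(X^\bk+\i Y^\bk)/|Z^\bk|^2$ together with $X^\bj X^\bk+Y^\bj Y^\bk=\Re(Z^\bj\overline{Z}^\bk)=|Z^\bk|^2\Re(Z^\bj/Z^\bk)$, I apply It\^{o}'s formula to $|Z^\bj_s|=\sqrt{(X^\bj_s)^2+(Y^\bj_s)^2}$ off the Lebesgue-null set $\{s:Z^\bj_s=0\}$, handled by the localization that already justifies \eqref{SDE:|Zbj|-2}. Since $\sigma(\bj)\cdot\sigma(\bj)=2$, the second-order It\^{o} correction contributes the Bessel-type $(2|Z^\bj|)^{-1}\d s$ term, yielding
\[
\d |Z^\bj_s|=\frac{1}{2|Z^\bj_s|}\left[1-\sum_{\bk\in \mc E_N}[\sigma(\bj)\cdot\sigma(\bk)]\frac{w_\bk\hK_1^{\bbeta,\bk}(s)}{K_0^{\bbeta,\bs w}(s)}\Re(Z^\bj_s/Z^\bk_s)\right]\d s+\d \widetilde{B}^\bj_s.
\]
The martingale part $\widetilde{B}^\bj$ defined in \eqref{covar:tB} has $\d \la \widetilde{B}^\bj,\widetilde{B}^\bk\ra_s=\frac{\sigma(\bj)\cdot\sigma(\bk)}{2}\cdot\frac{X^\bj X^\bk+Y^\bj Y^\bk}{|Z^\bj||Z^\bk|}\d s$, matching the stated covariation and reducing to $\d s$ when $\bj=\bk$; hence $\widetilde{B}^\bj$ is a one-dimensional standard Brownian motion by L\'{e}vy's characterization.

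Summing the previous display over $\bj\in \mc S$, I split the inner $\bk$-sum into $\bk\in \mc S$ and $\bk\in \mc E_N\setminus\mc S$. For $\bk\in \mc E_N\setminus\mc S$, the identity $\Re(Z^\bj/Z^\bk)=\frac{|Z^\bj|^2}{|Z^\bk|^2}\Re(Z^\bk/Z^\bj)$---both sides equal $\Re(Z^\bj\overline{Z}^\bk)/|Z^\bk|^2$---produces the second summand of $\Phi^{\bbeta,\bs w,\bj}_{\mc S}$ in \eqref{def:PhiNTC} directly. For $\bk\in \mc S$, the crucial step is to swap the dummy indices $\bj\leftrightarrow\bk$ in the double sum, which is legitimate exactly because both indices range over $\mc S$; this converts the weight $w_\bk\hK_1^{\bbeta,\bk}$ into $w_\bj\hK_1^{\bbeta,\bj}$ and replaces $\frac{1}{|Z^\bj|}\Re(Z^\bj/Z^\bk)$ by $\frac{1}{|Z^\bk|}\Re(Z^\bk/Z^\bj)=\frac{1}{|Z^\bj|}\cdot\frac{|Z^\bj|}{|Z^\bk|}\Re(Z^\bk/Z^\bj)$, yielding exactly the first summand of $\Phi^{\bbeta,\bs w,\bj}_{\mc S}$. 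Combining the two parts produces \eqref{SDE:ZS2}.

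For the bound \eqref{bdd:NTC1}, I use $|\sigma(\bj)\cdot\sigma(\bk)|\leq 2$ and the universal boundedness of $\hK_1(x)=xK_1(x)$ on $(0,\infty)$, which follows from the asymptotic representations \eqref{K10} and \eqref{K1infty}. The Cauchy--Schwarz bound $|\Re(Z^\bk/Z^\bj)|\leq |Z^\bk|/|Z^\bj|$ gives $\left|\frac{|Z^\bj|}{|Z^\bk|}\Re(Z^\bk/Z^\bj)\right|\leq 1$ for the first summand, and $\left|\frac{|Z^\bj|^2}{|Z^\bk|^2}\Re(Z^\bk/Z^\bj)\right|=|\Re(Z^\bj/Z^\bk)|\leq |Z^\bj|/|Z^\bk|$ for the second. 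Together these yield the two numerator contributions $w_\bj\#\mc S$ and $\sum_{\bk\in \mc E_N\setminus\mc S}w_\bk|Z^\bk|^{-1}|Z^\bj|$ in \eqref{bdd:NTC1}. The substantive content of the lemma is not the It\^{o} calculation itself but recognizing that the drift must be recast in the symmetric form of $\Phi^{\bbeta,\bs w,\bj}_{\mc S}$, tailored so that the coefficient $\mu_\bj(s)\equiv (1-\Phi^{\bbeta,\bs w,\bj}_{\mc S}(s))/2$ can later be made arbitrarily close to $1/2$ on a carefully chosen excursion, matching the hypotheses of Theorem~\ref{thm:NTC}; the bookkeeping at the Lebesgue-null set $\{|Z^\bi_s|=0\}$ (and $\{|Z^\bj_s|=0\}$ for $\bj$ with $w_\bj=0$) is routine under the localization already used in deriving \eqref{SDE:|Zbj|-2}.
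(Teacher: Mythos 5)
Your proof is correct and takes essentially the same route as the paper: sum the It\^{o}/Bessel-type SDEs for $|Z^\bj_t|$ derived from \eqref{SDE:Z1}, split the $\bk$-sum into $\bk\in\mc S$ and $\bk\in\mc E_N\setminus\mc S$, swap dummy indices $\bj\leftrightarrow\bk$ on $\mc S$, apply $\Re(Z^\bj/Z^\bk)=\tfrac{|Z^\bj|^2}{|Z^\bk|^2}\Re(Z^\bk/Z^\bj)$ off $\mc S$, and bound $\Phi^{\bbeta,\bs w,\bj}_{\mc S}$ with $|\sigma(\bj)\cdot\sigma(\bk)|\leq 2$ and elementary estimates on the $\Re$-factors and $\hK_1$. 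The one place where the paper is more explicit is the It\^{o} step itself: rather than invoke ``routine localization'' it cites \cite[Lemma~5.5]{C:SDBG1-2} together with the integrability \eqref{Lp:Z2drift}, since $|Z^\bi_t|$ does hit zero under $\P^{\bbeta,\bs w,\bi}_{z_0}$ on $[0,\Twi_0]$ and the Tanaka-type correction there requires the argument of the companion lemma rather than the generic 2D-BM reasoning behind \eqref{SDE:|Zbj|-2}.
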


\begin{proof}
We derive \eqref{SDE:ZS2} first.
 By summing over $|Z^\bj_t|$ for $\bj\in \mc S$ and using \eqref{SDE:Z1} for $0\leq t\leq \Twi_0$, \eqref{Lp:Z2drift}, and \cite[Lemma~5.5]{C:SDBG1-2} with the choice of
\[
\mathcal Z^\bj\equiv Z^\bj,\quad \tau\equiv \Twi_0\wedge n,\quad \d A^{\bj,\bk}_s\equiv \frac{\sigma(\bj)\cdot \sigma(\bk)}{2}\frac{w_\bk\hK_1^{\bbeta,\bk}(s)}{K_0^{\bbeta,\bs w}(s)}\d s,
\]
for every integer $n\geq 1$,
we get
\begin{align}
\sum_{\bj\in \mc S}| Z^\bj_t|&=\sum_{\bj\in \mc S} |Z^\bj_0|+\sum_{\bj\in \mc S}\int_0^t \frac{\d s}{2|Z^\bj_s|}-\sum_{\bj\in \mc S}\sum_{\bk\in \mc E_N}\int_0^t\frac{1}{|Z^\bj_s|}\Re\biggl(\frac{ Z^\bj_s}{ Z^\bk_s}\biggr)\cdot \frac{\sigma(\bj)\cdot\sigma(\bk)}{2}
 \frac{w_\bk\hK_1^{\bbeta,\bk}(s)}{K_0^{\bbeta,\bs w}(s)}\d s\notag\\
 &\quad+\sum_{\bj\in \mc S}\widetilde{B}^\bj_t,\quad 0\leq t\leq \Twi_0,\label{SDE:ZS1}
\end{align}
where $\widetilde{B}^\bj$ satisfy the property stated below \eqref{SDE:ZS2}. To get \eqref{SDE:ZS2} from \eqref{SDE:ZS1}, note that the double sum on the right-hand side of \eqref{SDE:ZS1} 
satisfies 
\begin{align}
&\quad \;\sum_{\bj\in \mc S}\sum_{\bk\in \mc E_N}\int_0^t\frac{1}{|Z^\bj_s|}\Re\biggl(\frac{ Z^\bj_s}{ Z^\bk_s}\biggr)\cdot \frac{\sigma(\bj)\cdot\sigma(\bk)}{2}
 \frac{w_\bk\hK_1^{\bbeta,\bk}(s)}{K_0^{\bbeta,\bs w}(s)}\d s\notag\\
&=\Bigg(\sum_{\bj\in \mc S}\sum_{\bk\in \mc S}+\sum_{\bj\in \mc S}\sum_{\bk\in\mathcal E_N\setminus \mc S}\Bigg)\int_0^t\frac{1}{|Z^\bj_s|}\Re\biggl(\frac{ Z^\bj_s}{ Z^\bk_s}\biggr)\cdot \frac{\sigma(\bj)\cdot\sigma(\bk)}{2}
 \frac{w_\bk \hK_1^{\bbeta,\bk}(s)}{K_0^{\bbeta,\bs w}(s)}\d s\notag\\
 \begin{split}\label{bdd:NTC1000}
  &=\sum_{\bj\in \mc S} \sum_{\bk\in \mc S}\int_0^t
\frac{1}{2|Z^\bj_s|}\frac{|Z^\bj_s|}{|Z^\bk_s|}\Re\biggl(\frac{ Z^\bk_s}{ Z^\bj_s}\biggr)\cdot [\sigma(\bk)\cdot\sigma(\bj)]
 \frac{w_\bj\hK_1^{\bbeta,\bj}(s)}{K_0^{\bbeta,\bs w}(s)}\d s\\
 &\quad +\sum_{\bj\in \mc S}\sum_{\bk\in \mathcal E_N\setminus \mc S}\int_0^t\frac{1}{2|Z^\bj_s|}\frac{|Z^\bj_s|^2}{|Z^\bk_s|^2} \Re\biggl(\frac{ Z^\bk_s}{ Z^{\bj}_s}\biggr)\cdot [\sigma(\bj)\cdot\sigma(\bk)]
 \frac{w_\bk\hK_1^{\bbeta,\bk}(s)}{K_0^{\bbeta,\bs w}(s)}\d s.
 \end{split}
\end{align}
Here, on the right-hand side of \eqref{bdd:NTC1000}, the first double sum follows by using the change of variables that replaces $(\bj,\bk)$ with $(\bk,\bj)$ and then writing $1/|Z^\bk_s|$ as $(1/|Z^\bj_s|)(|Z^\bj_s|/|Z^\bk_s|)$, and the second double sum holds by using $\Re(z)=|z|^2\Re(1/z)$ with $z\equiv Z^\bj_s/Z^\bk_s$. The required equality \eqref{SDE:ZS2} follows by noting that $\sigma(\bk)\cdot\sigma(\bj)=\sigma(\bj)\cdot \sigma(\bk)$ and applying \eqref{bdd:NTC1000} to \eqref{SDE:ZS1}. 

It remains to prove \eqref{bdd:NTC1}. 
By using $|\sigma(\bj)\cdot\sigma(\bk)|\leq 2$ and $||z|\Re(1/z)|\leq 1$ with $z\equiv Z^\bk_s/Z^\bj_s$, \eqref{def:PhiNTC} implies that for all $s\in (0,\Twi_0]$ with $|Z^\bk_s|>0$ for all $\bk\in \mc E_N$,  
\begin{align}\label{bdd:NTC}
|\Phi^{\bbeta,\bs w,\bj}_{\mc S}(s)|\leq 2\#\mc S\cdot \frac{w_\bj\hK_1^{\bbeta,\bj}(s)}{K_0^{\bbeta,\bs w}(s)}+ 2
\Bigg(\sum_{\bk\in \mathcal E_N\setminus \mc S}\frac{w_\bk \hK^{\bbeta,\bk}_1(s)}{|Z^\bk_s|}\Bigg)
\frac{|Z^\bj_s|}{K_0^{\bbeta,\bs w}(s)}.
\end{align}
To finish the proof, recall the asymptotic representation \eqref{K10} of $K_1(x)$ as $x\searrow 0$, and note that $\hK_1(x)\,\defeq\,xK_1(x)$ is decreasing since $\widehat{K}_1'(x)=-x K_0(x)<0$ \cite[(5.7.9), p.110]{Lebedev-2}.  Hence, \eqref{bdd:NTC} is enough to get \eqref{bdd:NTC1}. The proof is complete.
\end{proof}

The following definition specifies the notion that we will use to reduce the possible number of simultaneous zeros in $\{|Z^\bj_t|;0<t\leq \Twi_0\}_{\bj\in \mc E_N}$ one by one. 

\begin{defi}\label{def:NTC}
Let $I\subset \R_+$ with $I\neq \varnothing$, and let
 $\ms A=\{f_1,\cdots,f_n\}$ with $n\geq 2$ and $0\leq f_\ell\in \C(I)$. Given $j\in \{2,\cdots,n\}$, 
{\bf $\bs j$-no-simultaneous-contacts ($\bs j$-NSC)} in $\ms A$ over $I$ is said to fail if there exist integers $\ell_1,\cdots,\ell_j$ with $1\leq \ell_1<\cdots<\ell_j\leq n$ such that $f_{\ell_1}(t_0)+\cdots+f_{\ell_j}(t_0)=0$ for some $t_0\in I$. Otherwise, $j$-NSC in $\ms A$ over $I$ is said to hold. If $2$-NSC in $\ms A$ over $I$ holds, we also say that {\bf no-triple-contacts (NTC)} in $\ms A$ over $I$ holds.  
\end{defi}

The informal picture here is that $f_\ell$ represents the distance between two particles, and different $f_\ell$'s are for different pairs of particles. In particular, the NTC in Definition~\ref{def:NTC} holds if and only if for any $\ell_1\neq \ell_2$, $f_{\ell_1}$ and $f_{\ell_2}$ do not attain a zero at the same time, which holds if and only if for any two different pairs of particles, contacts do not happen at the same time. Hence, the NTC in Definition~\ref{def:NTC} is an abstract formalization of the NTC described in Section~\ref{sec:intro}, and the notions of $j$-NSC generalize that NTC in Section~\ref{sec:intro}.

\begin{lem}\label{lem:RRprop}
Let $\bbeta\in (0,\infty)^{\mc E_N}$, $\bs w\in \R_+^{\mc E_N}$ with $\#\{\bj;w_\bj>0\}\geq 2$, and $\bi\in \mc E_N$ with $w_\bi>0$. Then it holds that
\begin{align}\label{step1:property}
\P^{\bbeta,\bs w,\bi}_{z_0}\left(\underline{R}<\overline{R} \mbox{ and $\#\mathcal S$-NSC in $\{| Z^\bj_t|\}_{\bj\in \mc E_N}$ over $(\underline{R},\overline{R}]$ fails}\right)=0
\end{align}
for any $\mathcal S\subset\mathcal E_N$ with $\#\mathcal S\geq 2$ and stopping times $\underline{R}, \overline{R}$ satisfying the following conditions with condition \eqref{R0R1:3} dropped if
$\mathcal S=\mathcal E_N$:
\begin{gather}
\overline{R}<\infty,\label{R0R1:1}\\
\underline{R}\leq \overline{R}\leq \Twi_0,\label{R0R1:2}\\
\min_{\bj\in \mathcal E_N\setminus\mc S}|Z_t^\bj|\geq \frac{1}{K+1}\min_{\bj\in \mathcal E_N\setminus\mc S}|Z_{\underline{R}}^\bj|>0,\quad \forall\;t\in [\underline{R}, \overline{R}],\label{R0R1:3}\\
\max_{\bj\in \mc S}|Z_t^\bj|\leq \max_{\bj\in \mc S}|Z_{\underline{R}}^\bj|+L,\quad \forall\;t\in [\underline{R}, \overline{R}],\label{R0R1:4}
\end{gather}
where $L,K\geq 1$ are non-random integers.
\end{lem}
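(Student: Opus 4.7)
The plan is to apply Theorem~\ref{thm:NTC} to the nonnegative semimartingales $\varrho^\bj_t \defeq |Z^\bj_t|$ for $\bj \in \mc S$ on the interval $[\underline R, \overline R]$, which will directly yield $\sum_{\bj \in \mc S}|Z^\bj_t| > 0$ for every $t \in (\underline R, \overline R]$. First I will reduce the claim: by \eqref{R0R1:3} (vacuous if $\mc S = \mc E_N$), the processes $|Z^\bk|$ with $\bk \in \mc E_N \setminus \mc S$ stay strictly above $(K+1)^{-1}\min_{\bk \notin \mc S}|Z^\bk_{\underline R}|>0$ on $[\underline R, \overline R]$, so the failure of $\#\mc S$-NSC in $\{|Z^\bj|\}_{\bj \in \mc E_N}$ over $(\underline R, \overline R]$ coincides with the vanishing of $\varrho_t \defeq \sum_{\bj \in \mc S}|Z^\bj_t|$ at some $t \in (\underline R, \overline R]$. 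Lemma~\ref{lem:ZS} casts $\varrho_t$ in the form \eqref{eq:ntc} required by Theorem~\ref{thm:NTC}, with drift density $\mu_\bj(s) = (1 - \Phi^{\bbeta,\bs w,\bj}_{\mc S}(s))/2$ and martingale parts $\wt B^\bj$. The cross-variation condition \eqref{W:qcv} with $|\sigma_{\bj,\bk}| \le 1$ ($\bj \ne \bk$) follows from \eqref{covar:tB} via Cauchy--Schwarz on the factor $(X^\bj X^\bk + Y^\bj Y^\bk)/(|Z^\bj||Z^\bk|)$ combined with $|\sigma(\bj) \cdot \sigma(\bk)| \le 1$ for $\bj \ne \bk$, and the occupation-time integrability \eqref{ass:ntc} is a local consequence of Proposition~\ref{prop:SDE2}~(2$\cc$) and the near-Bessel behavior of each $|Z^\bj|$ on $[\underline R, \overline R)$.

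The main obstacle is the drift inequality $\mu_\bj(s) \ge (1 - 2\alpha)/2$ with $\alpha$ obeying \eqref{def:alpha} for $n_0 = \#\mc S$; equivalently, $\Phi^{\bbeta,\bs w,\bj}_{\mc S}(s) \le 2\alpha$. This need not hold globally on $[\underline R, \overline R]$, so I will mimic the excursion-localization from the proof of Proposition~\ref{prop:BMntc}. For $M \in \Bbb N$ introduce
\[
\tau_M \defeq \inf\bigl\{t \ge \underline R : \max\nolimits_{\bk \in \mc E_N}|Z^\bk_t| \ge M\bigr\} \wedge \overline R,
\]
and, for a deterministic $\theta > 0$ to be chosen later, define excursion endpoints by $\varsigma'_0 \defeq \underline R$,
\[
\varsigma_n \defeq \inf\{t \ge \varsigma'_{n-1} : \varrho_t \le \theta\} \wedge \tau_M, \qquad \varsigma'_n \defeq \inf\{t \ge \varsigma_n : \varrho_t \ge 2\theta\} \wedge \tau_M.
\]
On each $[\varsigma_n, \varsigma'_n]$ one has $|Z^\bj_s| \le 2\theta$ for $\bj \in \mc S$ and $|Z^\bk_s| \le M$ for all $\bk$, and the numerator in \eqref{bdd:NTC1} is controlled by a constant depending only on $(\bbeta, \bs w, \mc S, L, K, M)$ via \eqref{R0R1:3}--\eqref{R0R1:4}. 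The denominator satisfies $K_0^{\bbeta,\bs w}(s) \ge w_\bi K_0(\sqrt{2\beta_\bi} M)$, providing a positive lower bound; moreover when some $\bj_0 \in \mc S$ has $w_{\bj_0} > 0$ the stronger bound $K_0^{\bbeta,\bs w}(s) \ge w_{\bj_0} K_0(\sqrt{2\beta_{\bj_0}}\cdot 2\theta) \to \infty$ as $\theta \searrow 0$ is available, while if no such $\bj_0$ exists the first summand of the numerator of \eqref{bdd:NTC1} vanishes identically and the second is $O(\theta)$. In either sub-case $|\Phi^{\bbeta,\bs w,\bj}_{\mc S}(s)|$ can be made smaller than $2\alpha = (\#\mc S - 1)/(2\#\mc S)$ by choosing $\theta = \theta(M)$ sufficiently small, and Theorem~\ref{thm:NTC} applied with $S = \varsigma_n$, $T = \varsigma'_n$ excludes zeros of $\varrho$ on $(\varsigma_n, \varsigma'_n]$.

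To conclude, I will reproduce the last step of the proof of Proposition~\ref{prop:BMntc}. On the event $\{\underline R < \overline R,\, \tau_M = \overline R\}$, any zero of $\varrho$ in $(\underline R, \overline R]$ must either lie in some $(\varsigma_n, \varsigma'_n]$ (null-probability event by the preceding step) or else force the accumulation $\varsigma_n, \varsigma'_n \nearrow s^* \le \overline R$, which contradicts $|\varrho_{\varsigma'_n} - \varrho_{\varsigma_n}| = \theta > 0$ together with the uniform continuity of $\varrho$ at $s^*$. Since $\{\tau_M = \overline R\}$ increases to a full-probability event as $M \to \infty$ (by path continuity and $\overline R < \infty$), taking the union over $M \in \Bbb N$ delivers \eqref{step1:property}.
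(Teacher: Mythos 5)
Your proof follows the paper's strategy closely: reduce via \eqref{R0R1:3} to the family $\{|Z^\bj_t|\}_{\bj\in\mc S}$, invoke the SDE \eqref{SDE:ZS2} of Lemma~\ref{lem:ZS} for $\varrho_t=\sum_{\bj\in\mc S}|Z^\bj_t|$, localize by crossing times of the levels $\theta$ and $2\theta$, bound $\Phi^{\bbeta,\bs w,\bj}_{\mc S}$ through \eqref{bdd:NTC1}, apply Theorem~\ref{thm:NTC} on each excursion interval, and close with the accumulation contradiction. The further localization $\tau_M$ you introduce usefully makes explicit the upper bound on $|Z^\bk_s|$, $\bk\in\mc E_N$, needed to control $K_0^{\bbeta,\bs w}(s)$ from below; this is a point the paper treats implicitly, and introducing $\tau_M$ together with the final union over $M$ is a legitimate and cleaner way to handle it.

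There is, however, a gap in the claim that a \emph{deterministic} $\theta=\theta(M)$ suffices when $\mc S\neq\mc E_N$. Via \eqref{R0R1:3} the second summand of the numerator of \eqref{bdd:NTC1} is controlled only as
\[
\sum_{\bk\in\mc E_N\setminus\mc S}w_\bk\,|Z^\bk_s|^{-1}|Z^\bj_s|\;\leq\;\Bigl(\sum_{\bk\in\mc E_N\setminus\mc S}w_\bk\Bigr)\,\frac{(K+1)\cdot 2\theta}{\min_{\bk\in\mc E_N\setminus\mc S}|Z^\bk_{\underline R}|},
\]
and $\min_{\bk\in\mc E_N\setminus\mc S}|Z^\bk_{\underline R}|$ is a strictly positive but \emph{random} quantity that can be arbitrarily small. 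So neither the numerator bound nor the resulting admissible $\theta$ can be made to ``depend only on $(\bbeta,\bs w,\mc S,L,K,M)$'' as you assert, and the condition $|\Phi^{\bbeta,\bs w,\bj}_{\mc S}(s)|\leq 2\alpha$ on $[\varsigma_n,\varsigma'_n]$ may fail for a fixed $\theta$ on the part of the probability space where this minimum is tiny. The paper resolves this by choosing an $\F^0_{\underline R}$-measurable random level $\theta_0=\theta_0(\omega)$ — which still yields $(\F^0_t)$-stopping times $\varsigma_n,\varsigma'_n$, the only property actually required by Theorem~\ref{thm:NTC} — and you should replace your deterministic $\theta(M)$ by such a $\theta_0$ (equivalently, localize further on $\{\min_{\bk\in\mc E_N\setminus\mc S}|Z^\bk_{\underline R}|\geq 1/\mu\}$, $\mu\in\Bbb N$, and take one more union). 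With that correction the rest of your argument goes through as written.
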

\begin{proof}
By \eqref{R0R1:3}, the following identity is equivalent to \eqref{step1:property}:
\begin{align}\label{step1:property0}
\P^{\bbeta,\bs w,\bi}_{z_0}\left(\underline{R}<\overline{R} \mbox{ and $\#\mathcal S$-NSC in $\{| Z^\bj_t|\}_{\bj\in \mc S}$ over $(\underline{R},\overline{R}]$ fails}\right)=0.
\end{align}
It suffices to prove \eqref{step1:property0}.

To get \eqref{step1:property0}, we modify the proof of \eqref{BMntc} and use  Theorem~\ref{thm:NTC}. In this case, $\sum_{\bj\in \mc S}|Z^\bj_t|$ satisfies \eqref{SDE:ZS2} with a noise term given by  $\sum_{\bj\in \mc S}\widetilde{B}^\bj_t$, where $\widetilde{B}^\bj$ are one-dimensional standard Brownian motions satisfying \eqref{covar:tB}. 
To bound the finite-variation term in \eqref{SDE:ZS2}, for all $\theta>0$ and integers $M\geq 1$, we introduce $(\F_t^0)$-stopping times $S_n=S_n(\theta,M)$ and $T_n=T_n(\theta,M)$ defined inductively on $n\in \Bbb Z_+$ as follows:
$S_0=T_0\,\defeq\,\underline{R}$, 
and for all integers $n\geq 1$,
\begin{align*} 
S_n\,\defeq\,\inf\Bigg\{t\geq T_{n-1};\sum_{\bj\in \mc S}|Z^\bj_t|\leq \theta\Bigg\}\wedge \overline{R},\quad
T_n\,\defeq\,\inf\Bigg\{t\geq S_n;\sum_{\bj\in \mc S}|Z^\bj_t|=2\theta\Bigg\}\wedge \overline{R}.
\end{align*}
The crucial fact we need now is that by \eqref{bdd:NTC1}, \eqref{R0R1:2} and \eqref{R0R1:3}, for all $\eta>0$, there exists 
$0<\theta_0=\theta_0(\omega)\in \F^0_{\underline{R}}$ such that on $\{S_n(\theta_0,M)<\overline{R}\}$, 
\[
\sup_{\bj\in \mathcal S}\esssup\left\{\left|\Phi^{\bbeta,\bs w,\bj}_{\mc S}(s)\right|;s\in [S_n(\theta_0,M),T_n(\theta_0,M)]\right\}<\eta.
\]
Note that $S_n(\theta_0,M)$ and $T_n(\theta_0,M)$ are still stopping times since $\theta_0\in \F^0_{\underline{R}}$, and
 $\{S_n(\theta_0,M)<T_n(\theta_0,M)\}\subset \{S_n(\theta_0,M)<\overline{R}\}$. Recall Remark~\ref{rmk:ntc} (1$\cc$).
Therefore, by choosing $\eta$ sufficiently small, Theorem~\ref{thm:NTC} with $\mc J\equiv \mc S$ implies that for all $n\in \Bbb N$,
\[
\P^{\bbeta,\bs w,\bi}_{z_0}\Biggl(S_n(\theta_0,M)<T_n(\theta_0,M)\mbox{ and }\exists\; t\in (S_n(\theta_0,M),T_n(\theta_0,M)]\mbox{ s.t. }\sum_{\bj\in \mc S}|Z^\bj_t|=0\Biggr)=0.
\]
Moreover, similar to the argument below \eqref{NTCt0bdd}, the continuity of $t\mapsto \sum_{\bj\in \mc S}|Z^\bj_t|$ ensures that 
we can find an integer $n=n(\omega)\geq 0$ such that $S_n(\theta_0,M)=\overline{R}$. Hence, by the union bound, the foregoing equality gives \eqref{step1:property0}. The proof is complete.
\end{proof}

The proof of Proposition~\ref{prop:SDE2} (7$\cc$) also uses excursion intervals of $(\F_t^0)$-adapted nonnegative continuous processes $\varrho=\{\varrho_t;t\in \R_+\}$. For such $\varrho$, we introduce random times $g_n(a;\varrho)$ and $d_n(a;\varrho)$, $0<a<\infty$, in the following manner, where $t\in \R_+$ and $n\in \Bbb N$ with $n\geq 2$:
\begin{align*}
g(t,a;\varrho)&\,\defeq\,\begin{cases}
\sup\{s\in [0,t];\varrho_s=0\},&\inf\{s\in [t,\infty);\varrho_s=0\}-\sup\{s\in [0,t];\varrho_s=0\}\geq a,\\
\infty,&\inf\{s\in [t,\infty);\varrho_s=0\}-\sup\{s\in [0,t];\varrho_s=0\}<a,
\end{cases}\\ 
d(t,a;\varrho)&\,\defeq\,\begin{cases}
\inf\{s\in [t,\infty);\varrho_s=0\},&\inf\{s\in [t,\infty);\varrho_s=0\}-\sup\{s\in [0,t];\varrho_s=0\}\geq a,\\
\infty,&\inf\{s\in [t,\infty);\varrho_s=0\}-\sup\{s\in [0,t];\varrho_s=0\}<a,
\end{cases}\\
g_1(a;\varrho)&\,\defeq\, \inf\{g(s,a;\varrho);s\in \Bbb Q_+\},\\
 d_1(a;\varrho)&\,\defeq\, \inf\{d(s,a;\varrho);s\in \Bbb Q_+\},\\
g_n(a;\varrho)&\,\defeq\, 
\begin{cases}
\inf\{g(s,a;\varrho);s\in \Bbb Q_+,g(s,a;\varrho)\neq g_j(a;\varrho),\forall \;1\leq j\leq
n-1\},&\mbox{ if $g_{n-1}(a;\varrho)<\infty$},\\
\infty,&\mbox{ if $g_{n-1}(a;\varrho)=\infty$},
\end{cases}
\\
d_n(a;\varrho)&\,\defeq\, \begin{cases}
\inf\{d(t,a;\varrho);s\in \Bbb Q_+, d(s,a;\varrho)\neq d_j(a;\varrho),\forall\; 1\leq j\leq
n-1\},&\mbox{ if $d_{n-1}(a;\varrho)<\infty$},\\
\infty,&\mbox{ if $d_{n-1}(a;\varrho)=\infty$}.
\end{cases}
\end{align*}
Note that the above definitions use the usual convention $\sup\varnothing=0$. 

\begin{lem}\label{lem:stoppingtimes}
For $n\in \Bbb N$ and $0<a<\infty$,
$d_n(a;\varrho)$ and $g_n(a;\varrho)+a$ are $(\F_t^0)$-stopping times.
\end{lem}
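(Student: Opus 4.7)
The strategy is to identify $g_n(a;\varrho)$ and $d_n(a;\varrho)$ with the left and right endpoints of the ``long'' (length $\geq a$) excursion intervals of $\varrho$ away from $0$, enumerated in increasing order, and then verify measurability of the relevant events directly from the rational-infimum definitions. Let $(G_1, D_1), (G_2, D_2),\ldots$ list these long excursion intervals so that $G_1 < G_2 < \cdots$ and $D_1 < D_2 < \cdots$. For any rational $s \in (G_k, D_k)$ one reads off $g(s,a;\varrho) = G_k$ and $d(s,a;\varrho) = D_k$ from the very definition of $g(s,a;\varrho)$ and $d(s,a;\varrho)$, while for every other rational $s>0$ (lying in the closed zero set of $\varrho$ or in a short excursion) one has $g(s,a;\varrho) = d(s,a;\varrho) = \infty$. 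A short induction on $n$ then shows that the inf-with-exclusion recursions in the excerpt pick off exactly the successive $G_n$ and $D_n$, so $g_n = G_n$ and $d_n = D_n$.

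For $d_n$, introduce $D(s) \defeq \inf\{u \geq s: \varrho_u = 0\}$, which is an $(\F_t^0)$-stopping time by continuity and $(\F_t^0)$-adaptedness of $\varrho$, and $G(s) \defeq \sup\{u \leq s: \varrho_u = 0\}$ (with $\sup\varnothing = 0$), which is $\F_s^0$-measurable. For each rational $s$ with $s \leq t$,
\[
\{d(s,a;\varrho) \leq t\} = \{D(s) \leq t\} \cap \{D(s) - G(s) \geq a\} \in \F_t^0,
\]
while the event is empty when $s > t$ (since $d(s,a;\varrho) \geq s$). Hence $\{d_1(a;\varrho) \leq t\} = \bigcup_{s \in \Q \cap (0,t]} \{d(s,a;\varrho) \leq t\} \in \F_t^0$. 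For general $n$, the event $\{d_n(a;\varrho) \leq t\}$ is exactly the event that the countable set $\{d(s,a;\varrho) : s \in \Q \cap (0,t]\}$ contains at least $n$ distinct finite values not exceeding $t$; this is a countable Boolean combination over $n$-tuples $(s_1,\ldots,s_n) \in (\Q \cap (0,t])^n$ of the $\F_t^0$-events $\{d(s_i,a;\varrho) \leq t\}$ and $\{d(s_i,a;\varrho) \neq d(s_j,a;\varrho)\}$, hence is itself in $\F_t^0$.

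The case of $g_n(a;\varrho) + a$ is handled analogously, but the shift by $a$ is essential: at time $g_n$ one does not yet know whether the excursion starting there will reach length $a$. Rewriting $\{g_n(a;\varrho) + a \leq t\} = \{g_n(a;\varrho) \leq t - a\}$ permits looking $a$ units past the putative left endpoint. Concretely, for rational $s \in (0,t]$,
\[
\{g(s,a;\varrho) \leq t-a\} = \{G(s) \leq t-a\} \cap \{D(s) - G(s) \geq a\},
\]
and both conditions are in $\F_t^0$: split the second on $\{D(s) \leq t\}$ (where $D(s)$ is fully determined by time $t$) and on $\{D(s) > t\}$ (where, combined with $G(s) \leq t-a$, one has $D(s) - G(s) > t - (t-a) = a$ automatically). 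The enumeration argument from the previous paragraph, applied to the values $g(s,a;\varrho)$ in place of $d(s,a;\varrho)$, then yields $\{g_n(a;\varrho) + a \leq t\} \in \F_t^0$.

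The conceptual obstacle, and the only real subtlety, is precisely the asymmetry highlighted above: $d_n$ is itself a stopping time because recognising a length-$\geq a$ excursion from its right endpoint $d$ only requires looking backward within $[0,d]$, whereas $g_n$ alone is not, since recognising the same excursion from its left endpoint $g$ requires $a$ units of look-ahead. Adding $a$ exactly compensates this look-ahead. Everything else is bookkeeping to translate the countable inf-with-exclusion definitions into events in $\F_t^0$.
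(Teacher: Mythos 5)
Your proof is correct. For $d_n$, it mainly expands the paper's very terse argument: the paper just remarks that when $d_n<\infty$ the defining infimum is attained (so that $\{d_n\leq t\}$ becomes a countable union of $\F_t^0$-events), and leaves the enumeration bookkeeping—verifying $\{d(s_i,a;\varrho)\leq t\}\in\F_t^0$ and handling the exclusion of $d_1,\ldots,d_{n-1}$ via distinctness events, both restricted to $\{\cdot\leq t\}$—implicit, which you spell out carefully.

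For $g_n+a$ you take a genuinely different route. The paper introduces the auxiliary time
\[
\tau\,\defeq\,\inf\{t\geq d_{n-1}(a;\varrho)+a:\varrho_s>0\;\forall\;s\in(t-a,t)\},
\]
proves $\tau=g_n(a;\varrho)+a$ by a two-sided inclusion (the left endpoint of the $n$-th long excursion is a candidate for the infimum, and conversely any smaller $t$ in the defining set would exhibit a long excursion starting in $[d_{n-1},g_n)$, contradicting the minimality of $g_n$), and then observes that $\tau$ is a stopping time because its infimum is again attained. You instead compute $\{g_n+a\leq t\}=\{g_n\leq t-a\}$ directly from the rational-infimum definition, reducing to the $\F_t^0$-measurability of $\{g(s,a;\varrho)\leq t-a\}$ for rational $s\in(0,t]$, which you obtain from the case split on $\{D(s)\leq t\}$ versus $\{D(s)>t\}$. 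Both proofs turn on exactly the mechanism you isolate—the shift by $a$ compensates the look-ahead needed to certify a length-$\geq a$ excursion from its left endpoint—but your route makes the measurability fully explicit and avoids appealing to a debut-type step, at the cost of some extra bookkeeping; the paper's reformulation via $\tau$ is shorter once one accepts its terser justification that $\tau$ is a stopping time.
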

\begin{proof}
To see that $d_n(a;\varrho)$ is an $(\F_t^0)$-stopping time, simply note that if $d_n(a;\varrho)<\infty$, the infimum of $d(s,a;\varrho)$ in the definition of $d_n(a;\varrho)$ is a minimum. To show that $g_n(a;\varrho)+a$ is a stopping time, set 
\[
\tau\,\defeq\,\inf\{t\geq d_{n-1}(a;\varrho)+a;\varrho_s> 0,\, \forall \, s\in (t-a,t)\}.
\]
Since $\varrho_s\neq 0$ on $(g_n(a;\varrho),g_n(a;\varrho)+a)$ and $g_n(a;\varrho)\geq d_{n-1}(a;\varrho)$, it follows that $g_n(a;\varrho)+a\geq \tau$. Conversely, the existence of some $t$ such that $d_{n-1}(a;\varrho)+a\leq t<g_n(a;\varrho)+a$ and $\varrho_s>0$ for all $s\in (t-a,t)$ would contradict the definition of $g_n(a;\varrho)$. Hence, $g_n(a;\varrho)+a=\tau$. Note that $\tau$ is a stopping time, since on $\{\tau<\infty\}$, the infimum in the definition of $\tau$ becomes a minimum.
We conclude that $g_n(a;\varrho)+a$ is an $(\F_t^0)$-stopping time, as required. 
\end{proof}

\begin{proof}[Proof of Proposition~\ref{prop:SDE2} (7$\cc$)]
Fix $z_0\in \CNwni$ and an integer $N\geq 3$.
The proof is divided into three steps, using an induction on $j={N\choose 2}, {N\choose 2}-1,\cdots, 2$ in the same order of $j$ to establish the following property:
\begin{align}\label{NTC:goal}
\P^{\bbeta,\bs w,\bi}_{z_0}\left(\Twi_0<\infty \mbox{ and $j$-NSC in $\{|Z^\bj_t|\}_{\bj\in \mc E_N}$ over $(0,\Twi_0]$ fails}\right)=0.
\end{align}
The reader may recall the terms for $j$-NSC from Definition~\ref{def:NTC}.\medskip

\noindent {\bf Step~1.} We show that \eqref{NTC:goal} for $j={N\choose 2}$ holds. To use Lemma~\ref{lem:RRprop}, we set $\underline{R}\equiv 0$ and $\overline{R}\equiv R(L,M)$, where
\[
R(L,M)\,\defeq\,\inf\left\{t\geq 0; \max_{\bj\in \mc E_N}|Z_t^\bj|\geq \max_{\bj\in \mc E_N}|Z_{0}^\bj|+L\right\}\wedge \Twi_0\wedge M,\quad L,M\in \Bbb N.
\]
Since $R(L,M)>0$, it follows from Lemma~\ref{lem:RRprop} that
\begin{align}\label{R0R1:bdd}
\P^{\bbeta,\bs w,\bi}_{z_0}\left(\mbox{${N\choose 2}$-NSC in $\{|Z^\bj_t|\}_{\bj\in \mc E_N}$ over $(0,R(L,M)]$ fails}\right)=0.
\end{align}
Moreover, since
$\Twi_0=R(L,M)$ for large enough $L,M$ on $\{\Twi_0<\infty\}$, we have
\begin{align*}
&\quad\;\P^{\bbeta,\bs w,\bi}_{z_0}\left(\Twi_0<\infty \mbox{ and ${N\choose 2}$-NSC in $\{| Z^\bj_t|\}_{\bj\in \mc E_N}$ over $(0,\Twi_0]$ fails}\right)\\
&\leq \P^{\bbeta,\bs w,\bi}_{z_0}\left(\bigcup_{L=1}^\infty\bigcup_{M=1}^\infty \left\{\mbox{${N\choose 2}$-NSC in $\{|Z^\bj_t|\}_{\bj\in \mc E_N}$ over $(0,R(L,M)]$ fails}\right\}\right)=0.
\end{align*}
The last equality holds by \eqref{R0R1:bdd} and the union bound. We have proved \eqref{NTC:goal} for $j={N\choose 2}$. \medskip

\noindent {\bf Step~2.} Assume that for some integer $0\leq j<{N\choose 2}-2$, \eqref{NTC:goal} holds with $j$ replaced by ${N\choose 2}-\ell$ for all integers $0\leq \ell\leq j$. Our goal here is to prove \eqref{NTC:goal} for $j$ replaced by ${N\choose 2}-j-1$.

We first show that 
\begin{align}\label{NTC:step4-1}
\P^{\bbeta,\bs w,\bi}_{z_0}\left(
\left.
\begin{array}{cc}
\mbox{$\Twi_0<\infty$ and $\exists\;\mathcal L:\mathcal L\subset \mc E_N$ with $\#\mathcal L=j+1$ s.t.}\\
\mbox{$({N\choose 2}-j-1)$-NSC in $\{| Z^\bj_t|\}_{\bj\in \mc E_N}$ over }\\
\mbox{$\{0<t\leq \Twi_0;\min_{\bj\in \mc L}|Z^{\bj}_t|>0\}$ fails}
\end{array}
\right.\right)=0.
\end{align}
To this end, fix $\mathcal L$ with $\mathcal L\subset \mc E_N$ and $\#\mathcal L=j+1$. With
 the random times $g_n(a;\varrho),d_n(a;\varrho)$ defined before Lemma~\ref{lem:stoppingtimes}, we set 
\[
g_n^\mathcal L(m)\,\defeq\,g_n(\tfrac{1}{m};\varrho)\wedge \Twi_0,\; d_n^\mathcal L(m)\,\defeq\,d_n(\tfrac{1}{m};\varrho)\wedge \Twi_0,\; m\in \Bbb N,\mbox{ for }\varrho_t\equiv\min_{\bj\in \mc E_N}\Big|Z^{\bj}_{t\wedge \Twi_0}\Big|.
\] 
To apply Lemma~\ref{lem:RRprop}, for all $n,m,L,K,M\in \Bbb N$,
we choose $\mathcal S\equiv \mathcal E_N\setminus \mathcal L$ and 
\begin{align*}
\underline{R}&\equiv R(n,m,M)\,\defeq\, \left(g_n^{\mathcal L}(m)+\frac{1}{m}\right)\wedge \Twi_0\wedge M,\\
\overline{R}&\equiv R'(n,m,L,K,M)
\,\defeq\,
\inf\left\{t\geq R(n,m,M);\min_{\bj\in \mc L}|Z_t^{\bj}|\leq \frac{1}{K+1}\min_{\bj\in \mc L}|Z_{R(n,m,M)}^{\bj}|\right\} \\
&\quad\quad\wedge \inf\left\{t\geq R(n,m,M); \max_{\bj\in \mc E_N\setminus\mathcal L}|Z_t^\bj|\geq \max_{\bj\in \mc E_N\setminus\mathcal L}|Z_{R(n,m,M)}^\bj|+L\right\} \wedge d_n^{\mathcal L}(m)\wedge \Twi_0\wedge M.
\end{align*}
Note that $\#\mathcal S={N\choose 2}-j-1>1$ by the assumption on $j$, and the random times defined in the foregoing display are indeed stopping times by Lemma~\ref{lem:stoppingtimes}. Hence, by Lemma~\ref{lem:RRprop},
\begin{align*}
\P^{\bbeta,\bs w,\bi}_{z_0}\left(
\left.
\begin{array}{cc}
R(n,m,M)<R'(n,m,L,K,M) \mbox{ and (${N\choose 2}-j-1$)-NSC in  }\\
\mbox{$\{| Z^\bj_t|\}_{\bj\in \mc E_N}$ over $(R(n,m,M),R'(n,m,L,K,M)]$ fails}
\end{array}
\right.\right)=0
\end{align*}
for all $n,m,L,K,M\in \Bbb N$. Furthermore, by the union bound, \eqref{NTC:step4-1} follows from the foregoing equality
since on $\{\Twi_0<\infty\}$ the two events in the next display are $\P^{\bbeta,\bs w,\bi}_{z_0}$-a.s. equal:
\begin{align*}
\Big\{0<t\leq \Twi_0;\min_{\bj\in \mc L}|Z^{\bj}_t|>0\Big\}=\bigcup_{n,m,L,K,M\in \Bbb N}(R(n,m,M),R'(n,m,L,K,M)].
\end{align*}

We also need the following property:
\begin{align}
&\quad\;\P^{\bbeta,\bs w,\bi}_{z_0}\Bigg(\Twi_0<\infty \mbox{ and }\bigcup_{\mathcal L:\mathcal L\in \EN, \#\mathcal L=j+1}\Big\{0<t\leq \Twi_0;\min_{\bj\in \mc L}|Z^{\bj}_t|>0\Big\}=(0,\Twi_0]\Bigg)\notag\\
&=\P^{\bbeta,\bs w,\bi}_{z_0}(\Twi_0<\infty),\label{NTC:interval}
\end{align}
that is, $\P^{\bbeta,\bs w,\bi}_{z_0}$-a.s. on $\{\Twi_0<\infty\}$, 
 any $t_0\in (0,\Twi_0]$ satisfies $\min_{\bj\in \mc L}|Z^\bj_{t_0}|>0$ for some $\mathcal L\subset \mc E_N$ with $\#\mathcal L=j+1$. This set $\mathcal L$ can be chosen inductively as follows. First, use the ${N\choose 2}$-NSC to pick $\mathbf j_1\in \mc E_N$ such that $| Z^{\bj_1}_{t_0}|>0$. 
 If $j=0$, then we simply take $\mathcal L=\{\bj_1\}$ and stop. For $j\geq 1$, once distinct $\bj_1,\cdots,\bj_\ell$ for $1\leq \ell\leq j$ have been chosen, use the $({N\choose 2}-\ell)$-NSC from the induction assumption to choose $\bj_{\ell+1}$ from $\mathcal E_N\setminus\{\bj_1,\cdots,\bj_\ell\}$ such that $|Z^{\bj_{\ell+1}}_{t_0}|>0$. [Specifically, the assumption states that \eqref{NTC:goal} holds with $j$ replaced by ${N\choose 2}-\ell$ for all integers $0\leq \ell\leq j$.]  Continuing until $\ell=j$ gives the required set $\mathcal L$ as $\{\bj_1,\cdots,\bj_{j+1}\}$, so \eqref{NTC:interval} holds. 

Finally, we obtain \eqref{NTC:goal} for $j$ replaced by ${N\choose 2}-j-1$ upon combining \eqref{NTC:step4-1} and \eqref{NTC:interval}.\medskip 

\noindent {\bf Step~3.} By Steps~1--2 and mathematical induction, we obtain \eqref{NTC:goal} for $j=2$. This property is enough to get Proposition~\ref{prop:SDE2} (7$\cc$).
\end{proof}

\subsection{End of the proof of Theorem~\ref{thm:main1}}\label{sec:SDEcont}
To sum up, the existence of $\ms P=\{\P^{\bbeta,\bs w}_{z_0};z_0\in \CNw\}\cup 
\{ \P^{\bbeta,\bs w,\bi}_{z_0};z_0\in\CNwi\}_{\bi\in \mc E_N,w_\bi>0}$ satisfying (1$\cc$)--(3$\cc$) follows by using a general theorem of concatenation of strong Markov processes \cite[Section~14 in Chapter~II, pp.77+]{Sharpe-2} to concatenate the processes constructed in Sections~\ref{sec:SDE1} and~\ref{sec:SDE2}. We also use the properties in Proposition~\ref{prop:SDE1} and Proposition~\ref{prop:SDE2}.
 
Let us give more details of the above summary. First, for any $z_0\in \CNw$,
 we extend the probability measure $\P^{\bbeta,\bs w}_{z_0}$ from Definition~\ref{def:SDE1ult} beyond 
$T_0^{\bs w}=T_0^1$ by concatenation inductively over $[T_0^{m},T_0^{m+1}]$, $m\in \Bbb N$, 
using the probabilty measures from Proposition~\ref{prop:SDE2} (1$\cc$) such that the path of $\{\ms Z_t\}$ over $[0,T_0^\infty)$ is continuous. We define $\ms Z_t\equiv \partial$ for $t\geq T_0^\infty$ on $\{T_0^\infty<\infty\}$. In this concatenation, the existence of the random indices $\bs J_m$ in Theorem~\ref{thm:main1} (1$\cc$) has used 
Proposition~\ref{prop:SDE1} (3$\cc$) and Proposition~\ref{prop:SDE2} (7$\cc$), and $\P^{\bbeta,\bs w}_{z_0}(T_0^1<\infty)=1$ by Proposition~\ref{prop:SDE1} (2$\cc$). We denote the extension of $\P^{\bbeta,\bs w}_{z_0}$ thus obtained by $\P^{\bbeta,\bs w}_{z_0}$. This extension gives the required probability measure  $\P^{\bbeta,\bs w}_{z_0}$ in $\ms P$. In a similar fashion, for any $z_0\in \CNwi$ with $w_\bi>0$, we extend the probability measure $\P^{\bbeta,\bs w,\bi}_{z_0}$ from Proposition~\ref{prop:SDE2} (1$\cc$) beyond $\Twi_0=T^{\bi,1}_0$ and get the required probability measure $\P^{\bbeta,\bs w,\bi}_{z_0}\in \ms P$. For both cases of $\P^{\bbeta,\bs w,\bi}_{z_0}$ and $\P^{\bbeta,\bs w}_{z_0}$, the stopping times $T^m_0$ and $T^{\bi,m}_0$ are a.s. finite for all $m\in\Bbb N$ when $\bbeta$ is $\bs w$-homogeneous and $\#\{w_\bj;w_\bj>0\}\geq 2$ by Proposition~\ref{prop:SDE2} (5$\cc$). We have verified all the properties stated in Theorem~\ref{thm:main1} (1$\cc$). 
 
Theorem~\ref{thm:main1} (2$\cc$) can be justified as follows. By Proposition~\ref{prop:SDE2} (6$\cc$), $\ms P$ defines $\{\ms Z_t\}$ as a time-homogeneous Markov process. Here, we use Remark~\ref{rmk:SDE2} (1$\cc$) to justify the strong Markov property under $\P^{\bbeta,\bs w}_{z_0}$ for $z_0\in \CNw$. 
Also, \eqref{pathproperty} is due to the construction in (1$\cc$).
 
Finally, Theorem~\ref{thm:main1} (3$\cc$) is the consequence of the above concatenation construction of $\P\in \ms P$, Proposition~\ref{prop:SDE1} (4$\cc$), and Proposition~\ref{prop:SDE2} (2$\cc$). The proof of Theorem~\ref{thm:main1} is complete.
 
\section{The no-simultaneous-contacts (NSC) phenomenon}\label{sec:NTC}
Our goal in Section~\ref{sec:NTC} is to prove Theorem~\ref{thm:NTC}. It suffices to consider the case of
\begin{align}\label{ass:S<T}
\P(0=S<T<\infty)=1.
\end{align}
The additional assumption in \eqref{ass:S<T} can be justified as follows. First, we can assume $\P(S<\infty,T<\infty)=1$. Otherwise, note that
\begin{gather*}
\P(\exists\;t\in (S,T]\mbox{ s.t. }\varrho_t=0,\;S<T)\leq \sum_{\scriptstyle p<q\atop \scriptstyle p,q\in \Bbb Q_+} \P(\exists\;t\in (S\vee p ,T\wedge q]\mbox{ s.t. }\varrho_t=0,\;S\vee p<T\wedge q),\\
\{S\vee p<T\wedge q\}\subset \{S<T\},\quad \forall \;p,q\in \Bbb Q_+\mbox{ with }p<q,\\
[S\vee p,T\wedge q)\subset [S,T)\mbox{ on }\{S\vee p<T\wedge q\},\quad \forall \;p,q\in \Bbb Q_+\mbox{ with }p<q.
\end{gather*}
Hence, we can first work with these finite stopping times $S\vee p$ and $T\wedge q$ and use the assumed properties of $\{\varrho^\bj_t;S\leq t<T\}_{\bj\in \mc J}$ over $[S\vee p,T\wedge q)$ on $\{S\vee p<T\wedge q\}$.
Also, we can assume $\P(S<T)=1$ since whenever $\P(S<T)>0$, we can use the fact that $\{S<T\}\in \F_S$, provided that $(\F_t)$ is the underlying filtration, to transfer the assumed properties of $\{\varrho^\bj_t;S\leq t<T\}_{\bj\in \mc J}$ on $\{S<T\}$ to $\P(\,\cdot\,|S<T)$. Finally, the assumption that $\P(S=0)=1$ can be justified by a time shift. 

In the remaining of Section~\ref{sec:NTC}, we impose the additional condition in \eqref{ass:S<T} and take the following steps to prove Theorem~\ref{thm:NTC}. \medskip 
 
\noindent {\bf Step~1 (Emulating a Bessel process).}     
We start with the transformation of the martingale part of $\{\varrho_t;0\leq t<T\}$ by a time change.  By the Dambis--Dubins--Schwarz theorem \cite[(1.6) Theorem on p.181]{RY-2}, we can find a one-dimensional Brownian motion $\widetilde{\mc B}$ such that $\widetilde{\mc B}_0=0$ and the martingale part of $\{\varrho_t;0\leq t<T\}$ is expressible as $\widetilde{\mc B}_{\int_0^t \varsigma(s)\d s}$. That is, we have 
\begin{align}\label{extend:varrho}
\varrho_{t}=\varrho_0+\int_0^{t} \sum_{\bj\in \J}
 \frac{\mu_\bj(r)}{\varrho_r^\bj}\d r
 +\widetilde{\mc B}_{\int_0^t \varsigma(s)\d s},\quad 0\leq t< T.
\end{align}
Specifically, the clock process for $\widetilde{\mathcal B}$ is the continuous process given by
\begin{align}\label{def:varsigma}
\begin{aligned}
\int_0^t \varsigma(s)\d s
=\la \varrho,\varrho\ra_t=\left\langle\sum_{\bj\in \J} \mc B^\bj,\sum_{\bj\in\J}\mc B^\bj\right\rangle_{t}
,\quad 0\leq t<T.
\end{aligned}
\end{align}
With $n_0\,\defeq\,\#\mc J\in \{2,3,\cdots\}$ by assumption, $\varsigma(s)=\varsigma(s,\omega)$ satisfies the following for a.s.-$\omega$:
\begin{align}
0\leq \esssup \{\varsigma(s,\omega);s\in [0,T)\}&\leq n_0+\sum_{\scriptstyle \bj, \bk\in \mc J\atop \scriptstyle \bj\neq \bk}\frac{\esssup \{|\sigma_{\bj,\bk}(s,\omega)|;s\in [0,T)\}}{2}\notag\\
&\leq n_0+\frac{n_0(n_0-1)}{2},\label{bdd:varsigma}
\end{align}
where the second inequality in \eqref{bdd:varsigma} uses \eqref{W:qcv} with $S=0$ and the assumption that each $\mathcal B^\bj$ is a one-dimensional standard Brownian motion, and the third inequality uses the assumption that $\esssup \{|\sigma_{\bj,\bk}(s,\omega)|;s\in [0,T)\}\leq 1$ a.s. for all $\bj\neq \bk$. In particular, by the present assumption $\P(T<\infty)=1$ and \eqref{bdd:varsigma},
\[
\la \varrho,\varrho\ra_T\defeq \lim_{t\nearrow T}\ua \la \varrho,\varrho\ra_t<\infty.
\]

We now define a time change by
\[
\gamma(\ell)\;\defeq\, \inf\{t\geq 0;\la \varrho,\varrho\ra_{t\wedge T}>\ell\}\wedge T.
\]
That is, $\gamma(\ell)$ is the right-continuous inverse of $t\mapsto \la \varrho,\varrho\ra_{t\wedge T}$, but levelled out at level $T$. 
To use this time change, we set
\begin{align}\label{NTC:def}
L\,\defeq \,\la \varrho,\varrho\ra_{T},\quad \gamma(0-)\,\defeq \,0.
\end{align}
The first definition in \eqref{NTC:def} implies 
\begin{align}\label{NTC:gamma}
\gamma(\ell)= T,\quad \forall\;\ell\geq L, 
\end{align}
and the second definition in \eqref{NTC:def} is chosen for the convenience of the following lemma when we use \eqref{NTC:rho} for $\ell=0$. {Together with the particular form of the finite-variation part in \eqref{eq:ntc},
this lemma will help handle $\varrho$ over possible intervals of constancy of $\gamma$.}

\begin{lem}\label{lem:NTCinterval}
It holds that  
\begin{align}\label{NTC:rho}
t\mapsto \varrho_t\mbox{ is strictly increasing on $[\gamma(\ell-),\gamma(\ell)]$,}\quad \forall\; 0\leq \ell\leq L.
\end{align}
In particular, if $L=0$, then $\varrho_t>0$ for all $t\in (0,T]$.
\end{lem}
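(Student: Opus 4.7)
The plan is to exploit the definition of the time change $\gamma$ as the right-continuous inverse of $t\mapsto \la\varrho,\varrho\ra_{t\wedge T}$ (levelled off at $T$) in order to show that the local-martingale part of $\varrho$ is constant on every interval $[\gamma(\ell-),\gamma(\ell)]$. The strict monotonicity then follows because only the strictly positive finite-variation part of $\varrho$ survives on such an interval.

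The first step would be to establish the identity $\la\varrho,\varrho\ra_t=\ell$ for every $t\in[\gamma(\ell-),\gamma(\ell)]$ and every $0\leq \ell\leq L$. For $\ell=0$ this uses the convention $\gamma(0-)=0$; for $\ell>0$ it follows from continuity and monotonicity of $s\mapsto \la\varrho,\varrho\ra_s$ together with the standard inverse-function sandwich $\la\varrho,\varrho\ra_{\gamma(\ell')}=\ell'$ for $\ell'<L$ (which requires $\gamma(\ell')<T$, automatic for $\ell'<\ell\leq L$). Hence, by the DDS representation \eqref{extend:varrho}, the martingale part $\widetilde{\mc B}_{\la\varrho,\varrho\ra_t}$ is constantly equal to $\widetilde{\mc B}_\ell$ on the interval, and so for $\gamma(\ell-)\leq s<t\leq \gamma(\ell)$,
\[
\varrho_t-\varrho_s=\int_s^t\sum_{\bj\in\J}\frac{\mu_\bj(r)}{\varrho^\bj_r}\d r,
\]
with the value at $t=T$ interpreted via the limit in \eqref{def:rhoT} when $\gamma(\ell)=T$.

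The second step is a routine check that this integrand is strictly positive on a subset of $[s,t]$ of positive Lebesgue measure. By hypothesis, $\mu_\bj(r)\geq (1-2\alpha)/2>0$ a.s.\ on $[S,T)$, and assumption \eqref{ass:ntc} forces $\int_0^{t'}\d r/\varrho^\bj_r<\infty$ for every $t'<T$, which precludes $\varrho^\bj_r=0$ on a positive-measure subset of any $[s,t]\subset[0,T)$; when $t=T=\gamma(L)$, I would take any $s<t'<T$, deduce $\varrho_s<\varrho_{t'}$, and pass $t'\nearrow T$ using \eqref{def:rhoT}. This gives \eqref{NTC:rho}. The final claim is then immediate: when $L=0$, $\gamma(\ell)=T$ for all $\ell\geq 0$, so $[\gamma(0-),\gamma(0)]=[0,T]$, and the just-proved strict monotonicity yields $\varrho_t>\varrho_0\geq 0$ for all $t\in(0,T]$. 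I anticipate the only mild subtlety to be the endpoint $t=T=\gamma(L)$, where $\varrho_T$ may equal $+\infty$; the extended-real-valued interpretation in \eqref{def:rhoT} and monotone passage to the limit handle this cleanly, so no additional machinery is required.
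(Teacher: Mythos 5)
Your argument is correct and follows essentially the same route as the paper's own proof: both exploit that $\la\varrho,\varrho\ra$ is constant on $[\gamma(\ell-),\gamma(\ell)]$ to kill the martingale part, then read off strict monotonicity from the finite-variation part. The only cosmetic differences are that you invoke the DDS representation $\widetilde{\mc B}_{\la\varrho,\varrho\ra_t}$ where the paper appeals directly to the fact that a continuous local martingale is constant on intervals of constant quadratic variation (\cite[(1.13) Proposition, p.125]{RY-2}), and that your appeal to assumption \eqref{ass:ntc} to rule out $\varrho^\bj_r=0$ on a positive-measure set is unnecessary — the integrand $\mu_\bj(r)/\varrho^\bj_r$ is strictly positive (possibly $+\infty$) pointwise whenever $\mu_\bj(r)\geq(1-2\alpha)/2>0$ and $\varrho^\bj_r\geq 0$, which is already enough for the finite-variation increment over $(s,t]$ to be strictly positive.
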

\begin{proof}
This strict monotonicity is trivial when $\gamma(\ell-)=\gamma(\ell)$.
To see \eqref{NTC:rho} when $\gamma(\ell-)<\gamma(\ell)$, note that by the right-continuity of $\gamma(\cdot)$, 
$\la \varrho,\varrho\ra_{\gamma(\ell-)}=\la \varrho,\varrho\ra_{\gamma(\ell)}$, that is,
\begin{align}\label{BM:jump}
\left\langle \sum_{\bj\in \J} \mc B^\bj,\sum_{\bj\in \J} \mc B^\bj\right\rangle_{\gamma(\ell-)}=\left\langle \sum_{\bj\in \J} \mc B^\bj,\sum_{\bj\in \J} \mc B^\bj\right\rangle_{\gamma(\ell)}. 
\end{align}
This implies $\sum_{\bj\in \J} \mc B^\bj_{t}=\sum_{\bj\in \J} \mc B^\bj_{\gamma(\ell-)}$ for all $\gamma(\ell-)\leq t\leq \gamma(\ell)$ \cite[(1.13) Proposition on p.125]{RY-2}. Hence, \eqref{NTC:rho} holds by  \eqref{eq:ntc} along with the nonnegativity of $\varrho^\bj$ and the assumed $s$-a.e. inequality $\mu_\bj(s,\omega)\geq (1-2\alpha)/2>0$. 
\end{proof}

Since the remaining of the proof of Theorem~\ref{thm:NTC} uses only pathwise arguments, {\bf we assume $\bs L\bs >\bs 0$ from now on.} By Lemma~\ref{lem:NTCinterval}, $L=0$ implies that $t\mapsto \varrho_t$ is strictly increasing on $[\gamma(0-),\gamma(0)]=[0,T]$, so the required conclusion of Theorem~\ref{thm:NTC} plainly holds.

Having transformed the noise term of $\varrho$, we now define for all $0\leq \ell\leq L$, 
\begin{align}\label{def:rho2}
\varrho^{(2)}_\ell\,\defeq \,\varrho_{\gamma(\ell)}=\sum_{\bj\in \J}\varrho^\bj_{\gamma(\ell)},\quad A_\ell\,\defeq\int_{\gamma(0)}^{\gamma(\ell)}\sum_{\bj\in \J}
 \frac{\mu_\bj(r)}{\varrho_r^\bj}\d r,
\end{align}
where we define $\varrho_T$ according to Remark~\ref{rmk:ntc} (2$\cc$). 
 In the following lemma, \eqref{eq:rho21} specifies a basic form by which we will compare $\varrho^{(2)}$ with Bessel processes
$\varrho^{(1),k}$ to be defined in Step~3. 

\begin{lem}
It holds that 
\begin{gather}
\varrho^{(2)}_\ell=\varrho^{(2)}_0+A_\ell+\widetilde{\mc B}_\ell,\quad 0\leq \ell\leq L,\label{eq:rho2}\\
\label{eq:rho21}
\varrho_L^{(2)}=\varrho_T\quad A_L=\int_{\gamma(0)}^T\sum_{\bj\in \J}\frac{\mu_\bj(r)}{\varrho^\bj_r} \d r\in [0,\infty].
\end{gather}
\end{lem}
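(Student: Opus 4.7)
The plan is to obtain both equations by a direct time change applied to the decomposition \eqref{extend:varrho}, treating \eqref{eq:rho2} as an identity between $\varrho_{\gamma(\ell)} - \varrho_{\gamma(0)}$ and the sum of its finite-variation and Brownian components evaluated along the time change, and treating \eqref{eq:rho21} as an immediate consequence of $\gamma(L) = T$ from \eqref{NTC:gamma} together with the definitions in \eqref{def:rho2} and Remark~\ref{rmk:ntc}~(2$\cc$).

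First I would record the key clock identity $\langle\varrho,\varrho\rangle_{\gamma(\ell)} = \ell$ for every $\ell\in[0,L]$. This follows from the right-continuity of the inverse $\gamma$, the continuity and nondecreasing character of $t\mapsto \langle\varrho,\varrho\rangle_t$, and the fact that $\langle\varrho,\varrho\rangle_T = L$ from \eqref{NTC:def}: whenever $\gamma(\ell)<T$ the right-continuity of the inverse combined with the continuity of the clock gives $\langle\varrho,\varrho\rangle_{\gamma(\ell)} = \ell$, and when $\gamma(\ell)=T$ one must have $\ell = L = \langle\varrho,\varrho\rangle_T$. In particular, by \eqref{def:varsigma}, $\int_0^{\gamma(\ell)}\varsigma(s)\,\d s = \ell$ for all $0 \leq \ell \leq L$, and $\int_0^{\gamma(0)}\varsigma(s)\,\d s = 0$.

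Plugging $t=\gamma(\ell)$ and $t=\gamma(0)$ into \eqref{extend:varrho} and subtracting then yields
\begin{align*}
\varrho^{(2)}_\ell - \varrho^{(2)}_0 = \varrho_{\gamma(\ell)} - \varrho_{\gamma(0)} = \int_{\gamma(0)}^{\gamma(\ell)}\sum_{\bj\in\J}\frac{\mu_\bj(r)}{\varrho_r^\bj}\,\d r + \bigl(\widetilde{\mathcal B}_\ell - \widetilde{\mathcal B}_0\bigr),
\end{align*}
which is \eqref{eq:rho2} by the definition of $A_\ell$ in \eqref{def:rho2} and $\widetilde{\mathcal B}_0 = 0$. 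The possible jumps of $\gamma$ (intervals of constancy of $\langle\varrho,\varrho\rangle$) require no special treatment here: on any such interval $[\gamma(\ell_0-),\gamma(\ell_0)]$ the martingale part of $\varrho$ is constant by \eqref{BM:jump} and the Brownian clock does not advance, so all of the increment $\varrho_{\gamma(\ell_0)} - \varrho_{\gamma(\ell_0-)}$ is absorbed into the finite-variation integral, which is compatible with $\widetilde{\mathcal B}$ being continuous and $A_\ell$ having a matching jump.

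Finally, for \eqref{eq:rho21}, specializing $\ell=L$ and invoking $\gamma(L)=T$ from \eqref{NTC:gamma} gives $\varrho^{(2)}_L = \varrho_{\gamma(L)} = \varrho_T$ (where $\varrho_T$ is defined as in Remark~\ref{rmk:ntc}~(2$\cc$)) and $A_L = \int_{\gamma(0)}^T \sum_{\bj\in\J}\mu_\bj(r)/\varrho^\bj_r\,\d r$. Nonnegativity $A_L \geq 0$ follows from the a.s. lower bound $\mu_\bj(r)\geq (1-2\alpha)/2 > 0$ imposed in the hypotheses of Theorem~\ref{thm:NTC}, while the possibility $A_L = +\infty$ is allowed because the local integrability assumption \eqref{ass:ntc} only gives $\int_S^t\sum_\bj\d r/\varrho^\bj_r < \infty$ for $t<T$; no such bound is claimed at $t=T$. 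The only step that demands any care is the clock identity $\langle\varrho,\varrho\rangle_{\gamma(\ell)} = \ell$ at the endpoint $\ell = L$, and this is resolved by \eqref{NTC:def} and \eqref{NTC:gamma}.
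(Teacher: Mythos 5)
Your proof is correct and follows essentially the same route as the paper's: both arguments reduce \eqref{eq:rho2} to the clock identity $\int_0^{\gamma(\ell)}\varsigma(s)\,\d s=\ell$ (equivalently $\la\varrho,\varrho\ra_{\gamma(\ell)}=\ell$) for $0\leq\ell\leq L$, prove it from right-continuity of $\gamma$, continuity of the clock, and \eqref{NTC:def}--\eqref{NTC:gamma}, and then substitute into \eqref{extend:varrho}. The paper splits this into cases $\ell<L$ and $\ell=L$ while you state it uniformly; the remark about intervals of constancy of $\gamma$ is a correct but unnecessary consistency check, and \eqref{eq:rho21} is handled identically in both via $\gamma(L)=T$.
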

\begin{proof}
By \eqref{extend:varrho} and the fact that $\wt{\mc B}_0=0$, 
\eqref{eq:rho2} as soon as we have
\[
\widetilde{\mc B}_{\int_0^{\gamma(\ell)}\varsigma(s)\d s}=\widetilde{\mc B}_\ell,\quad 0\leq \ell\leq L.
\]
This equality can be seen by considering two cases: (1) for $\ell<L$, $t\mapsto \int_0^t\varsigma(s)\d s$ is a continuous function, and so,  $\int_0^{\gamma(\ell)}\varsigma(s)\d s=\ell$; (2) for $\ell=L$, 
\[
\int_0^{\gamma(L)}\varsigma(s)\d s=\int_0^T\varsigma(s)\d s=\la \varrho,\varrho\ra_{T}=L,
\] 
where the three equalities follow from \eqref{NTC:gamma}, the first equality in \eqref{def:varsigma}, and the first equality in \eqref{NTC:def}, respectively. Finally, \eqref{eq:rho21} follows immediately from \eqref{NTC:gamma}. 
\end{proof}

To emulate the comparing Bessel process based on \eqref{eq:rho2} requires a bound on $\{A_\ell;0\leq \ell\leq L\}$. 
The following lemma serves as the starting point.  

\begin{lem}\label{lem:igbdd}
For all integers $n\geq 2$, 
\[
\frac{1}{x_1}+\cdots+\frac{1}{x_n}\geq \frac{n^2}{x_1+\cdots+x_n},\quad \forall\;x_1,\cdots,x_n\geq 0.
\]
Moreover, equality holds if and only if (1) $x_1=\cdots=x_n$ or (2) some of the $x_j$'s is zero.
\end{lem}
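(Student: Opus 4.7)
The plan is to prove the inequality via a direct application of the Cauchy--Schwarz inequality (equivalently, the AM--HM inequality). First I will dispose of the non-degenerate case where $x_1,\dots,x_n$ are all strictly positive. Applying Cauchy--Schwarz to the vectors $(1/\sqrt{x_j})_{j=1}^n$ and $(\sqrt{x_j})_{j=1}^n$ gives
\[
n^2 \;=\; \Bigl(\sum_{j=1}^n 1\Bigr)^2 \;=\; \Bigl(\sum_{j=1}^n \tfrac{1}{\sqrt{x_j}}\cdot \sqrt{x_j}\Bigr)^2 \;\leq\; \Bigl(\sum_{j=1}^n \tfrac{1}{x_j}\Bigr)\Bigl(\sum_{j=1}^n x_j\Bigr),
\]
and dividing by $\sum_j x_j>0$ yields the desired bound.

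For the equality clause, I will invoke the standard equality case of Cauchy--Schwarz: equality holds iff the two vectors $(1/\sqrt{x_j})$ and $(\sqrt{x_j})$ are proportional, i.e.\ there exists $c>0$ with $1/x_j = c\, x_j$ for every $j$. This forces $x_j^2 = 1/c$ for all $j$, and since $x_j>0$, one concludes $x_1 = \cdots = x_n$, matching alternative (1) in the statement.

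The degenerate case where some $x_j = 0$ is handled separately using the convention $1/0 = +\infty$: the left-hand side is then $+\infty$, so the inequality $\geq$ holds trivially. If additionally all $x_j = 0$, the right-hand side is $n^2/0 = +\infty$ as well, so both sides agree as extended reals, which is the content of alternative (2) in the stated equality cases (interpreted in the $[0,+\infty]$ sense).

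I anticipate no substantive obstacle: the argument is a one-line application of Cauchy--Schwarz with its classical equality case, which is presumably why the paper states the lemma as a standalone bookkeeping fact to feed into the subsequent bound on $\{A_\ell; 0\leq \ell\leq L\}$ with $n = n_0 = \#\mathcal{J}$ and the condition \eqref{def:alpha}.
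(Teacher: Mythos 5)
Your proof of the inequality via Cauchy--Schwarz is correct and differs from the paper's route. The paper avoids invoking any named inequality: it multiplies through by $\prod_i x_i$ and by $\sum_k x_k$, expands, and reduces the claim to the manifest sum-of-squares positivity $\sum_{j<k}(x_j-x_k)^2\prod_{i\neq j,k}x_i\geq 0$, from which the equality case $x_1=\cdots=x_n$ drops out directly. Your approach is shorter and uses a well-known off-the-shelf tool; the paper's is self-contained and makes the sum-of-squares structure explicit, which is presumably why the author chose it. Both prove the inequality and the positive-variable equality case correctly.

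There is a small imprecision in your handling of the degenerate branch that you should tighten. The statement's alternative (2) reads ``some of the $x_j$'s is zero,'' but in the extended-reals reading you adopted (with $1/0=+\infty$), that is not a genuine equality case: if some but not all $x_j$ vanish, the left side is $+\infty$ while the right side is the finite number $n^2/(x_1+\cdots+x_n)$, so the inequality is strict. You correctly observe that true equality occurs when \emph{all} $x_j=0$ (both sides $+\infty$), but that is already a subcase of alternative (1) with common value $0$, so it does not really constitute a separate ``content'' for (2). To be fair, the paper's own proof sidesteps this by declaring at the outset that it suffices to treat $x_1,\ldots,x_n>0$ and never returns to the zero case; the lemma as stated is a bit loose on this edge case, and (2) should be read as flagging the degenerate regime where the quantities are not finite rather than asserting a literal equality. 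You should either drop the claim that ``all $x_j=0$'' is the content of (2), or note explicitly that (2) is the vacuous case in which the left side is infinite and the inequality carries no information.
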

\begin{proof}
I suffices to consider the case of $x_1,\cdots,x_n>0$. 
We begin by rewriting the inequality in question as the equivalent inequalities in the following five lines:
\begin{gather*}
\frac{\sum_{j=1}^n \prod_{i\neq j}x_i}{\prod_{i=1}^n x_i}\geq \frac{n^2}{\sum_{k=1}^nx_k},\\
\sum_{j,k=1}^n x_k\prod_{i\neq j}x_i\geq n^2\prod_{i=1}^n x_i,\\
\sum_{j<k}x_k\prod_{i\neq j}x_i+\sum_{j>k}x_k\prod_{i\neq j}x_i-(n^2-n)\prod_{i=1}^n x_i\geq 0,\\
\sum_{j<k}x_k^2\prod_{i\neq j,k}x_i+\sum_{k>j}x_j^2\prod_{i\neq j,k}x_i-\sum_{j<k}2\prod_{i=1}^n x_i\geq 0,\\
\sum_{j<k}(x_k^2-2x_jx_k+x_j^2)\prod_{i\neq j,k}x_i\geq 0.
\end{gather*}
In more detail, we have changed variables by replacing $(j,k)$ with $(k,j)$ to get the second term in the fourth line.  
Since the last inequality obviously holds true, the required inequality of the lemma holds. Moreover, the last equality shows that equality holds if and only if $x_1=\cdots=x_n$. This completes the proof of the lemma. 
\end{proof}

The following lemma concludes Step~1 by bounding $\{A_\ell;0\leq \ell\leq L\}$. 

\begin{lem}\label{lem:NTCAbdd}
For all $0\leq \ell_1\leq \ell_2\leq L$, it holds that 
\begin{align}\label{nc:bdd}
\begin{aligned}
&\int_{(\ell_1,\ell_2]}\d A_{\ell'}\geq  \int_{(\ell_1,\ell_2]} \frac{\mathfrak d-1}{2\varrho^{(2)}_{\ell'}}\d \ell',  
\quad\mbox{where}\quad \mathfrak d\,\defeq\, \frac{n_0^2(1-2\alpha)}{n_0+\frac{n_0(n_0-1)}{2}}+1\geq 2.
\end{aligned}
\end{align}
\end{lem}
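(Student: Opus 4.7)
My plan is to chain three elementary inequalities: a pointwise lower bound on the drift densities, Lemma~\ref{lem:igbdd} to convert a sum of reciprocals to a reciprocal of a sum, and a change-of-variables estimate using the $\esssup$-bound \eqref{bdd:varsigma} on the clock density $\varsigma$. The target constant $\mathfrak d-1$ will then arise by combining the factor $(1-2\alpha)n_0^2$ from the first two steps with the factor $1/(n_0+n_0(n_0-1)/2)$ from the time change, matching exactly the numerator and denominator in the definition of $\mathfrak d$.

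First, using the definition of $A_\ell$ in \eqref{def:rho2} together with the a.s.\ bound $\mu_\bj(r)\geq (1-2\alpha)/2$ and then Lemma~\ref{lem:igbdd} (applied to $x_\bj=\varrho^\bj_r$, noting $\sum_\bj\varrho^\bj_r=\varrho_r$), I get
\[
A_{\ell_2}-A_{\ell_1}=\int_{\gamma(\ell_1)}^{\gamma(\ell_2)}\sum_{\bj\in\J}\frac{\mu_\bj(r)}{\varrho^\bj_r}\d r\;\geq\;\frac{1-2\alpha}{2}\int_{\gamma(\ell_1)}^{\gamma(\ell_2)}\sum_{\bj\in\J}\frac{1}{\varrho^\bj_r}\d r\;\geq\;\frac{(1-2\alpha)n_0^2}{2}\int_{\gamma(\ell_1)}^{\gamma(\ell_2)}\frac{\d r}{\varrho_r}.
\]
Thus it remains to prove
\[
\int_{\gamma(\ell_1)}^{\gamma(\ell_2)}\frac{\d r}{\varrho_r}\;\geq\;\frac{1}{n_0+\tfrac{n_0(n_0-1)}{2}}\int_{\ell_1}^{\ell_2}\frac{\d\ell'}{\varrho^{(2)}_{\ell'}},
\]
since $(\mathfrak d-1)(n_0+n_0(n_0-1)/2)=n_0^2(1-2\alpha)$ by the definition of $\mathfrak d$.

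For the last inequality, I will perform the change of variables $r=\gamma(\ell')$. From \eqref{def:varsigma} and \eqref{bdd:varsigma} I have $\d\la\varrho,\varrho\ra_r=\varsigma(r)\d r\leq(n_0+n_0(n_0-1)/2)\d r$, hence
\[
\int_{\gamma(\ell_1)}^{\gamma(\ell_2)}\frac{\d r}{\varrho_r}\;\geq\;\frac{1}{n_0+\tfrac{n_0(n_0-1)}{2}}\int_{\gamma(\ell_1)}^{\gamma(\ell_2)}\frac{\d\la\varrho,\varrho\ra_r}{\varrho_r}.
\]
Then, since $\ell'\mapsto\gamma(\ell')$ is the right-continuous inverse of the continuous nondecreasing function $r\mapsto\la\varrho,\varrho\ra_r$, the standard push-forward identity gives
\[
\int_{\gamma(\ell_1)}^{\gamma(\ell_2)}\frac{\d\la\varrho,\varrho\ra_r}{\varrho_r}=\int_{\ell_1}^{\ell_2}\frac{\d\ell'}{\varrho_{\gamma(\ell')}}=\int_{\ell_1}^{\ell_2}\frac{\d\ell'}{\varrho^{(2)}_{\ell'}},
\]
the final equality being just the definition of $\varrho^{(2)}$ from \eqref{def:rho2}. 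Chaining everything produces \eqref{nc:bdd}.

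The only subtle point I anticipate is the push-forward identity in the last display, because $\gamma$ has jumps at values $\ell_0$ where $r\mapsto\la\varrho,\varrho\ra_r$ is constant on some $[t_1,t_2]$. On the original-time side those intervals contribute $\int_{t_1}^{t_2}\d r/\varrho_r$ integrated against $\d\la\varrho,\varrho\ra_r\equiv 0$, which is zero, while on the new-time side $\ell'=\ell_0$ is a single point carrying no Lebesgue mass; so both sides agree. Everywhere else $\gamma$ is strictly increasing and the identity is the usual $\d\ell'=\varsigma(\gamma(\ell'))\d\gamma(\ell')$ reparametrisation. Modulo that bookkeeping, each step is elementary, and the matching of constants to produce $\mathfrak d-1$ is the point of defining $\mathfrak d$ as in \eqref{nc:bdd}.
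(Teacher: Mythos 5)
Your proof is correct and essentially matches the paper's argument: the same three ingredients (the pointwise lower bound on $\mu_\bj$, Lemma~\ref{lem:igbdd}, and the $\esssup$-bound \eqref{bdd:varsigma} on $\varsigma$) in the same order, followed by the change of variables $\ell'=\la\varrho,\varrho\ra_r$ to produce $\int (\varrho^{(2)}_{\ell'})^{-1}\d\ell'$. The only cosmetic difference is in the final push-forward step: the paper first replaces $1/\varrho_r$ by $1/\varrho_{\gamma(\la\varrho,\varrho\ra_r)}$ pointwise via the monotonicity of Lemma~\ref{lem:NTCinterval}, so that the integrand depends on $r$ only through $\la\varrho,\varrho\ra_r$ before invoking \cite[(4.10) Proposition, p.9]{RY-2}, whereas you apply the change of variables to $\int (1/\varrho_r)\,\d\la\varrho,\varrho\ra_r$ directly and (correctly) observe that the discrepancy set—the union of constancy intervals of $\la\varrho,\varrho\ra$—carries no $\d\la\varrho,\varrho\ra$-mass.
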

\begin{proof}
The lower bound for $\mathfrak d$ in \eqref{nc:bdd} follows immediately from \eqref{def:alpha}.
To see the bound for $\int_{(\ell_1,\ell_2]}\d A_{\ell'}$ in \eqref{nc:bdd}, first, we use the assumed $r$-a.e. lower bound $\mu_\bj(r,\omega)\geq (1-2\alpha)/2$, Lemma~\ref{lem:igbdd} and the second inequality 
in \eqref{bdd:varsigma}
to get the second inequality below:
\begin{align*}
\int_{(\ell_1,\ell_2]}\d A_{\ell'}=A_{\ell_2}-A_{\ell_1}=\int_{\gamma(\ell_1)}^{\gamma(\ell_2)} \sum_{\bj\in \J}\frac{\mu_\bj(r)}{\varrho_r^\bj}\d r&\geq 
\frac{n_0^2(1-2\alpha)}{2
(n_0+\frac{n_0(n_0-1)}{2})
}\int_{\gamma(\ell_1)}^{\gamma(\ell_2)}\frac{1}{\varrho_r}\varsigma(r)\d r.
\end{align*}
To proceed, note that $r\leq \gamma_{\la \varrho,\varrho\ra_r}$ for all $0\leq r\leq T$. The strict inequality $r< \gamma_{\la \varrho,\varrho\ra_r}$ holds only if $r\in [\gamma(\ell-),\gamma(\ell))$ when $\gamma(\ell)-\gamma(\ell-)>0$, in which case
$1/\varrho_r\geq 1/\varrho_{\gamma_{\la \varrho,\varrho\ra_r}}$ by Lemma~\ref{lem:NTCinterval}. In any case, 
\[
\int_{\gamma(\ell_1)}^{\gamma(\ell_2)}\frac{1}{\varrho_r}\varsigma(r)\d r\geq \int_{\gamma(\ell_1)}^{\gamma(\ell_2)}\frac{1}{\varrho_{\gamma_{\la \varrho,\varrho\ra_r}}}\varsigma(r)\d r=\int_{\gamma(\ell_1)}^{\gamma(\ell_2)}\frac{1}{\varrho_{\gamma_{\la \varrho,\varrho\ra_r}}}\d \la \varrho,\varrho\ra_r.
\]
Also, if we denote $\widetilde{A}_{\ell'}\equiv \ell' $, then $\widetilde{A}_{\la \varrho,\varrho\ra_r}=\la\varrho,\varrho\ra_r$, and so, 
\begin{align*}
\int_{\gamma(\ell_1)}^{\gamma(\ell_2)}\frac{1}{\varrho_{\gamma_{\la \varrho,\varrho\ra_r}}}\d \la \varrho,\varrho\ra_r&=\int_{[\gamma(\ell_1),\gamma(\ell_2)]}\frac{1}{\varrho_{\gamma_{\la \varrho,\varrho\ra_r}}}\d \widetilde{A}_{\la \varrho,\varrho\ra_r}.
\end{align*}
Third, since $t\mapsto \la \varrho,\varrho\ra_t$ is continuous and non-decreasing, we can apply a change of variables formula suitable for time changes \cite[(4.10) Proposition, p.9]{RY-2} to get 
\begin{align*}
\int_{[\gamma(\ell_1),\gamma(\ell_2)]}\frac{1}{\varrho_{\gamma_{\la \varrho,\varrho\ra_r}}}\d \widetilde{A}_{\la \varrho,\varrho\ra_r}
=\int_{[\la \varrho,\varrho\ra_{\gamma(\ell_1)},\la \varrho,\varrho\ra_{\gamma(\ell_1)}] }\frac{1}{\varrho_{\gamma(\ell')}}\d \widetilde{A}_{\ell'}=\int_{[\ell_1,\ell_2] }\frac{1}{\varrho_{\gamma(\ell')}}\d \ell'=\int_{\ell_1}^{\ell_2}\frac{1}{\varrho^{(2)}_{\ell'}}\d \ell',
\end{align*}
where the second equality uses the continuity of $t\mapsto \la \varrho,\varrho\ra_t$ to get 
$\la \varrho,\varrho\ra_{\gamma(\ell_j)}=\ell_j$. Combining the last four displays proves the bound for $\int_{(\ell_1,\ell_2]}\d A_{\ell'}$ in \eqref{nc:bdd}.
\end{proof}

 Lemma~\ref{lem:NTCAbdd} has considered the fact that under the current assumptions, $\{A_\ell;0\leq \ell\leq L\}$ is not continuous and has only c\`adl\`ag paths. At every point of discontinuity, it only jumps upward. A closely related property is that although the sample paths of $\varrho^{(2)}$ are only c\`adl\`ag, Lemma~\ref{lem:NTCinterval} gives
\begin{align}\label{rho:jump}
\varrho^{(2)}_{\ell}-\varrho^{(2)}_{\ell-}\geq 0,\quad \forall\;0\leq \ell\leq L.
\end{align}
This property will be be useful in Steps~4 and 5.

\noindent {\bf Step~3 (Choosing lower-bound Bessel processes).} 
For all integers $k\geq 1$, define
\[
\sigma_{k}\defeq \inf\{\ell\geq 0;\varrho_\ell^{(2)}\geq k^{-1}\}\wedge L,
\]
where $L$ is defined in \eqref{NTC:def} and has been assumed to be strictly positive after Lemma~\ref{lem:NTCinterval}. (The effect we want from $\sigma_k$ is nonzero initial conditions, as will be made precise in Lemma~\ref{lem:sigmaklbd}.)  
Keeping in mind the first inequality in \eqref{nc:bdd}, we now choose lower-bound Bessel processes for $\varrho^{(2)}$, denoted by $\varrho^{(1),k}$, to be of dimension $\mathfrak d$ starting from $\varrho_{\sigma_k}^{(2)}$ at time $\sigma_{k}$ as follows: 
\begin{align}\label{def:varrho1}
\varrho^{(1),k}_{\ell}=\varrho_{\sigma_{k}}^{(2)}+\int_{\sigma_{k}}^{\ell} \frac{\mathfrak d-1}{2\varrho^{(1),k}_{\ell'}}\d \ell'+\widetilde{\mc B}_{\ell}-\widetilde{\mc B}_{\sigma_{k}},\quad\sigma_{k}\leq \ell<\infty.
\end{align}
The strong existence of these processes $\varrho^{(1),k}_{\ell}$ results from the strong existence of the SDE of the Bessel squared process of dimension $\mathfrak d$ since $\mathfrak d\geq 2$ by \eqref{nc:bdd}. 

Two basic properties we need from the above lower-bound processes are as follows. First, with probability one, since $\mathfrak d\geq 2$, we have
\begin{align}\label{NTC:lbd}
\varrho^{(1),k}_{\ell}>0,\quad \forall\;\ell>\sigma_k,\;\forall\;k\geq 1.
\end{align}
This property can be seen by using the polarity of one-point sets of the two-dimensional Brownian motion \cite[(2.7) Proposition, p.191]{RY-2} and the comparison theorem of SDEs \cite[(2.18) Proposition, p.293]{KS:BM-2} specialized to the SDEs of the Bessel squared processes of dimenesion two and of dimension $\mathfrak d$.
The second property is that by \eqref{eq:rho2} and \eqref{def:varrho1},
\[
\Delta_{\ell}(k)\,\defeq\, \varrho^{(1),k}_{\ell}-\varrho^{(2)}_{\ell},\quad \sigma_{k}\leq \ell\leq L, 
\]
satisfies the following \emph{ordinary} integral equation:
\begin{align}\label{eq:Delta}
\Delta_{\ell}(k)=\int_{(\sigma_{k},\ell]}\left(\frac{\mathfrak d-1}{2\varrho^{(1),k}_{\ell'}}\d \ell'-\d A_{\ell'}\right),\quad \sigma_{k}\leq \ell\leq L.
\end{align}

\noindent {\bf Step~4 (Comparing processes).}
We first specify a smooth regularization $\{\varphi_n\}$ of the positive-part function $x\mapsto x^+$:
\[
\varphi_n(\cdot)\,\defeq\,\psi_n(\cdot)\1_{(0,\infty)}(\cdot), 
\]
where $\{\psi_n\}$ is chosen to be the smooth regularization of the absolute-value function in the Yamada--Watanabe pathwise uniqueness method \cite[Theorem~1]{YW:PU-2}. These functions $\{\psi_n\}$ are defined as follows. Take a strictly decreasing sequence $\{a_n\}_{n=0}^\infty\subset(0,1]$ with $a_0=1$, $\lim_{n\to\infty}a_n=0$ and $\int_{(a_n,a_{n-1})}u^{-2}\d u=n$ for every $n\geq 1$. For each $n\geq 1$, there exists a continuous function $\rho_n(\cdot)$ on $\R$ with support in $(a_n,a_{n-1})$, so that $0\leq \rho_n(x)\leq 2(nx^{2})^{-1}$ for every $x>0$ and $\int_{(a_n,a_{n-1})}\rho_n(x)\d x=1$. Then 
\[
\psi_n(x)\,\defeq\int_0^{|x|}\int_0^y \rho_n(u)\d u\d y, \quad x\in \R,
\]
is even and twice continuous differentiable with $|\psi_n'(x)|\leq 1$ and $\lim_{n\to\infty}\psi_n(x)=|x|$ for all $x$. 

Now, by the change of variables for Stieltjes integrals \cite[(4.6) Proposition on p.6]{RY-2}, applying \eqref{eq:Delta} and the first inequality in \eqref{nc:bdd} in the same order gives
\begin{align}
\varphi_n(\Delta_\ell(k))&=\int_{(\sigma_k,\ell]} \varphi_n'(\Delta_{\ell'}(k))\left(\frac{\mathfrak d-1}{2\varrho^{(1),k}_{\ell'}}\d \ell'-\d A_{\ell'}\right)\notag\\
&\leq \int_{(\sigma_k,\ell]} \varphi_n'(\Delta_{\ell'}(k))\left(\frac{\mathfrak d-1}{2\varrho^{(1),k}_{\ell'}}-\frac{\mathfrak d-1}{2\varrho^{(2)}_{\ell'}}\right)\d {\ell}',\quad \forall\;\sigma_k\leq \ell\leq L.\label{YW:ineq}
\end{align}
We have also used $\varphi_n(0)=0$ to get the equality and $\varphi_n'\geq 0$ over $\R$ to get the inequality. 

To close the rightmost side of the first inequality in \eqref{YW:ineq}, we first prove the following lemma. Recall once again that we have assumed $L>0$ after Lemma~\ref{lem:NTCinterval}.

\begin{lem}\label{lem:sigmaklbd}
It holds that 
\begin{align}\label{def:sigmak}
\sigma_k<L\mbox{ and $\varrho^{(2)}_{\sigma_k}\geq k^{-1}$ for all large $k=k(\omega)$, and $\displaystyle \lim_{k\to\infty}\da \sigma_k=0$}.
\end{align}
\end{lem}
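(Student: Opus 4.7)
The plan is to analyze $\varrho^{(2)}$ near $\ell=0$ via the decomposition \eqref{eq:rho2}, in which $A$ is nonnegative and non-decreasing while $\wt{\mc B}$ is a standard Brownian motion with $\wt{\mc B}_0=0$, and then to invoke oscillation of $\wt{\mc B}$ at the origin.

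First, I will observe that $\varrho^{(2)}$ is c\`adl\`ag and nonnegative (since $\gamma$ is c\`adl\`ag and $\varrho$ continuous and nonnegative) and that, by \eqref{rho:jump}, every jump of $\varrho^{(2)}$ is upward. Setting $\sigma_k^*\defeq\inf\{\ell\geq 0;\varrho^{(2)}_\ell\geq k^{-1}\}$, right-continuity of $\varrho^{(2)}$ together with closedness of $[k^{-1},\infty)$ forces $\varrho^{(2)}_{\sigma_k^*}\geq k^{-1}$ whenever $\sigma_k^*<\infty$; consequently $\sigma_k=\sigma_k^*\wedge L$ inherits this lower bound as soon as $\sigma_k^*<L$. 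So the only real work is to show that $\sigma_k^*<L$ for all large $k$ and that $\sigma_k^*\downarrow 0$.

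Second, the case $\varrho^{(2)}_0>0$ is immediate: for every $k\geq 1/\varrho^{(2)}_0$ one has $\sigma_k=\sigma_k^*=0<L$, and the three assertions hold trivially. The core case is $\varrho^{(2)}_0=0$, in which \eqref{eq:rho2} reduces to $\varrho^{(2)}_\ell=A_\ell+\wt{\mc B}_\ell$; since the integrand defining $A_\ell$ in \eqref{def:rho2} is nonnegative, $A_\ell\geq 0$, and hence $\varrho^{(2)}_\ell\geq \wt{\mc B}_\ell$ on $[0,L]$.

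Third, I will invoke the classical property that, almost surely, $\sup_{\ell\in[0,q]}\wt{\mc B}_\ell>0$ for every rational $q>0$ (via reflection and countable intersection, or Blumenthal's 0-1 law). Combined with the standing assumption $L>0$, this yields, for every $\ell_0\in(0,L)$, a (random) $\ell^*\in[0,\ell_0]$ with $\varrho^{(2)}_{\ell^*}\geq \wt{\mc B}_{\ell^*}>0$. Hence for all $k\geq 1/\varrho^{(2)}_{\ell^*}$, $\sigma_k^*\leq \ell^*\leq \ell_0<L$, simultaneously giving $\sigma_k<L$ (so $\sigma_k=\sigma_k^*$, whence $\varrho^{(2)}_{\sigma_k}\geq k^{-1}$) and $\sigma_k\leq \ell_0$ for all large $k$. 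Since $\sigma_k$ is non-increasing in $k$ and $\ell_0>0$ is arbitrary, $\sigma_k\downarrow 0$ follows.

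The main subtlety to watch for is not overreaching: at this point in the argument only the nonnegativity of $A$ is available, not a Bessel-type drift of the form $\int (\mathfrak d-1)/(2\varrho^{(2)})$ (which appears only as the one-sided inequality \eqref{nc:bdd}), and \eqref{eq:rho2} is a c\`adl\`ag decomposition rather than a continuous semimartingale identity. Nonetheless the pathwise bound $\varrho^{(2)}\geq \wt{\mc B}$ plus Brownian oscillation at $0$ is all that is needed here; no comparison with the Bessel processes $\varrho^{(1),k}$ constructed in Step~3 is required at this stage.
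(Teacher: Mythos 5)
Your proof is correct, and it is a genuinely different (and somewhat leaner) route than the paper's. The paper's own proof splits on whether $\gamma(0)=0$ or $\gamma(0)>0$: in the first case it uses the drift lower bound \eqref{nc:bdd} from Lemma~\ref{lem:NTCAbdd} (if $\varrho^{(2)}$ vanished on all of $[0,\vep]$, the right-hand side of \eqref{nc:bdd} would force $A_\vep=\infty$, contradicting the finiteness guaranteed by \eqref{ass:ntc} since $\gamma(\vep)<T$), and in the second case it invokes the strict monotonicity of Lemma~\ref{lem:NTCinterval} to conclude $\varrho^{(2)}_0=\varrho_{\gamma(0)}>0$. You instead split on $\varrho^{(2)}_0>0$ vs.\ $\varrho^{(2)}_0=0$, and in the nontrivial case you use only that $A_\ell\ge 0$ (a trivial consequence of $\mu_\bj>0$ a.e.\ and $\varrho^\bj\ge 0$, once one notes via \eqref{ass:ntc} that the integrand is a.e.\ finite) together with the pathwise Brownian oscillation $\sup_{[0,q]}\wt{\mc B}>0$ for all $q>0$. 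Your route is more elementary: it does not consume Lemma~\ref{lem:NTCAbdd} nor Lemma~\ref{lem:NTCinterval}, whereas the paper's argument stays closer to the quantitative comparison-with-Bessel machinery that the rest of Step~3 relies on. Your treatment of the hitting-time lower bound $\varrho^{(2)}_{\sigma_k}\ge k^{-1}$ (via right-continuity and closedness of $[k^{-1},\infty)$) and of the monotone limit $\sigma_k\downarrow 0$ is also correct and complete; and you correctly keep all of this on $[0,\ell_0]$ with $\ell_0<L$, so the possible degeneracy of \eqref{eq:rho2} at $\ell=L$ (where $A_L$ and $\varrho^{(2)}_L$ may be infinite) never enters.
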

\begin{proof}
We consider separately the two cases: $\gamma(0)=0$ and $\gamma(0)>0$. If $\gamma(0)=0$, it follows from the lower bound for $\int_{(\ell_1,\ell_2]}\d A_{\ell'}$ in \eqref{nc:bdd} that $\max_{\ell'\in [0,\vep]}\varrho^{(2)}_{\ell'}>0$ for all $\vep>0$. Since $L>0$, the required properties in \eqref{def:sigmak} follow. If $\gamma(0)>0$,  Lemma~\ref{lem:NTCinterval} implies $\varrho_{\gamma(0)}>0$ so that \eqref{def:sigmak} plainly holds. 
\end{proof}

Define, for all integers $m\geq k+1$,
\begin{align}\label{def:taumk}
\tau_m(k)\,\defeq \,\inf\left\{\ell\geq \sigma_k;
\min\{\varrho^{(2)}_\ell,\varrho^{(1),k}_\ell\}\leq m^{-1}\right\}\wedge L.
\end{align}
Then it follows from \eqref{def:sigmak} that for all large $k=k(\omega)$ and all $m\geq k+1$, 
\begin{align}\label{mk:bdd}
\sigma_k<\tau_m(k)\mbox{ and }\min\{\varrho^{(2)}_\ell,\varrho^{(1),k}_\ell\}\geq m^{-1},\; \forall\; \sigma_k\leq \ell\leq \tau_m(k).
\end{align}
Here, we use \eqref{rho:jump} to get $\varrho^{(2)}_{\tau_m(k)}\geq  m^{-1}$ when $\varrho^{(2)}$ jumps at $\tau_m(k)$ and the sample-path continuity of $\varrho^{(1),k}$ to get  $\varrho^{(1),k}_{\tau_m(k)}\geq  m^{-1}$. {\bf We only consider these $\bs k$ and $\bs m$ in the remaining of the proof of Theorem~\ref{thm:NTC}.} 

Let $L_m$ be the Lipschitz constant of $x\mapsto (\mathfrak d-1)/(2x)$ over $[m^{-1},\infty)$. Note that $\varphi_n$ is supported in $(0,\infty)$ so $\varphi_n'(\Delta_\ell(k))\neq 0$ only if $\Delta_\ell(k)>0$,  that is, only if $\varrho^{(1),k}_\ell-\varrho^{(2)}_\ell> 0$, in which case $\varphi_n'(\Delta_\ell(k))=\psi_n'(\Delta_\ell(k))\leq 1$ by the choice of $(\varphi_n,\psi_n)$. By \eqref{YW:ineq} and \eqref{mk:bdd},
\begin{align*}
\varphi_n(\Delta_{\sigma_{k}+\ell}(k))
&\leq \int_{0}^{\ell} \varphi_n'(\Delta_{\sigma_k+\ell'}(k))L_m\left(\varrho^{(1),k}_{\sigma_k+\ell'}-\varrho^{(2)}_{\sigma_k+\ell'}\right)^+\d \ell'\\
&\leq \int_{0}^{\ell} L_m\Delta_{\sigma_k+\ell'}(k)^+\d \ell',\quad \forall\;0\leq \ell\leq \tau_m(k)-\sigma_{k}.
\end{align*}
Passing $n\to\infty$ for the leftmost side yields the closed inequality
\[
\Delta_{\sigma_{k}+\ell}(k)^+\leq L_m\int_{0}^{\ell} \Delta_{\sigma_k+\ell'}(k)^+\d \ell',\quad\forall\; 0\leq \ell\leq \tau_m(k)-\sigma_{k}.
\]
Gr\"onwall's lemma is applicable to the foregoing inequality due to the boundedness of $\Delta_\ell(k)^+$ on compacts in $[\sigma_k,\infty)$ by the sample-path continuity of $\varrho^{(1),k}$ and the nonnegativity of $\varrho^{(2)}$. Hence, 
\begin{align}\label{mk:bddconclusion}
&\Delta_\ell(k)=\varrho^{(1),k}_\ell-\varrho^{(2)}_\ell\leq 0,\quad \sigma_k\leq \ell\leq \tau_m(k),\quad \forall\;\mbox{large }k=k(\omega)\;\forall\;m\geq k+1.
\end{align}

\noindent {\bf Step~5 (Extending to all the required times).} In this step, we transfer \eqref{mk:bddconclusion} to the following inequalities for $\varrho^{(2)}$ and $\varrho$, one after the other: 
\begin{align}
\inf_{\ell\in [\vep,L]}\varrho^{(2)}_\ell> 0\quad &\forall\; 0<\vep< L,
\label{NTC:ineq1}\\
\varrho_t>0,\quad &\forall\;0<t\leq T.
\label{NTC:ineq2}
\end{align}

To obtain \eqref{NTC:ineq1}, let $\tau_\star\,\defeq\,\lim_m\ua\,\tau_m(k)$. If $\tau_\star<L$, then the following holds for all large $k$:
 \begin{align}
0=\varrho^{(2)}_{\tau_\star-}\geq  \varrho^{(1),k}_{\tau_\star}>0&\mbox{ if }\tau_m(k)<\tau_\star\mbox{ for all $m\geq k+1$},\label{NTC:c2-1}\\
0=\varrho^{(2)}_{\tau_\star}\geq  \varrho^{(1),k}_{\tau_\star}>0&\mbox{ if }\tau_m(k)=\tau_\star\mbox{ for some $m\geq k+1$}.\label{NTC:c2-2}
 \end{align} 
Here,  \eqref{NTC:lbd} and the bound on $\varrho^{(2)}_{\sigma_k}$ in \eqref{def:sigmak} imply $\varrho^{(1),k}_{\tau_\star}>0$; \eqref{def:taumk} implies the first equalities in \eqref{NTC:c2-1} and \eqref{NTC:c2-2} now that $\tau_\star<L$; \eqref{mk:bddconclusion} implies the first inequalities in \eqref{NTC:c2-1} and \eqref{NTC:c2-2}.
Since each of \eqref{NTC:c2-1} and \eqref{NTC:c2-2} shows a contradiction, we must have $\tau_\star=L$. 
Then by \eqref{rho:jump}, \eqref{NTC:lbd} and \eqref{mk:bddconclusion}, the following holds for all large $k$:
\begin{align}\label{NTC:c2000}
\varrho^{(2)}_\ell\geq \varrho^{(1),k}_\ell>0,\quad \forall\;\sigma_k\leq \ell\leq L. 
\end{align}
By \eqref{NTC:c2000}, we obtain  \eqref{NTC:ineq1} from the limit of $\sigma_k$ in \eqref{def:sigmak}.

To get \eqref{NTC:ineq2}, first, it is immediate from \eqref{NTC:ineq1} that $\varrho_t>0$ whenever $t=\gamma(\ell)$ or $\gamma(\ell-)$ for some $0<\ell\leq L$. To handle the remaining times $t\in (0,T]$, note that $\gamma(L)=T$ by \eqref{NTC:gamma}, and so, the increasing monotonicity of $\gamma(\cdot)$ gives
 \[
 (0,T]\setminus\{\gamma (\ell),\gamma(\ell-);0<\ell\leq L\}=(\gamma(0-),\gamma(0)]\cup \bigcup_{0<\ell\leq L}(\gamma(\ell-),\gamma(\ell)).
 \] 
By Lemma~\ref{lem:NTCinterval}, we get $\varrho_t>0$ when $t\in(0,T]$ and $t$ is not given by $\gamma(\ell)$ or $\gamma(\ell-)$ for some $0<\ell\leq L$. We have proved that $\varrho_t>0$ for all $0<t\leq T$. The proof of Theorem~\ref{thm:NTC} is complete.

\section{It\^{o}'s formulas for the log-sums}\label{sec:logsum}
Our goal in this section is to prove Proposition~\ref{prop:logsum}. For the proof, we write 
$R^\bj_t\,\defeq\,|Z^\bj_t|^2$ under $\P^\bi_{z_0}$ for any fixed $z_0\in \CN$ so that the following SDEs hold \cite[(4.4)]{C:SDBG1-2}:
\begin{align}\label{ItoR}
R^\bj_t
&=R^\bj_0+\int_0^t2\Biggl(1-\frac{\sigma(\bj)\cdot \sigma(\bi)}{2}
 \Re\left(\frac{Z^\bj_s}{Z^\bi_s}\right)\frac{ \K^{\bbeta,\bi}_{1}(s)}{ K^{\bbeta,\bi}_0(s)}\Biggr) \d s+\int_0^t 2|Z^\bj_s|\d B^\bj_s,\quad \forall\;\bj\in \mc E_N,
\end{align}
where $B^\bj$ follows the definition in \eqref{def:Bj-2}.
Also, $\{\bs R_t\}\,\defeq\,\{R^\bj_t\}_{\bj\in \mc E_N}$ so that $R^\bj_t$ is the $\bj$-th component of $\bs R_t$, whereas $R,R_\bj$ denote $\R_+$-valued variables and $\bs R=\{R_\bj\}_{\bj\in \mc E_N}$. Finally, with $\hK_\nu(x)\,\defeq\, x^\nu K_\nu(x)$,
\begin{align}\label{def:Gnu}
G_\nu(R)\,\defeq \,\widehat{K}_\nu(\sqrt{R}\,),\quad R>0,
\end{align}
which is extended to $R=0$ for any $\nu\in (0,\infty)$ by the following limit \cite[(5.16.4), p.136]{Lebedev-2}:
\begin{align}\label{def:hK0}
\widehat{K}_\nu(0)\,\defeq\,\lim_{x\searrow 0}\widehat{K}_\nu(x)=2^{\nu-1} \Gamma (\nu).
\end{align}

\subsection{Formulas at the approximate level}\label{sec:logsum-1}
Let $\bs \vep=\{\vep_\bj\}_{\bj\in \mc E_N}\in (0,\infty)^{\mc E_N}$, and let $F^\bi_{\vep_\bi}$ and $F^{\bs w}_{\bs \vep}$ be $\C^2$-functions defined in open sets containing $[0,\infty)$ and $[0,\infty)^{\mc E_N}$ by
\begin{align}
 F^\bi_{ \vep_\bi}(R_\bi)&\;\defeq \;\log H^\bi_0(\vep_\bi+R_\bi)\quad\mbox{for } H^\bj_0(R_\bj)\,\defeq\,  w_\bj   G_0(2\beta_\bj R_\bj),\;\forall\;\bj\in \mc E_N,\label{FUNC1}\\
 F^{\bs w}_{\bs \vep}(\bs R)&\;\defeq \;\log H^{\bs w}_0(\bs \vep+\bs R)\quad\mbox{for } H^{\bs w}_0(\bs R)\,\defeq\, \sum_{\bj\in \mathcal E_N} w_\bj G_0(2\beta_\bj R_\bj).\label{FUNC2}
\end{align}
Note that by Proposition~\ref{prop:BMntc}, Proposition~\ref{prop:logsum} (1$\cc$) and Remark~\ref{rmk:ntc} (2$\cc$)-(a), 
\begin{align}\label{Ito:scheme}
\begin{aligned}
\P^\bi_{z_0}\left(\lim_{\bs \vep\searrow \bs 0}F^{\bs w}_{\bs \vep}(\bs R_t)-F^\bi_{\vep_\bi}(R^\bi_t)=\log \frac{K_0^{\bbeta,\bs w}(t)}{w_\bi K_0^{\bbeta,\bi}(t)},\; \forall\;t\geq 0\right)=1,\quad \forall\;z_0\in \CNwni,
\end{aligned}
\end{align} 
where $\bs 0\in [0,\infty)^{\mathcal E_N}$ denotes the zero vector.

The following two lemmas apply the standard version of It\^{o}'s formula and specify decompositions of 
$F^\bi_{\vep_\bi}(R^\bi_t)$ and $F^{\bs w}_{\bs \vep}(\bs R_t)$ finer than the semimartingale decompositions from It\^{o}'s formula. These finer decompositions will induce suitable limits in Section~\ref{sec:logsumA}--\ref{sec:logsumC}. 

\begin{lem}\label{lem:ItoKi}
For all $\vep_\bi\in (0,\infty)$ and $z_0\in \Bbb C$, it holds that under $\P^\bi_{z_0}$,
\begin{align}\label{eq:ItoKi}
F^\bi_{\vep_\bi}(R^\bi_t)=F^\bi_{\vep_\bi}(R^\bi_0)+\sum_{i=1}^5I_{\ref{eq:ItoKi}}(i)_t.
\end{align}
Here, the terms $I_{\ref{eq:ItoKi}}(i)_t=I_{\ref{eq:ItoKi}}(i)$ are defined as follows:
\begin{align*}
I_{\ref{eq:ItoKi}}(1)&\;\defeq\,-w_\bi(2\beta_\bi)\int_0^t \frac{G_{1}(R)}{2R}\Bigg|_{R=2\beta_\bi(\vep_\bi+R^\bi_s)}\times \frac{2}{H^\bi_0(\vep_\bi +R^\bi_s)}\d s,\\
I_{\ref{eq:ItoKi}}(2)&\;\defeq\, -w_\bi (2\beta_\bi)\int_0^t \frac{G_{1}(R)}{2R}\Bigg|_{R=2\beta_\bi(\vep_\bi+R^\bi_s)}\times \frac{2|Z^\bi_s|}{H_0^\bi(\vep_\bi+R^\bi_s)}\d B^\bi_s,\\
I_{\ref{eq:ItoKi}}(3)&\;\defeq\, \frac{1}{2}w_\bi(2\beta_\bi)^2\int_0^t \frac{2G_1(R)+RG_0(R)}{4R^{2}}\Bigg|_{R=2\beta_\bi(\vep_\bi+R^\bi_s)}\times \frac{4R_s^\bi}{H^\bi_0( \vep_\bi+R^\bi_s)}\d s,\\
I_{\ref{eq:ItoKi}}(4)&\;\defeq\, -\frac{1}{2}w_\bi^2(2\beta_\bi)^2\int_0^t \frac{G_{1}(R)^2}{4R^{2}}\Bigg|_{R=2\beta_\bi(\vep_\bi+R^\bi_s)}\times \frac{4R^\bi_s}{H^\bi_0(\vep_\bi+R^\bi_s)^2}\d s,\\
I_{\ref{eq:ItoKi}}(5)&\;\defeq\,w_\bi(2\beta_\bi)\int_0^t \frac{G_{1}(R)}{2R}\Bigg|_{R=2\beta_\bi(\vep_\bi+R^\bi_s)}\times \frac{2}{H^\bi_0(\vep_\bi +R^\bi_s)}\left( \frac{ \K^{\bbeta,\bi}_{1}(s)}{ K^{\bbeta,\bi}_0(s)}\right)\d s.
\end{align*} 
\end{lem}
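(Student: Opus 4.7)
The plan is to derive \eqref{eq:ItoKi} as a direct application of the standard one-dimensional It\^o formula, matching its output term by term to $I_{\ref{eq:ItoKi}}(1),\dots,I_{\ref{eq:ItoKi}}(5)$. The whole point of the regularization $\vep_\bi>0$ is that $\vep_\bi+R^\bi_s\geq\vep_\bi>0$, so the map $R\mapsto F^\bi_{\vep_\bi}(R)=\log H^\bi_0(\vep_\bi+R)$ is smooth (indeed real-analytic) on an open set containing $[0,\infty)$. Since $\{R^\bi_t\}$ is a real continuous semimartingale by \eqref{ItoR}, It\^o's formula applies verbatim without any need for mollification or localization.

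The preparatory calculation I would record is the derivatives of $G_0$. Using $G_0(x)=K_0(\sqrt{x})$ and $G_1(x)=\sqrt{x}\,K_1(\sqrt{x})$ together with $K_0'=-K_1$ and $K_1'(y)=-K_0(y)-K_1(y)/y$ from \cite[p.110]{Lebedev-2}, the chain rule yields
\[
G_0'(x)=-\frac{G_1(x)}{2x},\qquad G_1'(x)=-\frac{G_0(x)}{2},\qquad G_0''(x)=\frac{2G_1(x)+x\,G_0(x)}{4x^2},\quad x>0.
\]
Composing with $R\mapsto 2\beta_\bi(\vep_\bi+R)$ gives explicit formulas for $H^{\bi\prime}_0$ and $H^{\bi\prime\prime}_0$, and therefore
\[
(F^\bi_{\vep_\bi})'(R)=\frac{H^{\bi\prime}_0(\vep_\bi+R)}{H^\bi_0(\vep_\bi+R)},\qquad(F^\bi_{\vep_\bi})''(R)=\frac{H^{\bi\prime\prime}_0(\vep_\bi+R)}{H^\bi_0(\vep_\bi+R)}-\frac{H^{\bi\prime}_0(\vep_\bi+R)^2}{H^\bi_0(\vep_\bi+R)^2}.
\]

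Reading off \eqref{ItoR} at $\bj=\bi$ (noting $\sigma(\bi)\cdot\sigma(\bi)=2$ and $\Re(Z^\bi_s/Z^\bi_s)=1$), the semimartingale data are
\[
dR^\bi_s=2\,ds-2\frac{\hK^{\bbeta,\bi}_1(s)}{K^{\bbeta,\bi}_0(s)}\,ds+2|Z^\bi_s|\,dB^\bi_s,\qquad d\la R^\bi,R^\bi\ra_s=4R^\bi_s\,ds,
\]
where the quadratic variation uses \eqref{def:Bj-2} together with $\la B^\bi,B^\bi\ra_s=s$. Inserting these into It\^o's formula and substituting the explicit forms of $(F^\bi_{\vep_\bi})'$ and $(F^\bi_{\vep_\bi})''$, the five resulting summands match $I_{\ref{eq:ItoKi}}(1)$--$I_{\ref{eq:ItoKi}}(5)$ one-to-one: the drift $+2\,ds$ part produces $I_{\ref{eq:ItoKi}}(1)$, the drift $-2\hK^{\bbeta,\bi}_1/K^{\bbeta,\bi}_0\,ds$ part produces $I_{\ref{eq:ItoKi}}(5)$, the stochastic integral part produces $I_{\ref{eq:ItoKi}}(2)$, the $H^{\bi\prime\prime}_0/H^\bi_0$ piece of the It\^o correction $\tfrac12(F^\bi_{\vep_\bi})''(R^\bi_s)\cdot 4R^\bi_s\,ds$ produces $I_{\ref{eq:ItoKi}}(3)$, and the $-(H^{\bi\prime}_0)^2/(H^\bi_0)^2$ piece produces $I_{\ref{eq:ItoKi}}(4)$.

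I do not expect any genuine obstacle here: the proof is one application of It\^o's formula plus bookkeeping, and issues of integrability or applicability are absent because the $\vep_\bi$-regularization removes every singularity before the formula is invoked. The one point worth flagging is that $I_{\ref{eq:ItoKi}}(3)$ and $I_{\ref{eq:ItoKi}}(4)$ are deliberately kept apart, even though It\^o's formula naturally combines them as the single second-order correction; this separation is what will later pair each summand with the corresponding piece coming from $F^{\bs w}_{\bs\vep}(\bs R_t)$ and, more importantly, what will make the individual $\bs\vep\searrow\bs 0$ limits tractable in Sections~\ref{sec:logsumA}--\ref{sec:logsumC}, where the balance between the $K_0(0)=\infty$ singularity and the Brownian paths approaching zero forces each piece to be handled separately.
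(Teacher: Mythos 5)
Your proof is correct and takes essentially the same route as the paper: apply It\^{o}'s formula to the $\C^2$-function $F^\bi_{\vep_\bi}$ along the semimartingale $R^\bi$, compute $\tfrac{\d}{\d R}\log G_0$ and $\tfrac{\d^2}{\d R^2}\log G_0$ (the paper cites \cite[(4.12)--(4.13)]{C:BES-2} for these; you derive them from scratch, which amounts to the same thing), and then split the first-order term by the three parts of $\d R^\bi_s$ into $I_{\ref{eq:ItoKi}}(1)$, $I_{\ref{eq:ItoKi}}(2)$, $I_{\ref{eq:ItoKi}}(5)$ and the second-order term into $I_{\ref{eq:ItoKi}}(3)$, $I_{\ref{eq:ItoKi}}(4)$ via $(\log H)''=H''/H-(H'/H)^2$. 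Your closing remark about why the deliberate separation of $I_{\ref{eq:ItoKi}}(3)$ and $I_{\ref{eq:ItoKi}}(4)$ matters for the $\bs\vep\searrow\bs 0$ limits in Sections~\ref{sec:logsumA}--\ref{sec:logsumC} is exactly the right reading of the paper's design.
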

\begin{proof}
We use the following derivative formulas:
\begin{align*}
\frac{\d}{\d R}\log G_0(R)=-\frac{G_1(R)}{2RG_0(R)},\quad \frac{\d^2}{\d R^2}\log G_0(R)=\frac{G_1(R)}{2R^2G_0(R)}+\frac{1}{4R}-\frac{G_{1}(R)^2}{4R^2G_0(R)^2};
\end{align*}
see \cite[(4.12) and (4.13)]{C:BES-2} with $\alpha=0$ or the proof of Lemma~\ref{lem:ItoKw}.
Hence, by \eqref{ItoR},  
\begin{align*}
I_{\ref{eq:ItoKi}}(1)+I_{\ref{eq:ItoKi}}(2)+I_{\ref{eq:ItoKi}}(5)&=\int_0^t \frac{\d F^\bi_{\vep_\bi}}{\d R_\bi}(R^\bi_s) \d R^\bi_s,\quad
I_{\ref{eq:ItoKi}}(3)+I_{\ref{eq:ItoKi}}(4)=\frac{1}{2}\int_0^t \frac{\d^2F^\bi_{ \vep_\bi}}{\d R_\bi^2}(R^\bi_s)\d \langle R^\bi,R^\bi\rangle_s,
\end{align*}
which is enough to get \eqref{eq:ItoKi}.
\end{proof}

\begin{lem}\label{lem:ItoKw}
For all $\bs\vep\in (0,\infty)^{\mc E_N}$ and $z_0\in \CN$, it holds that under $\P^{\bi}_{z_0}$, 
\begin{align}\label{eq:ItoKw}
F^{\bs w}_{\bs \vep}(\bs R_t)=F^{\bs w}_{\bs \vep}(\bs R_0)+\sum_{i=1}^9I_{\ref{eq:ItoKw}}(i)_t.
\end{align}
Here, the terms $I_{\ref{eq:ItoKw}}(i)_t=I_{\ref{eq:ItoKw}}(i)$ are defined as follows, using Kronecker's deltas  $\delta_{\bi,\bl}$:
\begin{align*}
I_{\ref{eq:ItoKw}}(1)&\;\defeq\,-\sum_{\bj\in \mathcal E_N}w_\bj(2\beta_\bj)\int_0^t \frac{G_{1}(R)}{2R}\Bigg|_{R=2\beta_\bj(\vep_\bj+R^\bj_s)}\times \frac{2}{H_0^{\bs w}(\bs \vep +\bs R_s)}\d s,\\
I_{\ref{eq:ItoKw}}(2)&\;\defeq\, -\sum_{\bj\in \mathcal E_N}w_\bj (2\beta_\bj)\int_0^t \frac{G_{1}(R)}{2R}\Bigg|_{R=2\beta_\bj(\vep_\bj+R^\bj_s)}\times \frac{2|Z^\bj_s|}{H_0^{\bs w}(\bs \vep+\bs R_s)}\d B^\bj_s,\\
I_{\ref{eq:ItoKw}}(3)&\;\defeq\, \frac{1}{2}\sum_{\bj\in \mathcal E_N}w_\bj(2\beta_\bj)^2\int_0^t \frac{2G_1(R)+RG_0(R)}{4R^{2}}\Bigg|_{R=2\beta_\bj(\vep_\bj+R^\bj_s)}\times \frac{4R_s^\bj}{H_0^{\bs w}(\bs \vep+\bs R_s)}\d s,\\
I_{\ref{eq:ItoKw}}(4)&\;\defeq\,  -\frac{1}{2}\sum_{\bj\in \mc E_N\setminus\{\bi\}}w_\bj^2(2\beta_\bj)^2\int_0^t \frac{G_{1}(R)^2}{4R^{2}}\Bigg|_{R=2\beta_\bj(\vep_\bj+R^\bj_s)}\times \frac{4R^\bj_s}{H_0^{\bs w}(\bs \vep+\bs R_s)^2}\d s,\\
I_{\ref{eq:ItoKw}}(5)&\;\defeq\, -\frac{1}{2}w_\bi^2(2\beta_\bi)^2\int_0^t \frac{G_{1}(R)^2}{4R^{2}}\Bigg|_{R=2\beta_\bi(\vep_\bi+R^\bi_s)}\times \frac{4R^\bi_s}{H_0^{\bs w}(\bs \vep+\bs R_s)^2}\d s,\\
I_{\ref{eq:ItoKw}}(6)&\;\defeq\, -\frac{1}{2}\sum_{\stackrel{\scriptstyle \bj,\bk\in \mathcal E_N\setminus\{\bi\}}{\bj\neq \bk}}w_\bj w_\bk(2\beta_\bj )(2\beta_\bk)\int_0^t \frac{G_{1}(R)}{2R}\frac{G_{1}(R')}{2R'}\Bigg|_{\stackrel{\scriptstyle R=2\beta_\bj(\vep_\bj+R^\bj_s)}{R'=2\beta_\bk(\vep_\bk+R^\bk_s)}}\\ &\quad\quad \times  \frac{4|Z^\bj_s| |Z^\bk_s| }{H_0^{\bs w}(\bs\vep+\bs R_s)^2}\d \la B^\bj,B^\bk\ra_s,\\
I_{\ref{eq:ItoKw}}(7)&\;\defeq\, -\sum_{\bj\in \mc E_N\setminus\{ \bi\}}w_\bj w_\bi(2\beta_\bj )(2\beta_\bi)\int_0^t \frac{G_{1}(R)}{2R}\frac{G_{1}(R')}{2R'}\Bigg|_{\stackrel{\scriptstyle R=2\beta_\bj(\vep_\bj+R^\bj_s)}{R'=2\beta_\bi(\vep_\bi+R^\bi_s)}}\times \frac{4|Z^\bj_s| |Z^\bi_s| }{H_0^{\bs w}(\bs\vep+\bs R_s)^2}\d \la B^\bj,B^\bi\ra_s,\\
I_{\ref{eq:ItoKw}}(8)&\;\defeq\, \sum_{\bj\in \mc E_N\setminus\{\bi\}}w_\bj(2\beta_\bj)\int_0^t \frac{G_{1}(R)}{2R}\Bigg|_{R=2\beta_\bj(\vep_\bj+R^\bj_s)}\times \frac{\sigma(\bj)\cdot \sigma(\bi)}{H_0^{\bs w}(\bs \vep +\bs R_s)}\left(
 \Re\left(\frac{Z^\bj_s}{Z^\bi_s}\right)\frac{ \K^{\bbeta,\bi}_{1}(s)}{ K^{\bbeta,\bi}_0(s)}\right)\d s,\\
I_{\ref{eq:ItoKw}}(9)&\;\defeq\, w_\bi(2\beta_\bi)\int_0^t \frac{G_{1}(R)}{2R}\Bigg|_{R=2\beta_\bi(\vep_\bi+R^\bi_s)}\times \frac{2}{H_0^{\bs w}(\bs \vep +\bs R_s)}\left(
\frac{ \K^{\bbeta,\bi}_{1}(s)}{ K^{\bbeta,\bi}_0(s)}\right)\d s.
\end{align*} 
\end{lem}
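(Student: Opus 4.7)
\textbf{Proof proposal for Lemma~\ref{lem:ItoKw}.} The plan is to apply the standard multi-dimensional It\^{o} formula to $F^{\bs w}_{\bs \vep}(\bs R_t)$ and then bookkeep the resulting nine summands. This application is legitimate because the shift by $\bs \vep \in (0,\infty)^{\mc E_N}$ keeps the argument of $G_0$ bounded away from zero, so $F^{\bs w}_{\bs \vep}$ is $C^2$ on a neighborhood of $[0,\infty)^{\mc E_N}$ even though $K_0$ and $K_1$ have singularities at the origin. The two ingredients to set up the computation are (i) the SDE \eqref{ItoR} for $\{R^\bj_t\}$ together with the quadratic covariation $\d \la R^\bj,R^\bk\ra_s = 4|Z^\bj_s||Z^\bk_s|\d \la B^\bj,B^\bk\ra_s$ under $\P^\bi_{z_0}$, and (ii) the derivative identities for $G_0$.

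For (ii), I would first derive
\[
G_0'(R)=-\frac{G_1(R)}{2R},\qquad G_0''(R)=\frac{2G_1(R)+RG_0(R)}{4R^2},
\]
using $K_0'(x)=-K_1(x)$ and $K_1'(x)=-K_0(x)-K_1(x)/x$ from \cite{Lebedev-2}, together with the definitions $G_\nu(R)=\hK_\nu(\sqrt{R}\,)$ from \eqref{def:Gnu}. The chain rule then gives, with the shorthand $g_\bj(s)\,\defeq\,\frac{G_1(R)}{2R}\big|_{R=2\beta_\bj(\vep_\bj+R^\bj_s)}$ and $h_\bj(s)\,\defeq\, \frac{2G_1(R)+RG_0(R)}{4R^2}\big|_{R=2\beta_\bj(\vep_\bj+R^\bj_s)}$,
\[
\partial_{R_\bj}F^{\bs w}_{\bs \vep}(\bs R_s)=-\frac{w_\bj(2\beta_\bj)\,g_\bj(s)}{H^{\bs w}_0(\bs \vep+\bs R_s)},
\]
\[
\partial^2_{R_\bj R_\bj}F^{\bs w}_{\bs \vep}(\bs R_s)=\frac{w_\bj(2\beta_\bj)^2 h_\bj(s)}{H^{\bs w}_0(\bs \vep+\bs R_s)}-\frac{w_\bj^2(2\beta_\bj)^2 g_\bj(s)^2}{H^{\bs w}_0(\bs \vep+\bs R_s)^2},
\]
\[
\partial^2_{R_\bj R_\bk}F^{\bs w}_{\bs \vep}(\bs R_s)=-\frac{w_\bj w_\bk(2\beta_\bj)(2\beta_\bk)\,g_\bj(s)g_\bk(s)}{H^{\bs w}_0(\bs \vep+\bs R_s)^2},\quad \bj\neq \bk.
\]

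Once these are substituted into It\^{o}'s formula, the identification proceeds by matching and partitioning. The first-order drift (the non-martingale part of \eqref{ItoR}) naturally splits via the factor $1-\tfrac{\sigma(\bj)\cdot \sigma(\bi)}{2}\Re(Z^\bj_s/Z^\bi_s)\hK^{\bbeta,\bi}_1(s)/K^{\bbeta,\bi}_0(s)$: the constant $1$ summed over $\bj$ yields $I_{\ref{eq:ItoKw}}(1)$, and the remaining drift splits into the contribution from $\bj=\bi$ (where $\sigma(\bi)\cdot \sigma(\bi)=2$ and $\Re(Z^\bi_s/Z^\bi_s)=1$), producing $I_{\ref{eq:ItoKw}}(9)$, and from $\bj\neq \bi$, producing $I_{\ref{eq:ItoKw}}(8)$. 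The local-martingale part of $\d R^\bj_s$ gives $I_{\ref{eq:ItoKw}}(2)$. The diagonal second-order terms split according to the two summands of $\partial^2_{R_\bj R_\bj}F^{\bs w}_{\bs \vep}$: the $h_\bj$-piece produces $I_{\ref{eq:ItoKw}}(3)$ (summed over all $\bj$), while the $g_\bj^2$-piece yields $I_{\ref{eq:ItoKw}}(4)$ for $\bj\in\mc E_N\setminus\{\bi\}$ and $I_{\ref{eq:ItoKw}}(5)$ for $\bj=\bi$. The off-diagonal $\bj\neq \bk$ terms split into the purely non-$\bi$ pairs, giving $I_{\ref{eq:ItoKw}}(6)$ (the $\frac{1}{2}$ in front of the It\^{o} correction is retained), and the pairs with one index equal to $\bi$, which appear twice in the sum $\sum_{\bj\neq \bk}$ and therefore consume the factor $\frac{1}{2}$ to yield $I_{\ref{eq:ItoKw}}(7)$ without one.

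The proof is mechanical given the smoothness of $F^{\bs w}_{\bs \vep}$, so I do not anticipate any substantive analytic obstacle; the only real hurdle is the careful bookkeeping required to split the double sum $\sum_{\bj,\bk\in \mc E_N}$ according to whether each index equals $\bi$ and to keep track of the factors $\tfrac{1}{2}$ and the two appearances of each unordered off-diagonal pair. All the hard work concerning singularities is deferred to the limit $\bs \vep\searrow \bs 0$ treated in the subsequent subsections.
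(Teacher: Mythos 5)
Your proposal is correct and follows the same route as the paper's proof: apply the multi-dimensional It\^{o} formula to the $\C^2$-function $F^{\bs w}_{\bs \vep}$ (smooth because the $\bs\vep$-shift keeps $G_0$, $G_1$ away from their origin singularities), use the derivative identities $G_0'(R)=-G_1(R)/(2R)$ and $G_0''(R)=[2G_1(R)+RG_0(R)]/(4R^2)$ obtained from $K_0'=-K_1$ and $(\d/\d x)\hK_1(x)=-xK_0(x)$, and then split the resulting drift, martingale, diagonal quadratic-variation and off-diagonal cross-variation integrals into the nine displayed pieces according to whether each index equals $\bi$, with the factor-of-two in the $\bj\ne\bk$ double sum absorbing the $\tfrac12$ in front of $I_{\ref{eq:ItoKw}}(7)$. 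The only cosmetic difference from the paper is that you differentiate $F^{\bs w}_{\bs\vep}$ directly while the paper first records the partials of $\log\sum_\bl w_\bl G_0(R_\bl)$ and lets the chain rule in $R_\bl\mapsto 2\beta_\bl(\vep_\bl+R_\bl)$ be implicit; the bookkeeping and the final identification are the same.
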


\begin{proof}
To find the partial derivatives of $\log \sum_{\bl\in \mc E_N}w_\bl G_0(R_\bl)$ up to the second order, we use
\begin{linenomath*}\begin{align}\label{hK:der}
(\d/\d x)[x^\nu K_\nu(x)]=-x^\nu K_{\nu-1}(x)\quad\forall\;\nu\in \R
\end{align}\end{linenomath*}
\cite[the third identity in (5.7.9) on p.110]{Lebedev-2} and the even parity of $\nu\mapsto K_{\nu}(x)$ \cite[(5.7.10) on p.110]{Lebedev-2}. Also,
note $(\d /\d R)F(f(R))=F'(f(R))f'(R)$ and $(\d^2 /\d R^2)F(f(R))=F''(f(R))f'(R)^2+F'(f(R))f''(R)$. Hence,
for $\bj\neq \bk$,
\begin{align}
\frac{\partial}{\partial R_\bj}\log \sum_{\bl\in \mc E_N}w_\bl G_0(R_\bl)&=\left.\left(\frac{-w_\bj x^{-1}\widehat{K}_{1}(x)}{ \sum_{\bl\in \mc E_N}w_\bl G_0(R_\bl)}\right)\right|_{x=\sqrt{R_\bj}}\left(\frac{1}{2\sqrt{R_\bj}}\right)=\frac{-w_\bj G_{1}(R_\bj)}{2R_\bj\sum_{\bl\in \mc E_N}w_\bl G_0(R_\bl)},\notag\\
\frac{\partial^2}{\partial R_\bj^2}\log \sum_{\bl\in \mc E_N}w_\bl G_0(R_\bl)&=\left(\frac{w_\bj x^{-2}\widehat{K}_{1}(x)+w_\bj K_{0}(x)}{\sum_{\bl\in \mc E_N}w_\bl G_0(R_\bl)}\right. \left.+\frac{w_\bj x^{-1}\widehat{K}_{1}(x)[-w_\bj x^{-1}\widehat{K}_{1}(x)]
}{[\sum_{\bl\in \mc E_N}w_\bl G_0(R_\bl)]^2}\right)\Bigg|_{x=\sqrt{R_\bj}}\notag\\
&\quad \times \left(\frac{1}{2\sqrt{R_\bj}}\right)^2+\left(\frac{-w_\bj G_{1}(R_\bj)}{2R_\bj\sum_{\bl\in \mc E_N}w_\bl G_0(R_\bl)}\right)
\Biggl(\frac{1}{-4R_\bj^{3/2}}\Biggr)\notag\\
&= \frac{2w_\bj G_{1}(R_\bj)+w_\bj R_\bj G_0(R_\bj)}{4R_\bj^{2}\sum_{\bl\in \mc E_N}w_\bl G_0(R_\bl)}
 -\frac{w_\bj^2 G_{1}(R_\bj)^2}{4R_\bj^{2}[\sum_{\bl\in \mc E_N}w_\bl G_0(R_\bl)]^2},\notag\\
 \frac{\partial^2}{\partial R_\bj\partial R_\bk}\log \sum_{\bl\in \mc E_N}w_\bl G_0(R_\bl)&=\left.
 \left(\frac{-w_\bj G_{1}(R_\bj)\cdot w_\bk x^{-1}\widehat{K}_{1}(x)}{2R_\bj[\sum_{\bl\in \mc E_N}w_\bl G_0(R_\bl)]^2}\right)
 \right|_{x=\sqrt{R_\bk}}\left(\frac{1}{2\sqrt{R_\bk}}\right)\notag\\
 &=\frac{-w_\bj w_\bk G_1(R_\bj)G_1(R_\bk)}{4R_\bj R_\bk[\sum_{\bl\in \mc E_N}w_\bl G_0(R_\bl)]^2 }.
\end{align}
By \eqref{ItoR}, it follows that 
\begin{align}
I_{\ref{eq:ItoKw}}(1)&=\sum_{\bj\in \mc E_N}\int_0^t \frac{\partial F^{\bs w}_{\bs \vep}}{\partial R_\bj}(\bs R_s)2\d s,\notag\\
I_{\ref{eq:ItoKw}}(2)&=\sum_{\bj\in \mc E_N}\int_0^t  \frac{\partial F^{\bs w}_{\bs \vep}}{\partial R_\bj}(\bs R_s)2|Z^\bj_s|\d B^\bj_s,\notag\\
I_{\ref{eq:ItoKw}}(8)+I_{\ref{eq:ItoKw}}(9)&=\sum_{\bj\in \mc E_N}\int_0^t  \frac{\partial F^{\bs w}_{\bs \vep}}{\partial R_\bj}(\bs R_s)\sigma(\bj)\cdot \sigma(\bi) \left(
 \Re\left(\frac{Z^\bj_s}{Z^\bi_s}\right)\frac{ \K^{\bbeta,\bi}_{1}(s)}{ K^{\bbeta,\bi}_0(s)}\right)\d s,\notag\\
I_{\ref{eq:ItoKw}}(3)+I_{\ref{eq:ItoKw}}(4)+I_{\ref{eq:ItoKw}}(5)&=\frac{1}{2}\sum_{\bj\in \mathcal E_N}\int_0^t \frac{\partial^2F^{\bs w}_{\bs \vep}}{\partial R_\bj^2}(\bs R_s)\d \langle R^\bj,R^\bj\rangle_s,\notag\\
I_{\ref{eq:ItoKw}}(6)+I_{\ref{eq:ItoKw}}(7)
&=\frac{1}{2}\sum_{\stackrel{\scriptstyle \bj,\bk\in \mc E_N}{ \bj\neq \bk}}\int_0^t \frac{\partial^2 F^{\bs w}_{\bs \vep}}{\partial R_\bj\partial R_{\bk}}(\bs R_s)\d \langle R^\bj,R^\bk\rangle_s,\notag
\end{align}
and these equations are enough to get \eqref{eq:ItoKw}.
\end{proof}

The remaining of Section~\ref{sec:logsum} will derive the limit of $\sum_{i=1}^9I_{\ref{eq:ItoKw}}(i)_t-\sum_{i=1}^5 I_{\ref{eq:ItoKi}}(i)_t$ as we consider  \eqref{Ito:scheme} for the proof of \eqref{RN}. This part of the proof is where we begin to use particular initial conditions $z_0$ for $\P^{\bi}_{z_0}$, as imposed in Proposition~\ref{prop:logsum}. 
 
\subsection{Limiting formulas: local-time and Riemann-integral terms}\label{sec:logsumA}
The following proposition shows where the term $A^{\bbeta,\bs w,\bi}_0(t)$ in \eqref{RN} comes from.

\begin{prop}\label{prop:lim1}
Let $\bs w\in \R_+^{\mc E_N}$ with $w_\bi> 0$. Under $\P^{\bi}_{z_0}$ for any $z_0\in\CNwni$, the following limit holds with probability one: for all $t>0$, 
\begin{align}
\begin{split}
&I_{\ref{eq:ItoKw}}(1)_t+I_{\ref{eq:ItoKw}}(3)_t-I_{\ref{eq:ItoKi}}(1)_t-I_{\ref{eq:ItoKi}}(3)_t\\
&\quad \xrightarrow[\bs\vep\searrow \bs 0]{} 
\sum_{\bj\in \mc E_N\setminus\{\bi\}}2\left(\frac{w_\bj}{w_\bi} \right)\int_0^t  K_0(\sqrt{2\beta_\bj} |Z_s^\bj|)\d L^\bi_s+\sum_{\bj\in \mathcal E_N} \int_0^t \beta_\bj  \frac{w_\bj K^{\bbeta,\bj}_0(s)}{K^{\bbeta,\bs w}_0(s)}\d s- \beta_\bi t.\label{Ilim:1wi}
\end{split}
\end{align}
That is, with probability one, $I_{\ref{eq:ItoKw}}(1)_t+I_{\ref{eq:ItoKw}}(3)_t-I_{\ref{eq:ItoKi}}(1)_t-I_{\ref{eq:ItoKi}}(3)_t$ converges to $A^{\bbeta,\bs w,\bi}_0(t)$ for all $t>0$ as $\bs \vep\searrow \bs 0$, where $A^{\bbeta,\bs w,\bi}_0(t)$ is defined in \eqref{def:A}. 
\end{prop}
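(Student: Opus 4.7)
The plan is to reduce the four-term difference on the left of \eqref{Ilim:1wi} to a sum of $\bj$-indexed contributions via a summand-by-summand algebraic cancellation between $I_{\ref{eq:ItoKw}}(1)$ and $I_{\ref{eq:ItoKw}}(3)$ (and, in parallel, between $I_{\ref{eq:ItoKi}}(1)$ and $I_{\ref{eq:ItoKi}}(3)$). Writing $R_\bj=2\beta_\bj(\vep_\bj+R^\bj_s)$ and substituting $R^\bj_s=R_\bj/(2\beta_\bj)-\vep_\bj$ into the $R^\bj_s$-factor of $I_{\ref{eq:ItoKw}}(3)_\bj$, I would show that the strongly singular $G_1(R_\bj)/R_\bj$ contribution in $I_{\ref{eq:ItoKw}}(3)_\bj$ cancels exactly against $I_{\ref{eq:ItoKw}}(1)_\bj$, leaving
\[
I_{\ref{eq:ItoKw}}(1)_\bj+I_{\ref{eq:ItoKw}}(3)_\bj=\int_0^t \frac{w_\bj\beta_\bj G_0(R_\bj)}{H_0^{\bs w}(\bs\vep+\bs R_s)}\,ds-\int_0^t \frac{2w_\bj\beta_\bj^2\vep_\bj}{H_0^{\bs w}(\bs\vep+\bs R_s)}\left[\frac{2G_1(R_\bj)}{R_\bj^2}+\frac{G_0(R_\bj)}{R_\bj}\right]ds.
\]
The analogous identity for $I_{\ref{eq:ItoKi}}(1)+I_{\ref{eq:ItoKi}}(3)$ (where $H_0^{\bs w}$ is replaced by $H_0^\bi=w_\bi G_0(R_\bi)$) simplifies the main term to the constant $\beta_\bi$, so that $I_{\ref{eq:ItoKi}}(1)+I_{\ref{eq:ItoKi}}(3)=\beta_\bi t$ minus a completely explicit $\vep_\bi$-correction.

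Once this bookkeeping is done, the quantity in \eqref{Ilim:1wi} splits into a \emph{main Riemann part} $\sum_{\bj}\int_0^t w_\bj\beta_\bj G_0(R_\bj)/H_0^{\bs w}\,ds-\beta_\bi t$ and a \emph{correction part} collecting all the $\vep_\bj$-terms. For the main part, Proposition~\ref{prop:BMntc} ensures that $|Z^\bj_s|$ stays strictly positive on $[0,t]$ for every $\bj\neq\bi$ with $w_\bj>0$ under $\P^\bi_{z_0}$ with $z_0\in\CNwni$, so the integrands are uniformly bounded in $\bs\vep$; dominated convergence then produces $\widetilde A^{\bbeta,\bs w,\bi}_0(t)$, exactly the second line of \eqref{Ilim:1wi}. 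In the correction part, every $\bj\neq\bi$ contribution is killed by its $\vep_\bj$ prefactor (since $R_\bj$ is bounded away from $0$), and, using $H_0^{\bs w}-w_\bi G_0(R_\bi)=\sum_{\bj\neq\bi}w_\bj G_0(R_\bj)$, the remaining $\bj=\bi$ piece consolidates into
\[
\int_0^t\frac{\vep_\bi\,\sum_{\bj\neq\bi}w_\bj G_0(R_\bj)}{H_0^{\bs w}}\left[\frac{4\beta_\bi^2 G_1(R_\bi)}{R_\bi^2 G_0(R_\bi)}+\frac{2\beta_\bi^2}{R_\bi}\right]ds.
\]
Since $\vep_\bi/R_\bi\leq 1/(2\beta_\bi)$ and the $2\beta_\bi^2/R_\bi$ integrand is bounded and vanishes pointwise off the Lebesgue-null set $\{R^\bi_s=0\}$, the $G_0/R_\bi$ piece disappears by dominated convergence.

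What remains is the genuinely singular term involving $\vep_\bi G_1(R_\bi)/R_\bi^2$, which is expected to produce precisely the local-time integral $\mathring A^{\bbeta,\bs w,\bi}_0(t)$. This is the main obstacle of the proof: since $G_1(R_\bi)\to 1$ and $G_0(R_\bi)H_0^{\bs w}\sim w_\bi G_0(R_\bi)^2=w_\bi K_0(\sqrt{R_\bi})^2$ as $R^\bi_s\to 0$ and $\vep_\bi\searrow 0$, the assertion reduces to identifying
\[
\lim_{\vep_\bi\searrow 0}\int_0^t \frac{\vep_\bi\,\varphi(\bs Z_s)}{R_\bi^2\,K_0(\sqrt{R_\bi})^2}\,ds\;=\;c\cdot\int_0^t \varphi(\bs Z_s)\,dL^\bi_s
\]
for continuous $\varphi$ on $\CN_{\bi\,\nsp,\,\bs w\setminus\{w_\bi\}\,\sp}$, with the precise constant $c$ yielding the $2(w_\bj/w_\bi)$ coefficient in $\mathring A^{\bbeta,\bs w,\bi}_0$. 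This is a Donati--Martin--Yor-style local-time approximation, pinned down by the normalization \eqref{def:DYLT-2}: one takes Laplace transforms in $t$ (or invokes the Revuz correspondence) to test the proposed limiting measure against $e^{-qt}\,dt$, reducing the identification to the resolvent identity $\E^\bi_0[\int_0^\infty e^{-qt}dL^\bi_t]=1/\log(1+q/\beta_\bi)$. I expect the delicate part to be controlling the approximation uniformly on compact time intervals so as to upgrade Laplace/resolvent identification to an a.s. pathwise limit for every $t>0$ simultaneously, presumably through a martingale bound combined with the choice of initial conditions $z_0\in\CNwni$ that prevents simultaneous contacts at time zero.
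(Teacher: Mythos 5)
Your algebraic reduction matches the paper's almost exactly: you split the difference into a main Riemann part producing $\widetilde A^{\bbeta,\bs w,\bi}_0(t)$ and a correction part, kill the $\bj\neq\bi$ corrections by dominated convergence using Proposition~\ref{prop:BMntc}, and recognize that the sole surviving singular term involves the kernel $\vep_\bi/(R_\bi^2 K_0(\sqrt{R_\bi})^2)$ multiplied by a continuous function of the state. This is precisely the structure of the paper's $\widetilde{I}^{1,\bs\vep}_t$ and $\widetilde{I}^{2,\bs\vep}_t$ decomposition in \eqref{I1I3}--\eqref{I1I3-I2}.

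The gap is in your final step, the local-time identification, and it is exactly the step you flag as the obstacle. You propose identifying the constant via Laplace transforms or the Revuz correspondence against the resolvent identity \eqref{def:DYLT-2}, and then ``upgrading'' to an a.s.\ pathwise limit for every $t>0$ through an unspecified martingale bound. That upgrade is not a side issue: Laplace-transform or Revuz-measure identification pins down the limit only in distribution or as a Radon measure on time, and martingale bounds do not by themselves convert this into an almost-sure, uniform-on-compacts statement of the form $\int_0^s \kappa^{\beta_\bi\da}_\vep(|Z^\bi_r|)\,\d r \to 2L^\bi_s$. The paper instead proves this directly (Lemma~\ref{lem:LTnorm1}) via the occupation times formula \eqref{oct}, which rewrites $\int_0^s g(|Z^\bi_r|)\,\d r=\int_0^\infty g(r)\,L^\bi_s(r)\,m^{\beta_\bi\da}_0(\d r)$ for the jointly continuous local-time field $(r,s)\mapsto L^\bi_s(r)$. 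One then shows $\kappa^{\beta_\bi\da}_\vep(r)\,m^{\beta_\bi\da}_0(\d r)$ converges to $2\delta_0(\d r)$ with domination uniform in $\vep$, obtaining a.s.\ uniform convergence of the time integral to $2L^\bi_s$ directly. A separate density computation (cf.\ \eqref{lt:consistent}) confirms the constant $2$ is consistent with the normalization \eqref{def:DYLT-2}. Finally, because the $\vep$-dependent multiplier of the kernel converges uniformly on compacts (Lemmas~\ref{lem:unif1}--\ref{lem:unif2}) and $\d L^\bi_s$ is supported on $\{R^\bi_s=0\}$, Lemma~\ref{lem:unif1}~(2$\cc$) closes the argument and produces the coefficient $2(w_\bj/w_\bi)K_0(\sqrt{2\beta_\bj}|Z^\bj_s|)$. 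Without the occupation-times-formula step, your plan does not yield the required a.s.\ pathwise statement, so the proof as written is incomplete.
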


We need a few tools to prove Proposition~\ref{prop:lim1}. The first tool is
a special approximation of $\{L_t^\bi\}$ under $\P^{\bi}_{z_0}$, where $\{L_t^\bi\}$  is the Markovian local time  of $\{|Z^\bi_t|\}$ at level $0$ subject to the normalization used by Donati-Martin--Yor~\cite[(2.10)]{DY:Krein-2}. To specify the approximation, we will use the occupation times formula for $\{|Z^\bi_t|\}$ under $\P_{z_0}^\bi$ for any $z_0\in \CN$:
\begin{align}\label{oct}
\int_0^tg(|Z^\bi_t|)\d s=\int_0^\infty g(r)L^{\bi}_t(r)m^{\beta_\bi\da }_0(\d r),\quad \forall\;g\in \B_+(\R_+).
\end{align}
Here, $\{L^{\bi}_t(r)\}$ are the Markovian local times of $\{|Z^\bi_t|\}$ at level $r\geq 0$ such that $(r,t)\mapsto L^{\bi}_t(r)$, $(r,t)\in \R_+^2$, is continuous and $L^{\bi}_t(0)=L^\bi_t$. Also, $m^{\beta\da }_0(\d r)$ is the speed measure of $\BES (0,\beta\da)$ \cite[Section~(3.3)]{DY:Krein-2} with the following normalization:
\begin{align}\label{BES0b:speed}
m_0^{\beta\da }(\d r)=4r K_0(\sqrt{2\beta }r)^2 \d r,\quad 0<r<\infty.
\end{align}
The following lemma proves \eqref{lt:consistent} to explain
the relation between the normalization in \eqref{oct} and the normalization in $\{L^\bi_t\}$ in \eqref{def:DYLT-2}. This relation is obtained independently by using \cite[Theorem~2.1]{C:SDBG1-2} and comparing \eqref{oct} with the following identity: 
\[
\E^\bi_0\left[\int_0^t\frac{\1_{[0,r]}(|Z^\bi_s|)\d s}{4|Z^\bi_s|K_0(\sqrt{2\beta_\bi}|Z^\bi_s|)^2}\right]
=\int_0^t \int_0^r \frac{f_{|Z^\bi_s|}(r')}{4r'K_0(\sqrt{2\beta_\bi}r')^2}\d r'\d s,\quad r>0,
\]
where $f_{|Z^\bi_t|}(r)$ is a version of the PDF of $|Z^\bi_t|$ under $\P^{\bi}_0$. 

\begin{lem}
The PDF of $|Z_t^\bi|$, $t>0$, under $\P^\bi_0$ can be chosen as
\begin{align}
f_{|Z_t^\bi|}(r)\,\defeq\,4rK_0(\sqrt{2\beta}r) \int_0^t \d \tau\left(\e^{-\beta (t-\tau)}\int_0^\infty \d u \frac{\beta^u(t-\tau)^{u-1}}{\Gamma(u)}\right)\left(\frac{1}{2\tau}\e^{-\beta\tau-\frac{r}{2\tau}}\right).\label{Z:den}
\end{align}
In this case, we have
\begin{align}\label{lt:consistent}
\lim_{r\to 0}\frac{f_{|Z_t^\bi|}(r)}{4rK_0(\sqrt{2\beta}r)^2}=\lim_{r\to 0}\frac{f_{|Z_t^\bi|}(r)}{4r(\log r)^2}=\frac{\d}{\d t}\E_0^{\bi}[L^\bi_t],\quad t>0.  
\end{align}
\end{lem}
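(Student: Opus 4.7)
The plan is (i) to verify that the expression on the right side of \eqref{Z:den} is a valid PDF of $|Z_t^\bi|$ under $\P_0^\bi$, and (ii) to deduce \eqref{lt:consistent} from this PDF via the occupation times formula \eqref{oct}.

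\textbf{Part (i): Identifying the PDF.} I would start from \cite[Theorem~2.1]{C:SDBG1-2}, which provides an explicit representation of the transition density of $Z_t^\bi$ under $\P^\bi$. Specializing to $z_0^\bi=0$ and integrating out the angular coordinate yields the PDF of $|Z_t^\bi|$, which one checks matches \eqref{Z:den}. Structurally, \eqref{Z:den} reflects a last-exit-from-$0$ decomposition for $\BES(0,\beta_\bi\da)$: $\tau$ plays the role of the age $t-g_t$ of the excursion of $|Z^\bi|$ straddling $t$; the factor $\frac{1}{2\tau}\e^{-\beta_\bi\tau-r/(2\tau)}$ is the excursion density for landing near the value $r$ at age $\tau$; the outer bracket equals $\frac{\d}{\d s}\E_0^\bi[L_s^\bi]\big|_{s=t-\tau}$ (confirmed by its Laplace transform $\int_0^\infty \e^{-qs}(\cdot)\d s=1/\log(1+q/\beta_\bi)$, in agreement with \eqref{def:DYLT-2}); and the prefactor $4rK_0(\sqrt{2\beta_\bi}r)$ assembles these factors correctly against the speed measure \eqref{BES0b:speed}.

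\textbf{Part (ii): Deducing \eqref{lt:consistent}.} The first equality in \eqref{lt:consistent} is immediate from $K_0(\sqrt{2\beta_\bi}r)\sim -\log r$ as $r\searrow 0$, cf.\ \eqref{K00}. For the main equality, apply \eqref{oct} to the nonnegative test function $g(r')=\1_{[0,r]}(r')/[4r'K_0(\sqrt{2\beta_\bi}r')^2]$, which is locally integrable against the speed measure \eqref{BES0b:speed}, to produce the pathwise identity
\begin{equation*}
\int_0^r L_t^\bi(r')\,\d r' \;=\; \int_0^t \frac{\1_{[0,r]}(|Z_s^\bi|)\,\d s}{4|Z_s^\bi|\,K_0(\sqrt{2\beta_\bi}|Z_s^\bi|)^2}.
\end{equation*}
Taking $\E_0^\bi$ on both sides and applying Fubini with the definition of $f_{|Z_s^\bi|}$ yields
\begin{equation*}
\int_0^r \E_0^\bi[L_t^\bi(r')]\,\d r' \;=\; \int_0^t\!\!\int_0^r \frac{f_{|Z_s^\bi|}(r')}{4r'K_0(\sqrt{2\beta_\bi}r')^2}\,\d r'\,\d s.
\end{equation*}
Differentiating in $r$ at $r\searrow 0$, invoking the joint continuity of $(r',t)\mapsto\E_0^\bi[L_t^\bi(r')]$ on the left and taking $r\searrow 0$ inside the $\d s$-integral on the right, gives $\E_0^\bi[L_t^\bi]=\int_0^t \lim_{r\searrow 0}\{f_{|Z_s^\bi|}(r)/[4rK_0(\sqrt{2\beta_\bi}r)^2]\}\,\d s$; differentiating once more in $t$ then produces \eqref{lt:consistent}.

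\textbf{Main obstacle.} The delicate point is controlling the small-$r$ behavior of $f_{|Z_s^\bi|}(r)/[4rK_0(\sqrt{2\beta_\bi}r)^2]$ uniformly enough in $s$ to legitimize both the interchange of $\lim_{r\searrow 0}$ with the $\d s$-integral and the subsequent differentiation in $t$. Using \eqref{Z:den}, the inner $\tau$-integral has a logarithmic singularity of order exactly $\log(1/r)$ as $r\searrow 0$ — isolated via the substitution $u=r/(2\tau)$ which converts it to the exponential integral $E_1(r/(2t))$ — and precisely cancels one of the two $K_0(\sqrt{2\beta_\bi}r)\sim -\log r$ factors in the denominator, leaving the required finite limit. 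That this limit equals $\frac{\d}{\d t}\E_0^\bi[L_t^\bi]$ then reduces to the Laplace-transform identity $\int_0^\infty \e^{-qt}(\text{outer bracket in \eqref{Z:den}})\,\d t = 1/\log(1+q/\beta_\bi)$, which is the normalization \eqref{def:DYLT-2}.
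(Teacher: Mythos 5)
Your Part (i) matches the paper: both obtain \eqref{Z:den} and its joint continuity from \cite[Theorem~2.1 (1$\cc$)]{C:SDBG1-2} at $z^0=0$ in polar coordinates. The Laplace-transform check that the outer bracket equals $\frac{\d}{\d s}\E_0^\bi[L_s^\bi]$ is a fine way to see what the paper cites directly from \cite[Theorem~2.1 (3$\cc$)]{C:SDBG1-2}.

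Your Part (ii) departs from the paper, and the departure is where the problems lie. The paper does \emph{not} use \eqref{oct} in this proof; it computes the two sides of \eqref{lt:consistent} independently and matches them. The right-hand side is read off from \cite[Theorem~2.1 (3$\cc$)]{C:SDBG1-2} as $g(t)\,\defeq\,\e^{-\beta t}\int_0^\infty \beta^u t^{u-1}\Gamma(u)^{-1}\d u$, and from \eqref{Z:den} one shows $f_{|Z^\bi_t|}(r)/[4rK_0(\sqrt{2\beta}r)]=\int_0^t g(t-\tau)(2\tau)^{-1}\e^{-\beta\tau-r/(2\tau)}\d\tau=g(t)\int_0^\delta(2\tau)^{-1}\e^{-\beta\tau-r/(2\tau)}\d\tau+\mathcal O(1)=g(t)K_0(\sqrt{2\beta}r)+\mathcal O(1)$ via the integral representation of $K_0$, so the left-hand side of \eqref{lt:consistent} also equals $g(t)$. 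You instead apply \eqref{oct} pathwise, take expectations, and differentiate twice. This is circular for the purpose the lemma serves: the text immediately preceding the lemma presents \eqref{lt:consistent} as the independent verification that the speed-measure normalization \eqref{BES0b:speed} in \eqref{oct}, together with the identification $L_t^\bi(0)=L_t^\bi$, is consistent with the Donati-Martin--Yor normalization \eqref{def:DYLT-2}. By taking \eqref{oct} with $L_t^\bi(0)=L_t^\bi$ as a premise, you presuppose the very compatibility the lemma is meant to supply; the paper's proof deliberately avoids \eqref{oct} so that it genuinely furnishes the cross-check. Even setting circularity aside, your route needs to pass $r\searrow 0$ under the $\d s$-integral and then differentiate in $t$, which requires uniform-in-$s$ control of $f_{|Z^\bi_s|}(r)/[4rK_0(\sqrt{2\beta}r)^2]$ that you flag under ``Main obstacle'' but do not establish; the paper sidesteps both interchanges because it never differentiates and only needs the pointwise-in-$t$ limit. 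The small-$r$ asymptotic you sketch (extracting the $-\log r$ singularity from the inner $\tau$-integral) is substantively the same computation as the paper's $K_0$ integral-representation step, so what is missing is not the asymptotics but the surrounding logic.
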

\begin{proof}
First, the formula in \eqref{Z:den} and its joint continuity in $(r,t)\in (0,\infty)^2$ can be obtained immediately by using \cite[Theorem~2.1 (1$\cc$)]{C:SDBG1-2} with $z^0=0$ and the polar coordinates. To prove \eqref{lt:consistent}, note that 
on the one hand, \cite[Theorem~2.1 (3$\cc$)]{C:SDBG1-2} with $z^0=0$ gives
\begin{align}
\frac{\d}{\d t}\E_0^{\bi}[L^\bi_t]&=g(t)\,\defeq\, \e^{-\beta t}\int_0^\infty \d u \frac{\beta^u t^{u-1}}{\Gamma(u)},\quad \forall\;t\in (0,\infty).\label{L:den}
\end{align}
On the other hand, to obtain the limit of the leftmost side of \eqref{lt:consistent}, note that by \cite[Proposition~4.5 (2$\cc$)]{C:SDBG1-2}, $g\in L^1_{\loc}([0,\infty))$, and $g\in \C((0,\infty))$ so that given $\vep>0$ and $t>0$, we can find $0<\delta<t$ such that 
$|g(t)-g(t-\tau)|<\vep$ for all $0\leq \tau\leq\delta$. 
By \eqref{Z:den}, we get, as $r\to 0$, 
\begin{align*}
\frac{f_{|Z_t^\bi|}(r)}{4rK_0(\sqrt{2\beta}r)}=\int_0^t \d \tau g(t-\tau)\left(\frac{1}{2\tau}\e^{-\beta \tau-\frac{r}{2\tau}}\right)
&=g(t)\int_0^\delta \d \tau\left(\frac{1}{2\tau}\e^{-\beta \tau-\frac{r}{2\tau}}\right)+\mathcal O(1)\\
&=g(t)\int_0^\infty \d \tau\left(\frac{1}{2\tau}\e^{-\beta \tau-\frac{r}{2\tau}}\right)+\mathcal O(1)\\
&=g(t)K_0(\sqrt{2\beta}r)+\mathcal O(1),
\end{align*}
where the last equality uses the integral representation of $K_0(\cdot)$~\cite[(5.10.25) on p.119]{Lebedev-2}. The last equality implies that the leftmost limit in \eqref{lt:consistent} equals $g(t)$, and hence, the rightmost side of \eqref{lt:consistent} by \eqref{L:den}. Finally, the first equality in \eqref{lt:consistent} holds by using \eqref{K00} again. 
\end{proof}

The following lemma gives the special approrximation of $\{L^\bi_t\}$ under $\P^{\bi}_{z_0}$ mentioned above. 
See also \cite[Section~(5.4)]{DY:Krein-2} on approximations of $\{L^\bi_t\}$.

\begin{lem}\label{lem:LTnorm1}
For all $\vep>0$, define
\begin{align}\label{def:kbe}
\kappa^{\beta\da}_\vep (r)\,\defeq\, \frac{\vep}{(\vep+r^2)^2K_0(\sqrt{2\beta(\vep+r^2)})^2},\quad r>0.
\end{align}
Then for any bounded $f:\R_+\to \R_+$ which is continuous at $r=0$ and has a compact support, 
\begin{align}\label{kbvep:asymp}
\lim_{\vep\to 0}\int_0^\infty f(r)\kappa^{\beta\da}_\vep(r)m^{\beta\da}_0(\d r)= 2f(0),
\end{align}
and under $\P^\bi_{z_0}$ for any $z_0\in \CN$, 
\begin{align}\label{eq:LTnorm1-2}
\sup_{s\leq t}\left|\int_0^s \kappa^{\beta\da}_\vep(|Z^\bi_r|)\d r-2L^{\bi}_s\right|\xrightarrow[\vep\to 0]{\rm a.s.}0,\quad \forall\;0<t<\infty.
\end{align}
\end{lem}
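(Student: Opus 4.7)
The plan is to view $\kappa^{\beta\da}_\vep(r)\,m^{\beta\da}_0(\d r)$ as an approximate identity of total mass $2$ concentrating at $r=0$, and to deduce \eqref{eq:LTnorm1-2} from \eqref{kbvep:asymp} via the occupation times formula \eqref{oct}.

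For \eqref{kbvep:asymp}, I would apply the change of variables $u=r^2$ followed by $u=\vep v$ to rewrite the integral as
\[
2\int_0^\infty f(\sqrt{\vep v})\,\frac{1}{(1+v)^2}\,\frac{K_0(\sqrt{2\beta\vep v})^2}{K_0(\sqrt{2\beta\vep(1+v)})^2}\,\d v.
\]
For each fixed $v>0$, continuity of $f$ at $0$ gives $f(\sqrt{\vep v})\to f(0)$, while the log-asymptotic \eqref{K00} shows that the numerator and denominator of the Macdonald ratio each behave like $\tfrac14(\log\vep)^2$ to leading order, so the ratio tends to $1$. Since $\int_0^\infty(1+v)^{-2}\d v=1$, the target $2f(0)$ follows once the interchange of limit and integral is justified.

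Justifying that interchange is the main obstacle, because the integrand blows up as $v\searrow 0$ for each fixed $\vep>0$. The plan is to cut $(0,\infty)$ into three regions via a parameter $\delta\in(0,1)$. On $[\delta,1/\delta]$ the Macdonald ratio tends to $1$ uniformly in $v$, so the contribution converges to $f(0)\int_\delta^{1/\delta}(1+v)^{-2}\d v$. On $(0,\delta]$, the refined expansion of $K_0(y)$ near $y=0$ bounds $K_0(\sqrt{2\beta\vep v})^2$ by $C[(\log\vep)^2+(\log v)^2]$ and $K_0(\sqrt{2\beta\vep(1+v)})^2$ from below by $c(\log\vep)^2$ for $\vep$ small, giving the integrated bound $O(\delta)+O((\log\vep)^{-2})$. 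On $[1/\delta,\infty)$, the compact support of $f$ truncates the integrand to $v\leq R^2/\vep$: on $[1/\delta,1/\vep]$ the log-asymptotic yields ratio $O(1)$ combined with $\int(1+v)^{-2}\d v=O(\delta)$, while on $[1/\vep,R^2/\vep]$ the large-$x$ asymptotic \eqref{K0infty} produces a uniformly bounded ratio via $\sqrt{2\beta\vep(1+v)}-\sqrt{2\beta\vep v}=O(\sqrt{\vep/v})$, contributing $O(\vep)$. Sending $\vep\searrow 0$ first and $\delta\searrow 0$ afterwards completes the argument.

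For \eqref{eq:LTnorm1-2}, the occupation times formula \eqref{oct} rewrites the left-hand integral as $\int_0^\infty\kappa^{\beta\da}_\vep(r)L^\bi_s(r)m^{\beta\da}_0(\d r)$, and I would decompose
\[
\int_0^\infty \kappa^{\beta\da}_\vep(r)L^\bi_s(r)m^{\beta\da}_0(\d r)-2L^\bi_s \;=\; \int_0^\infty \kappa^{\beta\da}_\vep(r)\bigl[L^\bi_s(r)-L^\bi_s(0)\bigr]m^{\beta\da}_0(\d r) + L^\bi_s\Bigl[\int_0^\infty \kappa^{\beta\da}_\vep(r)m^{\beta\da}_0(\d r)-2\Bigr].
\]
Continuity of $|Z^\bi|$ on $[0,t]$ places every $L^\bi_s(\cdot)$, $s\leq t$, in the common compact support $[0,M(\omega)]$, and the joint continuity of $(r,s)\mapsto L^\bi_s(r)$ furnishes uniform equicontinuity on $[0,M]\times[0,t]$. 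The second summand is controlled uniformly in $s\leq t$ by applying \eqref{kbvep:asymp} to a fixed continuous cutoff equal to $1$ on $[0,M]$. For the first summand, equicontinuity gives $\sup_{s\leq t,\,r\leq \eta}|L^\bi_s(r)-L^\bi_s(0)|\to 0$ as $\eta\searrow 0$; combined with the uniform mass estimate from the previous step, this makes the $r\leq\eta$ contribution small uniformly in $s$, while the $r>\eta$ contribution vanishes uniformly in $s$ as $\vep\searrow 0$ by applying \eqref{kbvep:asymp} to a continuous function supported on $[\eta/2,\infty)$ and vanishing at $0$. Sending $\eta\searrow 0$ then yields the required almost-sure uniform convergence in $s\in[0,t]$.
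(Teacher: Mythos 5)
Your argument for \eqref{kbvep:asymp} takes a genuinely different route from the paper. After the change of variables (your $v=(r')^2$ is equivalent to the paper's $r=\sqrt{\vep}r'$), the paper passes the limit by dominated convergence, constructing a single $\vep$-uniform dominating function of the form $C(M,\beta)[1+\log(r')^2]^2\,\frac{4r'}{(1+(r')^2)^2}$; you instead carry out a three-region truncation with a parameter $\delta$ sent to $0$ after $\vep\searrow 0$. Both are legitimate. Your decomposition has the virtue of being more elementary and not requiring a delicate global dominating bound: the middle region $[\delta,1/\delta]$ is where the uniform $\to 1$ limit of the Macdonald ratio actually lives, and the three tails are each estimated by a quantity that vanishes in the iterated limit. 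Your bounds on each region check out; the only care needed (which you anticipate) is that on $[1/\delta,\infty)$ one must distinguish whether the support radius $R$ puts the cutoff $R^2/\vep$ above or below $1/\vep$.

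For \eqref{eq:LTnorm1-2}, your approach is essentially what the paper sketches (occupation times formula plus equicontinuity of $(r,s)\mapsto L^\bi_s(r)$), but your specific decomposition introduces a gap: the second summand is written as $L^\bi_s\bigl[\int_0^\infty\kappa^{\beta\da}_\vep\,m^{\beta\da}_0-2\bigr]$, and you propose to control it by applying \eqref{kbvep:asymp} to a continuous cutoff equal to $1$ on $[0,M]$. But \eqref{kbvep:asymp} applied to such an $f$ only gives $\int f\,\kappa^{\beta\da}_\vep\,m^{\beta\da}_0\to 2$; it does not control the full $\int_0^\infty\kappa^{\beta\da}_\vep\,m^{\beta\da}_0$, because the complementary piece $(1-f)$ is not compactly supported and \eqref{kbvep:asymp} says nothing about it. You therefore need a separate tail estimate. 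That estimate is easy — from \eqref{K0infty} one gets $\kappa^{\beta\da}_\vep(r)\,m^{\beta\da}_0(\d r)\less\vep\,r^{-3}\,\d r$ for $r$ bounded below, so $\int_M^\infty\kappa^{\beta\da}_\vep\,m^{\beta\da}_0\less\vep/M^2\to 0$ — but it must be stated. Alternatively, you can avoid the tail altogether by recalling that $L^\bi_s(\cdot)$ vanishes off $[0,M]$, so the integral is in fact $\int_0^M$, and then squeeze $\int_0^M\kappa^{\beta\da}_\vep\,m^{\beta\da}_0$ between $\int\psi\,\kappa^{\beta\da}_\vep\,m^{\beta\da}_0$ and $\int\phi\,\kappa^{\beta\da}_\vep\,m^{\beta\da}_0$ for two continuous, compactly supported cutoffs with $\1_{[0,M/2]}\leq\psi\leq\1_{[0,M]}\leq\phi\leq\1_{[0,2M]}$, both limits handled by \eqref{kbvep:asymp}. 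With either fix, your argument closes.
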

\begin{proof}
We first prove \eqref{kbvep:asymp}.
For any $0<\vep<1$, the first equality below follows from the definitions \eqref{BES0b:speed} and \eqref{def:kbe} of $m^{\beta\da}_{0}$ and $\kappa^{\beta\da}_\vep$, and the second equality is obtained by using the change of variables $\sqrt{\vep}r'=r$:
\begin{align}
\int_0^\infty f(r)\kappa^{\beta\da}_\vep(r)m^{\beta\da}_0(\d r)
&=\int_0^\infty f(r)\frac{4r\vep}{(\vep+r^2)^2}\cdot \frac{K_0(\sqrt{2\beta }r)^2}{K_0(\sqrt{2\beta(\vep+r^2)})^2} \d r\notag\\
&=\int_{0}^{\infty} f(\sqrt{\vep}r')\frac{4\sqrt{\vep}r'\vep}{(\vep+\vep (r')^2)^2}\cdot \frac{K_0(\sqrt{2\beta \vep}r')^2}{K_0(\sqrt{2\beta(\vep+\vep (r')^2)})^2}\sqrt{\vep} \d r'\notag\\
&=\int_{0}^{\infty}  f(\sqrt{\vep}r')\frac{4r'}{(1+ (r')^2)^2}\cdot \frac{K_0(\sqrt{2\beta \vep}r')^2}{K_0(\sqrt{2\beta(\vep+\vep (r')^2)})^2} \d r'.\label{eq:km}
\end{align}

For the right-hand side of \eqref{eq:km}, the dominated convergence theorem applies to pass $\vep\to 0$ under the integral. To see this, 
 we use the asymptotic representations \eqref{K00} and \eqref{K0infty} of $K_0(x)$ as $x\to 0$ and as $x\to\infty$ to get the following bounds for all $0<\vep<\frac{1}{4\cdot 2\beta}$:
\begin{align*}
\forall\;r':\sqrt{\vep}r'\leq \frac{1}{\sqrt{4\cdot 2\beta}},\; 
\frac{K_0(\sqrt{2\beta \vep}r')^2}{K_0(\sqrt{2\beta(\vep+\vep (r')^2)})^2}&\less\frac{\log^2(2\beta\vep (r')^2)}{\log^2(2\beta(\vep+\vep (r')^2))}\leq \frac{\log^2(2\beta\vep (r')^2)}{\log^2(2\beta(\vep+\frac{1}{4\cdot 2\beta}))}\\
&\less \left(\frac{\log (2\beta\vep)+\log (r')^2}{\log (2\beta(\vep+\frac{1}{4\cdot 2\beta}))}\right)^2
\leq C(\beta)[1+\log(r')^2]^2,
\end{align*}
where the second inequality is obtained by the decreasing monotonicity of $x\mapsto \log^2x$ over $0<x\leq 1$, and for all $M>0$,
\begin{align*}
\forall\; r':\sqrt{\vep}r'>\frac{1}{\sqrt{4\cdot 2\beta}},\; &\quad\;\frac{K_0(\sqrt{2\beta \vep}r')^2}{K_0(\sqrt{2\beta(\vep+\vep (r')^2)})^2}\1_{\{\sqrt{\vep}r'\leq M\}}\\
&\leq C(\beta) \frac{\sqrt{2\beta(\vep+\vep (r')^2)}}{\sqrt{2\beta\vep}r'}\e^{-2\sqrt{2\beta \vep(r')^2}+2\sqrt{2\beta(\vep+\vep (r')^2)} }\1_{\{\sqrt{\vep}r'\leq M\}}
\leq C(M,\beta).
\end{align*}
By the last two displays, we obtain 
\begin{align}\label{eq:km1}
\frac{K_0(\sqrt{2\beta \vep}r')^2\1_{\{\sqrt{\vep }r'\leq M\}}}{K_0(\sqrt{2\beta(\vep+\vep (r')^2)})^2}\leq C(M,\beta)[1+\log(r')^2]^2,\;\forall\;r'>0,\;M>0,\;0<\vep<\frac{1}{4(2\beta)},
\end{align}
so the dominated convergence theorem applies to the right-hand side of \eqref{eq:km}.

Thanks to the applicability of the dominated convergence theorem, we obtain from \eqref{eq:km}, the assumed continuity of $f$ at $0$, and the asymptotic representation \eqref{K00} of $K_0(x)$ as $x\to 0$ the following limit:
\begin{align}\label{oct:limit}
 \lim_{\vep\to 0}\int_0^\infty f(r)\kappa^{\beta\da}_\vep(r)m^{\beta\da}_0(\d r)
=\int_{0}^{\infty}  f(0)\frac{4r'}{(1+ (r')^2)^2}\d r'=2f(0)
\end{align}
by the change of variables $r=1+(r')^2$. The last equality proves  \eqref{kbvep:asymp}.

Finally, to obtain \eqref{eq:LTnorm1-2}, we use the occupation times formula \eqref{oct} and get
\begin{align*}
\int_0^s \kappa^{\beta\da}_\vep(|Z^\bi_r|)\d r=\int_0^\infty L^{\bi}_s(r)\kappa^{\beta\da}_\vep(r)m^{\beta\da }_0(\d r).
\end{align*}
Since $(r,s)\mapsto L^{\bi}_s(r)$ is continuous and $r\mapsto L^{\bi}_t(r)$ has a compact support a.s., \eqref{eq:LTnorm1-2} follows by using \eqref{eq:km}, \eqref{eq:km1}, and a slight modification of \eqref{oct:limit}. The proof is complete.
\end{proof}

Lemmas~\ref{lem:unif1}--\ref{lem:bullet} below give the other tools for proving Proposition~\ref{prop:lim1} and will also be used to derive the other limits for Proposition~\ref{prop:logsum}. First, for Lemma~\ref{lem:unif1}, we omit the elementary proof of (1$\cc$) and refer to \cite[Section~2.8, pp.121+]{Ash-2} for (2$\cc$).
 
\begin{lem}\label{lem:unif1}
{\rm (1$\cc$)} Let $f:\R_+\to \R_+$ be continuous and $F_n\to F$ uniformly on compacts as $n\to\infty$. Then $F_n(f(s))\to F(f(s))$ uniformly on compacts as $n\to\infty$.\medskip 

\noindent {\rm (2$\cc$)} If $F_n$ and $F$ are increasing right-continuous functions on $\R_+$ such that $F_n$ converges weakly to $F$ and if $\{g_n\}$ converges uniformly to a continuous function $g$ on compacts, then $\int_0^t g_n(s)\d F_n(s)\to \int_0^t g(s)\d F(s)$ for all $t\geq 0$. 
\end{lem}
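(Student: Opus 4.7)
\textbf{Proof proposal for Lemma~\ref{lem:unif1}.} Part (1) reduces to the standard fact that uniform convergence on compacts is preserved under precomposition with a continuous function. The plan is: given any $T>0$, use continuity of $f$ to note that $f([0,T])$ is compact and hence contained in some interval $[0,M]$ with $M=M(T)<\infty$. The elementary bound
\[
\sup_{s\in[0,T]}|F_n(f(s))-F(f(s))|\leq \sup_{r\in[0,M]}|F_n(r)-F(r)|
\]
combined with the hypothesis $F_n\to F$ uniformly on $[0,M]$ then yields the conclusion immediately. There is no real obstacle here.

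For part (2), the plan is to split the difference as
\[
\int_0^t g_n\,dF_n-\int_0^t g\,dF=\int_0^t (g_n-g)\,dF_n+\left(\int_0^t g\,dF_n-\int_0^t g\,dF\right),
\]
and control the two pieces separately. For the first piece, I would use the bound $\|g_n-g\|_{L^\infty[0,t]}\cdot F_n(t)$; weak convergence gives an $n$-uniform bound on $F_n(t)$ (via continuity points of $F$ slightly to the right of $t$), and the uniform convergence $g_n\to g$ on $[0,t]$ kills $\|g_n-g\|_{L^\infty[0,t]}$, so this term tends to zero. For the second piece, I would invoke the classical Helly--Bray argument: approximate the continuous function $g$ on a large interval by a step function $\widetilde g$ with jumps only at continuity points of $F$, use uniform continuity of $g$ to control $\|g-\widetilde g\|_{L^\infty}$, and then sum over the finitely many intervals, each of which contributes $\widetilde g(s_k)\bigl(F_n(s_{k+1})-F_n(s_k)\bigr)\to \widetilde g(s_k)\bigl(F(s_{k+1})-F(s_k)\bigr)$ by weak convergence at continuity points.

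The main technical obstacle I expect is handling the right endpoint $t$, since $t$ need not be a continuity point of $F$, and the statement is required for all $t\geq 0$. I plan to address this by a sandwich argument: pick continuity points $t-\delta<t<t+\delta$ of $F$, write $\int_0^t = \int_0^{t+\delta} - \int_t^{t+\delta}$ (and similarly from below), let $n\to\infty$ first and then $\delta\searrow 0$, invoking right-continuity of $F$ together with the boundedness of $g$ on $[0,t+1]$ so that the contribution of $\int_t^{t+\delta}g\,dF$ and its analogues vanish in the limit. For the fully detailed argument I will refer to Ash~\cite{Ash-2}, Section~2.8, since the argument is standard and contains no new ingredients beyond what is described above.
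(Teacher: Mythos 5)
Your proposal matches the paper exactly in approach: the paper gives no proof of this lemma, treating (1$\cc$) as elementary and simply citing Ash, Section~2.8, for (2$\cc$) --- the same reference you invoke at the end. Your argument for (1$\cc$) is the right one. However, the ``sandwich'' you propose to handle the right endpoint $t$ in (2$\cc$) does not actually close the gap when $F$ jumps at $t$: passing $n\to\infty$ and then $\delta\searrow 0$ in $\int_0^{t-\delta}g_n\,\d F_n\leq\int_0^t g_n\,\d F_n\leq\int_0^{t+\delta}g_n\,\d F_n$ (for $g_n\geq 0$, say) gives $\int_0^{t-}g\,\d F\leq\liminf_n\int_0^t g_n\,\d F_n\leq\limsup_n\int_0^t g_n\,\d F_n\leq\int_0^t g\,\d F$, leaving a residual $g(t)\,\Delta F(t)$. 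That residual is real: take $F_n=\1_{[1+1/n,\infty)}$, $F=\1_{[1,\infty)}$, $g_n=g\equiv 1$, $t=1$; then $F_n\to F$ weakly in the usual (continuity-points) sense, yet $\int_0^1 g_n\,\d F_n=0$ while $\int_0^1 g\,\d F=1$. So as literally worded, (2$\cc$) is only correct under the stronger reading that $F_n(t)\to F(t)$ for \emph{every} $t\geq 0$, in which case the Helly--Bray argument closes directly and no sandwich is needed. This is immaterial for the paper, since in both places (2$\cc$) is applied the limiting $F$ is the continuous local time $2L^\bi_\cdot$ delivered by the uniform convergence in \eqref{eq:LTnorm1-2}, so $F$ has no jumps --- but if you intended a self-contained proof rather than a citation of Ash, the endpoint step as written needs the stronger hypothesis to be correct.
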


The second lemma determines the three basic convergences which arise from the integrands of the terms $I_{\ref{eq:ItoKi}}(i)_t$ and $I_{\ref{eq:ItoKw}}(i)_t$. 

\begin{lem}\label{lem:unif2}
For all $0<\delta_0<\delta_1<\infty$, set
\[
\mathcal R(\delta_0,\delta_1)\,\defeq\, \{\bs R\in \R_+^{\mathcal E_N}:R_\bj\in [\delta_0,\delta_1],\;\forall\;\bj\neq \bi,\;\&\;\; R_\bi\in [0,\delta_1]\}.
\]
Then as $\bs \vep\searrow \bs 0$, the following convergences hold uniformly in $ \bs R\in \mathcal R(\delta_0,\delta_1)$:
\begin{align}\label{Gvep:convwi}
\begin{split}
&\frac{1}{G_0(2\beta_\bi (\vep_\bi+ R_\bi))}\searrow \frac{1}{G_0(2\beta_\bi R_\bi)},\quad  G_0(2\beta_\bj(\vep_\bj+R_\bj))\nearrow G_0(2\beta_\bj R_\bj), \\
&G_1(2\beta_\bi(\vep_\bi+R_\bi))\nearrow G_1(2\beta_\bi R_\bi).
\end{split}
\end{align}
\end{lem}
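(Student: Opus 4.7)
The three convergences in \eqref{Gvep:convwi} are all monotone, so I will (i) verify pointwise monotonicity from known derivative formulas for $K_0$ and $\widehat{K}_1$; (ii) check continuity of each limit on the compact set $\mathcal R(\delta_0,\delta_1)$, extending through the boundary $R_\bi = 0$ where needed; and (iii) upgrade monotone pointwise convergence to uniform convergence by Dini's theorem.

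For (i), the identities $K_0'(x) = -K_1(x) < 0$ and $\widehat{K}_1'(x) = -xK_0(x) < 0$ on $(0,\infty)$ (the latter being \cite[(5.7.9)]{Lebedev-2}, as already invoked in the proof of Lemma~\ref{lem:ZS}) show that both $G_0(R) = K_0(\sqrt R)$ and $G_1(R) = \widehat K_1(\sqrt R)$ are strictly decreasing on $(0,\infty)$. Since $2\beta_\bj(\vep_\bj + R_\bj)$ decreases as $\vep_\bj \searrow 0$, this immediately gives the three monotonicities claimed in \eqref{Gvep:convwi}.

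For (ii), the only delicate point is at $R_\bi = 0$, which lies in $\mathcal R(\delta_0,\delta_1)$. The asymptotic representation \eqref{K00} gives $K_0(\sqrt{2\beta_\bi R_\bi}) \to \infty$ as $R_\bi \searrow 0$, so $R_\bi \mapsto 1/G_0(2\beta_\bi R_\bi)$ extends continuously by $0$ at the origin; and \eqref{def:hK0} gives $G_1(0) = \widehat K_1(0) = 1$, so $R_\bi \mapsto G_1(2\beta_\bi R_\bi)$ extends continuously as well. For $\bj \neq \bi$, continuity of $G_0(2\beta_\bj R_\bj)$ on $R_\bj \in [\delta_0,\delta_1]$ is immediate from the smoothness of $K_0$ away from the origin. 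Thus each limit in \eqref{Gvep:convwi} is continuous on the compact set $\mathcal R(\delta_0,\delta_1)$.

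For (iii), compactness of $\mathcal R(\delta_0,\delta_1)$ together with continuity of each approximating map and of its limit lets Dini's theorem upgrade the monotone pointwise convergence along any sequence $\bs\vep^{(n)} \searrow \bs 0$ to a uniform one; the net version as $\bs\vep \searrow \bs 0$ follows from monotonicity, or equivalently from uniform continuity of $G_0$ and $G_1$ on the compact intervals containing the relevant arguments (combined with the perturbation $2\beta_\bj \vep_\bj \to 0$ being uniform in $R_\bj$). No essential obstacle is expected; the mildly subtle step is the continuous extension at $R_\bi = 0$, which is handled once \eqref{K00} and \eqref{def:hK0} are in hand.
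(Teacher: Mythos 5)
Your proposal is correct and follows essentially the same route as the paper's proof: monotonicity of $G_0$ and $G_1$ from the derivative identities for $K_0$ and $\widehat K_1$, continuity of the limit functions (including the extension through $R_\bi=0$), and Dini's theorem on the compact set $\mathcal R(\delta_0,\delta_1)$. The extra remarks you add — the explicit continuous extension of $1/G_0$ by $0$ at the origin and the net-vs-sequence point for $\bs\vep\searrow\bs 0$ — are harmless elaborations of what the paper leaves implicit.
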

\begin{proof}
First, the monotonic pointwise convergences hold by the definition \eqref{def:Gnu} of $G_\nu(\cdot)$ and the decreasing monotonicity of $K_0(\cdot)$ and $\widehat{K}_1(\cdot)$ due to \eqref{hK:der}. Moreover, the limiting functions in \eqref{Gvep:convwi} are continuous. Hence, by Dini's theorem, the convergences in  \eqref{Gvep:convwi} are also uniform in $\bs R\in \mc R(\delta_0,\delta_1)$.
\end{proof}

The last lemma is stated for convenience. It collects three simple properties that will be used altogether frequently in the remaining of Section~\ref{sec:logsum}.

\begin{lem}\label{lem:bullet}
{\rm (1$\cc$)} For $z_0\in \CNwni$, $\P^{\bi}_{z_0}(R^\bj_s\neq 0\;\forall\;s\geq 0,\bj\neq \bi)=1$.\medskip

\noindent {\rm (2$\cc$)} $G_1(R)$ is continuous up to and including  $R=0$. \medskip 

\noindent {\rm (3$\cc$)} $K_0(\cdot)$ is bounded away from zero on compacts of $\R_+$. 
\end{lem}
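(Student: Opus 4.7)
The plan is to verify each of the three claims by a direct appeal to earlier results or elementary analysis; none of them presents a substantive difficulty, and the lemma is recorded only as a convenient reference for the computations in Sections~\ref{sec:logsumA}--\ref{sec:logsumC}.

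For Part~(1), I will invoke Proposition~\ref{prop:BMntc}, which gives $\P^{\bi}_{z_0}\bigl(T_0^\bi < \min_{\bj\neq \bi}\wt{T}_0^\bj = \infty\bigr) = 1$ for \emph{every} $z_0 \in \CN$; in particular $|Z^\bj_s| > 0$ for all $s > 0$ and all $\bj \neq \bi$, almost surely. The remaining time $s = 0$ is handled by the hypothesis $z_0 \in \CNwni$: by the definitions in \eqref{def:CNw}, we have $z_0^\bj \neq 0$ (equivalently $R^\bj_0 > 0$) for every $\bj \in \mc E_N \setminus \{\bi\}$ with $w_\bj > 0$, which covers the relevant indices.

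For Part~(2), the definition \eqref{def:Gnu} gives $G_1(R) = \hK_1(\sqrt{R})$ for $R > 0$, while the extension \eqref{def:hK0} assigns the value $\hK_1(0) = 2^{0}\Gamma(1) = 1$ at the origin. Since $\hK_1(x) = x K_1(x)$ is smooth on $(0,\infty)$ and continuous at $x=0$ by \eqref{def:hK0}, and since $R \mapsto \sqrt{R}$ is continuous on $[0,\infty)$, the composition $G_1$ is continuous on $[0,\infty)$.

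For Part~(3), $K_0$ is continuous and strictly positive on $(0,\infty)$, and the asymptotic \eqref{K00} yields $K_0(x) \to \infty$ as $x \searrow 0$. Hence on any compact $K \subset \R_+$, either $K \subset [\delta,\infty)$ for some $\delta > 0$ in which case continuity supplies a positive minimum, or $K$ contains $0$, in which case the divergence at the origin together with continuity and positivity on $(0,\infty)$ again yields a strictly positive infimum. Since all three parts follow by direct citation or inspection, there is no real obstacle in the proof.
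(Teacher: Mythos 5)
Your proof is correct and takes essentially the same approach as the paper's own (one-line) proof, which cites Proposition~\ref{prop:BMntc} together with the hypothesis $z_0\in\CNwni$ for (1$\cc$), the extension \eqref{def:hK0} for (2$\cc$), and the integral representation of $K_0(\cdot)$ for (3$\cc$). The only minor variant is that for (3$\cc$) you appeal to the divergence asymptotic \eqref{K00} at the origin plus continuity and strict positivity of $K_0$ on $(0,\infty)$ rather than to the integral representation directly, but both routes yield the same positivity, and your observation in (1$\cc$) that the hypothesis on $z_0$ only controls $R^\bj_0$ for those $\bj\neq\bi$ with $w_\bj>0$ matches the indices actually used downstream.
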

\begin{proof}
(1$\cc$) follows from Proposition~\ref{prop:BMntc} and the assumption that $z_0\in \CNwni$; (2$\cc$) uses \eqref{def:hK0}; (3$\cc$) can be seen by using the integral representation  of $K_0(\cdot)$~\cite[(5.10.25) on p.119]{Lebedev-2}. 
\end{proof}

\begin{proof}[Proof of Proposition~\ref{prop:lim1}]
Write
\begin{align}
&\quad \;I_{\ref{eq:ItoKw}}(1)+I_{\ref{eq:ItoKw}}(3)-I_{\ref{eq:ItoKi}}(1)-I_{\ref{eq:ItoKi}}(3)\notag\\
&
=\sum_{\bj\in \mathcal E_N} \int_0^t \left( \frac{w_\bj}{H_0^{\bs w}(\bs\vep +\bs R_s)}-\frac{w_\bj\delta_{\bi,\bj}}{H_0^{\bi}(\vep_\bi+R_s^\bi)}\right)\left[\frac{-2\beta_\bj G_{1}(R)}{R}\right]_{R=2\beta_\bj(\vep_\bj+R^\bj_s)}\d s\notag\\
&\quad +\sum_{\bj\in \mc E_N} \int_0^t \left( \frac{w_\bj}{H_0^{\bs w}(\bs\vep +\bs R_s)}-\frac{w_\bj\delta_{\bi,\bj}}{H_0^{\bi}(\vep_\bi+R_s^\bi)}\right)\left[\frac{\beta_\bj(R-2\beta_\bj\vep_\bj) [2G_1(R)+RG_0(R)]}{R^{2}}\right]_{R=2\beta_\bj(\vep_\bj+R^\bj_s)}\d s\notag\\
&=
\widetilde{I}^{1,\bs\vep}_t+\widetilde{I}^{2,\bs\vep}_t
,\label{I1I3}
\end{align}
where the two terms on the right-hand side are defined by
\begin{align}
\widetilde{I}^{1,\bs\vep}_t&\,\defeq \,\sum_{\bj\in \mathcal E_N} \int_0^t \left( \frac{w_\bj}{H_0^{\bs w}(\bs\vep +\bs R_s)}-\frac{w_\bj\delta_{\bi,\bj}}{H_0^{\bi}(\vep_\bi+R_s^\bi)}\right) \notag\\
&\quad \times\left[\frac{-2\beta_\bj G_{1}(R)}{R}+\frac{2\beta_\bj G_{1}(R)(R-2\beta_\bj \vep_\bj)}{R^{2}}\right]_{R=2\beta_\bj(\vep_\bj+R^\bj_s)}\d s\notag\\
&\,=\,\sum_{\bj\in \mathcal E_N} \int_0^t \left( \frac{w_\bj}{H_0^{\bs w}(\bs\vep +\bs R_s)}-\frac{w_\bj\delta_{\bi,\bj}}{H_0^{\bi}(\vep_\bi+R_s^\bi)}\right)\left[\frac{-(2\beta_\bj)^2 G_{1}(R)\vep_\bj}{R^{2}}\right]_{R=2\beta_\bj(\vep_\bj+R^\bj_s)}\d s,\notag\\
\begin{split}
\widetilde{I}^{2,\bs \vep}_t&\,\defeq \,\sum_{\bj\in \mc E_N} \int_0^t \left( \frac{w_\bj}{H_0^{\bs w}(\bs\vep +\bs R_s)}-\frac{w_\bj\delta_{\bi,\bj}}{H_0^{\bi}(\vep_\bi+R_s^\bi)}\right)\left[\frac{\beta_\bj(R-2\beta_\bj\vep_\bj) G_{0}(R)}{R}\right]_{R=2\beta_\bj(\vep_\bj+R^\bj_s)}\d s.\label{I1I3-I2}
\end{split}
\end{align}
We handle the limits of $\widetilde{I}^{1,\bs\vep}_t$ and $\widetilde{I}^{2,\bs\vep}_t$ in Steps~1 and~2, respectively, an conclude in Step~3. \medskip 

\noindent {\bf Step 1.} To find the limit of $\widetilde{I}^{1,\bs\vep}_t$, we separate the summand indexed by $\bi$ from the others. Therefore, by the definition \eqref{def:Gnu} of $G_0(R)$, we get
\begin{align}
\widetilde{I}^{1,\bs\vep}_t
&=\sum_{\bj\in \mc E_N\setminus\{ \bi\}}\int_0^t \left( \frac{w_\bj}{H_0^{\bs w}(\bs\vep +\bs R_s)}\right)\times [- G_{1}(2\beta_\bj(\vep_\bj+R^\bj_s))] \times \frac{ \vep_\bj}{(\vep_\bj +R^\bj_s)^{2}}\d s\notag\\
&\quad +\int_0^t \left( \frac{w_\bi}{H_0^{\bs w}(\bs\vep +\bs R_s)}-\frac{w_\bi}{H_0^{\bi}(\vep_\bi+R_s^\bi)}\right)\times [- G_{1}(2\beta_\bi(\vep_\bi+R^\bi_s))]\times G_0(2\beta_\bi (\vep_\bi+R^\bi_s))^2\notag\\
&\quad \times \frac{ \vep_\bi}{(\vep_\bi +R^\bi_s)^{2}K_0(\sqrt{2\beta_\bi (\vep_\bi+R^\bi_s)})^2}\d s\notag\\
\begin{split}
&=\sum_{\bj\in \mc E_N\setminus\{\bi\}}\int_0^t \left( \frac{w_\bj}{H_0^{\bs w}(\bs\vep +\bs R_s)}\right)\times [- G_{1}(2\beta_\bj(\vep_\bj+R^\bj_s))] \times \frac{ \vep_\bj}{(\vep_\bj +R^\bj_s)^{2}}\d s\\
&\quad +\int_0^t \left( \frac{w_\bi}{H_0^{\bs w}(\bs\vep +\bs R_s)}-\frac{w_\bi}{H_0^{\bi}(\vep_\bi+R_s^\bi)}\right)\\
&\quad \times [- G_{1}(2\beta_\bi(\vep_\bi+R^\bi_s))]\times G_0(2\beta_\bi (\vep_\bi+R^\bi_s))^2\times \kappa^{\beta_\bi\da}_\vep(|Z^\bi_s|)\d s,\label{I1:limwi0}
\end{split}
\end{align}
where the last equality uses the definition \eqref{def:kbe} of $\kappa^{\beta\da}_{\vep}$ and the definition of $R^\bi_s$. 

Now, the following limits hold with probability one:
\begin{align}
\widetilde{I}^{1,\bs\vep}_t
\xrightarrow[\bs \vep\searrow \bs 0]{}&\;\int_0^t 
\frac{\sum_{\bj\in \mc E_N\setminus\{\bi\}}w_\bj G_0(2\beta_\bj R_s^\bj)
G_{1}(2\beta_\bi R_s^\bi)
}{w_\bi+\sum_{\bj\in\mc E_N\setminus\{ \bi\}}\frac{w_\bj G_0(2\beta_\bj R^\bj_s)}{G_0(2\beta_\bi R_s^\bi )}}2\d L^\bi_s\label{lim:I1wi0}\\
&=\sum_{\bj\in \mc E_N\setminus\{\bi\}}2\left(\frac{w_\bj}{w_\bi}\right) \int_0^t 
 K_0(\sqrt{2\beta_\bj} |Z_s^\bj|)\d L^\bi_s,\quad \;\forall\;t>0.\label{lim:I1wi}
\end{align}
Let us explain \eqref{lim:I1wi0} and \eqref{lim:I1wi}. To obtain \eqref{lim:I1wi0}, we consider the sum and the last integral in \eqref{I1:limwi0} separately. To handle that sum, we use Lemma~\ref{lem:bullet} and  recall that $H_0^{\bs w}$ is defined in \eqref{FUNC2}. 
Hence, with probability one, the sum over $\bj\in \mc E_N\setminus\{\bi\}$ in  \eqref{I1:limwi0} converges to zero by dominated convergence for all $t$. 
Also, the limit of the last integral in \eqref{I1:limwi0} is equal to the limit in \eqref{lim:I1wi0}. This limit can be seen by 
 using Lemma~\ref{lem:unif1} (1$\cc$) and Lemma~\ref{lem:unif1} (2$\cc$) with \eqref{eq:LTnorm1-2} and the following limit proven at the end of Step~1: for all $0<\delta_0<\delta_1<\infty$,
\begin{align}
&\quad\;\lim_{\bs\vep\searrow \bs 0}\left( \frac{w_\bi}{H_0^{\bs w}(\bs\vep +\bs R)}-\frac{w_\bi}{H_0^{\bi}(\vep_\bi+R_\bi)}\right)\times [- G_{1}(2\beta_\bi(\vep_\bi+R_\bi))]\times  G_0(2\beta_\bi (\vep_\bi+R^\bi))^2\notag\\
& =
\frac{\sum_{\bj\in \mc E_N\setminus\{\bi\}}w_\bj G_0(2\beta_\bj R_\bj)
G_{1}(2\beta_\bi R_\bi)
}{w_\bi+\sum_{\bj\in \mc E_N\setminus\{\bi\}}\frac{w_\bj G_0(2\beta_\bj R_\bj)}{G_0(2\beta_\bi R_\bi )}}\quad \mbox{uniform in $\bs R\in \mathcal R(\delta_0,\delta_1)$.}\label{lim:Gwi}
\end{align}
Note that we can choose $\delta_0,\delta_1$ to justify the use of Lemma~\ref{lem:unif1} (2$\cc$) due to Lemma~\ref{lem:bullet} (1$\cc$).
We have proved \eqref{lim:I1wi0}. Also, \eqref{lim:I1wi} holds by the following three facts: (i) $G_1(0)=\widehat{K}_1(0)=1$ by the definition \eqref{def:Gnu} of $G_\nu(\cdot)$ and \eqref{def:hK0}, (ii)
\begin{align}\label{def:contR}
\frac{w_\bj G_0(2\beta_\bj R_\bj)}{G_0(2\beta_\bi R_\bi)}=
\frac{w_\bj K_0(\sqrt{2\beta_\bj R_\bj})}{K_0(\sqrt{2\beta_\bi R_\bi })}=0,\quad \forall\;\bj\neq \bi,\mbox{ if $R_\bi=0$ and $R_\bj>0$,}
\end{align}
and (iii) Lemma~\ref{lem:bullet} (1$\cc$) holds now since $z_0\in \CNwni$.

To complete Step~1, it remains to show the uniform convergence in \eqref{lim:Gwi}. By the definitions of $H_0^{\bi}$ and $H_0^{\bs w}$ in  \eqref{FUNC1} and \eqref{FUNC2},
\begin{align}
&\quad\;\left( \frac{w_\bi}{H_0^{\bs w}(\bs\vep +\bs R)}-\frac{w_\bi}{H_0^{\bi}(\vep_\bi+R_\bi)}\right)\times [- G_{1}(2\beta_\bi(\vep_\bi+R_\bi))\times  G_0(2\beta_\bi (\vep_\bi+R^\bi))^2]\notag\\
&=\left[ \frac{-w_\bi\sum_{\bj\in \mc E_N\setminus\{ \bi\}}w_\bj G_0(2\beta_\bj(\vep_\bj+R_\bj))}{H_0^{\bs w}(\bs\vep +\bs R)H_0^{\bi}(\vep_\bi+R_\bi)}\times G_0(2\beta_\bi (\vep_\bi+R_\bi))^2\right]
\times [- G_{1}(2\beta_\bi(\vep_\bi+R_\bi))]\notag\\
&=\left[ 
\frac{-\sum_{\bj\in \mc E_N\setminus\{ \bi\}}w_\bj G_0(2\beta_\bj(\vep_\bj+R_\bj))}{
w_\bi+\sum_{\bj\in \mc E_N\setminus\{ \bi\}}\frac{w_\bj G_0(2\beta_\bj(\vep_\bj+R_\bj))}{G_0(2\beta_\bi(\vep_\bi+R_\bi))}}
\right]
\times [- G_{1}(2\beta_\bi(\vep_\bi+R_\bi))].\notag
\end{align}
Since $w_\bi>0$ by assumption, 
the last equality and Lemma~\ref{lem:unif2} are enough to get \eqref{lim:Gwi}.\medskip

\noindent {\bf Step 2.}
This step is to obtain the limit of $\widetilde{I}^{2,\bs \vep}_t$, which is defined in \eqref{I1I3-I2}. We have
\begin{align}
\widetilde{I}^{2,\bs \vep}_t
&=\sum_{\bj\in \mathcal E_N} \int_0^t \frac{w_\bj}{H_0^{\bs w}(\bs\vep+\bs R_s)}\beta_\bj \times G_0 (R)
\times \left(\frac{R-2\beta_\bj \vep_\bj}{R}\right)\Bigg|_{R=2\beta_\bj (\vep_\bj+R^\bj_s)}\d s\notag\\
&\quad -\int_0^t \frac{w_\bi}{H_0^{\bi}(\vep_\bi+R^\bi_s)}\beta_\bi \times G_0 (R)
\times \left(\frac{R-2\beta_\bi \vep_\bi}{R}\right)\Bigg|_{R=2\beta_\bi (\vep_\bi+R^\bi_s)}\d s\notag\\
&\xrightarrow[\bs \vep\searrow \bs 0]{} \int_0^t \sum_{\bj\in \mathcal E_N}\beta_\bj \frac{w_\bj K^{\bbeta,\bj}_0(s)}{K^{\bbeta,\bs w}_0(s)}\1_{\{R^\bj_s>0\}}\d s- \beta_\bi \int_0^t\1_{\{R^\bi_s>0\}}\d s\notag\\
&=\int_0^t \sum_{\bj\in \mathcal E_N}\beta_\bj \frac{w_\bj K^{\bbeta,\bj}_0(s)}{K^{\bbeta,\bs w}_0(s)}\d s- \beta_\bi t,\label{lim:I2wi}
\end{align}
where the convergence holds by dominated convergence and the definitions \eqref{def:Gnu}, \eqref{FUNC1}, and \eqref{FUNC2} of $G_0$, $H_0^\bi$, and $H_0^{\bs w}$, and the last equality uses Lemma~\ref{lem:bullet} (1$\cc$) and the fact that $\{R^\bi_t\}$ is instantaneously reflecting at $0$~\cite[Theorem~2.1]{DY:Krein-2}.  \medskip

\noindent {\bf Step~3.} By \eqref{I1I3}, \eqref{lim:I1wi} and \eqref{lim:I2wi}, we obtain the required equality in \eqref{Ilim:1wi}.
\end{proof}

\subsection{Limiting formulas: martingale terms}
Recall that $\Twi_\eta$, $\{N^{\bbeta,\bs w,\bi}_0(t)\}$ and $\{\mathring{N}^{\bbeta,\bs w,\bi}_0(t)\}$
 are defined in \eqref{def:Twi}, \eqref{def:Nwi} and \eqref{def:Nwiring}.
The following proposition borrows the following mode of convergence from \cite[p.57]{Protter-2}. A sequence of processes $\{H^n_t\}_{n\geq 1}$ converges to a process $\{H_t\}$ {\bf uniformly on compacts in probability} (abbreviated {\bf u.c.p.}) if, for each $t> 0$, $\sup_{0\leq s\leq t}|H^n_s-H_s|$ converges to zero in probability.  

\begin{prop}\label{prop:lim2}
Let $\bs w\in \R_+^{\mc E_N}$ with $w_\bi> 0$. The following properties hold under $\P^{\bi}_{z_0}$ for any $z_0\in \CNwni$.  \medskip 

\noindent {\rm (1$\cc$)} $\{N^{\bbeta,\bs w,\bi}_0(t)\}$ is a well-defined continuous local martingale, and for $\eta\in (0,\min_{\bj:w_\bj>0,\bj\neq \bi}|z^\bj_0|)$,
$\{\mathring{N}^{\bbeta,\bs w,\bi}_0(t\wedge T^{\bs w\setminus\{w_\bi\}}_{\eta})\}$
is a continuous $L^2$-martingale satisfying
\begin{align}\label{expbdd:Nring}
\E^{\bi}_{z_0}\left[\exp\left\{\lambda \la \mathring{N}^{\bbeta,\bs w,\bi}_0,\mathring{N}^{\bbeta,\bs w,\bi}_0\ra_{t\wedge T^{\bs w\setminus\{w_\bi\}}_{\eta}}\right\}\right]<\infty,\quad \forall\;\lambda\in \R,\;t\geq 0.
\end{align}

\noindent {\rm (2$\cc$)} The following limit holds in the sense of u.c.p. as $\bs \vep\searrow \bs 0$:
\begin{align}
&\{I_{\ref{eq:ItoKw}}(2)_t-I_{\ref{eq:ItoKi}}(2)_t\} \xrightarrow[\bs\vep\searrow \bs 0]{\rm u.c.p.} \{ N^{\bbeta,\bs w,\bi}_0(t)\}.\label{Ilim:2wi}
\end{align}
\end{prop}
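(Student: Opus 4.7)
For $\widetilde{N}^{\bbeta,\bs w,\bi}_0$ the required control of the quadratic variation on $[0,T^{\bs w\setminus\{w_\bi\}}_\eta]$ is precisely the estimate \eqref{Nqvexp:2} already extracted in the proof of Corollary~\ref{cor:superMG}. For $\mathring{N}^{\bbeta,\bs w,\bi}_0$ the plan is to exploit the factorisation
\[
\frac{\hK^{\bbeta,\bi}_1(s)[K^{\bbeta,\bs w}_0(s)-w_\bi K^{\bbeta,\bi}_0(s)]}{|Z^\bi_s|K^{\bbeta,\bi}_0(s)K^{\bbeta,\bs w}_0(s)}=\frac{\hK^{\bbeta,\bi}_1(s)}{|Z^\bi_s|K^{\bbeta,\bi}_0(s)}\cdot\frac{\sum_{\bj\in \mc E_N\setminus\{\bi\}}w_\bj K^{\bbeta,\bj}_0(s)}{K^{\bbeta,\bs w}_0(s)}.
\]
On $[0,T^{\bs w\setminus\{w_\bi\}}_\eta]$ the numerator of the second factor is bounded by a constant $C(\eta,\bbeta,\bs w)$ (since each $|Z^\bj_s|\geq\eta$ for $\bj\neq\bi$ with $w_\bj>0$ there), while $K^{\bbeta,\bs w}_0(s)\geq w_\bi K^{\bbeta,\bi}_0(s)$. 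Hence the squared integrand is dominated by a constant multiple of $\hK^{\bbeta,\bi}_1(s)^2/[|Z^\bi_s|^2 K^{\bbeta,\bi}_0(s)^4]$. Time-integrability for this bound follows from \cite[(4.19)]{C:SDBG1-2} applied at a suitable $p<2$ combined with $L^q$-moment control of $K^{\bbeta,\bi}_0(s)^{-1}$ from \cite[Lemma~4.12]{C:SDBG1-2}, split via H\"older's inequality. This gives the $L^2$-martingale property and the exponential moment bound \eqref{expbdd:Nring}; sending $\eta\searrow 0$ using Proposition~\ref{prop:BMntc} then identifies $N^{\bbeta,\bs w,\bi}_0$ as a continuous local martingale.

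\textbf{Plan for Part (2$\cc$).} The first step is an algebraic rearrangement that identifies the limiting integrand. Peeling off the $\bj=\bi$ summand in $I_{\ref{eq:ItoKw}}(2)$ and using
\[
\frac{1}{H_0^{\bs w}(\bs\vep+\bs R_s)}-\frac{1}{H_0^\bi(\vep_\bi+R^\bi_s)}=\frac{-\sum_{\bj\neq\bi}w_\bj G_0(2\beta_\bj(\vep_\bj+R^\bj_s))}{H_0^{\bs w}(\bs\vep+\bs R_s)\,H_0^\bi(\vep_\bi+R^\bi_s)},
\]
rewrite $I_{\ref{eq:ItoKw}}(2)_t-I_{\ref{eq:ItoKi}}(2)_t$ as a single vector stochastic integral against $(B^\bj)_{\bj\in\mc E_N}$ whose integrands converge pointwise, as $\bs\vep\searrow \bs 0$, to those of $\mathring{N}^{\bbeta,\bs w,\bi}_0(t)+\widetilde{N}^{\bbeta,\bs w,\bi}_0(t)=N^{\bbeta,\bs w,\bi}_0(t)$; this is a routine substitution using $G_\nu(R)=\hK_\nu(\sqrt{R})$ and the definitions \eqref{def:Kbbetaw0} and \eqref{def:K1bj}. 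To upgrade to u.c.p.\ convergence, I will first work on the stopped interval $[0,T^{\bs w\setminus\{w_\bi\}}_\eta]$: by It\^{o}'s isometry it suffices to show that the expected quadratic variation of the difference tends to $0$. The pointwise convergence of integrands is supplied by Lemma~\ref{lem:unif2} together with the monotonicity $K_0(\sqrt{2\beta_\bi(\vep_\bi+R^\bi_s)})^{-1}\leq K_0(\sqrt{2\beta_\bi}|Z^\bi_s|)^{-1}$, while the pre-limit integrands are uniformly dominated by the same type of bound as in Part~(1$\cc$); dominated convergence then delivers the $L^2$-convergence, hence u.c.p.\ convergence, of the stopped processes. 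Finally, u.c.p.\ convergence on $[0,\infty)$ follows by letting $\eta\searrow 0$ and using $\P^{\bi}_{z_0}(T^{\bs w\setminus\{w_\bi\}}_\eta<t_0)\to 0$ for every fixed $t_0$ (Proposition~\ref{prop:BMntc}).

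\textbf{Main obstacle.} The delicate point is squeezing square-integrability of the integrand of $\mathring{N}^{\bbeta,\bs w,\bi}_0$ out of the borderline bound \cite[(4.19)]{C:SDBG1-2}, which is sharp at exponent $p=2$. The extra decay one needs comes from the factor $\sum_{\bj\neq\bi}w_\bj K^{\bbeta,\bj}_0/K^{\bbeta,\bs w}_0\lesssim 1/K^{\bbeta,\bi}_0$ that is available on the stopped interval, supplying an additional $1/|\log|Z^\bi_s||$ near $|Z^\bi_s|=0$. Correctly extracting this logarithmic gain, combining it with a H\"older split to upgrade from a single $L^2$ bound to all exponential moments of the quadratic variation \eqref{expbdd:Nring}, and doing so with estimates that are uniform in $\bs\vep$ (so as to justify the dominated convergence step for Part~(2$\cc$)), is the technical heart of the argument.
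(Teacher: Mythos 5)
Your overall strategy is the same as the paper's: factor the $\mathring{N}^{\bbeta,\bs w,\bi}_0$-integrand, bound it on the stopped interval, and (for (2$\cc$)) split off the $\bj=\bi$ term and pass to the limit under the stochastic integrals by dominated convergence. However, there is a genuine gap in the estimate you propose for (1$\cc$), and a related omission in the stopping time you use for (2$\cc$).

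For (1$\cc$): after the factorisation, you bound the second factor from above by $C(\eta,\bbeta,\bs w)/[w_\bi K^{\bbeta,\bi}_0(s)]$ uniformly in $s\leq T^{\bs w\setminus\{w_\bi\}}_\eta$, arriving at the squared-integrand bound $C\cdot\hK^{\bbeta,\bi}_1(s)^2/[|Z^\bi_s|^2 K^{\bbeta,\bi}_0(s)^4]$. This bound is correct, but it blows up as $|Z^\bi_s|\to\infty$: by the large-$x$ asymptotics \eqref{K0infty}--\eqref{K1infty}, $\hK_1(x)^2/K_0(x)^4\sim x^3\e^{2x}$, so the dominating quantity grows like $|Z^\bi_s|\,\e^{2\sqrt{2\beta_\bi}|Z^\bi_s|}$. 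With such a dominator, the exponential moment \eqref{expbdd:Nring} cannot be extracted; no result from \cite[(4.19)]{C:SDBG1-2} or \cite[Lemma~4.12]{C:SDBG1-2} controls the exponential of the integral of an expression that is $\omega$-unbounded on the large-$|Z^\bi_s|$ event. The paper avoids this by splitting on $\{|Z^\bi_s|\geq 1\}$ versus $\{|Z^\bi_s|\leq 1\}$: on the first set it uses the opposite inequality $\sum_{\bj\neq\bi}w_\bj K^{\bbeta,\bj}_0(s)\leq K^{\bbeta,\bs w}_0(s)$, which together with $\hK^{\bbeta,\bi}_1(s)/[|Z^\bi_s|K^{\bbeta,\bi}_0(s)]\leq C(\beta_\bi)$ gives a constant bound, so the resulting quadratic-variation bound is $C(\bbeta,\eta,w_\bi)\bigl(t+\int_0^t \1_{\{|Z^\bi_s|\leq 1\}}/[|Z^\bi_s|^2 K^{\bbeta,\bi}_0(s)^4]\,\d s\bigr)$ (with the indicator), and then \cite[Proposition~4.2 (1$\cc$)]{C:SDBG1-2} supplies the exponential moment. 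Your plan needs this case split; without it, the exponential moment cannot close.

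For (2$\cc$): the same large-$|Z^\bi_s|$ issue re-appears. You work on $[0,T^{\bs w\setminus\{w_\bi\}}_\eta]$, which only bounds $|Z^\bj_s|$ below for $\bj\neq\bi$ with $w_\bj>0$ and imposes no upper bound on any $R^\bj_t$. The paper instead stops at $T_{\delta_0,\delta_1}=\inf\{t\geq 0;\min_{\bj\neq\bi}R^\bj_t\leq\delta_0\text{ or }\max_{\bj}R^\bj_t\geq\delta_1\}$; the extra cap $R^\bi_t\leq\delta_1$ is what makes the dominating bound \eqref{MN:proof21} (and the uniform convergence in Lemma~\ref{lem:unif2}, over the compact region $\mathcal R(\delta_0,\delta_1)$) available, and it lets $\vep_\bi$ be handled uniformly via the monotonicity of $\sqrt{x}K_0(x)$ near $0$. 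Your suggestion to check $L^2$-convergence via It\^{o}'s isometry plus dominated convergence is a legitimate variant of the paper's pathwise dominated-convergence-for-stochastic-integrals \cite[Theorem~32, p.176]{Protter-2}, but it still requires an $\bs\vep$-uniform dominator in $L^2$, and your proposed dominator is the same one that fails for large $|Z^\bi_s|$. Introducing the upper cap (followed by $\delta_1\nearrow\infty$, $\delta_0\searrow 0$ using Proposition~\ref{prop:BMntc}) fixes this, and the rest of your argument then goes through along the same lines as the paper.
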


\begin{proof}
(1$\cc$) By Lemma~\ref{lem:bullet} (2$\cc$), $N^{\bbeta,\bs w,\bi}_0-\mathring{N}^{\bbeta,\bs w,\bi}_0$ is a well-defined continuous local martingale. Since $T_{\eta}^{\bs w\setminus\{w_\bi\}}\nearrow\infty$ as $\eta\searrow 0$ under $\P^{\bi}_{z_0}$ by Proposition~\ref{prop:BMntc}, it remains to prove the required properties of $\mathring{N}^{\bbeta,\bs w,\bi}_0$, that is, the $L^2$-martingale property and \eqref{expbdd:Nring}. 
To this end, rewrite the stochastic integrand of $\mathring{N}^{\bbeta,\bs w,\bi}_0$ from \eqref{def:Nwiring} as follows:
\[
\frac{\hK^{\bbeta,\bi}_1(s)[K^{\bbeta,\bs w}_0(s)-w_\bi K_0^{\bbeta,\bi}(s)]}{|Z^\bi_s|K^{\bbeta,\bi}_0(s)K^{\bbeta,\bs w}_0(s)}
=\frac{\hK^{\bbeta,\bi}_1(s)\sum_{\bj\in \mc E_N\setminus\{\bi\}}w_\bj K^{\bbeta,\bj}_0(s)}{|Z^\bi_s|K^{\bbeta,\bi}_0(s)K^{\bbeta,\bs w}_0(s)}.
\] 
We bound the right-hand side in the following manner. Whenever $|Z^\bi_s|\geq 1$, 
\begin{align*}
\frac{\hK^{\bbeta,\bi}_1(s)\sum_{\bj\in \mc E_N\setminus\{\bi\}}w_\bj K^{\bbeta,\bj}_0(s)}{|Z^\bi_s|K^{\bbeta,\bi}_0(s)K^{\bbeta,\bs w}_0(s)}=\frac{\hK^{\bbeta,\bi}_1(s)}{|Z^\bi_s|K^{\bbeta,\bi}_0(s)}\left(\frac{\sum_{\bj\in \mc E_N\setminus\{\bi\}}w_\bj K^{\bbeta,\bj}_0(s)}{K^{\bbeta,\bs w}_0(s)}\right)
\leq \frac{\hK^{\bbeta,\bi}_1(s)}{|Z^\bi_s|K^{\bbeta,\bi}_0(s)}\leq C(\beta_\bi)
\end{align*}
by the asymptotic representations \eqref{K0infty} and \eqref{K1infty} of $K_0(x)$ and $K_1(x)$ as $x\to\infty$. Also, whenever $|Z^\bi_s|\leq 1$ and $s\leq T^{\bs w\setminus\{w_\bi\}}_\eta$, 
\begin{align*}
\frac{\hK^{\bbeta,\bi}_1(s)\sum_{\bj\in \mc E_N\setminus\{\bi\}}w_\bj K^{\bbeta,\bj}_0(s)}{|Z^\bi_s|K^{\bbeta,\bi}_0(s)K^{\bbeta,\bs w}_0(s)}&= \frac{\hK^{\bbeta,\bi}_1(s)}{|Z^\bi_s|K^{\bbeta,\bi}_0(s)^2}\left(\frac{K_0^{\bbeta,\bi}(s)\sum_{\bj\in \mc E_N\setminus\{\bi\}}w_\bj K^{\bbeta,\bj}_0(s)}{K^{\bbeta,\bs w}_0(s) }\right)\\
&\leq  \frac{\hK^{\bbeta,\bi}_1(s)}{|Z^\bi_s|K^{\bbeta,\bi}_0(s)^2}\left(\frac{K_0^{\bbeta,\bi}(s)\sum_{\bj\in \mc E_N\setminus\{\bi\}}w_\bj K^{\bbeta,\bj}_0(s)}{w_\bi K_0^{\bbeta,\bi}(s) }\right)\\
&\leq \frac{C(\bbeta,\eta,w_\bi)}{|Z^\bi_s|K^{\bbeta,\bi}_0(s)^2},
\end{align*}
where the last inequality uses the asymptotic representations \eqref{K10} and \eqref{K1infty} of $K_1(x)$ as $x\to 0$ and as $x\to\infty$, the definition \eqref{def:Twi} of $T^{\bs w\setminus\{w_\bi\}}_\eta$, and
the asymptotic representation \eqref{K0infty} of $K_0(x)$ as $x\to\infty$. It follows from the last two displays that
\[
\la \mathring{N}^{\bbeta,\bs w,\bi}_0,\mathring{N}^{\bbeta,\bs w,\bi}_0\ra_{t\wedge T^{\bs w\setminus\{w_\bi\}}_\eta}\leq C(\bbeta,\eta,w_\bi)\left(t+\int_0^t \frac{\1_{\{|Z_s^\bi|\leq 1\}}\d s}{|Z^\bi_s|^2K^{\bbeta,\bi}_0(s)^4}\right).
\]
The exponential integrability in \eqref{expbdd:Nring} now follows from \cite[Proposition~4.2 (1$\cc$)]{C:SDBG1-2} and  implies the $L^2$-martingale property of $\{\mathring{N}^{\bbeta,\bs w,\bi}_0(t)\}$ \cite[(1.23) Proposition, p.129]{RY-2}. \medskip 

\noindent {\rm (2$\cc$)} We begin by rewriting $I_{\ref{eq:ItoKw}}(2)_t-I_{\ref{eq:ItoKi}}(2)_t$ as follows:
\begin{align}
&\quad \;I_{\ref{eq:ItoKw}}(2)_t-I_{\ref{eq:ItoKi}}(2)_t\\
&=
-\sum_{\bj\in \mathcal E_N\setminus\{\bi\}}w_\bj (2\beta_\bj)\int_0^t \frac{G_{1}(R)}{2R}\Bigg|_{R=2\beta_\bj(\vep_\bj+R^\bj_s)}\times \frac{2|Z^\bj_s|}{H_0^{\bs w}(\bs \vep+\bs R_s)}\d B^\bj_s\notag\\
&\quad -w_\bi (2\beta_\bi)
\int_0^t \frac{G_{1}(R)}{2R}\Bigg|_{R=2\beta_\bi(\vep_\bi+R^\bi_s)}\left(\frac{1}{H^{\bs w}_0(\bs \vep+\bs R_s)}-\frac{1}{H^{\bi}_0( \vep_\bi+R^\bi_s)}\right)2|Z^\bi_s|\d B^\bi_s\notag\\
\begin{split}\label{MN:proof}
&=-\sum_{\bj\in \mathcal E_N\setminus\{\bi\}}w_\bj \int_0^t \frac{G_{1}(2\beta_\bj(\vep_\bj+R^\bj_s))}{2(\vep_\bj+R^\bj_s)}\times \frac{2|Z^\bj_s|}{H_0^{\bs w}(\bs \vep+\bs R_s)}\d B^\bj_s\\
&\quad -w_\bi 
\int_0^t \frac{G_{1}(2\beta_\bi(\vep_\bi+R^\bi_s))}{2(\vep_\bi+R^\bi_s)}\left(\frac{1}{H^{\bs w}_0(\bs \vep+\bs R_s)}-\frac{1}{H^{\bi}_0( \vep_\bi+R^\bi_s)}\right)2|Z^\bi_s|\d B^\bi_s.
\end{split}
\end{align}
Below we derive the limits of the sum using $\bj\in \mc E_N\setminus\{\bi\}$ and the last integral in \eqref{MN:proof} separately using the following stopping times: for $\delta_0\in (0,\min_{\bj:w_\bj>0,\bj\neq \bi}|z^\bj_0|)$ and $\delta_1>\max_{\bj\in \mc E_N}|z_0^\bj|$,
\[
T_{\delta_0,\delta_1}\,\defeq\,\inf\left\{t\geq 0;\min_{\bj\in \mc E_N\setminus\{\bi\}}R^\bj_t\leq \delta_0\mbox{ or }\max_{\bj\in \mc E_N}R^\bj_t\geq \delta_1\right\}.
\]

First, when stopped at $T_{\delta_0,\delta_1}$, the limit of the sum using $\bj\in \mc E_N\setminus\{\bi\}$ in \eqref{MN:proof} can be determined as follows by Lemma~\ref{lem:unif1} (1$\cc$), Lemma~\ref{lem:unif2}, Lemma~\ref{lem:bullet} and the dominated convergence theorem for stochastic integrals \cite[Theorem~32, p.176]{Protter-2}:
\begin{align}
\begin{split}\label{MN:proof1}
&\Bigg\{-\sum_{\bj\in \mathcal E_N\setminus\{\bi\}}w_\bj \int_0^{t\wedge T_{\delta_0,\delta_1}} \frac{G_{1}(2\beta_\bj(\vep_\bj+R^\bj_s))}{2(\vep_\bj+R^\bj_s)}\times \frac{2|Z^\bj_s|}{H_0^{\bs w}(\bs \vep+\bs R_s)}\d B^\bj_s\Bigg\}\\
&\xrightarrow[\bs \vep\searrow \bs 0]{\rm u.c.p.}\Bigg\{-\sum_{\bj\in \mathcal E_N\setminus\{\bi\}}w_\bj \int_0^{t\wedge T_{\delta_0,\delta_1}} \frac{G_{1}(2\beta_\bj R^\bj_s)}{2 R^\bj_s}\times \frac{2|Z^\bj_s|}{H_0^{\bs w}(\bs R_s)}\d B^\bj_s\Bigg\}\\
&\quad\quad \quad =\{N^{\bbeta,\bs w,\bi}_0(t\wedge T_{\delta_0,\delta_1})-\mathring{N}^{\bbeta,\bs w,\bi}_0(t\wedge T_{\delta_0,\delta_1})\}.
\end{split}
\end{align}
Here, the equality uses the definitions \eqref{def:K1bj}, \eqref{def:Gnu} and \eqref{FUNC2} of $\widehat{K}^{\bbeta,\bj}_1(s)$, $G_1$ and $H_0^{\bs w}$. Recall once again that $\{N^{\bbeta,\bs w,\bi}_0(t)\}$ and $\{\mathring{N}^{\bbeta,\bs w,\bi}_0(t)\}$ are defined in \eqref{def:Nwi} and \eqref{def:Nwiring}.

For the last stochastic integral in \eqref{MN:proof}, we show that 
\begin{align}
&\left\{ -w_\bi 
\int_0^{t\wedge T_{\delta_0,\delta_1}} \frac{G_{1}(2\beta_\bi(\vep_\bi+R^\bi_s))}{2(\vep_\bi+R^\bi_s)}\left(\frac{1}{H^{\bs w}_0(\bs \vep+\bs R_s)}-\frac{1}{H^{\bi}_0( \vep_\bi+R^\bi_s)}\right)2|Z^\bi_s|\d B^\bi_s\right\}\notag\\
&\xrightarrow[\bs \vep\searrow \bs 0]{\rm u.c.p.}\{\mathring{N}^{\bbeta,\bs w,\bi}_0(t\wedge T_{\delta_0,\delta_1})\}.\label{MN:proof2}
\end{align}
To justify this limit,  first, note that by the definitions of $H_0^{\bi}$ and $H_0^{\bs w}$ in  \eqref{FUNC1} and \eqref{FUNC2}, 
\begin{align}
&\quad \;\frac{G_{1}(2\beta_\bi(\vep_\bi+R^\bi_s))}{2(\vep_\bi+R^\bi_s)}\left(\frac{1}{H^{\bs w}_0(\bs \vep+\bs R_s)}-\frac{1}{H^{\bi}_0( \vep_\bi+R^\bi_s)}\right)2|Z^\bi_s|\notag\\
&=\frac{G_{1}(2\beta_\bi(\vep_\bi+R^\bi_s))}{2(\vep_\bi+R^\bi_s)} \left(\frac{-\sum_{\bj\in \mc E_N\setminus\{\bi\}}w_\bj G_0(2\beta_\bj(\vep_\bj+R^\bj_s))}{H^{\bs w}_0(\bs \vep+\bs R_s)H^\bi_0(\vep_\bi+R^\bi_s)}\right)2|Z^\bi_s|\notag\\
\begin{split}
&=\frac{|Z^\bi_s| }{(\vep_\bi+R^\bi_s)G_0(2\beta_\bi(\vep_\bi+R^\bi_s))^2} \times
\frac{H^\bi_0(\vep_\bi+R^\bi_s)G_{1}(2\beta_\bi(\vep_\bi+R^\bi_s))[-\sum_{\bj\in \mc E_N\setminus\{\bi\}}w_\bj G_0(2\beta_\bj(\vep_\bj+R^\bj_s))]}{w^2_\bi H_0^{\bs w}(\bs \vep+\bs R_s)}\notag
\end{split}\\
\begin{split}
&\leq \frac{C(\delta_0,\delta_1,\bbeta,\bs w)}{|Z^\bi_s|G_0(2\beta_\bi R^\bi_s)^2},\quad \forall\;s\leq T_{\delta_0,\delta_1}.
\label{MN:proof21}
\end{split}
\end{align}
The last inequality follows by using the asymptotic representation \eqref{K10} of $K_1(x)$ as $x\to 0$ and the increasing monotonicity of $x\mapsto \sqrt{x}K_0(x)$ for all small $x>0$: $K_0'(x)=-K_1(x)$ by \eqref{hK:der}, so 
\[
(\d/\d x)\sqrt{x}K_0(x)=[K_0(x)-2xK_1(x)]/(2\sqrt{x}), 
\]
which is positive for all small $x>0$ by \eqref{K00} and \eqref{K10}. Given \eqref{MN:proof21}, now we can use the dominated convergence theorem for stochastic integrals \cite[Theorem~32, p.176]{Protter-2}
and \cite[Proposition~4.5 (1$\cc$)]{C:SDBG1-2} to pass the limit under the stochastic integrals. 
It follows from Lemma~\ref{lem:unif1} (1$\cc$) and Lemma~\ref{lem:unif2} that \eqref{MN:proof2} holds.

Finally, note that the u.c.p. convergences in \eqref{MN:proof1} and \eqref{MN:proof2} hold even if we drop the stopping by $T_{\delta_0,\delta_1}$ since $T_{\delta_0,\delta_1}\to\infty$ as $\delta_1\nearrow\infty$ and $\delta_0\searrow 0$ by  the sample path continuity of $\{N^{\bbeta,\bs w,\bi}_0(t)\}$ and 
$\{\mathring{N}^{\bbeta,\bs w,\bi}_0(t)\}$ and Proposition~\ref{prop:BMntc}. Applying these extensions of \eqref{MN:proof1} and \eqref{MN:proof2} to \eqref{MN:proof} proves \eqref{Ilim:2wi}.
\end{proof}

\subsection{Limiting formulas: quadratic-variation terms}\label{sec:logsumC}
The limits of the remaining terms in \eqref{eq:ItoKi} and~\eqref{eq:ItoKw} may be less obvious. 

\begin{prop}\label{prop:lim3}
Let $\bs w=\{w_\bj\}_{\bj\in \mc E_N}\in \R_+^{\mc E_N}$ with $w_\bi> 0$. Under $\P^{\bi}_{z_0}$ for any $z_0\in\CNwni$, the following limits hold with probability one: for all $t>0$, 
\begin{align}
&- I_{\ref{eq:ItoKi}}(4)-I_{\ref{eq:ItoKi}}(5)+I_{\ref{eq:ItoKw}}(4)+I_{\ref{eq:ItoKw}}(5)+I_{\ref{eq:ItoKw}}(6)+I_{\ref{eq:ItoKw}}(7)+I_{\ref{eq:ItoKw}}(8)+I_{\ref{eq:ItoKw}}(9)\notag\\
&\quad \xrightarrow[\bs \vep\searrow\bs 0]{} -\frac{1}{2}\la N^{\bbeta,\bs w,\bi}_0,N^{\bbeta,\bs w,\bi}_0\ra_t .\label{Ilim:4wi}
\end{align}
\end{prop}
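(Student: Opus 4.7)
\textbf{Proof proposal for Proposition~\ref{prop:lim3}.} My plan is to match the eight integral terms on the left-hand side of \eqref{Ilim:4wi} to the algebraic expansion
\[
\langle N^{\bbeta,\bs w,\bi}_0, N^{\bbeta,\bs w,\bi}_0\rangle_t = \langle \mathring{N}^{\bbeta,\bs w,\bi}_0,\mathring{N}^{\bbeta,\bs w,\bi}_0\rangle_t + 2\langle \mathring{N}^{\bbeta,\bs w,\bi}_0,\widetilde{N}^{\bbeta,\bs w,\bi}_0\rangle_t + \langle \widetilde{N}^{\bbeta,\bs w,\bi}_0,\widetilde{N}^{\bbeta,\bs w,\bi}_0\rangle_t
\]
by partitioning the sum into three groups, each converging almost surely (for every fixed $t$) to one of the three cross-brackets. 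Specifically, I will show
\begin{align*}
\mbox{Group A:}&\quad -I_{\ref{eq:ItoKi}}(4)-I_{\ref{eq:ItoKi}}(5)+I_{\ref{eq:ItoKw}}(5)+I_{\ref{eq:ItoKw}}(9)\xrightarrow[\bs\vep\searrow\bs 0]{} -\tfrac{1}{2}\langle \mathring{N}^{\bbeta,\bs w,\bi}_0,\mathring{N}^{\bbeta,\bs w,\bi}_0\rangle_t,\\
\mbox{Group B:}&\quad I_{\ref{eq:ItoKw}}(7)+I_{\ref{eq:ItoKw}}(8)\xrightarrow[\bs\vep\searrow\bs 0]{} -\langle \mathring{N}^{\bbeta,\bs w,\bi}_0,\widetilde{N}^{\bbeta,\bs w,\bi}_0\rangle_t,\\
\mbox{Group C:}&\quad I_{\ref{eq:ItoKw}}(4)+I_{\ref{eq:ItoKw}}(6)\xrightarrow[\bs\vep\searrow\bs 0]{} -\tfrac{1}{2}\langle \widetilde{N}^{\bbeta,\bs w,\bi}_0,\widetilde{N}^{\bbeta,\bs w,\bi}_0\rangle_t.
\end{align*}
The grouping is dictated by the role of the index $\bi$ in each integrand: Group A collects the four terms that are singular at $R^\bi_s=0$; Group B collects the two mixed $(\bi,\bj)$ cross terms together with the drift correction $I_{\ref{eq:ItoKw}}(8)$ coming from the SDE \eqref{SDE:Zbj-2} of $Z^\bj$ under $\P^\bi_{z_0}$; Group C is purely indexed over $\mc E_N\setminus\{\bi\}$, where the paths stay away from zero by Proposition~\ref{prop:BMntc}.

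For Groups B and C, the integrands are regular on the support of $\P^\bi_{z_0}$: since $z_0\in\CNwni$, Proposition~\ref{prop:BMntc} gives $|Z^\bj_s|>0$ for all $s\geq 0$ and $\bj\neq\bi$, so all $R^\bj_s$ involved are bounded away from zero on compact time intervals. Combining Lemma~\ref{lem:bullet} with the uniform monotone convergences of Lemma~\ref{lem:unif2} and dominated convergence yields the pointwise limits. The covariation brackets $\d\langle B^\bj,B^\bk\rangle_s = \tfrac{1}{2}\sigma(\bj)\cdot\sigma(\bk)\cdot\Re(Z^\bj_s/\overline{Z}^\bk_s\cdot|Z^\bk_s|/|Z^\bj_s|)\d s$ from \eqref{def:Bj-2} (or, for Group B, the drift coefficient $\sigma(\bj)\cdot\sigma(\bi)\Re(Z^\bj_s/Z^\bi_s)\widehat K_1^{\bbeta,\bi}(s)/K_0^{\bbeta,\bi}(s)$) are exactly what is needed to recover $-\langle \widetilde N,\widetilde N\rangle/2$ and $-\langle \mathring N,\widetilde N\rangle$ after localization at the stopping time $T_{\delta_0,\delta_1}\,\defeq\,\inf\{t\geq 0;\min_{\bj\neq\bi}R^\bj_t\leq\delta_0\mbox{ or }\max_\bj R^\bj_t\geq\delta_1\}$ used in the proof of Proposition~\ref{prop:lim2}, followed by sending $\delta_0\searrow 0$ and $\delta_1\nearrow\infty$.

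Group A is the subtle one and will be the main obstacle. Here the integrands are singular at $R^\bi_s=0$, so one cannot take $\bs\vep\searrow\bs 0$ under the integral naively. My plan is to add and subtract so that the divergent pieces cancel \emph{before} passing to the limit: writing
\[
\frac{1}{H_0^{\bs w}(\bs\vep+\bs R)^m}-\frac{\delta_{m,1}+\delta_{m,2}}{H_0^\bi(\vep_\bi+R_\bi)^m}\quad (m\in\{1,2\})
\]
as an explicit rational function whose numerator equals $w_\bi K_0^{\bbeta,\bi}(s)-K_0^{\bbeta,\bs w}(s)$ times a bounded factor, and using the decomposition $\sum_{\bj\neq\bi}w_\bj K_0^{\bbeta,\bj}(s)=K_0^{\bbeta,\bs w}(s)-w_\bi K_0^{\bbeta,\bi}(s)$. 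This manipulation cancels one power of $K_0^{\bbeta,\bi}(s)$ in each combined integrand and reveals that the pointwise limit of Group A equals the integrand
\[
-\tfrac{1}{2}\cdot\frac{\widehat K_1^{\bbeta,\bi}(s)^2\,[K_0^{\bbeta,\bs w}(s)-w_\bi K_0^{\bbeta,\bi}(s)]^2}{|Z^\bi_s|^2\,K_0^{\bbeta,\bi}(s)^2\,K_0^{\bbeta,\bs w}(s)^2}
\]
of $-\tfrac{1}{2}\d\langle\mathring N^{\bbeta,\bs w,\bi}_0,\mathring N^{\bbeta,\bs w,\bi}_0\rangle_s$. Integrability of this pointwise limit (hence applicability of dominated convergence) follows from the exponential bound \eqref{expbdd:Nring} in Proposition~\ref{prop:lim2}~(1$\cc$) together with Lemma~\ref{lem:Qtight}; the pre-limit domination uses the monotone bounds of Lemma~\ref{lem:unif2} together with the observation, as in \eqref{MN:proof21}, that $\sqrt x K_0(x)$ is increasing near zero so that $\vep_\bi$-mollification does not worsen the singular factor $1/[R^\bi_s K_0^{\bbeta,\bi}(s)^2]$. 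Summing the three group contributions gives $-\tfrac{1}{2}\langle N^{\bbeta,\bs w,\bi}_0,N^{\bbeta,\bs w,\bi}_0\rangle_t$, as required.
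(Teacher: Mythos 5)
Your overall grouping is the same as the paper's — Groups B and C match the paper's \eqref{app:1} (with the paper proving $I_{\ref{eq:ItoKw}}(4)\to -\tfrac{1}{2}\I_{\ref{match:1}}$, $I_{\ref{eq:ItoKw}}(6)\to -\tfrac{1}{2}\II_{\ref{match:1}}$, $I_{\ref{eq:ItoKw}}(7)+I_{\ref{eq:ItoKw}}(8)\to -\tfrac{1}{2}\III_{\ref{match:1}}$), and your Group A is exactly the paper's \eqref{app:2-1}. Your treatment of Groups B and C (localization by $T_{\delta_0,\delta_1}$ and dominated convergence off the singular set) is correct and is essentially the paper's argument via Lemma~\ref{lem:bullet}, Lemma~\ref{lem:unif2}, and \cite[Proposition~4.5]{C:SDBG1-2}.

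Your Group A, however, has a genuine gap. You claim an algebraic rearrangement reveals the perfect-square integrand and that dominated convergence then finishes. Two concrete obstacles block that route. First, the four integrands are not of a single type: $-I_{\ref{eq:ItoKi}}(4)$ and $I_{\ref{eq:ItoKw}}(5)$ carry the factor $R^\bi_s/(\vep_\bi+R^\bi_s)^2$ (arising from $\d\la R^\bi,R^\bi\ra_s=4R^\bi_s\,\d s$), whereas $-I_{\ref{eq:ItoKi}}(5)$ and $I_{\ref{eq:ItoKw}}(9)$ carry only $1/(\vep_\bi+R^\bi_s)$. Before you can complete the square you must reconcile these, and the difference $\vep_\bi/(\vep_\bi+R^\bi_s)^2$, weighted by the relevant Macdonald ratios, converges not to zero but to a local-time term $2L^\bi_t$ (Lemma~\ref{lem:lim3-1}); the proof works because these local-time corrections come with opposite signs and cancel, not because they are negligible. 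Second, even after reconciling powers of $(\vep_\bi+R^\bi_s)$, the mollified ratio $G_1(2\beta_\bi(\vep_\bi+R^\bi_s))/G_0(2\beta_\bi(\vep_\bi+R^\bi_s))$ in $-I_{\ref{eq:ItoKi}}(5)$ must be replaced by the unmollified $\hK_1^{\bbeta,\bi}(s)/K_0^{\bbeta,\bi}(s)$. There is no $\vep$-uniform dominating function here: indeed, the naive pointwise limit would produce the pieces $\I_{\ref{match:2}}$, $\II_{\ref{match:2}}$, $\III_{\ref{match:2}}$ of \eqref{match:2}, which the paper explicitly notes need not be absolutely convergent individually — so the exponential bound \eqref{expbdd:Nring} controls the final perfect square $\la\mathring{N}^{\bbeta,\bs w,\bi}_0,\mathring{N}^{\bbeta,\bs w,\bi}_0\ra_t$ but not the pre-limit pieces you would be dominating. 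The paper instead routes this replacement through the occupation-times formula and the increasing monotonicity of $\hK_1/K_0$ (Lemma~\ref{lem:lim3-2}, equations \eqref{oct:approx}--\eqref{oct:approx1}), which is where the real work in Group~A lives. Your proposal omits both of these steps, and neither is a dominated-convergence argument in disguise.
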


To motivate the proof of Proposition~\ref{prop:lim3} and clarify the direction, we first consider the following lemma. 
Note that by Lemma~\ref{lem:bullet}, the implication in \eqref{def:Bj-2}, and \cite[Proposition~4.5 (1$\cc$)]{C:SDBG1-2}, the integrals $\I_{\ref{match:1}}$, $\II_{\ref{match:1}}$ and $\III_{\ref{match:1}}$ defined below are absolutely convergent.

\begin{lem}\label{lem:MNQV}
Let $\bs w\in \R_+^{\mc E_N}$ with $w_\bi> 0$. Under $\P^{\bi}_{z_0}$ for any $z_0\in\CNwni$, the following limits hold with probability one: for all $t>0$, 
\begin{align}
\la N^{\bbeta,\bs w,\bi}_0,N^{\bbeta,\bs w,\bi}_0\ra_t=
\la \mathring{N}^{\bbeta,\bs w,\bi}_0,\mathring{N}^{\bbeta,\bs w,\bi}_0\ra_t+\I_{\ref{match:1}}+\II_{\ref{match:1}}+\III_{\ref{match:1}},\label{match:1}
\end{align}
where 
\begin{align*}
\I_{\ref{match:1}}&\;\defeq\,  \sum_{\bj\in \mc E_N\setminus\{\bi\}}w_\bj^2\int_0^t \frac{\hK^{\bbeta,\bj}_1(s)^2}{|Z^\bj_s|^2 K_0^{\bbeta,\bs w}(s)^2}\d s,\\
\II_{\ref{match:1}}&\;\defeq\,   \sum_{\stackrel{\scriptstyle \bj,\bk\in \mathcal E_N\setminus\{\bi\}}{\bj\neq \bk}}w_\bj w_\bk\int_0^{t}\frac{\hK^{\bbeta,\bj}_1(s)\hK^{\bbeta,\bk}_1(s)}{|Z^\bj_s||Z^\bk_s|K_0^{\bbeta,\bs w}(s)^2}\d \la B^{\bj},B^{\bk}\ra_s,\\
\III_{\ref{match:1}}&\;\defeq\,  -\sum_{\bj\in \mc E_N\setminus\{ \bi\}}w_\bi w_\bj\int_0^t \frac{\hK^{\bbeta,\bj}_1(s)\hK_1^{\bbeta,\bi}(s)
\sum_{\bk\in \mc E_N\setminus\{\bi\}}w_\bk K^{\bbeta,\bk}_0(s)
}{|Z^\bj_s|^2 K_0^{\bbeta,\bs w}(s)^2\cdot w_\bi K_0^{\bbeta,\bi}(s)}\Re\left(\frac{Z^\bj_s}{Z^\bi_s}\right)\sigma(\bj)\cdot \sigma(\bi)\d s.
\end{align*}
\end{lem}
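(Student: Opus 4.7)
\noindent The plan is to prove \eqref{match:1} by a direct computation using the decomposition $N^{\bbeta,\bs w,\bi}_0 = \mathring{N}^{\bbeta,\bs w,\bi}_0 + \widetilde{N}^{\bbeta,\bs w,\bi}_0$ from \eqref{def:Nwi} and the bilinearity of the quadratic variation:
\begin{align*}
\la N^{\bbeta,\bs w,\bi}_0, N^{\bbeta,\bs w,\bi}_0\ra_t = \la \mathring{N}^{\bbeta,\bs w,\bi}_0, \mathring{N}^{\bbeta,\bs w,\bi}_0\ra_t + 2\la \mathring{N}^{\bbeta,\bs w,\bi}_0, \widetilde{N}^{\bbeta,\bs w,\bi}_0\ra_t + \la \widetilde{N}^{\bbeta,\bs w,\bi}_0, \widetilde{N}^{\bbeta,\bs w,\bi}_0\ra_t.
\end{align*}
The first step is to compute $\la \widetilde{N}^{\bbeta,\bs w,\bi}_0, \widetilde{N}^{\bbeta,\bs w,\bi}_0\ra_t$ by expanding the double sum from the definition \eqref{def:Nwitilde} and separating the diagonal terms $\bj=\bk$ from the off-diagonal terms. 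Using $\la B^\bj, B^\bj\ra_t = t$ (which follows from \eqref{def:Bj-2} together with $\sigma(\bj)\cdot\sigma(\bj)=2$), the diagonal contribution yields exactly $\I_{\ref{match:1}}$, while the off-diagonal contribution, in which $\d\la B^\bj,B^\bk\ra_s$ survives intact, produces $\II_{\ref{match:1}}$.

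Next I would compute the cross term $2\la \mathring{N}^{\bbeta,\bs w,\bi}_0, \widetilde{N}^{\bbeta,\bs w,\bi}_0\ra_t$ from \eqref{def:Nwiring} and \eqref{def:Nwitilde}. The algebraic simplification relies on two identities. First, $K_0^{\bbeta,\bs w}(s) - w_\bi K_0^{\bbeta,\bi}(s) = \sum_{\bk\in \mc E_N\setminus\{\bi\}} w_\bk K_0^{\bbeta,\bk}(s)$ by the definition \eqref{def:Kbbetaw0}. Second, writing $Z^\bi = X^\bi + \i Y^\bi$ and similarly for $Z^\bj$, one has
\begin{align*}
\frac{X^\bi_s X^\bj_s + Y^\bi_s Y^\bj_s}{|Z^\bi_s||Z^\bj_s|} = \frac{|Z^\bi_s|}{|Z^\bj_s|}\,\Re\!\left(\frac{Z^\bj_s}{Z^\bi_s}\right),
\end{align*}
since $\Re(Z^\bj_s/Z^\bi_s) = (X^\bi_s X^\bj_s + Y^\bi_s Y^\bj_s)/|Z^\bi_s|^2$. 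Substituting this into the covariation formula in \eqref{def:Bj-2}, one factor of $|Z^\bi_s|$ cancels the $|Z^\bi_s|^{-1}$ from $\mathring{N}^{\bbeta,\bs w,\bi}_0$, and one factor of $|Z^\bj_s|$ combines with $|Z^\bj_s|^{-1}$ from $\widetilde{N}^{\bbeta,\bs w,\bi}_0$ to produce $|Z^\bj_s|^{-2}$; together with the factor $\sigma(\bi)\cdot\sigma(\bj)/2$, the factor $2$ in front of the cross term gives precisely $\III_{\ref{match:1}}$ after matching with the definition.

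Summing the three pieces yields \eqref{match:1}. For well-definedness of the Riemann-integral terms $\I_{\ref{match:1}}$ and $\III_{\ref{match:1}}$ and of the covariation integral $\II_{\ref{match:1}}$, I would invoke exactly the references already cited in the statement: Lemma~\ref{lem:bullet} (continuity of $\widehat{K}_1$ up to $0$ and positivity of $K_0$ on compacts), the covariation identity in \eqref{def:Bj-2}, and \cite[Proposition~4.5~(1$\cc$)]{C:SDBG1-2} to handle the integrability near $s=0$ and near zeros of $|Z^\bi|$; these ensure all three expressions are absolutely convergent a.s., so the bilinear expansion is rigorously legitimate. There is no real obstacle here beyond careful bookkeeping of the indices and the complex-algebra identity above; the computation is essentially symbolic once the two identities are in hand.
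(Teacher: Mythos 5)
Your proof is correct and follows essentially the same route as the paper: expand $\la N^{\bbeta,\bs w,\bi}_0,N^{\bbeta,\bs w,\bi}_0\ra$ by bilinearity using $N^{\bbeta,\bs w,\bi}_0=\mathring{N}^{\bbeta,\bs w,\bi}_0+\widetilde{N}^{\bbeta,\bs w,\bi}_0$, compute $\la\widetilde{N}^{\bbeta,\bs w,\bi}_0,\widetilde{N}^{\bbeta,\bs w,\bi}_0\ra$ by splitting diagonal from off-diagonal to get $\I_{\ref{match:1}}+\II_{\ref{match:1}}$, and substitute the covariation formula \eqref{def:Bj-2} together with the identity $\Re(Z^\bj_s/Z^\bi_s)|Z^\bi_s|^2=X^\bi_s X^\bj_s+Y^\bi_s Y^\bj_s$ into the cross term to obtain $\III_{\ref{match:1}}$. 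The paper phrases the bilinear expansion as $\la N-\mathring{N},N-\mathring{N}\ra+2\la N-\mathring{N},\mathring{N}\ra$ rather than expanding $\la\mathring{N}+\widetilde{N},\mathring{N}+\widetilde{N}\ra$, but since $\widetilde{N}=N-\mathring{N}$ this is the same calculation, and your justification of absolute convergence via Lemma~\ref{lem:bullet} and \cite[Proposition~4.5~(1$\cc$)]{C:SDBG1-2} matches the remark preceding the lemma.
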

\begin{proof}
We consider the following computation, where
the second equality below applies \eqref{def:Nwi} and \eqref{def:Nwiring}:
\begin{align}
&\quad\;\la N^{\bbeta,\bs w,\bi}_0,N^{\bbeta,\bs w,\bi}_0\ra_t-\la \mathring{N}^{\bbeta,\bs w,\bi}_0,\mathring{N}^{\bbeta,\bs w,\bi}_0\ra_t\notag\\
&=\la N^{\bbeta,\bs w,\bi}_0-\mathring{N}^{\bbeta,\bs w,\bi}_0,N^{\bbeta,\bs w,\bi}_0-\mathring{N}^{\bbeta,\bs w,\bi}_0\ra_t+2\la N^{\bbeta,\bs w,\bi}_0-\mathring{N}^{\bbeta,\bs w,\bi}_0, \mathring{N}^{\bbeta,\bs w,\bi}_0\ra_t\notag\\
&=\sum_{\bj\in \mc E_N\setminus\{\bi\}}w_\bj^2\int_0^t \frac{\hK^{\bbeta,\bj}_1(s)^2}{|Z^\bj_s|^2 K_0^{\bbeta,\bs w}(s)^2}\d s+\sum_{\stackrel{\scriptstyle \bj,\bk\in \mathcal E_N\setminus\{\bi\}}{\bj\neq \bk}}w_\bj w_\bk\int_0^{t}\frac{\hK^{\bbeta,\bj}_1(s)\hK^{\bbeta,\bk}_1(s)}{|Z^\bj_s||Z^\bk_s|K_0^{\bbeta,\bs w}(s)^2}\d \la B^{\bj},B^{\bk}\ra_s\notag\\
&\quad -2\sum_{\bj\in\mc E_N\setminus\{\bi\}}w_\bj\int_0^t \frac{\hK^{\bbeta,\bj}_1(s)\hK_1^{\bbeta,\bi}(s)\sum_{\bk\in \mc E_N\setminus\{\bi\}}w_\bk K^{\bbeta,\bk}_0(s)}{|Z^\bj_s||Z^\bi_s| K_0^{\bbeta,\bs w}(s)^2K_0^{\bbeta,\bi}(s)}\d \la B^\bj,B^\bi\ra_s\notag\\
&=\sum_{\bj\in \mc E_N\setminus\{\bi\}}w_\bj^2\int_0^t \frac{\hK^{\bbeta,\bj}_1(s)^2}{|Z^\bj_s|^2 K_0^{\bbeta,\bs w}(s)^2}\d s +\sum_{\stackrel{\scriptstyle \bj,\bk\in \mathcal E_N\setminus\{\bi\}}{\bj\neq \bk}}w_\bj w_\bk\int_0^{t}\frac{\hK^{\bbeta,\bj}_1(s)\hK^{\bbeta,\bk}_1(s)}{|Z^\bj_s||Z^\bk_s|K_0^{\bbeta,\bs w}(s)^2}\d \la B^{\bj},B^{\bk}\ra_s\notag\\
&\quad -\sum_{\bj\in \mc E_N\setminus\{ \bi\}}w_\bi w_\bj\int_0^t \frac{\hK^{\bbeta,\bj}_1(s)\hK_1^{\bbeta,\bi}(s)\sum_{\bk\in \mc E_N\setminus\{\bi\}}w_\bk K^{\bbeta,\bk}_0(s)}{|Z^\bj_s|^2 K_0^{\bbeta,\bs w}(s)^2\cdot w_\bi K_0^{\bbeta,\bi}(s)}\Re\left(\frac{Z^\bj_s}{Z^\bi_s}\right)\sigma(\bj)\cdot \sigma(\bi)\d s,\notag
\end{align}
where the last equality changes the third term on its left-hand side by 
using \eqref{def:Bj-2}. The last equality proves \eqref{match:1}.
\end{proof}

To use \eqref{match:1}, we also note the following \emph{formal} decomposition suggested by \eqref{def:Nwiring}: 
\begin{align}
&\quad\;\la \mathring{N}^{\bbeta,\bs w,\bi}_0,\mathring{N}^{\bbeta,\bs w,\bi}_0\ra_t\notag\\
&=w_\bi^2\int_0^{t}\frac{\hK^{\bbeta,\bi}_1(s)^2}{|Z^\bi_s|^2}\biggl(\frac{1}{w_\bi K_0^{\bbeta,\bi}(s)}-\frac{1}{K_0^{\bbeta,\bs w}(s)}\biggr)^2\d s\label{MQV:special}\\
&=w_\bi^2\int_0^{t}\frac{\hK^{\bbeta,\bi}_1(s)^2}{|Z^\bi_s|^2}\biggl(\frac{1}{w_\bi^2 K_0^{\bbeta,\bi}(s)^2}-\frac{2}{w_\bi K_0^{\bbeta,\bi}(s)K^{\bbeta,\bs w}_0(s)}+\frac{1}{K_0^{\bbeta,\bs w}(s)^2}\biggr)\d s\notag\\
&=\I_{\ref{match:2}}+\II_{\ref{match:2}}+\III_{\ref{match:2}},\label{match:2}
\end{align}
where
\begin{align*}
\I_{\ref{match:2}}&\;\defeq\,  \int_0^{t}\frac{\hK^{\bbeta,\bi}_1(s)^2}{|Z^\bi_s|^2}\biggl(\frac{1}{K_0^{\bbeta,\bi}(s)^2}\biggr)\d s,\quad \;
\II_{\ref{match:2}}\;\defeq\,  -w_\bi\int_0^{t}\frac{\hK^{\bbeta,\bi}_1(s)^2}{|Z^\bi_s|^2}\biggl(\frac{2}{K_0^{\bbeta,\bi}(s)K^{\bbeta,\bs w}_0(s)}\biggr)\d s,\\
\III_{\ref{match:2}}&\;\defeq\, w_\bi^2\int_0^{t}\frac{\hK^{\bbeta,\bi}_1(s)^2}{|Z^\bi_s|^2}\biggl(\frac{1}{K_0^{\bbeta,\bs w}(s)^2}\biggr)\d s.
\end{align*}
Here, we regard \eqref{match:2} formal since the three integrals $\I_{\ref{match:2}}$, $\II_{\ref{match:2}}$ and $\III_{\ref{match:2}} $ are not within the scope of \cite[Proposition~4.5 (1$\cc$)]{C:SDBG1-2}. 
We are unable to guarantee their absolute convergence.

The following proof of Proposition~\ref{prop:lim3} is presented in the form of justifying the following
\emph{non-rigorous} approximations of two different types as $\bs \vep\searrow \bs 0$:
\begin{align}
&
\begin{cases}
I_{\ref{eq:ItoKw}}(4)\displaystyle \approx-\frac{\I_{\ref{match:1}}}{2}, 
\quad I_{\ref{eq:ItoKw}}(6)\approx-\frac{\II_{\ref{match:1}}}{2},\\
\vspace{-.2cm}\\
 I_{\ref{eq:ItoKw}}(7)+I_{\ref{eq:ItoKw}}(8)\displaystyle \approx-\frac{\III_{\ref{match:1}}}{2},\label{app:1}
\end{cases}\\
&\begin{cases}
-I_{\ref{eq:ItoKi}}(4)-I_{\ref{eq:ItoKi}}(5)\displaystyle \approx -\frac{\I_{\ref{match:2}}}{2},\quad 
I_{\ref{eq:ItoKw}}(9)\approx-\frac{\II_{\ref{match:2}}}{2},\\
\vspace{-.2cm}\\
 I_{\ref{eq:ItoKw}}(5)\displaystyle \approx-\frac{\III_{\ref{match:2}}}{2} .\label{app:2}
\end{cases}
\end{align}
These approximations  are motivated by Lemma~\ref{lem:unif2} and
 a comparison of the left-hand side of \eqref{Ilim:4wi}  on the one hand and 
 Lemma~\ref{lem:MNQV} and the formal equation \eqref{match:2} on the other hand. 
In more detail, the approximation of $ I_{\ref{eq:ItoKw}}(7)+I_{\ref{eq:ItoKw}}(8)$ in \eqref{app:1} considers the following algebra: by using \eqref{def:Bj-2}, the sum of the formal limits of 
$I_{\ref{eq:ItoKw}}(7)$ and $I_{\ref{eq:ItoKw}}(8)$ is
\begin{align*}
& \quad -\sum_{\bj\in \mc E_N\setminus\{ \bi\}}w_\bj w_\bi(2\beta_\bj )(2\beta_\bi)\int_0^t \frac{G_{1}(R)}{2R}\frac{G_{1}(R')}{2R'}\Bigg|_{\stackrel{\scriptstyle R=2\beta_\bj R^\bj_s}{R'=2\beta_\bi R^\bi_s}}\\
&\quad \times \frac{4|Z^\bj_s| |Z^\bi_s| }{H_0^{\bs w}(\bs R_s)^2}\frac{\sigma(\bj)\cdot\sigma(\bi)}{2}\frac{|Z^\bi_s|}{|Z^\bj_s|}\Re\left(\frac{Z^\bj_s}{Z^\bi_s}\right)\d s\\
&\quad+ \sum_{\bj\in \mc E_N\setminus\{\bi\}}w_\bj(2\beta_\bj)\int_0^t \frac{G_{1}(R)}{2R}\Bigg|_{R=2\beta_\bj R^\bj_s}\times \frac{\sigma(\bj)\cdot \sigma(\bi)}{H_0^{\bs w}(\bs R_s)}\left(
 \Re\left(\frac{Z^\bj_s}{Z^\bi_s}\right)\frac{ \K^{\bbeta,\bi}_{1}(s)}{ K^{\bbeta,\bi}_0(s)}\right)\d s \\
 &=-\frac{1}{2}\sum_{\bj\in \mc E_N\setminus\{\bi\}}w_\bj w_\bi \int_0^t \frac{\widehat{K}^{\bbeta,\bj}_1(s)\widehat{K}^{\bbeta,\bi}_1(s)}{|Z^\bj_s|^2K_0^{\bbeta,\bs w}(s)^2}\Re\left(\frac{Z^\bj_s}{Z^\bi_s}\right)\sigma(\bj)\cdot \sigma(\bi)\d s\\
 &\quad +\frac{1}{2}\sum_{\bj\in \mc E_N\setminus\{\bi\}}w_\bj w_\bi \int_0^t \frac{\widehat{K}^{\bbeta,\bj}_1(s)\widehat{K}^{\bbeta,\bi}_1(s)}{|Z^\bj_s|^2K^{\bbeta,\bs w}_0(s)\cdot w_\bi K_0^{\bbeta,\bi}(s)}\Re\left(\frac{Z^\bj_s}{Z^\bi_s}\right)\sigma(\bj)\cdot \sigma(\bi)\d s =-\frac{\III_{\ref{match:1}}}{2}.
\end{align*} 

\begin{proof}[Proof of Proposition~\ref{prop:lim3}]
We begin by proving the simpler limits in \eqref{app:1}. Write
\begin{align*}
I_{\ref{eq:ItoKw}}(4)&=-\frac{1}{2}\sum_{\bj\in \mc E_N\setminus\{ \bi\}}w_\bj^2\int_0^t \frac{ G_1(2\beta_\bj(\vep_\bj+R_\bj))^2R^\bj_s}{(\vep_\bj+R^\bj_s)^2H^{\bs w}_0(\bs \vep+\bs R_s)^2}\d s,\\
I_{\ref{eq:ItoKw}}(6)&=-\frac{1}{2}\sum_{\stackrel{\scriptstyle \bj,\bk\in \mathcal E_N\setminus\{\bi\}}{\bj\neq \bk}}w_\bj w_\bk \int_0^t \frac{ G_1(2\beta_\bj(\vep_\bj+R^\bj_s))G_1(2\beta_\bk(\vep_\bk+R^\bk_s)|Z^\bj_s||Z^\bk_s|}{(\vep_\bj+R^\bj_s)(\vep_\bk+R^\bk_s) H_0^{\bs w} (\bs \vep+\bs R_s)^2}\d \la B^\bj,B^\bk\ra_s,\\
I_{\ref{eq:ItoKw}}(7)+I_{\ref{eq:ItoKw}}(8)&=-\sum_{\bj\in \mc E_N\setminus\{ \bi\}}w_\bj w_\bi\int_0^t \frac{ G_1(2\beta_\bj(\vep_\bj+R^\bj_s))G_1(2\beta_\bi(\vep_\bi+R^\bi_s))|Z^\bj_s| |Z^\bi_s|}{(\vep_\bj+R^\bj_s)(\vep_\bi+R^\bi_s)H_0^{\bs w}(\bs \vep+\bs R_s)^2}\\
&\quad \quad \times\Re\left(\frac{Z^\bj_s}{Z^\bi_s}\right)\frac{|Z^\bi_s|}{|Z^\bj_s|}\left(\frac{\sigma(\bj)\cdot\sigma(\bi)}{2}\right)\d s\\
&\quad \;+\sum_{\bj\in \mc E_N\setminus\{ \bi\}}w_\bj\int_0^t\frac{G_1(2\beta_\bj(\vep_\bj+R^\bj_s))\hK^{\bbeta,\bi}_1(s)}{(\vep_\bj+R^\bj_s)H^{\bs w}_0(\bs\vep+\bs R_s)K^{\bbeta,\bi}_0(s)} \Re\left(\frac{Z^\bj_s}{Z^\bi_s}\right) \left(\frac{\sigma(\bj)\cdot\sigma(\bi)}{2}\right)\d s\\
&=\sum_{\bj\in \mc E_N\setminus\{ \bi\}}w_\bj w_\bi\int_0^t\frac{G_1(2\beta_\bj(\vep_\bj+R^\bj_s))}{(\vep_\bj+R^\bj_s)H^{\bs w}_0(\bs\vep+\bs R_s)^2\cdot w_\bi K^{\bbeta,\bi}_0(s)}\\
&\quad \times\left[-G_1(2\beta_\bi(\vep_\bi+R^\bi_s))\left(\frac{R^\bi_s}{\vep_\bi+R^\bi_s}\right)w_\bi K_0^{\bbeta,\bi}(s)+
\widehat{K}^{\bbeta,\bi}_1(s)H_0^{\bs w}(\bs \vep+\bs R_s)
\right]\\
&\quad\times \Re\left(\frac{Z^\bj_s}{Z^\bi_s}\right) \left(\frac{\sigma(\bj)\cdot\sigma(\bi)}{2}\right)\d s.
\end{align*} 
Here, to get the next to the last equality, we write out $\d \la B^\bj,B^\bi\ra_s$ in $I_{\ref{eq:ItoKw}}(7)$ by using \eqref{def:Bj-2}. To pass the limits of  $I_{\ref{eq:ItoKw}}(4)$, $I_{\ref{eq:ItoKw}}(6)$ and $I_{\ref{eq:ItoKw}}(7)+I_{\ref{eq:ItoKw}}(8)$, note that the dominated convergence theorem applies by the above integral representations, Lemma~\ref{lem:bullet} and \cite[Proposition~4.5 (1$\cc$)]{C:SDBG1-2}. Hence, we obtain the following rigorous justifications of the three approximations in \eqref{app:1}:
\begin{align}
\begin{split}
\lim_{\bs \vep\searrow \bs 0}I_{\ref{eq:ItoKw}}(4)&=-\frac{\I_{\ref{match:1}}}{2},\quad \lim_{\bs \vep\searrow \bs 0}
I_{\ref{eq:ItoKw}}(6)=-\frac{\II_{\ref{match:1}}}{2},\\
\lim_{\bs \vep\searrow \bs 0}I_{\ref{eq:ItoKw}}(7)+I_{\ref{eq:ItoKw}}(8)&=-\frac{\III_{\ref{match:1}}}{2}.\label{app:1-1}
\end{split}
\end{align}

Next, we realize \eqref{app:2} by considering \eqref{match:2} and the sums of both sides of the approximate equalities. Specifically, the goal is to prove that 
\begin{align}\label{app:2-1}
\lim_{\bs \vep\searrow \bs 0}-I_{\ref{eq:ItoKi}}(4)-I_{\ref{eq:ItoKi}}(5)+I_{\ref{eq:ItoKw}}(5)+I_{\ref{eq:ItoKw}}(9)=-\frac{1}{2}\la \mathring{N}^{\bbeta,\bs w,\bi}_0,\mathring{N}^{\bbeta,\bs w,\bi}_0\ra_t.
\end{align}
We begin by simplifying $-I_{\ref{eq:ItoKi}}(4)-I_{\ref{eq:ItoKi}}(5)$, $I_{\ref{eq:ItoKw}}(5)$ and $I_{\ref{eq:ItoKw}}(9)$:
\begin{align}
\begin{split}
-I_{\ref{eq:ItoKi}}(4)-I_{\ref{eq:ItoKi}}(5)&=-\frac{1}{2}\int_0^t \frac{G_1(2\beta_\bi(\vep_\bi+R^\bi_s))}{G_0(2\beta_\bi(\vep_\bi+R^\bi_s))(\vep_\bi+R^\bi_s)}\\
&\quad \times\left(\frac{-R^\bi_s G_1(2\beta_\bi(\vep_\bi+R^\bi_s))}{(\vep_\bi+R^\bi_s)G_0(2\beta_\bi(\vep_\bi+R^\bi_s))}+\frac{2\hK^{\bbeta,\bi}_1(s)}{K^{\bbeta,\bi}_0(s)}\right)\d s,\label{6''}
\end{split}\\
I_{\ref{eq:ItoKw}}(5)&=-\frac{1}{2}\int_0^t \frac{w_\bi^2 R^\bi_s G_1(2\beta_\bi(\vep_\bi+R^\bi_s))^2}{(\vep_\bi+R^\bi_s)^2H_0^{\bs w}(\bs \vep+\bs R_s)^2}\d s,\label{6''-1}\\
I_{\ref{eq:ItoKw}}(9)&=\frac{1}{2} \int_0^t \frac{w_\bi\cdot 2G_1(2\beta_\bi(\vep_\bi+R^\bi_s))}{(\vep_\bi+R^\bi_s)H_0^{\bs w}(\bs \vep+\bs R_s)}\left(\frac{\hK^{\bbeta,\bi}_1(s)}{K_0^{\bbeta,\bi}(s)}\right)\d s.\label{6''-2}
\end{align}
To bound the sum of the right-hand sides, we do some further simplifications by taking its limit. By Lemma~\ref{lem:lim3-1} proven below, we can replace $R^\bi_s/(R^\bi_s+\vep)$ in the integrands of the right-hand sides of \eqref{6''} and \eqref{6''-1} by $1$ in the limit. Specifically, 
\begin{align*}
&\quad\;\lim_{\bs \vep\searrow \bs 0}- I_{\ref{eq:ItoKi}}(4)-I_{\ref{eq:ItoKi}}(5)+I_{\ref{eq:ItoKw}}(5)+I_{\ref{eq:ItoKw}}(9)\\
&=\lim_{\bs \vep \searrow \bs 0}-\frac{1}{2}\int_0^t \frac{G_1(2\beta_\bi(\vep_\bi+R^\bi_s))}{G_0(2\beta_\bi(\vep_\bi+R^\bi_s))(\vep_\bi+R^\bi_s)}\left(\frac{- G_1(2\beta_\bi(\vep_\bi+R^\bi_s))}{G_0(2\beta_\bi(\vep_\bi+R^\bi_s))}+\frac{2\hK^{\bbeta,\bi}_1(s)}{K^{\bbeta,\bi}_0(s)}\right)\d s\\
&\quad -\frac{1}{2}\int_0^t \frac{w_\bi^2 G_1(2\beta_\bi(\vep_\bi+R^\bi_s))^2}{(\vep_\bi+R^\bi_s)H_0^{\bs w}(\bs \vep+\bs R_s)^2}\d s+I_{\ref{eq:ItoKw}}(9). 
\end{align*}
Moreover, the last equality can be further improved as follows:
\begin{align*}
&\quad\;\lim_{\bs \vep\searrow \bs 0} -I_{\ref{eq:ItoKi}}(4)-I_{\ref{eq:ItoKi}}(5)+I_{\ref{eq:ItoKw}}(5)+I_{\ref{eq:ItoKw}}(9)\\
&=\lim_{\bs \vep\searrow \bs 0}-\frac{1}{2}\int_0^t \frac{G_1(2\beta_\bi(\vep_\bi+R^\bi_s))}{G_0(2\beta_\bi(\vep_\bi+R^\bi_s))(\vep_\bi+R^\bi_s)}\left(\frac{\hK^{\bbeta,\bi}_1(s)}{K^{\bbeta,\bi}_0(s)}\right)\d s -\frac{1}{2}\int_0^t \frac{w_\bi^2 G_1(2\beta_\bi(\vep_\bi+R^\bi_s))^2}{(\vep_\bi+R^\bi_s)H_0^{\bs w}(\bs \vep+\bs R_s)^2}\d s\\
&\quad +I_{\ref{eq:ItoKw}}(9)\\
&=\lim_{\bs \vep\searrow \bs 0}-\frac{1}{2}\int_0^t \frac{1}{(\vep_\bi+R^\bi_s)}\left(\frac{\hK^{\bbeta,\bi}_1(s)}{K^{\bbeta,\bi}_0(s)}\right)^2\d s -\frac{1}{2}\int_0^t \frac{w_\bi^2 G_1(2\beta_\bi(\vep_\bi+R^\bi_s))^2}{(\vep_\bi+R^\bi_s)H_0^{\bs w}(\bs \vep+\bs R_s)^2}\d s+I_{\ref{eq:ItoKw}}(9),
\end{align*}
where, by the occupation times formula \eqref{oct}, the next to the last equality and the last equality follow from \eqref{oct:approx} and \eqref{oct:approx1} proven below, respectively. By the last equality and \eqref{6''-2}, we get
\begin{align*}
&\quad\;\lim_{\bs \vep\searrow \bs 0}-I_{\ref{eq:ItoKi}}(4)-I_{\ref{eq:ItoKi}}(5)+I_{\ref{eq:ItoKw}}(5)+I_{\ref{eq:ItoKw}}(9)\\
&=\lim_{\bs \vep\searrow \bs 0}-\frac{1}{2}\int_0^t \frac{1}{(\vep_\bi+R^\bi_s)}\left(\frac{\hK_1^{\bbeta,\bi}(s)}{K_0^{\bbeta,\bi}(s)}-\frac{w_\bi G_1(2\beta_\bi(\vep_\bi+R^\bi_s))}{H_0^{\bs w}(\bs \vep+\bs R_s)}\right)^2\d s.
\end{align*}
The dominated convergence theorem applies to evaluate the last limit by \cite[Proposition~4.5 (1$\cc$)]{C:SDBG1-2} and Lemma~\ref{lem:bullet}, and this limit equals $-\frac{1}{2}\la \mathring{N}^{\bbeta,\bs w,\bi}_0,\mathring{N}^{\bbeta,\bs w,\bi}_0\ra_t$ by  \eqref{MQV:special}. We have proved the required identity in \eqref{app:2-1}. 

The proof is complete upon combining \eqref{app:1-1} and~\eqref{app:2-1}.  
\end{proof}

\begin{lem}\label{lem:lim3-1}
It holds that 
\begin{align*}
&\lim_{\bs \vep\searrow \bs 0}\int_0^t G_1(2\beta_\bi(\vep_\bi+R^\bi_s))^2\cdot \frac{\vep_\bi}{(\vep_\bi+R^\bi_s)^2G_0(2\beta_\bi(\vep_\bi+R^\bi_s))^2}\d s=2 L^\bi_t,\\
&\lim_{\bs \vep\searrow \bs 0}\int_0^t \frac{w_\bi^2G_1(2\beta_\bi(\vep_\bi+R^\bi_s))^2 G_0(2\beta_\bi(\vep_\bi+R^\bi_s))^2}{H_0^{\bs w}(\bs \vep+\bs R_s)^2}\cdot \frac{\vep_\bi}{(\vep+R^\bi_s)^2G_0(2\beta_\bi(\vep_\bi+R^\bi_s))^2}\d s= 2L^\bi_t.
\end{align*}
\end{lem}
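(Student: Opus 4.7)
The starting point is to recognize that both integrands can be written in terms of the approximate local-time kernel $\kappa^{\beta_\bi\da}_{\vep_\bi}(|Z^\bi_s|)$ of Lemma~\ref{lem:LTnorm1}. For the first integrand this is direct from the definition \eqref{def:kbe}: it equals $g_\vep(s)\,\kappa^{\beta_\bi\da}_{\vep_\bi}(|Z^\bi_s|)$ with $g_\vep(s):=G_1(2\beta_\bi(\vep_\bi+R^\bi_s))^2$. For the second, I will cancel the common $G_0(2\beta_\bi(\vep_\bi+R^\bi_s))^2$ factors in numerator and denominator and then reintroduce them via the identity
\[
\frac{\vep_\bi}{(\vep_\bi+R^\bi_s)^2}=G_0(2\beta_\bi(\vep_\bi+R^\bi_s))^2\,\kappa^{\beta_\bi\da}_{\vep_\bi}(|Z^\bi_s|),
\]
which recasts the second integrand as $g_\vep(s)\,\Psi_\vep(s)\,\kappa^{\beta_\bi\da}_{\vep_\bi}(|Z^\bi_s|)$, where
\[
\Psi_\vep(s):=\left(\frac{w_\bi G_0(2\beta_\bi(\vep_\bi+R^\bi_s))}{H_0^{\bs w}(\bs\vep+\bs R_s)}\right)^2\in[0,1].
\]

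I will handle the first limit by applying Lemma~\ref{lem:unif1} (2$\cc$) to the nondecreasing functions $F_\vep(t):=\int_0^t\kappa^{\beta_\bi\da}_{\vep_\bi}(|Z^\bi_s|)\d s$ and $F_0(t):=2L^\bi_t$. By \eqref{eq:LTnorm1-2}, $F_\vep\to F_0$ uniformly on compacts $\P^\bi_{z_0}$-a.s., while the uniform continuity of $G_1$ on compacts of $\R_+$ and the a.s.~boundedness of $R^\bi_\cdot$ on $[0,t]$ yield $g_\vep\to G_1(2\beta_\bi R^\bi_\cdot)^2$ uniformly on $[0,t]$ to a continuous limit. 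Hence
\[
\int_0^t g_\vep(s)\,\d F_\vep(s)\to 2\int_0^t G_1(2\beta_\bi R^\bi_s)^2\,\d L^\bi_s=2L^\bi_t,
\]
the last equality using that $\d L^\bi_s$ is supported on $\{s:R^\bi_s=0\}$ and $G_1(0)=\widehat{K}_1(0)=1$ by \eqref{def:hK0}.

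For the second limit the pointwise $\bs\vep\searrow\bs 0$ limit of $\Psi_\vep$ is discontinuous in $s$ (equal to $1$ on $\{R^\bi=0\}$, strictly smaller elsewhere), so Lemma~\ref{lem:unif1} (2$\cc$) does not apply directly and I will sandwich. Since $\Psi_\vep\leq 1$, the first limit immediately gives $\limsup\leq 2L^\bi_t$. For the lower bound, Lemma~\ref{lem:bullet} (1$\cc$) together with sample-path continuity ensures that $C(\omega):=\sup_{s\in[0,t]}\sum_{\bj\in\mc E_N\setminus\{\bi\}}w_\bj G_0(2\beta_\bj R^\bj_s)<\infty$ a.s.~under $\P^\bi_{z_0}$ with $z_0\in\CNwni$. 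Fixing $\eta>0$, the logarithmic blow-up \eqref{K00} of $K_0$ near $0$ allows me to choose $\delta=\delta(\omega,\eta)>0$ small enough that $w_\bi K_0(2\sqrt{\beta_\bi}\delta)\geq C(\omega)/\eta$, so that whenever $\vep_\bi\leq\delta^2$ and $|Z^\bi_s|\leq\delta$ one has $\Psi_\vep(s)\geq(1-\eta)^2$. I then split the second integral at $|Z^\bi_s|=\delta$: the outer piece is bounded by a constant (depending on $\omega$) times $\vep_\bi/\delta^4$, by direct estimate using the decreasing monotonicity of $K_0$ and the a.s.~boundedness of $R^\bi_\cdot$ on $[0,t]$, and so vanishes as $\vep_\bi\to 0$; the inner piece is at least $(1-\eta)^2$ times $\int_0^t g_\vep(s)\1_{\{|Z^\bi_s|\leq\delta\}}\kappa^{\beta_\bi\da}_{\vep_\bi}(|Z^\bi_s|)\d s$, which equals the first integral minus the outer piece and hence converges to $2L^\bi_t$. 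Taking $\eta\searrow 0$ yields the matching lower bound.

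The main obstacle is precisely the discontinuity of $\lim_{\bs\vep\searrow\bs 0}\Psi_\vep$, which blocks a one-step invocation of the weak-convergence machinery used for the first limit and forces the truncation-and-sandwich structure. The $\omega$-dependent choice of $\delta$ is made possible only by the a.s.~uniform positivity and boundedness of $R^\bj_\cdot$ on $[0,t]$ for $\bj\neq\bi$, which the initial-condition restriction $z_0\in\CNwni$ secures through Proposition~\ref{prop:BMntc} (and Lemma~\ref{lem:bullet} (1$\cc$)).
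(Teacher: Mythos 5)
Your handling of the first limit matches the paper's, and your sandwich argument for the second limit is mathematically correct. However, the reason you give for switching to a sandwich — that the pointwise limit of $\Psi_\vep$ is discontinuous in $s$, so Lemma~\ref{lem:unif1}~(2$\cc$) fails to apply — is mistaken. Rewriting
\begin{align*}
\Psi_\vep(s)=\Biggl(\frac{w_\bi}{w_\bi+\sum_{\bj\in\mc E_N\setminus\{\bi\}}w_\bj\,\frac{G_0(2\beta_\bj(\vep_\bj+R^\bj_s))}{G_0(2\beta_\bi(\vep_\bi+R^\bi_s))}}\Biggr)^2,
\end{align*}
one sees that the $\bs\vep\searrow\bs 0$ limit $\Psi_0(\bs R_s)$ involves ratios $G_0(2\beta_\bj R^\bj_s)/G_0(2\beta_\bi R^\bi_s)$, and since $G_0(R)=K_0(\sqrt{R})$ diverges logarithmically at $R=0$ while $R^\bj_\cdot$ stays bounded away from $0$ on $[0,t]$ (Lemma~\ref{lem:bullet}~(1$\cc$)), each such ratio tends to $0$ as $R^\bi_s\to 0$. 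Thus $\Psi_0(\bs R_s)\to 1$ \emph{continuously} as $R^\bi_s\to 0$: the limit equals $1$ on $\{R^\bi_s=0\}$ and is strictly below $1$ elsewhere, as you noted, but it attains $1$ by a limit, not a jump, so it is a continuous function of $s$. Lemma~\ref{lem:unif2} then upgrades the convergence $g_\vep\Psi_\vep\to G_1(2\beta_\bi R^\bi_\cdot)^2\Psi_0(\bs R_\cdot)$ to uniform convergence on the compact set $\mathcal R(\delta_0,\delta_1)$ occupied by $\bs R_\cdot$ on $[0,t]$, so Lemma~\ref{lem:unif1}~(2$\cc$) applies in a single step exactly as for the first limit, giving $2\int_0^t G_1(2\beta_\bi R^\bi_s)^2\Psi_0(\bs R_s)\,\d L^\bi_s=2L^\bi_t$ because the integrand equals $1$ on the support of $\d L^\bi$. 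That is the paper's argument. Your truncation-and-sandwich route is a valid alternative with the extra bookkeeping of $\delta$ and $\eta$, but it is not forced by any failure of the weak-convergence lemma, and the claim that the limit is discontinuous should be corrected.
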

\begin{proof}
To obtain the first limit, note that the uniform convergence of $G_1(2\beta_\bi(\vep_\bi+R^\bi_s))^2$ to $1$, $0\leq s\leq t$, holds by 
Lemma~\ref{lem:unif1} (1$\cc$) and Lemma~\ref{lem:unif2}. This convergence, \eqref{eq:LTnorm1-2}, Lemma~\ref{lem:unif1} (2$\cc$) and the fact that $G_1(0)=\widehat{K}_1(0)=1$ give the required limit. 
The second limit follows similarly, now using the following uniform convergence instead:
\begin{align*}
\frac{w_\bi^2G_1(2\beta_\bi(\vep_\bi+R^\bi_s))^2 G_0(2\beta_\bi(\vep_\bi+R^\bi_s))^2}{H_0^{\bs w}(\bs \vep+\bs R_s)^2}
=\Biggl(\frac{w_\bi G_1(2\beta_\bi(\vep_\bi+R^\bi_s))}{w_\bi+\sum_{\bj\in \mc E_N\setminus\{\bi\}}w_\bj \frac{G_0(2\beta_\bj(\vep_\bj+R^\bj_s))}{G_0(2\beta_\bi(\vep_\bi+R^\bi_s)) }}\Biggr)^2\to 1,\;0\leq s\leq t,
\end{align*}
which holds by Lemma~\ref{lem:unif1} (1$\cc$) and  Lemma~\ref{lem:unif2}.
\end{proof}

\begin{lem}\label{lem:lim3-2}
It holds that $(\hK_1/K_0)(\cdot)$ is an increasing function, and for any $M>0$, 
\begin{align}
0&\leq \int_0^{M}\frac{1}{K_0(\sqrt{\vep +y^2})(\vep+y^2)}\left(\frac{\hK_1}{K_0}(\sqrt{\vep+y^2})-\frac{\hK_1}{K_0}(y)\right)yK_0(y)^2\d y\xrightarrow[\vep\searrow 0]{}0,\label{oct:approx}\\
0&\leq \int_0^{M}\frac{1}{K_0(y)(\vep+y^2)}\left(\frac{\hK_1}{K_0}(\sqrt{\vep+y^2})-\frac{\hK_1}{K_0}(y)\right)yK_0(y)^2\d y\xrightarrow[\vep\searrow 0]{}0.
\label{oct:approx1}
\end{align}
\end{lem}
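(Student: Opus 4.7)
The plan is to prove the monotonicity by a direct derivative computation and then handle the two vanishing limits by a splitting argument combined with careful near-origin asymptotics.

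The monotonicity of $f(x)\defeq (\widehat{K}_1/K_0)(x)$ is immediate from the quotient rule and the identities $\widehat{K}_1'(x)=-xK_0(x)$ (cited just after \eqref{bdd:NTC1}) and $K_0'(x)=-K_1(x)$, which give
\[
f'(x)=\frac{x\bigl[K_1(x)^2-K_0(x)^2\bigr]}{K_0(x)^2}.
\]
Strict positivity of $f'$ on $(0,\infty)$ then reduces to the inequality $K_1(x)>K_0(x)$, which follows from the standard integral representation $K_\nu(x)=\int_0^\infty \e^{-x\cosh t}\cosh(\nu t)\,dt$ upon subtracting the $\nu=0$ formula from the $\nu=1$ one and using $\cosh t-1>0$ for $t>0$. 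The nonnegativity of the integrands in \eqref{oct:approx} and \eqref{oct:approx1} is then immediate, since $\sqrt{\vep+y^2}\geq y$ and $f$ is increasing.

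For the vanishing limits, I would apply a dominated-convergence argument after splitting $[0,M]=[0,\delta_0]\cup[\delta_0,M]$ for a small parameter $\delta_0$ chosen later. On $[\delta_0,M]$ all factors are bounded above and below, and $f(\sqrt{\vep+y^2})-f(y)\to 0$ uniformly by the uniform continuity of $f$; this piece vanishes as $\vep\searrow 0$. On $[0,\delta_0]$, I would use the asymptotics \eqref{K00}, \eqref{K10} and $\widehat{K}_1(0)=1$ from \eqref{def:hK0} to derive $f(x)=1/\log(1/x)+O(1/\log^2(1/x))$ and the one-sided bound $f'(x)\leq C/[x\log^2(1/x)]$ as $x\searrow 0$. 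Combined with $\sqrt{\vep+y^2}-y=\vep/[\sqrt{\vep+y^2}+y]\leq \vep/(2y)$ and the mean-value theorem, these give, for all small $y\in(0,\delta_0]$,
\[
0\leq f(\sqrt{\vep+y^2})-f(y)\leq \frac{C\vep}{y^2\log^2(1/y)}.
\]
Plugging this into \eqref{oct:approx1} together with $K_0(y)=O(\log(1/y))$ reduces the near-origin contribution to an integral of the form $C\vep\int_0^{\delta_0}dy/\bigl[y(\vep+y^2)\log(1/y)\bigr]$; splitting this integral at $y=\sqrt{\vep}$ (the natural scale of the weight $\vep+y^2$) yields a uniform bound of the form $C/\log(1/\delta_0)+o(1)$ as $\vep\searrow 0$. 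Given $\eta>0$, one first picks $\delta_0$ small enough that $C/\log(1/\delta_0)<\eta/2$ and then takes $\vep$ small to control the remainder. The argument for \eqref{oct:approx} is essentially the same, the only modification being an extra factor $K_0(y)/K_0(\sqrt{\vep+y^2})$, which is bounded by a harmless ratio of logarithms under the same splitting.

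The main obstacle will be the near-origin piece: both $\widehat{K}_1/K_0$ and $K_0$ exhibit only logarithmic behavior and their differences nearly cancel, so no single integrable dominating function works on $(0,\delta_0]$. The scale $y=\sqrt{\vep}$ emerging from $\vep+y^2$ has to be exploited explicitly, and the $\delta_0$-first-then-$\vep$ iteration of limits needs to be carried out carefully so that the residual $1/\log(1/\delta_0)$ factor can be made arbitrarily small independently of $\vep$.
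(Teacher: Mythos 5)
Your proof of the monotonicity of $\widehat K_1/K_0$ is correct and self-contained (the paper instead cites an earlier result), and the reduction of the pointwise difference via the mean-value theorem is a reasonable opening. However, the estimate you plan to feed into the integral has a genuine gap in the region $0<y\lesssim\sqrt{\vep}$, and the way you propose to split does not fix it.

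Concretely, the MVT bound $0\le f(\sqrt{\vep+y^2})-f(y)\le C\vep/[y^2\log^2(1/y)]$ is true but blows up as $y\searrow 0$, whereas the actual difference stays bounded by $f(\sqrt{\vep+\delta_0^2})$. When you substitute it into \eqref{oct:approx1} and use $K_0(y)\lesssim\log(1/y)$, you arrive at $C\vep\int_0^{\delta_0}\d y/[y(\vep+y^2)\log(1/y)]$, and the claim is that splitting at $y=\sqrt{\vep}$ produces a bound of order $C/\log(1/\delta_0)+o(1)$. But this integral is already $+\infty$: on $(0,\sqrt{\vep})$ one has $\vep+y^2\le 2\vep$, so
\[
\vep\int_0^{\sqrt{\vep}}\frac{\d y}{y(\vep+y^2)\log(1/y)}\ \ge\ \frac12\int_0^{\sqrt{\vep}}\frac{\d y}{y\log(1/y)}\ =\ \frac12\Big[-\log\log(1/y)\Big]_{y\searrow 0}^{\sqrt{\vep}}\ =\ +\infty.
\]
So the bound you propose cannot be used on $(0,\sqrt{\vep})$: no choice of $\delta_0$ and no reordering of limits can rescue a majorant that is non-integrable near the origin. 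To close this, for $y\lesssim\sqrt{\vep}$ you would need to combine the MVT bound with the trivial bound $f(\sqrt{\vep+y^2})-f(y)\le f(\sqrt{2\vep})\lesssim 1/|\log\vep|$, and even then the crossover between the two is delicate (it occurs at $y\asymp\sqrt{\vep/|\log\vep|}$, not at $\sqrt{\vep}$). The paper avoids the issue altogether by writing the numerator $\widehat K_1(\sqrt{\vep+y^2})K_0(y)-\widehat K_1(y)K_0(\sqrt{\vep+y^2})$ as $[\widehat K_1(\sqrt{\vep+y^2})-\widehat K_1(y)]K_0(y)+\widehat K_1(y)[K_0(y)-K_0(\sqrt{\vep+y^2})]$, estimating each bracket with the explicit integral formulas for $\widehat K_1$ and $K_0$, and then applying the change of variables $y\mapsto\sqrt{\vep}\,y$, which is what actually exposes the $1/|\log\vep|$ gain on the whole of $(0,M)$. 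A minor additional remark: the easy reduction between the two limits runs in the opposite direction to the one you indicate — since $K_0$ is decreasing, the integrand of \eqref{oct:approx} dominates that of \eqref{oct:approx1}, so proving \eqref{oct:approx} automatically yields \eqref{oct:approx1}.
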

\begin{proof}
First, the property that $(\hK_1/K_0)(\cdot)$ is increasing has been obtained in \cite[Proposition~4.12 (4$\cc$)]{C:BES-2} by taking $\alpha=0$ there. Second, since $K_0$ is decreasing,  it suffices to show the convergence to zero in \eqref{oct:approx}. In this case, we can just consier the case that $M=1/2$ since $K_0(\cdot)$ is bounded away from zero on compacts in $(0,\infty)$.

To prove the convergence to zero in \eqref{oct:approx} for $M=1/2$, write
\begin{align}
&\quad\;\int_0^{1/2}\frac{1}{K_0(\sqrt{\vep +y^2})(\vep+y^2)}\left(\frac{\hK_1}{K_0}(\sqrt{\vep+y^2})-\frac{\hK_1}{K_0}(y)\right)yK_0(y)^2\d y\notag\\
&=\int_0^{1/2} \frac{1}{\vep+y^2}\frac{[\hK_1(\sqrt{\vep+y^2})K_0(y)-\hK_1(y)K_0(\sqrt{\vep+y^2})]}{K_0(\sqrt{\vep+y^2})^{2}K_0(y)}yK_0(y)^2\d y\notag\\
\begin{split}\label{oct:cal1}
&=\int_0^{1/2} \frac{1}{\vep+y^2}\frac{[\hK_1(\sqrt{\vep+y^2})-\hK_1(y)]K_0(y)}{K_0(\sqrt{\vep+y^2})^{2}K_0(y)}yK_0(y)^2\d y\\
&\quad+\int_0^{1/2} \frac{1}{\vep+y^2}\frac{\hK_1(y)[K_0(y)-K_0(\sqrt{\vep+y^2})]}{K_0(\sqrt{\vep+y^2})^2K_0(y)}yK_0(y)^2\d y.
\end{split}
\end{align}
In the remaining of this proof, we consider the two integrals on the right-hand side for $0<\vep\leq 1/2$. The goal is to show that both of them tend to zero as $\vep\to 0$.

To show that the first integral on the right-hand side of \eqref{oct:cal1} tends to zero, we note that 
\begin{align*}
\hK_1(\sqrt{\vep+y^2})-\hK_1(y)=\int_y^{\sqrt{\vep+y^2}}\frac{\d}{\d x}\hK_1(x)\d x=\int_y^{\sqrt{\vep+y^2}}-xK_0(x)\d x
\end{align*}
since $(\d/\d x)\hK_1(x)=-xK_0(x)$ \cite[(5.7.9)]{Lebedev-2}.
Hence, by the asymptotic representation \eqref{K00} of $K_0(x)$ as $x\searrow 0$, we have, for all $0\leq y\leq 1/2$ and $0<\vep\leq 1/2$, 
\begin{align*}
|\hK_1(\sqrt{\vep+y^2})-\hK_1(y)|
\less \int_y^{\sqrt{\vep+y^2}}-x\log x\d x=\frac{-x^2(\log x^2-1)}{4}\Big|_{x=y}^{\sqrt{\vep+y^2}},
\end{align*}
since $\int -x\log x\d x=\frac{-x^2(\log x^2-1)}{4}+C$ for $x>0$. Applying the last equality and the asymptotic representation of $K_0(x)$ as $x\to 0$ to the first integral on the right-hand side of \eqref{oct:cal1}, we get
\begin{align}
&\quad\;\left|\int_0^{1/2} \frac{1}{\vep+y^2}\frac{[\hK_1(\sqrt{\vep+y^2})-\hK_1(y)]K_0(y)}{K_0(\sqrt{\vep+y^2})^2K_0(y)}yK_0(y)^2\d y\right|\notag\\
&\less \int_0^{1/2} \frac{y}{\vep+y^2}\frac{-(\vep+y^2)[\log(\vep+y^2)-1
]+y^2(\log y^2-1)}{\log^2(\vep+y^2)}(\log^{2}y)\d y\notag\\
&= \int_0^{1/2} \frac{y}{\vep+y^2}\frac{\vep+y^2}{\log^2(\vep+y^2)}(\log^{2}y)\d y\notag\\
&\quad+\int_0^{1/2} \frac{y}{\vep+y^2}\frac{-(\vep+y^2)\log(\vep+y^2)
+y^2(\log y^2-1)}{\log^2(\vep+y^2)}(\log^{2}y)\d y\notag\\
&\xrightarrow[\vep\searrow 0]{}\int_0^{1/2}y\d y+\int_0^{1/2}-y\d y=0.\label{oct:cal2}
\end{align}
In more detail, the limit in \eqref{oct:cal2} holds by using the following domination bounds where the right-hand sides are all $\in L^1([0,1/2],\d y)$, for all $0<\vep\leq 1/2$:
\begin{align*}
&  \left|\frac{y}{\vep+y^2}\frac{\vep+y^2}{\log^2(\vep+y^2)}(\log^{2}y)\right|\leq
 \frac{y(\log^2y)}{\log^2(1/2+y^2)},\\
&\left| \frac{y}{\vep+y^2}\frac{-(\vep+y^2)\log(\vep+y^2)
+y^2(\log y^2-1)}{\log^2(\vep+y^2)}(\log^{2}y)\right|\less y\log^2y+\frac{y|\log^2y-1|}{\log^2(1/2+y^2)}(\log^2y).
\end{align*}
Note that these bounds have used the fact that $x\mapsto \log^2x$ is decreasing on $(0,1]$. 

Next, we show that the second integral on the right-hand side of \eqref{oct:cal1} tends to zero. We use the identity 
\[
K_0(x)=\frac{1}{2}\int_0^\infty \e^{-s}\int_{x^2/(4s)}^\infty \e^{-u}u^{-1}\d u\d s.
\]
\cite[(4.77)]{C:BES-2}. The foregoing display implies that 
\[
0\leq K_0(y)-K_0(\sqrt{\vep+y^2})\leq \int_0^\infty \e^{-s}\int^{(\vep+y^2)/(4s)}_{y^2/(4s)} u^{-1}\d u\d s\less \log \left(\frac{\vep+y^2}{y^2}\right),\quad \forall\;y>0. 
\]
By the foregoing display  and the fact that $\hK_1(y)\less 1$ for all $0<y\leq 1/2$ using the asymptotic representation \eqref{K10} of $K_1(x)$ as $x\to 0$,
the second integral on the right-hand side of \eqref{oct:cal1} thus satisfies the next two inequalities:
\begin{align}
0&\leq \int_0^{1/2} \frac{1}{\vep+y^2}\frac{\hK_1(y)[K_0(y)-K_0(\sqrt{\vep+y^2})]}{K_0(\sqrt{\vep+y^2})^2K_0(y)}yK_0(y)^2\d y\notag\\
&\less \int_0^{1/2} \frac{y}{\vep+y^2}\frac{-\log y}{\log^2(\vep+y^2)}\log \left(1+\frac{\vep}{y^2}\right)\d y\notag\\
\begin{split}
&= \int_0^{0.5/\sqrt{\vep}} \frac{ y}{1+ y^2}\frac{-\log \sqrt{\vep}-\log y}{[\log \vep +\log (1+ y^2)]^2}\log \left(1+\frac{1}{ y^2}\right)\d y
.\label{oct:cal2-0}
\end{split}
\end{align}
We consider the last integral via
$\int_0^{0.5/\sqrt{\vep}}=\int_0^{0.5}+\int_{0.5}^{0.5/\sqrt{\vep}}$. In the first case,
\begin{align}
 &\quad \int_0^{0.5} \frac{ y}{1+ y^2}\frac{-\log \sqrt{\vep}-\log y}{[\log \vep +\log (1+ y^2)]^2}\log \left(1+\frac{1}{ y^2}\right)\d y\notag\\
 &\less \frac{1}{|\log \vep|} \int_0^{0.5} y\log \left(1+\frac{1}{ y^2}\right)\d y+\frac{1}{\log^2 \vep }\int_0^{0.5} y|\log y|\log \left(1+\frac{1}{ y^2}\right)\d y\xrightarrow[\vep\searrow 0]{}0,\label{oct:cal2-1}
\end{align}
where the inequality uses $y/(1+y^2)\leq y$, and the convergence holds since we have the convergence
$\int_{0}^{0.5}y|\log y|\log (1+\frac{1}{y^2})\d y<\infty$.
Also, since $y/(1+y^2)\less 1/y$ for all $y\geq 1$ and $\log (1+x)\less x$ for all $x>0$, the other part of the integral in \eqref{oct:cal2-0} satisfies 
\begin{align}
&\quad\;\int_{0.5}^{0.5/\sqrt{\vep}} \frac{ y}{1+ y^2}\frac{-\log \sqrt{\vep}-\log y}{[\log \vep +\log (1+ y^2)]^2}\log \left(1+\frac{1}{ y^2}\right)\d y\notag\\
&\less \int_{0.5}^{0.5/\sqrt{\vep}} \frac{1}{y}\frac{-\log \sqrt{\vep}-\log y}{[\log \vep +\log (1+y^2)]^2}\log \left(1+\frac{1}{ y^2}\right)\d y\notag\\
&\less \int_{0.5}^{0.5/\sqrt{\vep}} \frac{-\log \sqrt{\vep}-\log y}{y(\log^2\vep)}\cdot \frac{1}{y^2}\d y\xrightarrow[\vep\to 0]{}0,\label{oct:cal2-2}
\end{align}
where the second inequality uses $-\log \vep-\log (1+ y^2)\geq -\log (\vep+0.5^2)$ for all $0.5<y<0.5/\sqrt{\vep}$.
By \eqref{oct:cal2-0}, \eqref{oct:cal2-1} and \eqref{oct:cal2-2}, 
we have proved that 
\begin{align}
\int_0^{1/2} \frac{1}{\vep+y^2}\frac{\hK_1(y)[K_0(y)-K_0(\sqrt{\vep+y^2})]}{K_0(\sqrt{\vep+y^2})^2K_0(y)}yK_0(y)^2\d y\xrightarrow[\vep\searrow 0]{}0.\label{oct:cal3}
\end{align}

Finally, we combine \eqref{oct:cal2} and \eqref{oct:cal3}. This shows that the right-hand side of \eqref{oct:cal1} tends to zero as $\vep\searrow 0$. We have proved \eqref{oct:approx} for $M=1/2$. The proof is complete.
\end{proof}

\subsection{End of the proof of Proposition~\ref{prop:logsum}}
We show \eqref{RN} first. 
Recall that Proposition~\ref{prop:logsum} assumes $z_0\in \CNwni$. By putting together 
\eqref{Ito:scheme}, Lemmas~\ref{lem:ItoKi}--\ref{lem:ItoKw},
and Propositions~\ref{prop:lim1}, \ref{prop:lim2} (2$\cc$) and \ref{prop:lim3}, for any fixed $T>0$, 
we can find a sequence $\bs \vep_n=\{\vep_{n,\bj}\}_{\bj\in \mc E_N}\searrow 0$ such that with $\P^\bi_{z_0}$-probability one, the following equalities hold 
for all $0\leq t\leq T$:
\begin{align}
\log \frac{K^{\bbeta,\bs w}_0 (t)}{w_\bi K^{\bbeta,\bi}_0 (t)}
&=\lim_{n\to\infty}F^{\bs w}_{\bs \vep_n}(\bs R_0)-F^\bi_{\vep_{n,\bi}}(R^\bi_0)+\sum_{i=1}^9I_{\ref{eq:ItoKw}}(i)_t-\sum_{i=1}^5I_{\ref{eq:ItoKi}}(i)_t\notag\\
&=\log \frac{K^{\bbeta,\bs w}_0 (0)}{ w_\bi K^{\bbeta,\bi}_0 (0)}+A^{\bbeta,\bs w,\bi}_0(t) +N_0^{\bbeta,\bs w,\bi}(t)-\frac{1}{2}\la N^{\bbeta,\bs w,\bi}_0,N^{\bbeta,\bs w,\bi}_0\ra_t,\notag
\end{align}
where $I_{\ref{eq:ItoKw}}(i)_t$ and $I_{\ref{eq:ItoKi}}(i)_t$ on the right-hand side of the second equality are understood to use $\bs \vep=\bs \vep_n$. Since $0<T<\infty$ is arbitrary, we obtain the property that with $\P^\bi_{z_0}$-probability one, \eqref{RN} holds for all $t\geq 0$. 

The remaining properties in Proposition~\ref{prop:logsum} can be obtained as follows. First, $\{N^{\bbeta,\bs w,\bi}_0(t)\}$ is a well-defined continuous local martingale by Proposition~\ref{prop:lim2} (1$\cc$). To verify the properties in Proposition~\ref{prop:logsum} (4$\cc$), note that those of $\{\mathring{N}^{\bbeta,\bs w,\bi}_0(t\wedge T^{\bs w\setminus\{w_\bi\}}_{\eta})\}$ have been obtained in Proposition~\ref{prop:lim2} (1$\cc$). The $L^2$-martingale property of 
$\{N^{\bbeta,\bs w,\bi}_0(t\wedge T^{\bs w\setminus\{w_\bi\}}_{\eta})\}$ then follows from the definitions of $\Twi_\eta$ and $\{N^{\bbeta,\bs w,\bi}_0(t)\}$ in \eqref{def:Twi} and \eqref{def:Nwiring} since for any $\bj\in \mc E_N$ with $w_\bj>0$,
\[
\frac{w_\bj\hK^{\bbeta,\bj}_1(s)}{|Z^\bj_s|K^{\bbeta,\bs w}_0(s)}\leq \frac{w_\bj\hK^{\bbeta,\bj}_1(s)}{|Z^\bj_s| w_\bj K_0^{\bbeta,\bj}(s)}\leq C(\beta_\bj,\eta),\quad \forall\;s\leq \Twi_\eta,
\]
where the second inequality uses the asymptotic representations \eqref{K0infty} and \eqref{K1infty} of $K_0$ and $K_1$ as $x\to\infty$. The proof of Proposition~\ref{prop:logsum} is complete.

\end{document}